\newcommand{\arxiv}[1]{\href{http://arxiv.org/abs/#1}{\tt arXiv:\nolinkurl{#1}}}
\newcommand{\arXiv}[1]{\href{http://arxiv.org/abs/#1}{\tt arXiv:\nolinkurl{#1}}}
\newtheorem{Proposition}{Proposition}[section]
\newtheorem{Lemma}[Proposition]{Lemma}
\newtheorem{Theorem}[Proposition]{Theorem}
\newtheorem{Corollary}[Proposition]{Corollary}
\newtheorem{Example}[Proposition]{Example}
\newtheorem{Hypothesis}[Proposition]{Hypothesis}
\newbox\squ  
\def\FF{\mathbb{F}}
\def\Mod#1{#1\!\operatorname{-Mod}}
\def\mod#1{#1\!\operatorname{-mod}}
\def\Proj#1{#1\!\operatorname{-proj}}
\def\CH{{\operatorname{ch}_q\:}}
\def\Q{{\mathbb Q}}
\def\Z{{\mathbb Z}}
\def\0{{\bar 0}}
\def\1{{\bar 1}}
\def\A{{\mathcal A}}
\def\hom{{\operatorname{hom}}}
\def\End{{\operatorname{End}}}
\def\Hom{{\operatorname{Hom}}}
\newcommand{\tw}{\operatorname{s}}
\newcommand{\DD}{\mathfrak{D}}
\newcommand{\BB}{\mathfrak{B}}
\newcommand{\f}{\mathbf{f}}
\newcommand{\Af}{\prescript{}{\A}{\f}}
\newcommand{\pf}{\prescript{\prime}{}{\f}}
\newcommand{\pAf}{\prescript{\prime}{\A}{\f}}
\newcommand{\ptheta}{\prescript{\prime}{}{\theta}}
\newcommand{\<}{\langle}
\renewcommand{\>}{\rangle}
\newcommand{\Si}{\mathfrak{S}}
\newcommand{\Pol}{P}
\newcommand{\Sym}{\Lambda}
\def\Laurent{\mathcal A}
\def\qdim{{\operatorname{dim}_q}\,}
\def\Ind{{\operatorname{Ind}}}
\def\Res{{\operatorname{Res}}}
\def\im{{\operatorname{im}}}
\def\height{{\operatorname{ht}}}
\def\bi{\text{\boldmath$i$}}
\def\bj{\text{\boldmath$j$}}
\def\eps{{\varepsilon}}
\def\phi{{\varphi}}
\def\ga{{\gamma}}
\def\Ga{{\Gamma}}
\def\de{{\delta}}
\def\De{{\Delta}}
\def\al{{\alpha}}
\def\be{{\beta}}
\def\si{{\sigma}}
\def\iso{\stackrel{\sim}{\longrightarrow}}
\def\underbar{\mathpalette\@underbar}
\def\words{{\langle I \rangle}}
\def\onto{{\twoheadrightarrow}}
\def\into{{\hookrightarrow}}
\def\O{{\mathcal O}}
\def\shift{{\tt sh}}
\colorlet{darkgreen}{green!50!black}
\tikzset{dots/.style={very thick,loosely dotted},
         greendot/.style={fill,circle,color=darkgreen,inner sep=1.5pt,outer sep=0}
}
\def\greendot(#1,#2){\node[greendot] at(#1,#2){}}
\newenvironment{braid}{
  \begin{tikzpicture}[baseline=6mm,blue,line width=1pt, scale=0.4,
                      draw/.append style={rounded corners},
                      every node/.append style={font=\tiny}]%
  }{\end{tikzpicture}
}
\newenvironment{dynkin}{
  \begin{tikzpicture}[baseline=6mm, scale=0.8,
                      draw/.append style={rounded corners},
                      every node/.append style={font=\tiny}]%
  }{\end{tikzpicture}
}
\author[Alexander Kleshchev]{\sc Alexander S. Kleshchev}
\address{Department of Mathematics\\ University of Oregon\\
Eugene\\ OR~97403, USA}
\email{klesh@uoregon.edu}
\author[Joseph Loubert]{\sc Joseph W. Loubert}
\address{Department of Mathematics\\ University of Oregon\\
Eugene\\ OR~97403, USA}
\email{loubert@uoregon.edu}
\thanks{Research supported in part by the NSF grant no. DMS-1161094 and the Humboldt Foundation. Substrantial part of the paper has been completed at the University of Stuttgart. The authors thank Steffen Koenig for hospitality.}
\begin{document}

\begin{abstract}
We prove that the Khovanov-Lauda-Rouquier algebras $R_\al$ of finite type are (graded) affine cellular in the sense of Koenig and Xi. In fact, we establish a stronger property, namely that the affine cell ideals in $R_\al$ are generated by idempotents.  This in particular implies the (known) result that the global dimension of $R_\al$ is finite. 
\end{abstract}

\title[Affine cellularity of finite type KLR algebras]{Affine Cellularity of Khovanov-Lauda-Rouquier Algebras of Finite Types}

\maketitle

\section{Introduction}\label{SIntro}
The goal of this paper is to establish (graded) affine cellularity in the sense of Koenig and Xi \cite{KoXi} for the Khovanov-Lauda-Rouquier algebras $R_\al$ of finite Lie type. In fact, we construct a chain of affine cell ideals in $R_\al$ which are generated by idempotents. This stronger property is analogous to quasi-heredity for finite dimensional algebras, and by a general result of Koenig and Xi \cite[Theorem 4.4]{KoXi}, it also implies finiteness of the global dimension of $R_\al$. Thus we obtain a new proof of (a slightly stronger version of) a recent result of Kato \cite{Kato} and McNamara \cite{McN} (see also \cite{BKM}). As another application, one gets a theory of standard and proper standard modules, cf. \cite{Kato},\cite{BKM}. 
It would be interesting to apply this paper to prove the conjectural (graded) cellularity of cyclotomic KLR algebras of finite types.

Our approach is independent of the homological results in  \cite{McN}, \cite{Kato} and \cite{BKM} (which relies on \cite{McN}). The connection between the theory developed in \cite{BKM} and this paper is explained in \cite{KK}.
This paper generalizes  \cite{KLM}, where analogous results were obtained for finite type $A$.

 We now give a definition of (graded) affine cellular algebra from \cite[Definition 2.1]{KoXi}. Throughout the paper,  unless otherwise stated, we assume that all algebras are ($\Z$)-graded, all ideals, subspaces, etc. are homogeneous, and all homomorphisms are homogeneous degree zero homomorphisms with respect to the given gradings. For this introduction, we fix a noetherian domain $k$ (later on it will be sufficient to work with $k=\Z$). 
 Let $A$ be a (graded) unital $k$-algebra with a $k$-anti-involution $\tau$. A (two-sided) ideal $J$ in $A$ is called an \emph{affine cell ideal} if the following conditions are satisfied: 
\begin{enumerate}
\item $\tau(J) = J$;
\item there exists an affine $k$-algebra $B$ with a $k$-involution $\si$ and a free $k$-module $V$ of finite rank such that $\Delta:=V \otimes_k B$ has an $A$-$B$-bimodule structure, with the right $B$-module structure induced by the regular right $B$-module structure on $B$;
\item let $\Delta' := B \otimes_k V$ be the $B$-$A$-bimodule with left $B$-module structure induced by the regular left $B$-module structure on $B$ and right $A$-module structure defined by 
\begin{equation}\label{ERightCell}
(b\otimes v)a = \tw(\tau(a)(v \otimes b)),
\end{equation} 
where 
$\tw:V\otimes_k B\to B\otimes_k V,\ v\otimes b\to b\otimes v$; 
then there is an $A$-$A$-bimodule isomorphism $\mu: J \to \Delta \otimes_B\Delta'$, such that the following diagram commutes:

\begin{equation}\label{ECellCD}
\xymatrix{ J \ar^-{\mu}[rr]  \ar^{\tau}[d]&& \Delta \otimes_B\Delta' \ar^{v \otimes b \otimes b' \otimes w \mapsto w \otimes \si(b') \otimes \si(b) \otimes v }[d] \\ J \ar^-{\mu}[rr]&&\Delta \otimes_B\Delta'.}
\end{equation}
\end{enumerate}
The algebra $A$ is called \emph{affine cellular} if there is a $k$-module decomposition $A= J_1' \oplus J_2' \oplus \cdots \oplus J_n'$ 
with $\tau(J_l')=J_l'$ for $1 \leq l \leq n$, such that, setting $J_m:= \bigoplus_{l=1}^m J_l'$, we obtain an ideal filtration
$$0=J_0 \subset J_1 \subset J_2 \subset \cdots \subset J_n=A$$
so that each $J_m/J_{m-1}$ is an affine cell ideal of $A/J_{m-1}$. 

To describe our main results we introduce some notation referring the reader to the main body of the paper for details. Fix a Cartan datum of finite type, and denote by $\Phi_+ = \{\be_1, \dots, \be_N\}$ the set of positive roots, and by $Q_+$ the positive part of the root lattice. For $\al \in Q_+$ we have the KLR algebra $R_\al$ with standard idempotents $\{e(\bi)\mid\bi\in\words_\al\}$. 
We denote by $\Pi(\al)$ the set of root partitions of $\al$. 
This is partially ordered with respect to a certain bilexicographic order `$\leq$'.

To any $\pi\in\Pi(\al)$ one associates a proper standard module $\bar \De(\pi)$ and a word $\bi_\pi\in \words_\al$. We fix a distinguished vector $v_\pi^+ \in \bar \De(\pi)$, and choose a set $\BB_\pi \subseteq R_\al$ so that $\{bv_\pi^+ \mid b \in \BB_\pi\}$ is a basis of $\bar \De(\pi)$. We define polynomial subalgebras $\Sym_\pi\subseteq R_\al$---these are isomorphic to tensor products of algebras of symmetric polynomials. 
We also explicitly define elements $\de_\pi, D_\pi\in R_\al$ and set $e_\pi := D_\pi \de_\pi$. 
Then we set
\begin{align*}
I_\pi' &:= \text{$k$-span}\{b e_\pi \Sym_\pi D_\pi (b')^\tau \mid b,b' \in \BB_\pi\},
\end{align*}
$I_\pi := \sum_{\si \geq \pi}{I_\si'}$, and $I_{>\pi} = \sum_{\si > \pi}{I_\si'}$. 
Our main results are now as follows

\vspace{2mm}
\noindent
{\bf Main Theorem.}
{\em
The algebra $R_\al$ is graded affine cellular with cell chain given by the ideals $\{I_\pi\mid \pi \in \Pi(\al)\}$. Moreover, for a fixed $\pi\in \Pi(\al)$, we set $\bar R_\al := R_\al / I_{>\pi}$ and $\bar h:=h+I_{>\pi}$ for any $h\in R_\al$. We have:
  \begin{enumerate}
  \item[{\rm (i)}] $I_\pi = \sum_{\si \geq \pi} R_\al e(\bi_\si) R_\al$;
   \item[{\rm (ii)}] $\bar e_\pi$ is an idempotent in $\bar R_\al$;
  \item[{\rm (iii)}] the map $\Sym_\pi \to \bar e_\pi \bar R_\al \bar e_\pi,\ f\mapsto \bar e_\pi \bar f\bar e_\pi$ is an isomorphism of graded algebras;
  \item[{\rm (iv)}] $\bar R_\al \bar e_\pi$ is a free right $\bar e_\pi \bar R_\al \bar e_\pi$-module with basis $\{\bar b \bar e_\pi \mid b \in \BB_\pi\}$;
  \item[{\rm (v)}] $\bar e_\pi \bar R_\al $ is a free left $\bar e_\pi \bar R_\al \bar e_\pi$-module with basis $\{\bar e_\pi \bar D_\pi \bar b^\tau \mid b \in \BB_\pi\}$;
  \item[{\rm (vi)}] multiplication provides an isomorphism
      \[
        \bar R_\al \bar e_\pi \otimes_{\bar e_\pi \bar R_\al \bar e_\pi} \bar e_\pi \bar R_\al \iso \bar R_\al \bar e_\pi \bar R_\al;
      \]
\item[{\rm (vii)}]  $\bar R_\al \bar e_\pi \bar R_\al = I_\pi / I_{>\pi}$.
  \end{enumerate}
}

Main Theorem(vii) shows that each affine cell ideal $I_\pi/I_{>\pi}$ in $R_\al/I_{>\pi}$ is generated by an idempotent. This, together with the fact that each algebra $\Sym_\pi$ is a polynomial algebra, is enough to invoke \cite[Theorem 4.4]{KoXi} to get

\vspace{2 mm}
\noindent
{\bf Corollary.}
{\em 
If the ground ring $k$ has finite global dimension, then the algebra $R_\al$ has finite global dimension. 
}

\vspace{2 mm}
This seems to be a slight generalization of \cite{Kato},\cite{McN},\cite{BKM}  in two ways: \cite{Kato} assumes that $k$ is a field of characteristic zero (and the Lie type is simply-laced), while \cite{McN},\cite{BKM} assume that $k$ is a field; moreover, \cite{Kato},\cite{McN},\cite{BKM} deal with categories of graded modules only, while our corollary holds for the algebra $R_\al$ even as an ungraded algebra. 

The paper is organized as follows. Section 2 contains preliminaries needed for the rest of the paper. 
The first subsection contains mostly general conventions that will be used. Subsection~\ref{SLie} goes over the Lie theoretic notation that we employ. We move on in subsection~\ref{SSKLR} to the definition and basic results of Khovanov-Lauda-Rouquier (KLR) algebras. The next two subsections are devoted to recalling results about the representation theory of KLR algebras. Then, in subsection~\ref{SSQG}, we introduce our notation regarding quantum groups, and recall some well-known basis theorems. The next subsection is devoted to the connection between KLR algebras and quantum groups, namely the categorification theorems. Finally, subsection~\ref{SSDF} contains an easy direct proof of a graded dimension formula for the KLR algebras, cf.  \cite[Corollary 3.15]{BKM}. 

Section 3 is devoted to constructing a basis for the KLR algebras that is amenable to checking affine cellularity. We begin in subsection~\ref{SSSSWId} by choosing some special weight idempotents and proving some properties they enjoy. Subsection~\ref{SNota} introduces the notation that allows us to define our affine cellular structure. This subsection also contains the crucial Hypothesis~\ref{HProp}. Next, in subsection~\ref{SSPower}, we come up with an affine cellular basis in the special case corresponding to a root partition of that is a power of a single root. Finally, we use this in the last subsection to come up with our affine cellular basis in full generality.

In section 4 we show how the affine cellular basis is used to prove that the KLR algebras are affine cellular.

Finally, in section 5 we verify Hypothesis~\ref{HProp} for all positive roots in all finite types. We begin in subsection~\ref{SSHomog} by recalling some results concerning homogeneous representations. In subsection~\ref{SSSLO} we recall the definition of special Lyndon orders and Lyndon words, which will serve as the special weights of subsection~\ref{SSSSWId}. The next subsection is devoted to verifying Hypothesis~\ref{HProp} in the special case when the cuspidal representation corresponding to the positive root is homogeneous. We then employ this in subsection~\ref{SSADE} to show that the hypothesis holds in simply-laced types. Finally, we have subsection~\ref{SSBCFG}, wherein we verify the hypothesis by hand in the non-symmetric types.

\section{Preliminaries and a dimension formula} 
In this section we set up the theory of KLR algebras and their connection to  quantum groups following mainly \cite{KL1} and also \cite{KR}. Only subsection~\ref{SSDF} contains some new material.

\subsection{Generalities}
Throughout the paper we work over the ground ring $\O$ which is assumed to be either $\Z$ or an arbitrary field $F$. Most of the time we work over $F$ and then deduce the corresponding result for $\Z$ using the following standard lemma

\begin{Lemma}\label{PFieldToZ}
  Let $M$ be a finitely generated $\Z$-module, and $\{x_\al\}_{\al \in A}$ a subset of $M$. Then $\{x_\al\}$ is a spanning set (resp. basis) of $M$ if and only if $\{1_F \otimes x_\al\}$ is a spanning set (resp. basis) of $F \otimes_\Z M$ for every field $F$.
\end{Lemma}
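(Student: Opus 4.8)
The plan is to prove both directions by reducing everything to the corresponding statements about vector spaces over fields, using the structure theorem for finitely generated modules over a PID and the behavior of everything under the functors $F \otimes_\Z (-)$.

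First I would handle the spanning-set direction. If $\{x_\al\}$ spans $M$, then since $F \otimes_\Z (-)$ is right exact and $M \twoheadrightarrow 0$ from the span of the $x_\al$ means the $x_\al$ generate, tensoring shows the $1_F \otimes x_\al$ span $F \otimes_\Z M$ — this is just right-exactness of tensor product, or more concretely: any $1 \otimes m$ with $m = \sum n_\al x_\al$ (finite sum, $n_\al \in \Z$) equals $\sum (n_\al \cdot 1_F)(1 \otimes x_\al)$. Conversely, suppose $\{1_F \otimes x_\al\}$ spans $F \otimes_\Z M$ for every field $F$. Let $N \subseteq M$ be the submodule generated by the $x_\al$ and consider the finitely generated $\Z$-module $M/N$. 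Applying $F \otimes_\Z (-)$ to $0 \to N \to M \to M/N \to 0$ and using right-exactness gives $F \otimes_\Z (M/N) = 0$ for every field $F$. Taking $F = \Q$ forces the rank of $M/N$ to be zero, so $M/N$ is torsion; taking $F = \FF_p$ and using $\FF_p \otimes_\Z (M/N) = (M/N)/p(M/N) = 0$ forces $M/N$ to be $p$-divisible for every prime $p$, which for a finite abelian group (the torsion part, which is all of $M/N$) forces $M/N = 0$. Hence $N = M$ and the $x_\al$ span.

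Next the basis direction. If $\{x_\al\}$ is a $\Z$-basis of $M$, then $M \cong \Z^{(A)}$ (necessarily $A$ finite since $M$ is finitely generated) with the $x_\al$ the standard generators, and $F \otimes_\Z M \cong F^{(A)}$ with the $1_F \otimes x_\al$ the standard basis — so they form a basis. Conversely, suppose $\{1_F \otimes x_\al\}$ is a basis of $F \otimes_\Z M$ for every field $F$. By the spanning direction already proved, the $x_\al$ span $M$, so we need linear independence over $\Z$. First, the index set $A$ is finite: $\dim_\Q(\Q \otimes_\Z M)$ is finite (as $M$ is finitely generated) and equals $|A|$ since the $1_\Q \otimes x_\al$ are a $\Q$-basis, so in particular the $x_\al$ are distinct and finite in number. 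Now write $M \cong \Z^r \oplus T$ with $T$ finite; then $|A| = \dim_\Q(\Q \otimes M) = r$, while for a prime $p$ dividing $|T|$ we get $\dim_{\FF_p}(\FF_p \otimes M) = r + \dim_{\FF_p}(T/pT) > r = |A|$, contradicting that the $|A|$ elements $1_{\FF_p} \otimes x_\al$ form a basis (hence a spanning set of the right size) unless $T = 0$. So $M$ is free of rank $|A|$; a spanning set of size equal to the rank in a free module of finite rank over $\Z$ (a commutative ring in which rank is well-defined) is automatically a basis — or argue directly that a surjection $\Z^{|A|} \twoheadrightarrow M \cong \Z^{|A|}$ is an isomorphism.

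The main obstacle, such as it is, is organizing the converse implications cleanly: one must extract from "the tensored set is a basis over every field" the two separate pieces of information — vanishing of the rational rank obstruction (via $F = \Q$) and vanishing of the $p$-torsion obstruction (via $F = \FF_p$ for all primes $p$) — and combine them using the structure theorem. There is no deep content, only the need to be careful that finite generation of $M$ is genuinely used (to know $A$ is finite and to invoke the structure theorem) and that a size-matching argument upgrades "spanning" to "basis" in the free case. I would present the field-independent reductions first and then feed them the two test fields.
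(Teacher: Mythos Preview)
The paper does not actually prove this lemma; it is introduced as a ``standard lemma'' and stated without proof. Your argument is correct and complete: the forward implications are immediate from right-exactness of $F\otimes_\Z(-)$, and for the converses you correctly use the structure theorem together with the test fields $\Q$ (to kill rank) and $\FF_p$ (to kill $p$-torsion), finishing the basis case with a rank-count to upgrade spanning to basis.
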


Let $q$ be an indeterminate, $\Q(q)$ the field of rational functions, and $\Laurent:=\Z[q,q^{-1}]\subseteq \Q(q)$. Let\, $\bar{ }:\Q(q)\to \Q(q)$ be the $\Q$-algebra involution with $\bar q=q^{-1}$, referred to as the {\em bar-involution}. 

For a 
graded vector space $V=\oplus_{n\in \Z} V_n$, with finite dimensional graded components its {\em graded dimension} is $\qdim \, V:=\sum_{n \in \Z}  (\dim V_n)q^n\in\Z[[q,q^{-1}]]$. 
For any graded $F$-algebra $H$ we denote by $\Mod{H}$ the abelian category of all graded left $H$-modules, with 
morphisms being {\em degree-preserving} module homomorphisms, which we denote by $\hom$.
Let $\mod{H}$ denote
the abelian subcategory of all
{\em finite dimensional}\, graded $H$-modules and
 $\Proj{H}$ denote the additive subcategory  of 
all {\em finitely generated projective}\, graded $H$-modules. 
Denote the corresponding Grothendieck groups by
 $[\mod{H}]$ and $[\Proj{H}]$, respectively. 
These Grothendieck groups are $\Laurent$-modules via 
$
q^m[M]:=[M\langle m\rangle],
$ 
where $M\langle m\rangle$ denotes the module obtained by 
shifting the grading up by $m$:
$
M\langle m\rangle_n:=M_{n-m}.
$ 
For $n \in \Z$, let
$
\Hom_H(M, N)_n := \hom_H(M \langle n \rangle, N)
$
denote the space of 
homomorphisms
of degree $n$. 
Set
$
\Hom_H(M,N) := \bigoplus_{n \in \Z} \Hom_H(M,N)_n. 
$


\subsection{Lie theoretic data}\label{SLie}
A {\em Cartan datum} is a pair $(I,\cdot)$ consisting of a set $I$ and a $\Z$-valued symmetric bilinear form $i,j\mapsto i\cdot j$ on the free abelian group $\Z[I]$ such that $i\cdot i\in \{2,4,6,\dots\}$ for all $i\in I$ and 
$2(i\cdot j)/(i\cdot i)\in\{0,-1,-2\dots\}$ for all $i\neq j$ in $I$. 
Set
$
a_{ij}:=2(i\cdot j)/(i\cdot i)$ for $i,j\in I$ and define the {\em Cartan matrix} $A:=(a_{ij})_{i,j\in I}$. 
Throughout the paper, unless otherwise stated, we assume that $A$ has {\em finite type}, see \cite[\S 4]{Kac}. 
We have simple roots $\{\al_i\mid i\in I\}$, and we identify $\al_i$ with $i$. 
Let 
$Q_+ := \bigoplus_{i \in I} \Z_{\geq 0} \al_i$. For $\alpha \in Q_+$, we write $\height(\alpha)$ for the sum of its 
coefficients when expanded in terms of the $\al_i$'s.
Denote by $\Phi_+\subset Q_+$ the set of {\em positive}\, roots, cf. \cite[\S 1.3]{Kac}, and by $W$ the corresponding Weyl group. A total order on $\Phi_+$ is called {\em convex} if $\be,\ga,\be+\ga\in \Phi_+$ and $\be<\ga$ imply $\be<\be+\ga<\ga$.

Given $\beta\in \Z[I]$, denote
\begin{align*}
q_\beta:=q^{(\beta\cdot \beta)/2},\ 
[n]_\beta:=(q_\beta^{n}-q_\beta^{-n})/(q_\beta-q_\beta^{-1}),\ 
 [n]^!_\beta:=[n]_\beta[n-1]_\beta\dots[1]_\beta.
\end{align*}
In particular, for $i\in I$, we have $q_i,[n]_i,[n]_i^!$. 
Let $A$ be a $Q_+$-graded $\Q(q)$-algebra, $\theta\in A_{\al}$ for $\al\in Q_+$, and $n\in\Z_{\geq 0}$. We use the standard notation for  
quantum divided powers:   $\theta^{(n)}:= \theta^n/[n]_\al^!.$

Denote by $\words:=\bigsqcup_{d\geq 0} I^d$ the set of all tuples $\bi=i_1\dots i_d$ of elements of $I$, which we refer to as {\em words}. We consider $\words$ as a monoid under the concatenation product. 
If $\bi\in\words$, we can write it in the form 
$
\bi=j_1^{m_1}\dots j_r^{m_r}
$
for $j_1,\dots,j_r\in I$ such that $j_s\neq j_{s+1}$ for all $s=1,2,\dots,r-1$. 
We then denote
\begin{equation}\label{EIFact}
[\bi]!:=[m_1]^!_{j_1}\dots[m_r]^!_{j_r}. 
\end{equation}
For $\bi=i_1\dots i_d$ set 
$
|\bi|:=\al_{i_1}+\dots+\al_{i_d}\in Q_+.
$ 
The symmetric group $S_d$ 
with simple transpositions $s_1,\dots,s_{d-1}$ 
acts  
on $I^d$ on the left by place
permutations. The $S_d$-orbits on $I^d$ are the sets
$
\words_\alpha := \{\bi\in I^d \:|\:|\bi| = \alpha\}
$ 
parametrized by the elements $\alpha \in Q_+$ of height $d$. 



\subsection{Khovanov-Lauda-Rouquier algebras}\label{SSKLR} 
Let $A$ be a 
Cartan matrix. Choose signs $\eps_{ij}$ for all $i,j \in I$ with $a_{ij}
< 0$  so that $\eps_{ij}\eps_{ji} = -1$, and 
define the polynomials 
$\{Q_{ij}(u,v)\in F[u,v]\mid i,j\in I\}$:    
\begin{equation}\label{EArun}
Q_{ij}(u,v):=
\left\{
\begin{array}{ll}
0 &\hbox{if $i=j$;}\\
1 &\hbox{if $a_{ij}=0$;}\\
\eps_{ij}(u^{-a_{ij}}-v^{-a_{ji}}) &\hbox{if $a_{ij}<0$.}
\end{array}
\right.
\end{equation}
In addition, fix $\al\in Q_+$ of height $d$. 
Let $R_\al=R_\al(\Ga,\O)$ be an associative graded unital $\O$-algebra, given by the generators
\begin{equation*}\label{EKLGens}
\{e(\bi)\mid \bi\in \words_\al\}\cup\{y_1,\dots,y_{d}\}\cup\{\psi_1, \dots,\psi_{d-1}\}
\end{equation*}
and the following relations for all $\bi,\bj\in \words_\al$ and all admissible $r,t$:

\begin{equation}
e(\bi) e(\bj) = \de_{\bi,\bj} e(\bi),
\quad{\textstyle\sum_{\bi \in \words_\alpha}} e(\bi) = 1;\label{R1}
\end{equation}
\begin{equation}\label{R2PsiY}
y_r e(\bi) = e(\bi) y_r;\qquad y_r y_t = y_t y_r;
\end{equation}
\begin{equation}
\psi_r e(\bi) = e(s_r\bi) \psi_r;\label{R2PsiE}
\end{equation}
\begin{equation}\label{R3YPsi}
y_r \psi_s = \psi_s y_r\qquad (r \neq s,s+1);
\end{equation}
\begin{equation}
(y_t\psi_r-\psi_r y_{s_r(t)})e(\bi)  
= \de_{i_r,i_{r+1}}(\de_{t,r+1}-\de_{t,r})e(\bi);
\label{R6}
\end{equation}
\begin{equation}
\psi_r^2e(\bi) = Q_{i_r,i_{r+1}}(y_r,y_{r+1})e(\bi)
 \label{R4}
\end{equation}
\begin{equation} 
\psi_r \psi_t = \psi_t \psi_r\qquad (|r-t|>1);\label{R3Psi}
\end{equation}
\begin{equation}
\begin{split}
&(\psi_{r+1}\psi_{r} \psi_{r+1}-\psi_{r} \psi_{r+1} \psi_{r}) e(\bi)  
\\=
&
\de_{i_r,i_{r+2}}\frac{Q_{i_r,i_{r+1}}(y_{r+2},y_{r+1})-Q_{i_r,i_{r+1}}(y_r,y_{r+1})}{y_{r+2}-y_r}e(\bi).
\end{split}
\label{R7}
\end{equation}
The {\em grading} on $R_\al$ is defined by setting:
$$
\deg(e(\bi))=0,\quad \deg(y_re(\bi))=i_r\cdot i_r,\quad\deg(\psi_r e(\bi))=-i_r\cdot i_{r+1}.
$$
In this paper {\em grading} always means {\em $\Z$-grading}, ideals are assumed to be homogeneous, and modules are assumed graded, unless otherwise stated.

It is pointed out in \cite{KL2} and \cite[\S3.2.4]{R} that up to isomorphism the graded $\O$-algebra $R_\al$ depends only on the Cartan datum and $\al$. 
We refer 
to the algebra $R_\al$ as  an {\em (affine) Khovanov-Lauda-Rouquier algebra}. 
It is convenient to consider the direct sum of algebras 
$
R:=\bigoplus_{\al_\in Q_+} R_\al.
$
Note that $R$ is non-unital, but it is locally unital since each $R_\al$ is unital. 
The algebra $R_\alpha$ possesses a graded anti-automorphism
\begin{equation}\label{star}
\tau:R_\alpha \rightarrow R_\alpha,\ x \mapsto x^\tau
\end{equation}
which is the identity on generators.

For each element $w\in S_d$ fix a reduced expression $w=s_{r_1}\dots s_{r_m}$ and set 
$
\psi_w:=\psi_{r_1}\dots \psi_{r_m}.
$
In general, $\psi_w$ depends on the choice of the reduced expression of $w$. 

\begin{Theorem}\label{TBasis}{\rm \cite[Theorem 2.5]{KL1},  \cite[Theorem 3.7]{R}} 
The following set  is an $\O$-basis of  $R_\al$: 
$ \{\psi_w y_1^{m_1}\dots y_d^{m_d}e(\bi)\mid w\in S_d,\ m_1,\dots,m_d\in\Z_{\geq 0}, \ \bi\in \words_\al\}.
$ 
\end{Theorem}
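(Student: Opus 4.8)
The plan is the classical two-step proof of the KLR basis theorem: first show that the displayed set spans $R_\al$ over $\O$, then show it is $\O$-linearly independent by constructing the polynomial representation and analysing its leading terms.

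\emph{Spanning.} Let $V\subseteq R_\al$ be the $\O$-span of $\{\psi_wy_1^{m_1}\cdots y_d^{m_d}e(\bi)\}$. Since $1=\sum_\bi e(\bi)\in V$ and $R_\al$ is generated by the $e(\bi),y_r,\psi_r$, it suffices to check that $V$ is closed under left multiplication by each generator. Closure under the $e(\bj)$ is immediate from \eqref{R1}, \eqref{R2PsiY}, \eqref{R2PsiE}. For $y_r$ and $\psi_r$ one argues by induction on the number of $\psi$-factors, with a secondary induction on $\ell(w)$: relation \eqref{R6} moves a $y$ rightward past a $\psi$-letter at the cost of terms with strictly fewer $\psi$-factors; relations \eqref{R4} and \eqref{R7} replace a product $\psi_{r_1}\cdots\psi_{r_s}$ indexed by a non-reduced word by one indexed by a reduced word, again modulo terms with fewer $\psi$-factors carrying extra polynomial factors in the $y$'s (which are then pushed right via \eqref{R6}); and \eqref{R1}, \eqref{R2PsiE}, \eqref{R3YPsi}, \eqref{R3Psi} handle the remaining commutations. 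One preliminary point is needed: by \eqref{R3Psi}, \eqref{R7} and Matsumoto's theorem, $\psi_w$ is independent of the chosen reduced expression up to a $\Z$-combination of terms $\psi_{w'}(\cdots)$ with $\ell(w')<\ell(w)$. This whole argument is insensitive to whether $\O=\Z$ or a field.

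\emph{Linear independence.} I would exhibit the polynomial representation. Put $\O[\underline y]:=\O[y_1,\dots,y_d]$ and let $\Pol_\al:=\bigoplus_{\bi\in\words_\al}\O[\underline y]\,v_\bi$, a free $\O[\underline y]$-module of rank $|\words_\al|$; let $e(\bj)$ act as the projection onto the summand $\O[\underline y]v_\bj$, let $y_r$ act by multiplication by $y_r$, and let
\[
\psi_r\cdot(f\,v_\bi)=\begin{cases}\dfrac{s_r(f)-f}{y_r-y_{r+1}}\,v_\bi & \text{if } i_r=i_{r+1},\\[2mm]A_{i_r,i_{r+1}}(y_r,y_{r+1})\,s_r(f)\,v_{s_r\bi} & \text{if } i_r\neq i_{r+1},\end{cases}
\]
where $s_r$ acts on $\O[\underline y]$ by transposing $y_r$ and $y_{r+1}$, and the polynomials $A_{ij}\in\O[u,v]$ are chosen, using a fixed orientation of the Dynkin diagram and the identity $Q_{ij}(u,v)=Q_{ji}(v,u)$ (immediate from \eqref{EArun} and $\eps_{ij}\eps_{ji}=-1$), so that $A_{ji}(u,v)A_{ij}(v,u)=Q_{ij}(u,v)$. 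One then checks that these operators satisfy all the defining relations: \eqref{R1}, \eqref{R2PsiY}, \eqref{R2PsiE}, \eqref{R3YPsi} are clear, \eqref{R6} and \eqref{R4} are one-line computations (the $i_r=i_{r+1}$ case of \eqref{R4} is $\partial_r^2=0=Q_{ii}$, the $i_r\neq i_{r+1}$ case is exactly the defining property of $A$), and the only laborious verification is the braid relation \eqref{R7}, done case-by-case according to which of $i_r,i_{r+1},i_{r+2}$ coincide; this is where the precise shape \eqref{EArun} of $Q_{ij}$ is used.

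Granting the representation $\rho$, it remains to see that the operators $\rho(\psi_wy_1^{m_1}\cdots y_d^{m_d}e(\bi))$ are $\O$-linearly independent in $\End_\O(\Pol_\al)$; combined with the spanning step this shows $\{\psi_wy^{\mathbf m}e(\bi)\}$ is an $\O$-basis (and incidentally that $\rho$ is faithful). Here one applies a hypothetical dependence to $v_\bi\in\Pol_\al$ for each fixed $\bi$, notes that $\rho(\psi_w)(y^{\mathbf m}v_\bi)$ lands in $\O[\underline y]v_{w\bi}$, separates the relation by target word $w\bi$, and runs a triangularity argument in $\O[\underline y]$ ordered by $\ell(w)$ and by the degree of $y^{\mathbf m}$, whose point is that the leading part of $\rho(\psi_w)$ is governed by the permutation $w$. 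I expect the main obstacle to be precisely this extraction of linear independence — reconciling the degree-lowering divided-difference factors (where consecutive colours agree) with the degree-preserving or degree-raising twisted-permutation factors (where they differ), so as to isolate a genuinely triangular leading term — closely followed by the combinatorial bookkeeping in the \eqref{R7} check for $\rho$. This is in essence the argument of \cite[Theorem 2.5]{KL1} and \cite[Theorem 3.7]{R}.
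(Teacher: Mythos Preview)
The paper itself does not prove this theorem; it simply records the statement and cites \cite[Theorem~2.5]{KL1} and \cite[Theorem~3.7]{R}. Your outline is precisely the standard argument carried out in those references --- spanning by straightening with the defining relations, and linear independence via the faithful polynomial representation together with a leading-term/triangularity analysis --- so there is nothing further to compare against within this paper, and your proposal is in agreement with the cited proofs.
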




In view of the theorem, we have a polynomial subalgebra
\begin{equation}\label{EPol}
\Pol_d=\O[y_1,\dots,y_d]\subseteq R_\al.
\end{equation}

Let $\ga_1,\dots,\ga_l$ be elements of $Q_+$ with $\ga_1+\dots+\ga_l=\al$. Then we have a natural embedding 
\begin{equation}\label{EIotaPi}
\iota_{\ga_1,\dots,\ga_l}:R_{\ga_1}\otimes\dots\otimes R_{\ga_l}\into R_\al
\end{equation}
of algebras, whose image is the {\em parabolic subalgebra} $R_{\ga_1,\dots,\ga_l}\subseteq R_\al$.  This is not a unital subalgebra, the image of the identity
element of $R_{\ga_1}\otimes\dots\otimes R_{\ga_l}$ being  
$$
\textstyle1_{\ga_1,\dots,\ga_l} = \sum_{\bi^{(1)} \in \words_{\ga_1},\dots, \bi^{(l)} \in \words_{\ga_l}} e(\bi^{(1)}\dots\bi^{(l)}).
$$

An important special case is where $\al=d\al_i$ is a multiple of  a simple root, in which case  we have that $R_{d\al_i}$ is the 
$d^{th}$ nilHecke algebra $H_d$ 
generated by $\{y_1, \dots, y_d, \psi_1, \dots, \psi_{d-1}\}$ subject to the relations
\begin{align}
  \psi_{r}^2 &= 0
   \label{eq:HeckeRel1}
   \\
  \psi_{r} \psi_{s} &= \psi_{s} \psi_{r} \qquad \textup{if $|r-s| > 1$}
   \label{eq:HeckeRel2}
   \\
  \psi_{r} \psi_{r+1} \psi_{r} &= \psi_{r+1} \psi_{r} \psi_{r+1}
   \label{eq:HeckeRel3}
  \\
  \psi_{r} y_{s} &= y_{s} \psi_{r} \qquad \textup{if $s\neq r,r+1$}
   \label{eq:HeckeRel4}
  \\
  \psi_{r} y_{r+1} &= y_{r} \psi_{r} + 1
   \label{eq:HeckeRel5}
  \\
  y_{r+1} \psi_{r} &= \psi_{r} y_{r} + 1.
   \label{eq:HeckeRel6}
\end{align}
The grading is so that $\deg(y_r) = \al_i\cdot\al_i$ and $\deg(\psi_r) = -\al_i\cdot\al_i$. 
Note that here the elements $\psi_w$ do not depend on a choice of reduced decompositions.

Let $w_0 \in \Si_d$ be the longest element, and define the following elements of $H_d$: 
\[
  \de_d := y_2 y_3^2 \dots y_d^{d-1}, \quad e_d := \psi_{w_0} \de_d.
\]
It is known that 
\begin{equation}\label{EnH}
  e_d \psi_{w_0} = \psi_{w_0},
\end{equation}
and in particular $e_d$ is an idempotent, see for example \cite[\S2.2]{KL1}. The following is a special case of our main theorem for the case where $\al=d\al_i$, which will be used in its proof. It is known that the center $Z(H_d)$ consists of the symmetric polynomials $\O[y_1,\dots,y_d]^{\Si_d}$.

\begin{Theorem}\label{TnilHeckeCellBasis} {\rm \cite[Theorem 4.16]{KLM}}  
  Let $X$ be a $\O$-basis of $\O[y_1,\dots,y_d]^{\Si_d}$ and let $\BB$ be a basis of $\O[y_1,\dots,y_d]$ as an $\O[y_1,\dots,y_d]^{\Si_d}$-module. Then $\{b e_d f \psi_{w_0} (b')^\tau\ |\ b,b' \in \BB, f \in X\}$ is a $\O$-basis of $H_d$. 
\end{Theorem}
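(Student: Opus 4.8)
The plan is to establish the claimed basis by a dimension count combined with a spanning argument, leveraging the known structure of the nilHecke algebra $H_d$ as a matrix algebra over its center. First I would recall that $H_d$ is isomorphic to the algebra of $d! \times d!$ matrices over $Z(H_d) = \O[y_1,\dots,y_d]^{\Si_d}$; equivalently, $P_d = \O[y_1,\dots,y_d]$ is a free $Z(H_d)$-module of rank $d!$, and $H_d \cong \End_{Z(H_d)}(P_d)$ via the natural action. In particular $e_d$ is a primitive idempotent with $e_d H_d e_d \cong Z(H_d)$, and $H_d e_d$ is isomorphic to $P_d$ as a $(H_d, Z(H_d))$-bimodule, with $e_d H_d \cong P_d$ on the other side. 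This immediately gives that $H_d e_d \psi_{w_0}$ is a free right $Z(H_d)$-module: indeed $e_d\psi_{w_0} = \psi_{w_0}$ has the property that right multiplication by symmetric polynomials spans $e_d H_d$, and $\{b e_d \mid b \in \BB\}$ is a basis of $H_d e_d$ over $e_d H_d e_d \cong Z(H_d)$.

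Next I would set up the multiplication map. Consider the map
\[
  H_d e_d \otimes_{e_d H_d e_d} e_d H_d \to H_d e_d H_d,
\]
which is surjective by construction, and is an isomorphism because $e_d$ is an idempotent whose two-sided ideal is all of $H_d$ (as $H_d$ is simple over its center, $H_d e_d H_d = H_d$). Then I would identify $e_d H_d$ explicitly: using \eqref{EnH}, $e_d H_d = \psi_{w_0} H_d$, and I claim $\{\psi_{w_0} (b')^\tau \mid b' \in \BB\}$ — or rather, after inserting the central factor, $\{e_d f \psi_{w_0}(b')^\tau\}$ — forms the relevant basis of $e_d H_d$ as a left $e_d H_d e_d$-module tensored with the symmetric polynomials. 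Applying $\tau$ (which fixes generators, hence fixes $y_i$, permutes the $\psi$'s appropriately, and sends $\psi_{w_0}$ to itself up to lower terms) converts a basis statement for $H_d e_d$ into one for $e_d H_d$. Combining the two sides through the isomorphism above, and using that $X$ is an $\O$-basis of $Z(H_d) = e_d H_d e_d$ while $\BB$ is a basis of $P_d$ over $Z(H_d)$, yields that $\{b e_d f \psi_{w_0} (b')^\tau \mid b, b' \in \BB,\ f \in X\}$ spans $H_d$ over $\O$.

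Finally I would check linear independence, equivalently that the spanning set has the right size in each graded degree. By Theorem~\ref{TBasis} the graded dimension of $H_d = R_{d\al_i}$ is known: $\qdim H_d = (\text{Poincaré series of } \O[y_1,\dots,y_d]) \cdot \sum_{w \in \Si_d} q^{-(\al_i\cdot\al_i)\ell(w)}$ up to the appropriate normalization, which factors as $(\qdim P_d) \cdot (\qdim P_d \text{ over } Z(H_d)) \cdot (\text{shift})$. On the other side, the proposed set is indexed by $\BB \times \BB \times X$, whose graded size is exactly $(\qdim P_d / \qdim Z(H_d))^2 \cdot \qdim Z(H_d) \cdot (\text{degree shifts from } \de_d, \psi_{w_0})$, and a direct comparison of these power series — using $\qdim P_d = \qdim Z(H_d) \cdot [d]^!_{\al_i}$-type identity — shows they agree. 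Hence the spanning set is a basis. Since this is precisely \cite[Theorem 4.16]{KLM}, I would in practice simply cite that reference; the main obstacle in a from-scratch proof is the bookkeeping of the degree shifts coming from $\de_d = y_2 y_3^2 \cdots y_d^{d-1}$ and $\psi_{w_0}$, and verifying that $b \mapsto b e_d$ and $b' \mapsto e_d D$-type maps genuinely land in free modules of the asserted rank rather than merely spanning — this is where the idempotent relation \eqref{EnH} and the matrix-algebra description of $H_d$ do the essential work.
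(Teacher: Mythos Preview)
The paper gives no proof of this theorem at all: it is simply stated with the citation to \cite[Theorem~4.16]{KLM}. Your proposal explicitly recognises this and says you would ``in practice simply cite that reference,'' so in that sense your approach coincides exactly with the paper's.

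The from-scratch sketch you offer on top of that is essentially the standard argument via the matrix-algebra description $H_d\cong\End_{Z(H_d)}(P_d)$ and is correct in outline. One small point to tighten if you ever write it out in full: applying $\tau$ to a basis of $H_d e_d$ yields a basis of $e_d^\tau H_d$, and $e_d^\tau=\de_d\psi_{w_0}$ is not literally $e_d$; you then need the relation $e_d\psi_{w_0}=\psi_{w_0}$ (so $e_d H_d=\psi_{w_0}H_d=e_d^\tau H_d$) to pass between the two. Also, $\psi_{w_0}^\tau=\psi_{w_0}$ holds exactly in the nilHecke algebra (since $w_0^{-1}=w_0$ and $\psi_w$ is independent of reduced expression here), not merely ``up to lower terms.'' These are bookkeeping issues rather than gaps.
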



\subsection{Basic representation theory of $R_\al$}
By \cite{KL1}, every irreducible graded $R_\al$-module is finite dimensional,  
and  there are finitely many irreducible $R_\al$-modules up to isomorphism and grading shift. For $\bi\in \words_\al$ and $M\in\Mod{R_\al}$, the {\em $\bi$-word space} of $M$ is
$
M_\bi:=e(\bi)M.
$
We have a decomposition of (graded) vector spaces
$
M=\bigoplus_{\bi\in \words_\al}M_\bi.
$
We say that $\bi$ is a {\em word of $M$} if $M_\bi\neq 0$.


We identify in a natural way: 
$$
[\mod{R}]=\bigoplus_{\al\in Q_+}[\mod{R_\al}],\quad [\Proj{R}]=\bigoplus_{\al\in Q_+}[\Proj{R_\al}].
$$



%
Recall the anti-automorphism $\tau$ from (\ref{star}). This allows us to introduce the left $R_\al$-module structure on the graded dual of a finite dimensional $R_\al$-module $M$---the resulting left $R_\al$-module is denoted $M^\circledast$. On the other hand, given any left $R_\al$-module $M$, denote by $M^\tau$ the right $R_\al$-module with the action given by $mx=\tau(x)m$ for $x\in R_\al,m\in M$. Following \cite[(14)]{KL2}, define the {\em Khovanov-Lauda pairing} to be the $\Laurent$-linear pairing 
$$(\cdot,\cdot) : [\Proj{R_\al}]\times[\Proj{R_\al}] \rightarrow \Laurent\cdot
\prod_{i\in I}\prod_{a=1}^{m_i}\frac{1}{(1-q_i^{2a})}
$$ 
such that $([P],[Q]) = \qdim(P^\tau\otimes_{R_\al} Q)$.

Let $\al,\be\in Q_+$. Recalling the isomorphism $\iota_{\al,\be}:R_\al\otimes\ R_\be\to R_{\al,\be}\subseteq R_{\al+\be}$, consider the 
functors 
\begin{align*}
\Ind_{\alpha,\beta}
&:= R_{\alpha+\beta} 1_{\alpha,\beta}
\otimes_{R_{\alpha,\beta}} ?:\Mod{R_{\alpha,\beta}} \rightarrow \Mod{R_{\alpha+\beta}},\\
\Res_{\alpha,\beta}
&:= 1_{\alpha,\beta} R_{\alpha+\beta}
\otimes_{R_{\alpha+\beta}} ?:\Mod{R_{\alpha+\beta}}\rightarrow \Mod{R_{\alpha,\beta}}.
\end{align*}
For $M\in\mod{R_\al}$ and $N\in \mod{R_\beta}$, we denote
$ 
M\circ N:=\Ind_{\al,\be}
(M\boxtimes N).
$ 
The functors of induction define products 
on the Grothendieck groups $[\mod{R}]$ and $[\Proj{R}]$ and  the functors of restriction define coproducts on $[\mod{R}]$ and $[\Proj{R}]$. These products and coproducts make $[\mod{R}]$ and $[\Proj{R}]$ into twisted unital and counital bialgebras \cite[Proposition 3.2]{KL1}.

Let $i\in I$ and $n\in\Z_{> 0}$. 
As explained in \cite[$\S$2.2]{KL1}, 
the algebra
$R_{n \al_i}$ 
has a representation on the polynomials $F[y_1,\dots,y_n]$
such that each $y_r$ acts as multiplication by $y_r$ and each $\psi_r$ acts
as the divided difference operator 
$
\partial_r: f \mapsto \frac{{^{s_r}} f - f}{y_{r}-y_{r+1}}.
$
Let $P(i^{(n)})$ denote this representation of $R_{n\al_i}$
viewed as a 
graded $R_{n\al_i}$-module with grading defined by
$$
\deg(y_1^{m_1} \cdots y_n^{m_n}) := (\al_i\cdot \al_i )(m_1+\cdots+m_n - n(n-1)/4).
$$
By \cite[$\S$2.2]{KL1}, the left regular $R_{n\al_i}$-module decomposes as 
$
P(i^n) \cong [n]^!_i \cdot P(i^{(n)})$. In particular, $P(i^{(n)})$ is projective.
Set
\begin{align*}
\theta_{i}^{(n)}&:= 
\Ind_{\alpha,n \al_i} (? \boxtimes P(i^{(n)})):\Mod{R_\alpha} \rightarrow \Mod{R_{\alpha+n\al_i}},\\
(\theta_{i}^*)^{(n)}&:= 
\Hom_{R'_{n \al_i}}(P(i^{(n)}), ?): \Mod{R_{\alpha+n\al_i}} \rightarrow \Mod{R_{\alpha}},
\end{align*}
where $R'_{n\al_i} := 1 \otimes R_{n\al_i} \subseteq R_{\alpha,n\al_i}$.
These functors induce $\Laurent$-linear maps on the corresponding Grothendieck groups: 
$$\theta_i^{(n)}:[\Proj{R_\alpha}] \rightarrow [\Proj{R_{\alpha+n\al_i}}],\quad
(\theta_{i}^*)^{(n)}:[\mod{R_{\alpha+n\al_i}}] \rightarrow [\mod{R_{\alpha}}].
$$

\subsection{Cuspidal and standard modules} \label{SSSMT}
Standard module theory for $R_\al$ has been developed in \cite{KR,HMM,BKOP,McN}. Here we follow the most general approach of McNamara \cite{McN}. Fix a reduced decomposition $w_0 = s_{i_1} \dots s_{i_N}$ of the longest element $w_0\in  W$. This gives a convex total order on the positive roots $$\Phi_+ = \{ \be_1 > \dots > \be_N\},$$ 
with $\be_{N+1-k} = s_{i_1} \dots s_{i_{k-1}}(\al_{i_k})$. 

To every positive root $\be\in\Phi_+$ of the corresponding root system $\Phi$, one associates a {\em cuspidal module} $L(\be)$. This irreducible module is uniquely determined by the following property: if $\de,\ga\in Q_+$ are non-zero elements such that $\be=\de+\ga$ and $\Res_{\de,\ga}L(\be)\neq 0$, then $\de$ is a sum of positive roots less than $\be$ and $\ga$ is a sum of positive roots greater than $\be$. 



A standard argument involving the Mackey Theorem from \cite{KL1} and convexity as in the proof of \cite[Lemma 2.11]{BKM}, yields:

\begin{Lemma} \label{LBKM} 
Let $\be\in\Phi_+$ and $a_1,\dots,a_n\in\Z_{\geq 0}$. All composition factors of $\Res_{a_1\be,\dots,a_n\be}L(\be)^{\circ (a_1+\dots+a_n)}$ are of the form 
$L(\be)^{\circ a_1}\boxtimes \dots\boxtimes L(\be)^{\circ a_n}$. 
\end{Lemma}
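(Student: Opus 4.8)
The plan is to prove this by induction on $a_1 + \dots + a_n$, using the cuspidality of $L(\be)$ together with the Mackey filtration for $\Res \circ \Ind$. First I would recall that by definition of the cuspidal module, if $\de, \ga \in Q_+$ are nonzero with $\de + \ga = m\be$ and $\Res_{\de,\ga} L(\be)^{\circ m} \neq 0$, then convexity of the chosen total order forces $\de$ and $\ga$ each to be sums of roots that interleave $\be$ in a controlled way; in fact the key point, established as in \cite[Lemma 2.11]{BKM}, is that $\Res_{\de,\ga} L(\be)^{\circ m} \neq 0$ with $\de + \ga = m\be$ already forces $\de = a\be$ and $\ga = (m-a)\be$ for some $0 \le a \le m$, because any other decomposition would involve a root strictly less than $\be$ appearing on one side and strictly greater on the other in a way incompatible with a sum equalling a multiple of $\be$. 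I would spell this out: write $\de = \sum c_\ga \ga$; the components of $\Res_{\de,\ga}L(\be)^{\circ m}$ coming from the Mackey filtration applied to $L(\be)^{\boxtimes m}$ are built from restrictions of the individual $L(\be)$ factors, and each such restriction $\Res_{\de',\ga'}L(\be)$ is nonzero only when $\de'$ is a sum of roots $< \be$ and $\ga'$ a sum of roots $> \be$; summing and using convexity (a sum of roots all $< \be$ plus a sum of roots all $> \be$ can equal a multiple of $\be$ only in the trivial way) pins down $\de, \ga$ to be multiples of $\be$.

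Granting that, the main step is the inductive one. Consider $\Res_{a_1\be, \dots, a_n\be} L(\be)^{\circ(a_1 + \dots + a_n)}$. Set $m = a_1 + \dots + a_n$ and $b = a_2 + \dots + a_n$. By transitivity of restriction, $\Res_{a_1\be,\dots,a_n\be} = (\id \boxtimes \Res_{a_2\be,\dots,a_n\be}) \circ \Res_{a_1\be, b\be}$. Now I apply the Mackey Theorem from \cite{KL1} to $\Res_{a_1\be,b\be} L(\be)^{\circ m} = \Res_{a_1\be,b\be} \Ind_{\be,\dots,\be}(L(\be)^{\boxtimes m})$: this has a filtration whose subquotients are of the form $\Ind(\text{shuffles of }\Res \text{'s of the }L(\be)\text{-factors})$. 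By the paragraph above, the only factors that survive (are nonzero after projecting to the $(a_1\be, b\be)$ block) are those in which each $L(\be)$ gets restricted trivially — i.e., entirely into the first tensor slot or entirely into the second — so the surviving subquotients are exactly (shifts of) $L(\be)^{\circ a_1} \boxtimes L(\be)^{\circ b}$. Hence every composition factor of $\Res_{a_1\be, b\be} L(\be)^{\circ m}$ is $L(\be)^{\circ a_1} \boxtimes L(\be)^{\circ b}$. Applying $\id \boxtimes \Res_{a_2\be,\dots,a_n\be}$ and the inductive hypothesis to the second factor $L(\be)^{\circ b}$ finishes the argument, since restriction is exact and sends composition factors to (filtrations with) composition factors.

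The step I expect to be the main obstacle is the convexity/combinatorial lemma in the first paragraph: making precise that in the Mackey filtration for $\Res_{\de,\ga}$ of a product of cuspidals, the only nonvanishing shuffle components are the ``block-diagonal'' ones. This requires carefully tracking the weights $\de', \ga'$ of the individual restrictions $\Res_{\de',\ga'} L(\be)$, observing that $\de'$ is supported on roots $< \be$ and $\ga'$ on roots $> \be$, and then invoking convexity of the total order to conclude that a sum $\sum_j \de'_j + \sum_j \ga'_j$ of such terms can equal $(a_1\be, b\be)$-type weights only when each $\de'_j, \ga'_j$ is either $0$ or $\be$ — equivalently, each factor is restricted to one side. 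This is exactly the argument sketched ``as in the proof of \cite[Lemma 2.11]{BKM}'', so I would either cite it directly or reproduce the short convexity computation: if $\gamma_1 \le \dots \le \gamma_p < \be < \gamma_{p+1} \le \dots$ and $\sum \gamma_j = c\be$, then comparing with the partial sums along the convex order forces all $\gamma_j$ on the small side to telescope into multiples of $\be$, which by minimality of $\be$ among roots $\le \be$ in the relevant interval is impossible unless there are none. Once this lemma is in hand, everything else is a routine application of Mackey plus induction.
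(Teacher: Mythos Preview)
Your proposal is correct and follows essentially the same approach as the paper, which simply refers to ``a standard argument involving the Mackey Theorem from \cite{KL1} and convexity as in the proof of \cite[Lemma 2.11]{BKM}''. The only minor difference is organizational: you reduce to the two-block case by induction and transitivity of restriction, whereas one can equally well run the Mackey filtration directly for all $n$ blocks at once (indexing the subquotients by matrices $(\ga_{ij})$ with $\sum_j \ga_{ij}=\be$ and $\sum_i \ga_{ij}=a_j\be$, then using cuspidality and the convexity fact that $a\be$ can be written as a sum of positive roots $\leq \be$ only trivially to force each $\ga_{ij}\in\{0,\be\}$); both routes are standard and yield the same conclusion.
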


Let $\al\in Q_+$. A tuple $\pi=(p_1, \dots p_N) \in \Z_{\geq 0}^N$ is called a {\em root 
partition of $\al$} if $p_1\be_1+\dots+p_N\be_N=\al$. We also use the notation $\pi=(\be_1^{p_1},\dots,\be_N^{p_N})$. For example, if $\al=n\be$ for $\be\in\Phi_+$, we have a root partition $(\be^n)\in\Pi(\al)$. 
Denote by $\Pi(\al)$ the set of all root partitions of $\al$. 
This set has two total orders: $\leq_l$ and $\leq_r$ defined as follows:  
$(p_1,\dots,p_N)<_l (s_1,\dots,s_N)$ (resp. $(p_1,\dots,p_N)<_r (s_1,\dots,s_N)$) if there exists $1\leq k\leq N$ such that $p_k<s_k$ and $p_m=s_m$ for all $m<k$ (resp. $m>k$). Finally, we have a {\em bilexicographic partial order}:
\begin{equation}\label{EBilex}
\pi\leq \si \Longleftrightarrow \pi\leq_l \si\ \text{and}\ \pi\leq_r\si\qquad(\pi,\si\in\Pi(\al)).
\end{equation}

The following lemma is implicit in \cite{McN}; see also \cite[Lemma 2.5]{BKM}.
\begin{Lemma}\label{LMinRP}
Given any $\pi \in \Pi(p\be)$, we have $\pi \geq (\be^p)$.
\end{Lemma}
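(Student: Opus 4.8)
\textbf{Proof proposal for Lemma~\ref{LMinRP}.}

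The plan is to compare an arbitrary root partition $\pi=(q_1,\dots,q_N)$ of $p\be$ with the partition $(\be^p)$, which puts all its mass in the single slot corresponding to $\be$ (say $\be=\be_k$, so $(\be^p)$ has $p$ in position $k$ and zeros elsewhere). By the definition of the bilexicographic order \eqref{EBilex}, I must show both $\pi\geq_l (\be^p)$ and $\pi\geq_r (\be^p)$. For the left order: let $j$ be the first index where $\pi$ and $(\be^p)$ differ. If $j<k$ then I need $q_j>0$; if $j=k$ then I need $q_k>p$; and if $j>k$ the two tuples would have to already agree in positions $1,\dots,k$, forcing $q_k=p$ and hence $\pi=(\be^p)$ by the degree constraint (see below), so that case is vacuous. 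The key point is therefore: \emph{if $q_1=\dots=q_{j-1}=0$ for some $j\le k$, then convexity of the order on $\Phi_+$ forces enough mass into the early slots.}

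The main tool is convexity of the total order $\be_1>\dots>\be_N$. Write $p\be=\sum_{m} q_m\be_m$. Suppose, for the left comparison, that $q_1=\dots=q_{j-1}=0$ with $j\le k$; then $p\be=\sum_{m\ge j} q_m\be_m$ is a non-negative combination of roots that are all $\le \be_j$, and in fact (excluding the slot $k$) of roots $\be_m$ with $m\ge j$. I claim $\be$ cannot be written as a non-negative combination of positive roots all strictly less than $\be$ in the convex order — this is exactly the standard convexity fact: in a convex order, a root $\be$ is never a non-negative integer combination of other positive roots all lying on one side of it, because one can separate $\Phi_+$ into $\{<\be\}$ and $\{\ge\be\}$ and both halves are closed under addition within $\Phi_+$ (this is the same convexity input used for Lemma~\ref{LBKM} and in \cite[Lemma 2.11]{BKM}). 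Consequently the expression $p\be=\sum_{m\ge j}q_m\be_m$ must actually involve the slot $k$ (i.e.\ $j\le k$ and $q_k>0$), and moreover by grouping the terms with $m=k$ versus $m>k$ one gets $p\be - q_k\be = \sum_{m>k}q_m\be_m$ with the right side a non-negative combination of roots strictly greater than $\be$; applying convexity again (now to the roots $>\be$) forces $p-q_k\le 0$, i.e.\ $q_k\ge p$. If $q_k=p$ then all other $q_m=0$ and $\pi=(\be^p)$; otherwise $q_k>p\ge 0$, and in particular the first index of disagreement $j$ satisfies either $j<k$ with $q_j>0$, or $j=k$ with $q_k>p$ — in both sub-cases $\pi>_l(\be^p)$.

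The right order $\le_r$ is handled by the mirror-image argument: read the tuples from the right, let $j'$ be the largest index of disagreement, assume $q_N=\dots=q_{j'+1}=0$ with $j'\ge k$, and run the same convexity dichotomy with the roles of ``$<\be$'' and ``$>\be$'' swapped, concluding $q_k\ge p$ and hence $\pi>_r(\be^p)$ unless $\pi=(\be^p)$. Combining the two gives $\pi\ge(\be^p)$.

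I expect the only real content — the ``hard part'' — to be the convexity lemma itself: that in a convex total order on $\Phi_+$, a positive root is not a non-negative integer combination of positive roots all strictly smaller (equivalently, all strictly larger) than it. This is well known and essentially immediate from the defining property of convex orders (the sets $\{\ga\in\Phi_+: \ga<\be\}$ and $\{\ga\in\Phi_+:\ga>\be\}$ are each closed under taking roots that are sums), and it is already implicitly invoked in the paper's discussion around Lemma~\ref{LBKM}; so I would either cite it or dispatch it in one line. Everything else is just careful bookkeeping with the two lexicographic orders, and the degree/height constraint $\sum q_m\height(\be_m)=p\,\height(\be)$ to pin down the equality case.
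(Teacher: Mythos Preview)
Your approach is correct and is essentially the argument in the references the paper defers to (the paper gives no proof, merely citing \cite{McN} and \cite[Lemma~2.5]{BKM}). One slip to fix: the ordering convention is $\be_1>\dots>\be_N$, so roots $\be_m$ with $m>k$ are strictly \emph{less} than $\be=\be_k$, not greater. Thus in your $\leq_l$ analysis, once $q_1=\dots=q_{k-1}=0$, the identity $(p-q_k)\be=\sum_{m>k}q_m\be_m$ expresses $(p-q_k)\be$ as a non-negative combination of roots strictly less than $\be$, and it is this (not the ``greater than'' side) that convexity forbids when $p>q_k$; the $\leq_r$ side is the genuine mirror with roots strictly greater than $\be$. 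Note also that the sub-case $j<k$ in your $\leq_l$ discussion needs no convexity at all, since then automatically $q_j>0=(\be^p)_j$. With these corrections your argument goes through.
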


For a root partition $\pi=(p_1,\dots,p_N)\in\Pi(\al)$ as above, set $\shift(\pi):=\sum_{k=1}^N (\be_k\cdot \be_k)p_k(p_k-1)/4$, and define the corresponding {\em proper standard module} 
\begin{equation}\label{EStand}
\bar\De(\pi):=L({\be_1})^{\circ p_1}\circ\dots\circ L({\be_N})^{\circ p_N}\langle\shift(\pi)\rangle. 
\end{equation}
For $\pi=(\be_1^{p_1},\dots,\be_N^{p_N})$, we denote
$$
\Res_\pi:=\Res_{p_1\be_1,\dots,p_N\be_N}.
$$

\begin{Theorem} \label{TStand}%
{\rm \cite{McN}} 
For any convex order there exists a cuspidal system $\{L(\be)\mid \be\in \Phi_+\}$.  Moreover: 
\begin{enumerate}
\item[{\rm (i)}] For every $\pi\in\Pi(\al)$, the proper standard module  
$
\bar\De(\pi) 
$ has irreducible head; denote this irreducible module $L(\pi)$. 

\item[{\rm (ii)}] $\{L(\pi)\mid \pi\in \Pi(\al)\}$ is a complete and irredundant system of irreducible $R_\al$-modules up to isomorphism.

\item[{\rm (iii)}] $L(\pi)^\circledast\simeq L(\pi)$.  

\item[{\rm (iv)}] $[\bar\De(\pi):L(\pi)]_q=1$, and $[\bar\De(\pi):L(\si)]_q\neq 0$ implies $\si\leq \pi$. 


\item[{\rm (v)}] $L(\be)^{\circ n}$ is irreducible for every $\be\in \Phi_+$ and every $n\in\Z_{>0}$. 

\item[{\rm (vi)}] $\Res_\pi\bar\De(\si)\neq 0$ implies $\si\geq \pi$, and $\Res_\pi\bar\De(\pi)\simeq L({\be_1})^{\circ p_1}\boxtimes\dots\boxtimes L({\be_N})^{\circ p_N}$. 
\end{enumerate}
\end{Theorem}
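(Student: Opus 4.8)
\medskip
\noindent\emph{Proof proposal.}
This is McNamara's theorem \cite{McN} (with forerunners in \cite{KR,HMM,BKOP} and parallels in \cite{Kato},\cite{BKM}); the plan is to follow the shape of his argument. The cuspidal system and the six assertions are proved together, by an induction running simultaneously over the convex order on $\Phi_+$ and the bilexicographic order on $\Pi(\al)$.

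\smallskip
\noindent\textbf{Construction of cuspidal modules.}
Call an irreducible $R_\be$-module ($\be\in\Phi_+$) \emph{cuspidal} if it satisfies the restriction condition stated just before Lemma~\ref{LBKM}. For a simple root, the one-dimensional module on which $e(i)$ acts by $1$ and $y_1$ by $0$ is cuspidal. For a non-simple $\be$, one produces $L(\be)$ by cuspidalisation: form a suitable induction product out of cuspidal modules of other positive roots and isolate, using the Mackey theorem for $\Res\circ\Ind$ together with convexity of the order, the unique irreducible subquotient all of whose $\Res_{\de,\ga}$ have $\de$ a sum of roots $<\be$ and $\ga$ a sum of roots $>\be$; this is $L(\be)$, and its $\bi_\be$-word space is one-dimensional. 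Uniqueness is immediate from the restriction characterisation.

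\smallskip
\noindent\textbf{Proper standard modules, heads, and classification.}
Fix $\pi=(\be_1^{p_1},\dots,\be_N^{p_N})\in\Pi(\al)$, so $\bar\De(\pi)=L(\be_1)^{\circ p_1}\circ\dots\circ L(\be_N)^{\circ p_N}\langle\shift(\pi)\rangle$. Expanding $\Res_\si\bar\De(\pi)$ by the Mackey theorem, convexity limits which root-blocks each factor $L(\be_j)^{\circ p_j}$ can feed, and together with Lemma~\ref{LBKM} this yields $\Res_\si\bar\De(\pi)\neq 0\Rightarrow\si\geq\pi$ and $\Res_\pi\bar\De(\pi)\cong L(\be_1)^{\circ p_1}\boxtimes\dots\boxtimes L(\be_N)^{\circ p_N}$, which is (vi) (granting irreducibility of the $L(\be_k)^{\circ p_k}$, below). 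By Frobenius reciprocity, $\Hom_{R_\al}(\bar\De(\pi),\bar\De(\pi))$ embeds in $\End(L(\be_1)^{\circ p_1}\boxtimes\dots\boxtimes L(\be_N)^{\circ p_N})$, one-dimensional in degree $0$, so $\bar\De(\pi)$ has irreducible head $L(\pi)$ with $[\bar\De(\pi):L(\pi)]_q=1$. A further Mackey estimate gives $[\bar\De(\pi):L(\si)]_q\neq 0\Rightarrow\si\leq\pi$, so the $L(\pi)$ are pairwise non-isomorphic; a highest-word argument (every irreducible $R_\al$-module restricts nontrivially along some root partition) shows they exhaust the irreducibles. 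This proves (i), (ii), (iv); and (iii) then follows from the corresponding self-duality of the cuspidal modules together with a comparison of $\bar\De(\pi)$ with its $\circledast$-dual.

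\smallskip
\noindent\textbf{Powers of cuspidal modules --- the main obstacle.}
It remains to prove (v), that $L(\be)^{\circ n}$ is irreducible, which was also used above. By cuspidality and Lemma~\ref{LBKM}, every restriction of $L(\be)^{\circ n}$ is assembled from modules of the form $L(\be)^{\circ a_1}\boxtimes\dots\boxtimes L(\be)^{\circ a_m}$, so $\End(L(\be)^{\circ n})$ is governed by intertwiners indexed by the action of $S_n$ on the $n$ tensor slots, much as for the nilHecke algebra $H_n$; the crux is that this endomorphism algebra is one-dimensional in degree $0$, whence $L(\be)^{\circ n}$ has simple head and, by self-duality, simple socle, and is therefore irreducible. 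Pinning this down --- via the affine standard module $\De(\be)$ whose polynomial endomorphism acts freely, a rank computation through the Khovanov--Lauda pairing, and explicit control of the intertwiners --- is the technical heart of the statement, and it is exactly this input that the present paper must re-examine root by root in Section~5 (the homogeneous versus non-homogeneous cuspidal dichotomy underlying Hypothesis~\ref{HProp}).
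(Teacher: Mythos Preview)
The paper gives no proof of this theorem; it is simply quoted from \cite{McN} and used as a black box throughout. Your sketch of (i)--(iv) and (vi) is a fair outline of McNamara's strategy.

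There are, however, two genuine problems.

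First, your closing sentence misreads the present paper. You say that pinning down the irreducibility of $L(\be)^{\circ n}$ ``is exactly this input that the present paper must re-examine root by root in Section~5.'' It is not. Section~\ref{SVerif} verifies Hypothesis~\ref{HProp}, which asserts the existence of particular elements $\de_\be,D_\be,y_\be\in e(\bi_\be)R_\be e(\bi_\be)$ with specified properties (idempotence modulo $I_{>(\be)}$, $\tau$-invariance, commutation, generation of the corner algebra). That hypothesis feeds the affine cellular basis construction of Section~3; it has nothing to do with establishing Theorem~\ref{TStand}(v). The paper takes (v), like the rest of Theorem~\ref{TStand}, wholesale from \cite{McN}, and in fact uses it freely in Section~3 (e.g.\ in and around Lemmas~\ref{LInduct} and~\ref{LEndBasis}) well before any of the case analysis of Section~5.

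Second, your argument for (v) has a gap. From ``endomorphism algebra one-dimensional in degree~$0$'' you get simple head, and from self-duality simple socle; but simple head plus simple socle does not imply irreducibility (an indecomposable module with length $>1$ can perfectly well have both). McNamara's actual proof of (v) is not the module-theoretic argument you sketch: it goes through the categorification theorems and the bilinear form on the PBW basis (essentially a graded dimension computation on the quantum-group side) to force the induced module to coincide with its irreducible head. If you want a self-contained module-theoretic proof you need an additional ingredient, for instance a length or character count showing that the head already exhausts the module.
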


Note that the algebra $R_\al(F)$ is defined over $\Z$, i.e. $R_\al(F)\simeq R_\al(\Z)\otimes_\Z F$. We will use the corresponding indices when we need to distinguish between modules defined over different rings. 
The following result shows that cuspidal modules are also defined over $\Z$:

\begin{Lemma} \label{LCuspInt}
Let $\be\in\Phi_+$, and $v\in L(\be)_\Q$ be a non-zero homogeneous vector. Then  $L(\be)_\Z:=R_\be(\Z)\cdot v\subset L(\be)_\Q$ is an $R_\be(\Z)$-invariant lattice such that $L(\be)_\Z\otimes_\Z F\simeq L(\be)_F$ as $R_\be(F)$-modules for any field $F$.   
\end{Lemma}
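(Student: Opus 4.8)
\textbf{Proof plan for Lemma~\ref{LCuspInt}.}
The strategy is to pass the known facts about $L(\be)$ over a field down to $\Z$ via the standard lemma (Lemma~\ref{PFieldToZ}), using the $R_\be(\Z)$-module $L(\be)_\Z := R_\be(\Z)\cdot v$ as the bridge. First I would observe that $L(\be)_\Z$ is a finitely generated $\Z$-module: indeed $L(\be)_\Q$ is finite dimensional over $\Q$ (being irreducible, hence finite dimensional by \cite{KL1}), so any $\Z$-submodule is finitely generated, and being a submodule of a $\Q$-vector space it is torsion free, hence free of finite rank. By construction it is $R_\be(\Z)$-invariant, and $L(\be)_\Z\otimes_\Z\Q = L(\be)_\Q$ since $v$ generates over $\Q$ and tensoring with $\Q$ is exact.

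The heart of the matter is to show $\dim_F\bigl(L(\be)_\Z\otimes_\Z F\bigr) = \dim_\Q L(\be)_\Q$ for every field $F$, i.e.\ that the rank of the lattice is not dropped by reduction and no ``extra'' dimension appears in characteristic $p$. I would fix a homogeneous $\Z$-basis $b_1 v,\dots,b_m v$ of $L(\be)_\Z$ (with $b_j\in R_\be(\Z)$), so that $\{1_F\otimes b_j v\}$ spans $L(\be)_\Z\otimes_\Z F$ by Lemma~\ref{PFieldToZ}. To see these remain linearly independent over $F$, the key input is the cuspidality/minimality characterization of $L(\be)$: $\be$ is itself a positive root, so the only root partition of $\be$ is $(\be)$ (this is the $p=1$ case of Lemma~\ref{LMinRP}), whence $\bar\De((\be)) = L(\be)$ and $L(\be)$ is the \emph{unique} irreducible $R_\be$-module up to grading shift, over any field. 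Combined with the graded dimension formula of subsection~\ref{SSDF} (or equivalently with the categorification statement that $[\Proj{R_\be}]$ and $[\mod{R_\be}]$ are free of rank one over $\Laurent$ in the appropriate graded piece), one gets that $\qdim L(\be)_F$ is \emph{independent of $F$}: it equals the graded dimension of the unique graded irreducible, computed from the canonical basis element $r_\be\in\f$, and this computation does not see the characteristic. Hence $\dim_F L(\be)_F = \dim_\Q L(\be)_\Q = m$.

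Knowing the target dimension, I would finish as follows. The natural surjection $L(\be)_\Z\otimes_\Z F \onto R_\be(F)\cdot(1_F\otimes v)$ lands inside $L(\be)_F$ up to the identification $R_\be(F)\cong R_\be(\Z)\otimes_\Z F$; more precisely, $L(\be)_\Z\otimes_\Z F$ is a cyclic $R_\be(F)$-module generated by $1_F\otimes v$. It is nonzero (it surjects onto $L(\be)_\Z\otimes_\Z\Q/\,p\cdot(\text{stuff})$, or simply because $v\notin p L(\be)_\Z$ for the chosen basis), so it has an irreducible quotient, which must be a grading shift of $L(\be)_F$ by uniqueness. Thus $\dim_F\bigl(L(\be)_\Z\otimes_\Z F\bigr)\geq \dim_F L(\be)_F = m$; but it is spanned by $m$ elements, so equality holds and the surjection onto its irreducible quotient is an isomorphism, giving $L(\be)_\Z\otimes_\Z F\simeq L(\be)_F$ as $R_\be(F)$-modules. (A grading-shift ambiguity is pinned down by insisting $v$ be chosen in the appropriate degree, matching the normalization of $L(\be)$.)

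\textbf{Main obstacle.} The routine parts are the finite-generation and freeness of the lattice and the exactness arguments; the real content is the characteristic-independence of $\dim L(\be)_F$. This is exactly where one must invoke the already-available machinery---the uniqueness of the cuspidal module (Theorem~\ref{TStand} applied with $\al=\be$, together with Lemma~\ref{LMinRP}) and the explicit graded dimension formula of subsection~\ref{SSDF}, which expresses $\qdim R_\be$ and hence $\qdim L(\be)$ in terms of quantum-group data that is manifestly defined over $\Z[q,q^{-1}]$ and so the same in every characteristic. If one instead tried to argue purely by lattice-theoretic dimension counting without this input, one could not rule out a genuine jump in dimension at bad primes; the cuspidal uniqueness is what forecloses that.
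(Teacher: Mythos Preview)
Your argument has a genuine gap. The claim that ``the only root partition of $\be$ is $(\be)$'' is false: for a non-simple root $\be$, the set $\Pi(\be)$ typically has many elements (e.g.\ in type $A_2$ with $\be=\al_1+\al_2$ there is also the partition $(\al_2,\al_1)$). Lemma~\ref{LMinRP} only says $(\be)$ is \emph{minimal}, not unique. Consequently $L(\be)_F$ is \emph{not} the unique irreducible $R_\be(F)$-module, and you invoke this non-fact in two essential places: to deduce that $\qdim L(\be)_F$ is independent of $F$, and to conclude that the irreducible quotient of $L(\be)_\Z\otimes_\Z F$ must be $L(\be)_F$.

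Both uses can be repaired, but not by the reasoning you give. For the first, the correct input is that $L(\be)=\bar\De((\be))$ and $\gamma^*([\bar\De((\be))])=E_\be^*$ (this is McNamara's result, recorded here as Lemma~\ref{LStdPBW}); since $E_\be^*$ and the character map $\kappa$ are defined over $\Laurent$ independently of the field, $\CH L(\be)_F$ is the same for every $F$. For the second, the point is cuspidality: since $\CH(L(\be)_\Z\otimes_\Z F)=\CH L(\be)_\Q$, the reduction has the same vanishing of $\Res_{\de,\ga}$ as $L(\be)_\Q$, so every composition factor satisfies the cuspidality condition and hence is isomorphic to $L(\be)_F$ up to shift. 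With both fixes in place your dimension count does finish the proof, and in fact avoids the external citation to \cite[Lemma~4.7]{Kcusp} that the paper uses: the paper argues via cuspidality exactly as above to get that all composition factors are $L(\be)_F$, but then appeals to that lemma for a multiplicity-one statement rather than comparing dimensions directly.

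A minor point: ``any $\Z$-submodule of a finite-dimensional $\Q$-vector space is finitely generated'' is false (take $\Q\subset\Q$). The finite generation of $L(\be)_\Z$ comes from the grading: each graded piece of $R_\be(\Z)$ is a free $\Z$-module of finite rank, and $L(\be)_\Q$ lives in finitely many degrees, so $L(\be)_\Z$ is a finite sum of images of finitely generated $\Z$-modules. This is what the paper means by ``using degrees''.
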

\begin{proof}
Note using degrees that $L(\be)_\Z$ is finitely generated over $\Z$, hence it is a  lattice in $L(\be)_\Q$. Furthermore $\CH L(\be)_\Z\otimes_\Z F=\CH L(\be)_\Q$, whence by definition of the cuspidal modules, all composition factors of  $\CH L(\be)_\Z\otimes_\Z F$ are of the form $L(\be)_F$. But there is always a multiplicity one composition factor in a reduction modulo $p$ of any irreducible module over a KLR algebra, thank to \cite[Lemma 4.7]{Kcusp}.
\end{proof}

\subsection{Quantum groups}\label{SSQG}

Following \cite[Section 1.2]{Lu1}, we define the algebra $\pf$ to be the free $\Q(q)$-algebra with generators $\ptheta_i$ for $i \in I$ (our $q$ is Lusztig's $v^{-1}$, in keeping with the conventions of \cite{KL1}). This algebra is $Q_+$-graded by assigning the degree $\al_i$ to $\ptheta_i$ for each $i \in I$, so that $\pf = \oplus_{\al \in Q_+} \pf_\al$. If $x \in \pf_\al$, we write $|x| = \al$. For $\bi=(i_1, \dots, i_n) \in \words$, write $\ptheta_{\bi} := \ptheta_{i_1} \dots \ptheta_{i_n}$.
Then 
$ 
  \{\ptheta_\bi \mid \bi \in \words_\al\}
$ 
is a basis for $\pf_\al$. In particular, each $\pf_\al$ is finite dimensional.
Consider the graded dual $\pf^* := \oplus_{\al \in Q_+} (\pf_\al)^*$. We consider words $\bi \in \words$ as elements of $\pf^*$, so that $\bi(\ptheta_\bj) = \de_{\bi, \bj}$. That is to say,
$ 
  \{\bi \mid \bi \in \words_\al\}
$ 
is the basis of $\pf_\al^*$ dual to the basis 
 $\{\ptheta_\bi \mid \bi \in \words_\al\}$.

Let $\pAf$ be the $\A$-subalgebra of $\pf$ generated by $\{(\ptheta_i)^{(n)} \mid i \in I, n \in \Z_{\geq 0} \}$. This algebra is $Q_+$-graded by $\pAf = \oplus_{\al \in Q_+}\pAf_\al$, where $\pAf_\al := \pAf \cap \pf_\al$. Given $\bi = j_1^{r_1}\dots j_m^{r_m} \in \words$ with $j_n \neq j_{n+1}$ for $1 \leq n < m$, denote $\ptheta_{(\bi)} := (\ptheta_{j_1})^{(r_1)} \dots (\ptheta_{j_m})^{(r_m)} \in \pAf$. Then
\begin{equation}\label{EMBasisA}
  \{\ptheta_{(\bi)} \mid \bi \in \words_\al\}
\end{equation}
is an $\A$-basis of $\pAf_\al$.	We also define $\pAf^* := \{x \in \pf^* \mid x(\pAf) \subseteq \A\}$, and assign it the induced $Q_+$-grading. For every $\al \in Q_+$, the $\A$-module $\pAf_\al^*$ is free with basis
\begin{equation}\label{EDualMBasisA}
  \{[\bi]! \bi \mid \bi \in \words_\al\}
\end{equation}
dual to (\ref{EMBasisA}).

There is a {\em twisted} multiplication on $\pf \otimes \pf$ given by $(x \otimes y)(z \otimes w) = q^{-|y|\cdot|z|} xz \otimes yw$ for homogeneous $x,y,z,w \in \pf$. Let $r\colon \pf \to \pf \otimes \pf$ be the algebra homomorphism determined by $r(\ptheta_i) = \ptheta_i \otimes 1 + 1 \otimes \ptheta_i$ for all $i \in I$. By \cite[Proposition 1.2.3]{Lu1} there is a unique symmetric bilinear form $(\cdot, \cdot)$ on $\pf$ such that $(1,1)=1$ and 
\begin{align*}
  (\ptheta_i, \ptheta_j) &= \frac{\de_{i,j}}{1-q_i^2} \quad\text{for } i,j \in I,\\
  (xy, z) &= (x \otimes y, r(z)),\\
  (x, yz) &= (r(x), y \otimes z),
\end{align*}
where the bilinear form on $\pf \otimes \pf$ is given by $(x \otimes x', y \otimes y') = (x,y)(x',y')$.

Define $\f$ to be the quotient of $\pf$ by the radical of $(\cdot,\cdot)$. Denote the image of $\ptheta_i$ in $\f$ by $\theta_i$. The $Q_+$-grading on $\pf$ descends to a $Q_+$-grading on $\f$ with $|\theta_i| = i$. 
Let $\Af$ be the $\A$-subalgebra of $\f$ generated by $\theta_i^{(n)}$ for $i \in I, n \in \Z_{\geq 0}$. This algebra is $Q_+$-graded by $\Af_\al := \Af \cap \f_\al$. Given $\bi = j_1^{r_1}\dots j_m^{r_m} \in \words$ with $j_n \neq j_{n+1}$ for $1 \leq n < m$, denote $\theta_\bi := \theta_{j_1}^{r_1} \dots \theta_{j_m}^{r_m}$ and 
\begin{equation}\label{EThetaI}
\theta_{(\bi)} := \theta_{j_1}^{(r_1)} \dots \theta_{j_m}^{(r_m)} \in \Af.
\end{equation}

We recall the definition of the {\em PBW basis} of $\Af$ from \cite[Part VI]{Lu1}. Recall that a reduced decomposition $w_0 = s_{i_1} \dots s_{i_N}$ yields a total order on the positive roots $\Phi_+ = \{ \be_1 > \dots > \be_N\}$, with $\be_{N+1-k} = s_{i_1} \dots s_{i_{k-1}}(\al_{i_k})$. Now, embed $\Af$ into the upper half of the full quantum group via $\theta_i \mapsto E_i$ and take the braid group generators $T_i := T''_{i,+}$ from \cite[37.1.3]{Lu1}. For $1 \leq k \leq N$, we define
\[
  E_{\be_{N+1-k}} := T_{i_1} \dots T_{i_{k-1}}(\theta_{i_k}) \in \Af_{\be_{N+1-k}}.
\]
For a sequence $\pi = (p_1, \dots p_N) \in \Z_{\geq 0}^N$, we set 
$$E_\pi := E_{\be_1}^{(p_1)} \dots E_{\be_N}^{(p_N)}$$ 
and also define
\begin{equation}\label{ELPi}
l_\pi:=\prod_{r=1}^N \prod_{s=1}^{p_k} \frac{1}{1-q_{\be_r}^{2s}}.
\end{equation}

The next theorem now gives a PBW basis of $\Af_\al$.

\begin{Theorem}\label{TPBW}
  The set
$
  \{E_\pi \mid \pi \in \Pi(\al)\}
$ 
is an $\A$-basis of $\Af_\al$. Furthermore: 
\[
  (E_\pi, E_\si) = \de_{\pi, \si} l_\pi.
\]
\end{Theorem}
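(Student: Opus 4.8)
The plan is to prove Theorem~\ref{TPBW} by reducing it to standard results of Lusztig on the canonical/PBW bases of the upper half of the quantized enveloping algebra, being careful about the normalization conventions. First I would recall that under the embedding $\theta_i\mapsto E_i$, the elements $E_{\be}$ defined via the braid group operators $T_{i_1}\cdots T_{i_{k-1}}(\theta_{i_k})$ are precisely Lusztig's root vectors, and that \cite[Proposition~41.1.3 and Theorem~41.1.4]{Lu1} (or \cite[\S38--40]{Lu1}) assert that $\{E_\pi\mid\pi\in\Pi(\al)\}$ is a basis of $\Af_\al$ over $\A=\Z[q,q^{-1}]$ — once one matches Lusztig's monomial ordering on the root vectors with the fixed convex order $\be_1>\dots>\be_N$. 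So the first key step is just citing this: the $\A$-form $\Af$ is generated by the divided powers $\theta_i^{(n)}$, and Lusztig shows the ordered monomials in the root vectors (with divided powers) form an $\A$-basis. Since $|\Pi(\al)|=\dim_{\Q(q)}\f_\al$ is already known (it equals the number of words up to the monoid relations, matching $\dim\pf_\al^*$ modulo the radical), the spanning and linear independence over $\A$ both follow from Lusztig.

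The second, more substantive, key step is the orthogonality formula $(E_\pi,E_\si)=\de_{\pi,\si}l_\pi$. Here I would proceed in two stages. Stage one: show that distinct PBW monomials are orthogonal. This is the content of Lusztig's Proposition~\cite[38.2.3]{Lu1}, which says the PBW basis vectors for distinct $\pi$ are orthogonal with respect to Lusztig's form; one needs to check that the form $(\cdot,\cdot)$ normalized here (with $(\theta_i,\theta_i)=1/(1-q_i^2)$) agrees with Lusztig's form up to the conventions $q=v^{-1}$. Stage two: compute the self-pairing $(E_\pi,E_\pi)$. The braid operators $T_i$ are isometries for Lusztig's bilinear form (up to the precise normalization — this is \cite[38.2.1]{Lu1} or a direct consequence of the compatibility of $T_i$ with the form), so $(E_{\be}^{(p)},E_{\be}^{(p)})=(\theta_j^{(p)},\theta_j^{(p)})$ where $\be=T_{i_1}\cdots(\al_j)$. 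Then one computes $(\theta_j^{(p)},\theta_j^{(p)})$ directly from the rank-one calculation: using $r(\theta_j)=\theta_j\otimes 1+1\otimes\theta_j$ and induction, $(\theta_j^n,\theta_j^n)=[n]_j^!\prod_{s=1}^n\frac{1}{1-q_j^{2s}}\cdot(\text{something})$; dividing by $([n]_j^!)^2$ for the divided power gives $(\theta_j^{(p)},\theta_j^{(p)})=\prod_{s=1}^p\frac{1}{1-q_j^{2s}}$. Multiplying over the factors of the PBW monomial — using that the twisted multiplication interacts with the form so that $(xy,xy)$ factors when $x,y$ are supported on ``separated'' root intervals, which is exactly the convexity/orthogonality input — yields $(E_\pi,E_\pi)=\prod_{r=1}^N\prod_{s=1}^{p_r}\frac{1}{1-q_{\be_r}^{2s}}=l_\pi$, matching \eqref{ELPi} (modulo the evident typo $p_k\leadsto p_r$ in the displayed product).

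I expect the main obstacle to be bookkeeping the normalization conventions rather than any deep difficulty: Lusztig works with $v$ and a bilinear form normalized by $(E_i,E_i)=1/(1-v_i^{-2})$ or similar, whereas here $q=v^{-1}$ and $(\theta_i,\theta_i)=1/(1-q_i^2)$, and the twisted multiplication on $\f\otimes\f$ carries a sign/power convention $q^{-|y|\cdot|z|}$ that must be reconciled with \cite[1.2.2]{Lu1}. One must also confirm that the $E_\be$ here (defined with $T''_{i,+}$) are the root vectors for which Lusztig proves orthogonality, and that the chosen convex order arising from $w_0=s_{i_1}\cdots s_{i_N}$ is the one compatible with the monomial basis — this is standard (every convex order on $\Phi_+$ comes from a reduced word for $w_0$, and Lusztig's construction is exactly along such a word). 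Once these identifications are pinned down, both assertions of the theorem are immediate transcriptions of \cite[\S38--41]{Lu1}, so I would state the proof concisely as: ``This is \cite[Proposition~41.1.3]{Lu1} for the basis statement and \cite[Proposition~38.2.3]{Lu1} together with the rank-one computation above for the orthogonality, after matching conventions.''
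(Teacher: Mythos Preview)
Your proposal is correct and takes essentially the same approach as the paper: both reduce the theorem entirely to citations from Lusztig \cite{Lu1}. The paper's proof is the single sentence ``This follows from Corollary 41.1.4(b), Propositions 41.1.7, 38.2.3, and Lemma 1.4.4 of Lusztig \cite{Lu1},'' which packages exactly the ingredients you identify---the $\A$-basis statement, orthogonality of distinct PBW monomials, and the rank-one self-pairing computation (Lusztig's Lemma 1.4.4 is precisely the formula for $(\theta_i^{(p)},\theta_i^{(p)})$ you propose to redo by hand).
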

\begin{proof}
This follows from  Corollary 41.1.4(b), Propositions 41.1.7, 38.2.3, and Lemma 1.4.4 of Lusztig \cite{Lu1}.
\end{proof}

Consider the graded dual $\f^* := \oplus_{\al \in Q_+} \f_\al^*$. The map $r^*: \f^* \otimes \f^* \to \f^*$ gives $\f^*$ the structure of an associative algebra. Let 
\begin{equation}\label{EIota}
\kappa: \f^* \into \pf^*
\end{equation}
be the map dual to the quotient map $\xi:\pf \onto \f$. Set $\Af^* := \{x \in \f^* \mid x(\Af) \subseteq \A\}$ with the induced $Q_+$-grading. Given $i \in I$, we denote by $\theta_i^*: \f^* \to \f^*$ the dual map to the map $\f \to \f,\ x \mapsto x \theta_i$. Then the divided power $(\theta_i^*)^{(n)}:\f^*\to\f^*$ is dual to the map $x \mapsto x \theta_i^{(n)}$. Clearly $(\theta_i^*)^{(n)}$ stabilizes $\Af^*$. For $\be \in \Phi_+$, define $E_\be^* \in \Af_\be^*$ to be dual to $E_\be$. We define
\begin{equation}\label{EDualPBW}
  (E_\be^*)^{\< m\>} := q_\be^{m(m-1)/2} (E_\be^*)^m \quad\text{and}\quad E_\pi^* := (E_{\be_1}^*)^{\< p_1 \>} \dots (E_{\be_N}^*)^{\< p_N\>}
\end{equation}
for $m \geq 0$, and any sequence $\pi = (p_1, \dots p_N) \in \Z_{\geq 0}^N$. The next well-known result gives the {\em dual PBW basis} of $\Af^*$.

\begin{Theorem}\label{TDualPBW}
  The set
$
  \{E_\pi^* \mid \pi \in \Pi(\al)\}
$ 
is the $\A$-basis of $\Af_\al^*$ dual to the PBW basis of Theorem~\ref{TPBW}.
\end{Theorem}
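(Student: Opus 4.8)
The plan is to identify the dual PBW elements $E_\pi^*$ inside $\pf^*$ explicitly enough to recognize them as the dual basis to $\{E_\pi\}$. The key structural fact is that $\Af^*$ is a subalgebra of $\f^*$ under the multiplication $r^*$, and the divided-power twisting in \eqref{EDualPBW} is exactly the bookkeeping needed to make the products of the $E_{\be}^*$ behave well. First I would recall that Theorem~\ref{TPBW} gives $(E_\pi, E_\si) = \de_{\pi,\si} l_\pi$, so the bilinear form identifies $\Af_\al$ (after extending scalars to $\Q(q)$) with a space on which $\{E_\pi\}$ is orthogonal; the issue is purely integral, namely that $\{E_\pi^*\}$ as defined lands in $\Af_\al^*$ and is $\A$-dual to $\{E_\pi\}$, not merely $\Q(q)$-dual.

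The main steps would be: (1) For a single root $\be$, show $(E_\be^*)^{\<m\>}$ is dual to $E_\be^{(m)}$ up to the scalar coming from $l_{(\be^m)} = \prod_{s=1}^m (1-q_\be^{2s})^{-1}$; this is a rank-one computation inside $\f^*_{m\be}$, using that $E_\be^m = [m]_\be^! E_\be^{(m)}$ and the twisting $q_\be^{m(m-1)/2}$ in \eqref{EDualPBW} converts $[m]_\be^!$-type denominators into the correct normalization. (2) Show that the multiplication $r^*$ on $\f^*$ is, in the appropriate sense, ``triangular'' with respect to the order on root partitions: the product $(E_{\be_1}^*)^{\<p_1\>}\cdots(E_{\be_N}^*)^{\<p_N\>}$ equals $E_\pi^*$ plus a combination of $E_\si^*$ with $\si$ strictly larger (or smaller) in the bilexicographic order, because pairing against $E_\si = E_{\be_1}^{(s_1)}\cdots E_{\be_N}^{(s_N)}$ and using the coproduct $r$ together with convexity of the PBW order forces $\si = \pi$ for the leading term. (3) Combine (1) and (2): the pairing $(E_\pi^*, E_\si)$ is computed by iterating $(xy,z) = (x\otimes y, r(z))$, expanding $r(E_\si)$, and invoking Theorem~\ref{TPBW} orthogonality at each stage; the scalars from the $\<m\>$-twists are designed precisely to cancel the $l_\pi$ factors, yielding $(E_\pi^*, E_\si) = \de_{\pi,\si}$. (4) Finally, a cardinality/freeness argument: since $\{E_\pi\}$ is an $\A$-basis of $\Af_\al$ by Theorem~\ref{TPBW} and $\Af_\al^*$ is $\A$-free of the same finite rank, a family in $\Af_\al^*$ pairing to the identity matrix against an $\A$-basis is automatically an $\A$-basis.

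The hard part will be step (2)–(3): controlling the non-leading terms in the product $(E_{\be_1}^*)^{\<p_1\>}\cdots(E_{\be_N}^*)^{\<p_N\>}$ and making sure the twisting constants $q_\be^{m(m-1)/2}$ interact correctly with the twisted multiplication $(x\otimes y)(z\otimes w) = q^{-|y|\cdot|z|}xz\otimes yw$ on $\pf\otimes\pf$ and with Lusztig's braid operators $T_i$ defining $E_\be$. In practice this is where one must be careful that the convex order used to build the PBW basis and the bilexicographic order on $\Pi(\al)$ are compatible, so that ``leading term'' is well-defined; this compatibility is exactly what underlies Lemma~\ref{LMinRP} and the triangularity statements in \cite[Part VI]{Lu1}. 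I would therefore lean on Lusztig's results on the dual canonical/PBW bases (the same sections cited in the proof of Theorem~\ref{TPBW}, especially the orthogonality and the behavior of $r$ on PBW monomials) rather than re-deriving the triangularity from scratch; the remaining content is then the elementary verification that the normalization in \eqref{EDualPBW} is the correct one, which is the single-root computation of step (1).
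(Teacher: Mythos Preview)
Your proposal would work, but it is considerably more elaborate than needed, and step (2) as stated is confused: by definition \eqref{EDualPBW}, $E_\pi^*$ \emph{is} the product $(E_{\be_1}^*)^{\<p_1\>}\cdots(E_{\be_N}^*)^{\<p_N\>}$, so there is nothing to prove there and no ``plus lower terms'' to control. What you really mean is that this product pairs to $\de_{\pi,\si}$ against $E_\si$, which is step (3).

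The paper's argument bypasses all of the triangularity analysis you flag as the ``hard part.'' The key observation is that the identity $(xy,z)=(x\otimes y,r(z))$ says precisely that the map $\f\to\f^*$, $x\mapsto (x,-)$, is an algebra homomorphism for the product $r^*$ on $\f^*$. Hence
\[
(E_\pi,-)=\prod_{n}(E_{\be_n}^{(p_n)},-)=\prod_n \frac{1}{[p_n]_{\be_n}^!}\,(E_{\be_n},-)^{p_n}
\]
as elements of $\f^*$. By Theorem~\ref{TPBW} with $\pi=(\be)$ one has $(E_\be,-)=\frac{1}{1-q_\be^2}\,E_\be^*$, and your step (1) computation then gives $(E_{\be}^{(m)},-)=l_{(\be^m)}(E_\be^*)^{\<m\>}$, whence $(E_\pi,-)=l_\pi E_\pi^*$. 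Applying Theorem~\ref{TPBW} once more yields $E_\pi^*(E_\si)=l_\pi^{-1}(E_\pi,E_\si)=\de_{\pi,\si}$. No expansion of $r(E_\si)$, no convexity, and no Levendorskii--Soibelman type triangularity is required: the full orthogonality in Theorem~\ref{TPBW} already absorbs what you were planning to extract from the convex order.
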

\begin{proof}
It easily follows from the properties of the Lusztig bilinear form, the definition of the product on $\f^*$ and Theorem~\ref{TPBW} that the linear functions 
$(E_\pi,-)$ and $l_\pi E_\pi^*$ on $\f$ are equal. It remains to apply Theorem~\ref{TPBW} one more time. 
\end{proof}

\begin{Example} 
{\rm 
Let $C=A_2$, and $w_0=s_1s_2s_1$. Then $E_{\al_1+\al_2}=T_{1,+}''(E_2)=E_1E_2-qE_2E_1$, and, switching back to $\theta$'s, the PBW basis of $\Af_{\al_1+\al_2}$ is $\{\theta_2\theta_1, \theta_1\theta_2-q\theta_2\theta_1\}$. Using the defining properties of Lusztig's bilinear form, one can easily check that $(E_{\al_2}E_{\al_1},E_{\al_2}E_{\al_1})=\frac{1}{(1-q^2)^2}$, $(E_{\al_1+\al_2},E_{\al_1+\al_2})=\frac{1}{(1-q^2)}$, and $(E_{\al_2}E_{\al_1},E_{\al_1+\al_2})=0$. Finally the dual basis is  $\{(12),(21)+q(12)\}$. 
}
\end{Example}

\subsection{Categorification of $\Af$ and $\Af^*$}
Now we state the fundamental categorification theorem 
proved in \cite{KL1,KL2}, see also \cite{R}. We denote by $[R_0]$ the class of the left regular representation of the trivial algebra $R_0\cong F$. 

\begin{Theorem}\label{klthm}
There is a unique $\Laurent$-linear isomorphism
$
\gamma:\Af \stackrel{\sim}{\rightarrow} [\Proj{R}]
$
such that 
$1 \mapsto [R_0]$ 
and 
$\gamma( x \theta_i^{(n)}) = 
\theta_i^{(n)}(\gamma(x))$ 
for all $x \in \Af$,  $i \in I$, and $n \geq 1$. 
Under this isomorphism:
\begin{itemize}
\item[(1)] $\gamma(\Af_\al)=[\Proj{R_\al}]$;
\item[(2)] the 
multiplication $\Af_\alpha \otimes \Af_\beta
\rightarrow  \Af_{\alpha+\beta}$ 
corresponds to the product on $[\Proj{R}]$ induced by the exact functor 
$\Ind_{\alpha,\beta}$;
\item[(3)] for $\bi \in \words_\al$ we have $\gamma(\theta_\bi) = [R_\al e(\bi)]$;
\item[(4)] for $x,y \in \Af$ we have $(x,y) = (\ga(x),\ga(y))$.
\end{itemize}
\end{Theorem}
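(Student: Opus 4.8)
\noindent This is the categorification theorem of Khovanov--Lauda \cite{KL1,KL2} (see also \cite{R}), and the plan has two stages: first construct $\gamma$ as an $\Laurent$-algebra homomorphism, then recognize it as an isomorphism via the two bilinear forms. For the first stage I would use that $\Af$ is generated over $\Laurent$ by the divided powers $\theta_i^{(n)}$ ($i\in I$, $n\geq 1$), and that $[\Proj{R}]$ already carries operators $\theta_i^{(n)}=\Ind_{\al,n\al_i}(?\boxtimes P(i^{(n)}))$ coming from exact functors; setting $\gamma(1):=[R_0]$ and $\gamma(x\theta_i^{(n)}):=\theta_i^{(n)}(\gamma(x))$ then gives a candidate map. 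The real work at this stage is well-definedness, i.e.\ checking that these assignments respect the defining relations of $\Af$, which is done in \cite[\S\S2--3]{KL1} using the faithful polynomial representation of the nilHecke algebra $R_{n\al_i}=H_n$ together with the basis Theorem~\ref{TBasis}. Granting well-definedness, properties (1)--(3) are formal: (1) because $\theta_i^{(n)}$ shifts the $Q_+$-grading by $n\al_i$; (2) because induction is associative and exact on projectives; (3) because $R_\al e(\bi)\cong R_\al e(i_1)\circ\cdots\circ R_\al e(i_d)$ by transitivity of induction, while $P(i^{(1)})\cong R_{\al_i}e(i)$.

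For the second stage, surjectivity is immediate: Theorem~\ref{TBasis} shows $[\Proj{R_\al}]$ is spanned over $\Laurent$ by the classes $[R_\al e(\bi)]$, $\bi\in\words_\al$, and these lie in $\im\gamma$ by (3). For injectivity, together with property (4), I would introduce the Khovanov--Lauda pairing $([P],[Q]):=\qdim(P^\tau\otimes_{R_\al}Q)$ on $[\Proj{R_\al}]$; computing it on rank-one projectives, using the nilHecke decomposition $P(i^n)\cong[n]^!_i\,P(i^{(n)})$ and the Mackey filtration of \cite{KL1} for $\Res\circ\Ind$, and propagating through the (twisted) bimultiplicativity of both pairings, shows that $\gamma$ carries it to Lusztig's form $(\cdot,\cdot)$ on $\Af$, which is (4). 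Since Lusztig's form is nondegenerate on $\f=\pf/\mathrm{rad}(\cdot,\cdot)$ and $\Af\otimes_\Laurent\Q(q)=\f$, any $x\in\ker\gamma$ satisfies $(x,-)\equiv 0$ and hence $x=0$, so $\gamma$ is injective; equivalently one may match $\Laurent$-ranks, the graded dimension formula of subsection~\ref{SSDF} identifying $\operatorname{rk}_\Laurent[\Proj{R_\al}]$ with $|\Pi(\al)|=\operatorname{rk}_\Laurent\Af_\al$ (Theorem~\ref{TPBW}).

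The genuine obstacle, I expect, is exactly the input that powers the second stage: pinning down $\operatorname{rk}_\Laurent[\Proj{R_\al}]$, equivalently ruling out that induced projectives satisfy relations beyond those holding in $\Af$. This rests either on the hard direction of the basis theorem for $R_\al$ plus a careful analysis of how the projective indecomposable $P(i^{(n)})$ splits off the decomposable $P(i^n)$, or on the independent graded dimension computation of subsection~\ref{SSDF}. Everything else --- associativity of induction, the induction/restriction adjunction, the Mackey theorem, and the defining identities for Lusztig's form --- is then bookkeeping.
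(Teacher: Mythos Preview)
The paper does not give its own proof of this theorem: it is stated as the ``fundamental categorification theorem proved in \cite{KL1,KL2}, see also \cite{R}'' and simply cited. Your sketch is a reasonable outline of the Khovanov--Lauda argument and correctly identifies where the substance lies (well-definedness of $\gamma$, and then injectivity via nondegeneracy of the form).

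One genuine caution: your alternative route to injectivity, matching $\Laurent$-ranks via ``the graded dimension formula of subsection~\ref{SSDF},'' is circular in this paper. The proof of Theorem~\ref{TDimGen} explicitly invokes Theorem~\ref{klthm}(3) and (4), so you cannot use it to establish Theorem~\ref{klthm}. Likewise, identifying $\operatorname{rk}_\Laurent[\Proj{R_\al}]$ with $|\Pi(\al)|$ via the classification of simples (Theorem~\ref{TStand}) goes through \cite{McN}, which already uses categorification. If you want a self-contained argument, you must take the first option you list: deduce injectivity directly from nondegeneracy of Lusztig's form together with the basis Theorem~\ref{TBasis} and the nilHecke analysis, as in \cite{KL1}.
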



Let $M$ be a finite dimensional graded $R_\al$-module. Define the {\em $q$-character} of $M$ as follows: 
\begin{equation*}
\CH M:=\sum_{\bi\in \words_\al}(\qdim M_\bi) \bi\in \pAf^*.
\end{equation*}
The $q$-character map $\CH: \mod{R_\al}\to \pAf^*$ factors through to give an $ \Laurent$-linear map from the Grothendieck group
\begin{equation}\label{EChMap}
\CH: [\mod{R_\al}]\to  \pAf^*.
\end{equation}

We now state a dual result to Theorem~\ref{klthm}, see \cite[Theorem 4.4]{KR}.

\begin{Theorem}\label{Dklthm} 
There is a unique $\Laurent$-linear isomorphism
$
\gamma^*: [\mod{R}]\stackrel{\sim}{\rightarrow}\Af^*
$
with the following properties:
\begin{itemize}
\item[(1)] $\ga^*([R_0])= 1$; 
\item[(2)] $\gamma^*((\theta_i^*)^{(n)}(x)) = 
(\theta_i^*)^{(n)}(\gamma^*(x))$ for all $x \in [\mod{R}],\ 
i \in I,\ n \geq 1$; 
\item[(3)] the following triangle is commutative:
$$
\begin{pb-diagram}
\node{}\node{\pAf^*}
\node{} \\
\node{[\mod{R}]} \arrow[2]{e,t}{\ga^*}
\arrow{ne,t}{\CH}
\node{}\node{\Af^*}
\arrow{nw,t}{\kappa}
\end{pb-diagram}
$$
\item[(4)] $\gamma^*([\mod{R_\al}])=\Af^*_\al$ for all $\ga\in Q_+$;
\item[(5)] under the isomorphism $\ga^*$, the 
multiplication $\Af^*_\alpha \otimes \Af^*_\beta
\rightarrow  \Af^*_{\alpha+\beta}$  
corresponds to the product on $[\mod{R}]$ induced by 
$\Ind_{\alpha,\beta}$;
\end{itemize}
\end{Theorem}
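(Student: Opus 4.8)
The plan is to construct $\gamma^*$ as the transpose of the isomorphism $\gamma$ of Theorem~\ref{klthm} with respect to a natural perfect pairing between projectives and modules, and then to read off properties (1)--(5) by transporting everything through $\gamma$.

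\textbf{Construction and property (4).} For each $\al$ form the $\Laurent$-bilinear pairing $\langle\cdot,\cdot\rangle_R\colon[\Proj{R_\al}]\times[\mod{R_\al}]\to\Laurent$, $\langle[P],[M]\rangle_R:=\qdim\Hom_{R_\al}(P,M)$. Since $P(\pi)$ is the projective cover of $L(\pi)$ and $\End_{R_\al}(L(\pi))=F$ concentrated in degree $0$ (using $L(\pi)^\circledast\cong L(\pi)$, Theorem~\ref{TStand}(iii)), the bases $\{[P(\pi)]\}$ and $\{[L(\pi)]\}$ are dual, so $\langle\cdot,\cdot\rangle_R$ is perfect over $\Laurent$. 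On the quantum side, the evaluation pairing $\f_\al\times\f^*_\al\to\Q(q)$ restricts to a perfect $\Laurent$-pairing $\langle\cdot,\cdot\rangle_\f\colon\Af_\al\times\Af^*_\al\to\Laurent$, the PBW and dual PBW bases of Theorems~\ref{TPBW} and \ref{TDualPBW} being dual bases. Define $\gamma^*\colon[\mod R]\to\f^*$ by letting $\gamma^*([M])$ be the functional on $\Af$ given by $x\mapsto\langle\gamma(x),[M]\rangle_R$ (this takes values in $\Laurent$, so $\gamma^*([M])\in\Af^*$), and extending $\Q(q)$-linearly; equivalently $\langle x,\gamma^*(y)\rangle_\f=\langle\gamma(x),y\rangle_R$ for $x\in\Af$, $y\in[\mod R]$. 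As $\gamma$ is an isomorphism and both pairings are perfect over $\Laurent$, $\gamma^*$ is a degree-preserving $\Laurent$-linear isomorphism $[\mod{R_\al}]\iso\Af^*_\al$, which is existence together with property~(4).

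\textbf{Properties (1), (3) and uniqueness.} Property~(1) is immediate from $\gamma^*([R_0])(1)=\langle[R_0],[R_0]\rangle_R=1$. For (3), take $x=\theta_\bi\in\Af_\al$; then $\gamma^*([M])(\theta_\bi)=\langle\gamma(\theta_\bi),[M]\rangle_R=\langle[R_\al e(\bi)],[M]\rangle_R=\qdim e(\bi)M=\qdim M_\bi$ by Theorem~\ref{klthm}(3). Since $\kappa$ is dual to $\xi\colon\pf\onto\f$, $\ptheta_\bi\mapsto\theta_\bi$, and the $\theta_\bi$ span $\f_\al$, this says exactly $\kappa\circ\gamma^*=\CH$, i.e. the triangle commutes. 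Uniqueness follows from (1) and (2): if $\delta:=\gamma^*_1-\gamma^*_2$ satisfies both, induct on $\height(\al)$ to show $\delta|_{[\mod{R_\al}]}=0$. The case $\al=0$ is clear; for $\al\neq0$ and $x\in[\mod{R_\al}]$, the inductive hypothesis gives $\theta_i^*(\delta(x))=\delta((\theta_i^*)^{(1)}(x))=0$ for every $i$ with $\al-\al_i\in Q_+$, and $\bigoplus_i\theta_i^*\colon\Af^*_\al\to\bigoplus_i\Af^*_{\al-\al_i}$ is injective (it is dual to the surjection $\bigoplus_i\f_{\al-\al_i}\to\f_\al$, $(x_i)\mapsto\sum_i x_i\theta_i$), so $\delta(x)=0$.

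\textbf{Properties (2) and (5).} These are obtained by dualising the corresponding statements of Theorem~\ref{klthm}. For (2): the functor $\theta_i^{(n)}=\Ind_{\al,n\al_i}(-\boxtimes P(i^{(n)}))$ is left adjoint to $(\theta_i^*)^{(n)}=\Hom_{R'_{n\al_i}}(P(i^{(n)}),\Res_{\al,n\al_i}-)$, whence $\langle\theta_i^{(n)}[P],[M]\rangle_R=\langle[P],(\theta_i^*)^{(n)}[M]\rangle_R$; combining this with $\gamma(x\theta_i^{(n)})=\theta_i^{(n)}\gamma(x)$ and the fact that $(\theta_i^*)^{(n)}$ on $\f^*$ is dual to right multiplication by $\theta_i^{(n)}$, perfectness of $\langle\cdot,\cdot\rangle_\f$ forces $\gamma^*((\theta_i^*)^{(n)}[M])=(\theta_i^*)^{(n)}\gamma^*([M])$. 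Property~(5) is the analogue for $\Ind_{\al,\be}$ versus the product $r^*$ on $\f^*$: one pairs $\gamma^*([M\circ N])$ against an arbitrary $z\in\Af_{\al+\be}$, expands $r(z)$ in $\Af\otimes\Af$, and uses Frobenius reciprocity for $R_{\al,\be}\hookrightarrow R_{\al+\be}$ together with Theorem~\ref{klthm}(2) (multiplication on $\Af$ corresponds to $\Ind$ on $[\Proj R]$) and the definition of $r^*$ via $r$. The main obstacle is precisely here: the Frobenius reciprocity isomorphism for the parabolic embedding carries a grading shift, and one must verify it is compensated exactly by the $q$-twisting in the multiplications on $\f$ and $\f^*$ (equivalently, that $\gamma$ and $\gamma^*$ respect the \emph{twisted} bialgebra structures of \cite[Proposition 3.2]{KL1}); keeping these $q$-powers straight is where the real work lies. (Alternatively, surjectivity of $\gamma^*$ onto $\Af^*_\al$ in the construction above can instead be deduced from the dimension formula of subsection~\ref{SSDF} by identifying $\gamma^*[\bar\De(\pi)]$ with the dual PBW element $E^*_\pi$.)
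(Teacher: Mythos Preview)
The paper does not give its own proof of this theorem: it simply cites \cite[Theorem 4.4]{KR} and states the result. So there is no ``paper's proof'' to compare against beyond that citation. Your approach---defining $\gamma^*$ as the transpose of $\gamma$ with respect to the Cartan pairing $\langle[P],[M]\rangle_R=\qdim\Hom(P,M)$ and then reading off (1)--(5) by duality---is exactly the standard one, and is essentially what underlies the cited reference.

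A few remarks on the details. For perfectness of $\langle\cdot,\cdot\rangle_R$ you do not need Theorem~\ref{TStand}: the finiteness of irreducibles and the fact that graded simples over $R_\al$ are absolutely irreducible are already in \cite{KL1} and are logically prior to the standard-module theory. Your caution about property~(5) is also somewhat misplaced: the statement that $\gamma$ is a \emph{twisted bialgebra} isomorphism (i.e.\ intertwines both product and coproduct, with the correct $q$-twist) is precisely the content of \cite[Proposition~3.2 and \S3.4]{KL1}, which the paper invokes just before Theorem~\ref{klthm}. Once you accept that, (5) is immediate by taking graded duals, and there is no further grading-shift bookkeeping to do. Finally, your parenthetical ``alternative'' at the end---deducing surjectivity of $\gamma^*$ from the dimension formula of \S\ref{SSDF} and the identification $\gamma^*[\bar\De(\pi)]=E_\pi^*$---would be circular in this paper, since Corollary~\ref{TDim} is derived via Lemma~\ref{LStdPBW}, which in turn uses Theorem~\ref{Dklthm}.
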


We conclude with McNamara's result on the categorification of the dual PBW-basis (see also \cite{Kato} for simply laced Lie types): 

\begin{Lemma}\label{LStdPBW}
  For every $\pi \in \Pi(\al)$ we have $\ga^*([\bar \De(\pi)]) = E_\pi^*$.
\end{Lemma}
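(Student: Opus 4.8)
The plan is to verify the identity $\gamma^*([\bar\De(\pi)]) = E_\pi^*$ by reducing it to the statements already available in the excerpt, most notably Theorem~\ref{TStand}(iv), Theorem~\ref{TDualPBW}, and the compatibility properties of $\gamma^*$ in Theorem~\ref{Dklthm}. First I would establish the base case $\al = p\be$ for a single positive root $\be$, i.e. show $\gamma^*([\bar\De(\be^p)]) = (E_\be^*)^{\<p\>}$. Since $\bar\De(\be^p) = L(\be)^{\circ p}\langle\shift\rangle$ and $L(\be)^{\circ p}$ is irreducible (Theorem~\ref{TStand}(v)), and since $\gamma^*$ intertwines induction with the multiplication on $\Af^*$ (Theorem~\ref{Dklthm}(5)), it suffices to compute $\gamma^*([L(\be)])$ and then take the appropriate power, tracking the grading shift $\shift(\be^p) = (\be\cdot\be)p(p-1)/4$ against the twist factor $q_\be^{m(m-1)/2}$ in the definition \eqref{EDualPBW} of $(E_\be^*)^{\<m\>}$. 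The identification $\gamma^*([L(\be)]) = E_\be^*$ for a single root should follow because $L(\be)$ is the cuspidal module and $E_\be^*$ is, by Theorem~\ref{TDualPBW}, the element of the dual PBW basis dual to $E_\be$; both are "minimal" in the appropriate sense, and one can pin down $\gamma^*([L(\be)])$ up to scalar using the characterization of cuspidality together with the triangle in Theorem~\ref{Dklthm}(3) relating $\gamma^*$ to the $q$-character.

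Next I would handle the general case by induction on the bilexicographic order. Consider $\pi = (\be_1^{p_1},\dots,\be_N^{p_N})$ and apply $\gamma^*$ to the class $[\bar\De(\pi)] = [L(\be_1)^{\circ p_1}\circ\dots\circ L(\be_N)^{\circ p_N}\langle\shift(\pi)\rangle]$. Using Theorem~\ref{Dklthm}(5) repeatedly, $\gamma^*([\bar\De(\pi)])$ equals the twisted product $\gamma^*([L(\be_1)^{\circ p_1}])\cdots\gamma^*([L(\be_N)^{\circ p_N}])$, up to the grading shift bookkeeping; by the base case each factor is $(E_{\be_k}^*)^{\<p_k\>}$, so the product is exactly $E_\pi^*$ as defined in \eqref{EDualPBW} — provided the shift $\shift(\pi) = \sum_k (\be_k\cdot\be_k)p_k(p_k-1)/4$ is precisely absorbed by the twist factors $q_{\be_k}^{p_k(p_k-1)/2}$. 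This is a routine compatibility check between the normalization of $\bar\De(\pi)$ and the normalization of $E_\pi^*$. Alternatively, and perhaps more cleanly, one can argue by unitriangularity: Theorem~\ref{TStand}(iv) gives $[\bar\De(\pi)] = [L(\pi)] + \sum_{\si < \pi} (\text{stuff})\,[L(\si)]$, so $\{[\bar\De(\pi)]\}$ and $\{E_\pi^*\}$ are both bases of $\Af^*_\al$ that are unitriangular with respect to $\leq$ relative to some common basis; matching leading terms (the cuspidal/irreducible ones) then forces $\gamma^*([\bar\De(\pi)]) = E_\pi^*$.

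The main obstacle, I expect, is the single-root base case: establishing $\gamma^*([L(\be)]) = E_\be^*$ cleanly. The real content is that McNamara's cuspidal module $L(\be)$ categorifies Lusztig's dual root vector $E_\be^*$ under $\gamma^*$. One way to see this is to use the convexity characterization of $L(\be)$ (its restrictions vanish except in the prescribed way) together with the dual statement for $E_\be$ coming from the braid-group construction $E_{\be_{N+1-k}} = T_{i_1}\cdots T_{i_{k-1}}(\theta_{i_k})$; both satisfy the same recursive/inductive characterization via $\Res$ and the coproduct $r$, so they must correspond. A subtlety to watch is the exact grading shift: cuspidal modules carry no intrinsic shift, so one must confirm that $\gamma^*([L(\be)])$ lands on $E_\be^*$ on the nose rather than a shifted version, which again comes down to comparing normalizations (here the $\<m\>$-twist for $m=1$ is trivial, so no shift appears). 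Once the base case is secured, the induction and the shift bookkeeping are straightforward.
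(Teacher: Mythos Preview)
Your approach is essentially the paper's: establish $\gamma^*([L(\be)]) = E_\be^*$ for each positive root $\be$, then obtain the general case from the multiplicativity of $\gamma^*$ (Theorem~\ref{Dklthm}(5)) and the definition~\eqref{EStand}, with the shift $\shift(\pi)$ matching the twist factors $q_{\be_k}^{p_k(p_k-1)/2}$ exactly as you say. The only difference is that the paper does not re-derive the base case at all---it simply cites \cite[Theorem~3.1(1)]{McN} for $\gamma^*([L(\be)]) = E_\be^*$, which is precisely the step you flagged as the main obstacle; your unitriangularity alternative is not needed and, as sketched, would still hinge on that same single-root identification to match leading terms.
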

\begin{proof}
  By \cite[Theorem 3.1(1)]{McN}, we have $\ga^*([L(\be)]) = E_\be^*$ for all $\be\in\Phi_+$.  
The general case then follows from Theorem~\ref{Dklthm}(5) and the definition (\ref{EStand}) of $\bar\De(\pi)$. 
\end{proof}

\subsection{A dimension formula}\label{SSDF}
In this section we obtain a dimension formula for $R_\al$, which can be viewed as a combinatorial shadow of the affine quasi-hereditary structure on it. The idea of the proof comes from \cite[Theorem 4.20]{BKgrdec}. An independent but much less elementary proof can be found in \cite[Corollary 3.15]{BKM}. 

Recall the element $\theta_{(\bi)}\in \Af$ from (\ref{EThetaI}) and the scalar $[\bi]!\in\A$ from (\ref{EIFact}). We note that Lemma~\ref{LMonPBWGen} and Theorem~\ref{TDimGen} {\em do not require the assumption that the Cartan matrix $A$ is of finite type}, adopted elsewhere in the paper. 

\begin{Lemma}\label{LMonPBWGen}
Let $V^1,\dots,V^m\in \mod{R_\al}$, and let $v^n:=\ga^*([V^n])\in\Af^*_\al$ for $n=1,\dots,m$. Assume that $\{v^1,\dots,v^m\}$ is an $\A$-basis of $\Af^*_\al$. Let $\{v_1,\dots,v_m\}$ be the dual basis of $\Af_\al$. 
 Then for every $\bi \in \words_\al$, we have 
\[
    \theta_{(\bi)} = \sum_{n=1}^m\frac{\qdim  V^n_\bi}{[\bi]!} v_n.
\]
\end{Lemma}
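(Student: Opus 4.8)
The statement is a duality computation: both sides live in $\Af_\al$, so it suffices to pair with an $\A$-basis of $\Af^*_\al$. The plan is to pair each side against the basis $\{v^1,\dots,v^m\}=\{\ga^*([V^n])\}$ of $\Af^*_\al$, and show the pairings agree. On the right-hand side, pairing $v_n$ against $v^{n'}$ gives $\de_{n,n'}$ by the definition of dual basis, so $\left(\sum_n \tfrac{\qdim V^n_\bi}{[\bi]!}\,v_n,\; v^{n'}\right) = \tfrac{\qdim V^{n'}_\bi}{[\bi]!}$. On the left-hand side, I need to compute $(\theta_{(\bi)}, v^{n'})$ and show it equals the same quantity $\tfrac{\qdim V^{n'}_\bi}{[\bi]!}$.

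The key computation is therefore the pairing $(\theta_{(\bi)}, \ga^*([V]))$ for $V\in\mod{R_\al}$. First I would unwind $\ga^*([V])$ via Theorem~\ref{Dklthm}(3): $\kappa(\ga^*([V])) = \CH V = \sum_{\bj\in\words_\al}(\qdim V_\bj)\,\bj \in \pAf^*$. So under the inclusion $\kappa:\f^*\into\pf^*$, the element $\ga^*([V])$ is identified with the $q$-character of $V$, written in the dual word basis. Next, recall from the setup that $\{[\bj]!\,\bj \mid \bj\in\words_\al\}$ is the $\A$-basis of $\pAf^*_\al$ dual to the basis $\{\ptheta_{(\bj)} \mid \bj\in\words_\al\}$ of $\pAf_\al$ (equations~(\ref{EMBasisA}), (\ref{EDualMBasisA})). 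Hence for the element $\ptheta_{(\bi)}\in\pAf_\al$ we have $(\ptheta_{(\bi)}, [\bj]!\,\bj) = \de_{\bi,\bj}$, i.e. $(\ptheta_{(\bi)}, \bj) = \de_{\bi,\bj}/[\bi]!$ (with the convention that the pairing here is the evaluation pairing between $\pf$ and $\pf^*$, or equivalently Lusztig's form restricted appropriately). Therefore, using that the quotient map $\xi:\pf\onto\f$ sends $\ptheta_{(\bi)}\mapsto\theta_{(\bi)}$ and $\kappa$ is its dual,
\[
(\theta_{(\bi)}, \ga^*([V])) = (\ptheta_{(\bi)}, \kappa(\ga^*([V]))) = (\ptheta_{(\bi)}, \CH V) = \sum_{\bj\in\words_\al}(\qdim V_\bj)\,(\ptheta_{(\bi)}, \bj) = \frac{\qdim V_\bi}{[\bi]!}.
\]
Applying this with $V = V^{n'}$ matches the right-hand side pairing, and since a pairing-nondegenerate bilinear form distinguishes elements of $\Af_\al$ (Theorem~\ref{TPBW} gives $(\cdot,\cdot)$ nondegenerate on $\Af_\al$ via the PBW basis and the dual PBW basis, Theorem~\ref{TDualPBW}), the two expressions for $\theta_{(\bi)}$ coincide.

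The main obstacle is bookkeeping of which bilinear form / evaluation pairing is used at each stage and checking compatibility: the identification of $\ga^*([V])$ with $\CH V$ happens inside $\pf^*$ via $\kappa$, while the dual-basis relation for $\{v_n\}$ and $\{v^n\}$ and the nondegeneracy used at the end happen via Lusztig's form on $\f$ (equivalently the induced evaluation pairing $\f\times\f^*\to\Q(q)$). One must confirm that $\kappa$ is exactly dual to $\xi$ (which is (\ref{EIota})), so that $(\ptheta_{(\bi)},\kappa(y)) = (\xi(\ptheta_{(\bi)}),y) = (\theta_{(\bi)},y)$ for $y\in\f^*$, and that the evaluation pairing $\pAf_\al\times\pAf^*_\al\to\A$ really is the one making (\ref{EMBasisA}) and (\ref{EDualMBasisA}) dual bases — both of which are immediate from the definitions in Subsection~\ref{SSQG}. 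Once these identifications are pinned down the proof is a short chain of equalities; no estimates or case analysis are needed, and the assumption of finite type is not used (consistent with the remark preceding the lemma).
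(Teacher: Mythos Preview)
Your argument is correct and is essentially the paper's own proof: both compute the coefficients of $\theta_{(\bi)}$ in the basis $\{v_n\}$ by evaluating the dual basis elements $v^n$ (equivalently $\kappa(v^n)=\CH V^n$) on $\ptheta_{(\bi)}$ using the dual pair (\ref{EMBasisA}), (\ref{EDualMBasisA}) and the fact that $\kappa=\xi^*$. The only cosmetic difference is that the paper writes $\theta_{(\bi)}=\sum_n v^n(\xi(\ptheta_{(\bi)}))\,v_n$ directly, whereas you phrase it as checking both sides against each $v^{n'}$; your closing appeal to nondegeneracy of Lusztig's form via the PBW basis is unnecessary, since agreement on the evaluation pairing with an $\A$-basis of $\Af_\al^*$ already forces equality in $\Af_\al$.
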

\begin{proof}
Recall the map $\kappa$ from (\ref{EIota}) dual to the natural projection $\xi:\pf \onto \f$. By Theorem~\ref{Dklthm} we have for any $1\leq n\leq m$: 
\begin{align*}
  \kappa(v^n) =  \kappa(\ga^*([V^n]))=\CH([V^n])=\sum_{\bi \in \words_\al}(\qdim V^n_\bi) \bi 
= \sum_{\bi \in \words_\al}\frac{\qdim V^n_\bi}{[\bi]!} [\bi]! \bi.
\end{align*}
Recalling (\ref{EMBasisA}) and (\ref{EDualMBasisA}), 
$\{\ptheta_{(\bi)} \mid \bi \in \words_\al\}$ and
$\{[\bi]! \bi \mid \bi \in \words_\al\}$ is a pair of dual bases in $\pAf_\al $ and $\pAf_\al ^*$. So, using our expression for $ \kappa(v^n)$, we can now get by dualizing:
\begin{align*}
\theta_{(\bi)}&=\xi(\ptheta_{(\bi)})=\sum_{n=1}^m v^n(\xi(\ptheta_{(\bi)}))v_n
\\
&=\sum_{n=1}^m\kappa(v^n)(\ptheta_{(\bi)})v_n
=\sum_{n=1}^m
\sum_{\bj \in \words_\al}\frac{\qdim V^n_\bj}{[\bj]!} [\bj]! \bj
(\ptheta_{(\bi)})v_n
\\
&= \sum_{n=1}^m
\sum_{\bj \in \words_\al}\frac{\qdim V^n_\bj}{[\bj]!} \de_{\bi,\bj}v_n
= \sum_{n=1}^m\frac{\qdim  V^n_\bi}{[\bi]!} v_n,
\end{align*}
as required. 
\end{proof}

\begin{Theorem}\label{TDimGen}
With the assumptions of Lemma~\ref{LMonPBWGen}, for every $\bi, \bj \in \words_\al$, we have
\[
    \qdim(e(\bi)R_\al e(\bj)) = \sum_{n,k=1}^m (\qdim V^n_\bi)(\qdim V^k_\bj)(v_n,v_k).
\]
  In particular,
\[
    \qdim(R_\al) = \sum_{n,k=1}^m (\qdim V^n)(\qdim V^k)(v_n,v_k).
\]
\end{Theorem}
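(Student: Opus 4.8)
The plan is to deduce Theorem~\ref{TDimGen} from the Khovanov--Lauda pairing together with Theorem~\ref{klthm}(4) and the basis theorem. First I would recall that by definition $([P],[Q]) = \qdim(P^\tau \otimes_{R_\al} Q)$ for $[P],[Q]\in[\Proj{R_\al}]$. Taking $P = R_\al e(\bi)$ and $Q = R_\al e(\bj)$, one has $P^\tau \otimes_{R_\al} Q \cong e(\bi) R_\al e(\bj)$ as graded vector spaces (using $\tau(e(\bi)) = e(\bi)$ and the standard identification $e(\bi)R_\al \otimes_{R_\al} R_\al e(\bj) \cong e(\bi)R_\al e(\bj)$), so
\[
  \qdim(e(\bi) R_\al e(\bj)) = ([R_\al e(\bi)], [R_\al e(\bj)]).
\]

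Next I would translate the right-hand side into $\Af$. By Theorem~\ref{klthm}(3), $\gamma(\theta_\bi) = [R_\al e(\bi)]$, and by part (4) of that theorem $\gamma$ is an isometry for the respective bilinear forms, so $([R_\al e(\bi)],[R_\al e(\bj)]) = (\theta_\bi, \theta_\bj)$. Now the point is to expand $\theta_\bi$ in terms of the dual basis $\{v_1,\dots,v_m\}$ of $\Af_\al$. Here I would invoke Lemma~\ref{LMonPBWGen}, which already gives the expansion of the divided-power monomial $\theta_{(\bi)}$; since $\theta_\bi = [\bi]!\,\theta_{(\bi)}$ by the definitions \eqref{EIFact} and \eqref{EThetaI}, we get $\theta_\bi = \sum_{n=1}^m (\qdim V^n_\bi)\, v_n$. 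Substituting this (and the analogous expression for $\theta_\bj$) into $(\theta_\bi,\theta_\bj)$ and using bilinearity yields
\[
  \qdim(e(\bi)R_\al e(\bj)) = \sum_{n,k=1}^m (\qdim V^n_\bi)(\qdim V^k_\bj)(v_n,v_k),
\]
which is the first claim.

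For the second ("in particular") claim I would sum the first formula over all $\bi,\bj \in \words_\al$. By relation \eqref{R1} we have $R_\al = \bigoplus_{\bi,\bj} e(\bi) R_\al e(\bj)$, hence $\qdim(R_\al) = \sum_{\bi,\bj} \qdim(e(\bi)R_\al e(\bj))$; and $\sum_\bi \qdim V^n_\bi = \qdim V^n$ by the word-space decomposition $V^n = \bigoplus_\bi V^n_\bi$. Interchanging the finite sums gives exactly $\qdim(R_\al) = \sum_{n,k} (\qdim V^n)(\qdim V^k)(v_n,v_k)$.

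I do not expect a serious obstacle here; the only point requiring a little care is the identification $P^\tau \otimes_{R_\al} Q \cong e(\bi)R_\al e(\bj)$ as graded vector spaces and checking that the grading shifts match so that the $\qdim$'s agree on the nose (not just up to a power of $q$). The convergence/well-definedness of $\qdim(R_\al)$ as a formal series is not an issue since each graded component $e(\bi)R_\al e(\bj)$ is finite-dimensional in each degree by Theorem~\ref{TBasis}, and these combine into the finite sum over $\bi,\bj$.
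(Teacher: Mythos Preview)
Your proposal is correct and follows essentially the same approach as the paper: identify $\qdim(e(\bi)R_\al e(\bj))$ with the Khovanov--Lauda pairing $([R_\al e(\bi)],[R_\al e(\bj)])$, transfer to $(\theta_\bi,\theta_\bj)$ via Theorem~\ref{klthm}(3),(4), and then expand using Lemma~\ref{LMonPBWGen} after clearing the $[\bi]!$ factor. Your treatment of the ``in particular'' claim by summing over $\bi,\bj$ is exactly what the paper leaves implicit.
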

\begin{proof}
  Theorem~\ref{klthm}(3) shows that $[R_\al e(\bi)] = \ga(\theta_\bi) = \ga([\bi]! \theta_{(\bi)})$. Using the definitions and Theorem~\ref{klthm}(4), we have
\begin{align*}
  \qdim(e(\bi)R_\al e(\bj)) &= \qdim((R_\al e(\bi))^\tau \otimes_{R_\al} R_\al e(\bj))\\
  &= ([R_\al e(\bi)], [R_\al e(\bj)])= ([\bi]! \theta_{(\bi)}, [\bj]! \theta_{(\bj)}).
\end{align*}
Now, by Lemma~\ref{LMonPBWGen} we see that
\begin{align*}
  ([\bi]! \theta_{(\bi)}, [\bj]! \theta_{(\bj)}) &= \Big( \sum_{n=1}^m(\qdim V^n_\bi) v_n, \sum_{k=1}^n(\qdim V^k_\bj) v_k \Big),
\end{align*}
which implies the theorem.
\end{proof}

Recall the scalar $l_\pi$ from (\ref{ELPi}), the module $\bar\De(\pi)$ from (\ref{EStand}), and PBW-basis elements $E_\pi$ from \S\ref{SSQG}.

\begin{Corollary}\label{TDim}
  For every $\bi, \bj \in \words_\al$, we have
\[
    \qdim(e(\bi)R_\al e(\bj)) = \sum_{\pi \in \Pi(\al)} (\qdim \bar \De(\pi)_\bi)(\qdim \bar \De(\pi)_\bj)
               l_\pi.
\]
  In particular,
\[
    \qdim(R_\al) = \sum_{\pi \in \Pi(\al)} (\qdim \bar \De(\pi))^2
                   l_\pi.
\]
\end{Corollary}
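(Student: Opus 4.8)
The statement is an immediate specialization of Theorem~\ref{TDimGen}, so the task is simply to exhibit a valid choice of the data $(V^1,\dots,V^m)$ and to identify the resulting bilinear form values with the scalars $l_\pi$. The natural choice is to index the modules by root partitions: set $m := |\Pi(\al)|$ and take $\{V^1,\dots,V^m\} := \{\bar\De(\pi) \mid \pi \in \Pi(\al)\}$, the proper standard modules from~(\ref{EStand}). First I would invoke Lemma~\ref{LStdPBW}, which gives $\ga^*([\bar\De(\pi)]) = E_\pi^*$, so that the classes $v^n$ of Lemma~\ref{LMonPBWGen} are precisely the dual PBW basis elements $\{E_\pi^* \mid \pi \in \Pi(\al)\}$. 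By Theorem~\ref{TDualPBW} this is indeed an $\A$-basis of $\Af_\al^*$, so the hypotheses of Lemma~\ref{LMonPBWGen} and Theorem~\ref{TDimGen} are satisfied, and the dual basis $\{v_n\}$ of $\Af_\al$ is exactly the PBW basis $\{E_\pi \mid \pi\in\Pi(\al)\}$ of Theorem~\ref{TPBW}.

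Next I would plug this into the conclusion of Theorem~\ref{TDimGen}. Reindexing the sums by root partitions, we get
\[
\qdim(e(\bi)R_\al e(\bj)) = \sum_{\pi,\si \in \Pi(\al)} (\qdim \bar\De(\pi)_\bi)(\qdim \bar\De(\si)_\bj)(E_\pi, E_\si).
\]
Now apply the orthogonality statement of Theorem~\ref{TPBW}, namely $(E_\pi, E_\si) = \de_{\pi,\si} l_\pi$: all cross terms with $\pi\neq\si$ vanish, collapsing the double sum to the single sum $\sum_{\pi\in\Pi(\al)} (\qdim \bar\De(\pi)_\bi)(\qdim \bar\De(\pi)_\bj) l_\pi$, which is the first displayed formula of the corollary. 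The ``in particular'' statement follows by summing over all $\bi,\bj\in\words_\al$ (using $\qdim R_\al = \sum_{\bi,\bj} \qdim(e(\bi)R_\al e(\bj))$ from $1 = \sum_\bi e(\bi)$, and $\qdim \bar\De(\pi) = \sum_\bi \qdim \bar\De(\pi)_\bi$), or equally directly from the second formula in Theorem~\ref{TDimGen}.

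There is essentially no obstacle here: the corollary is a bookkeeping consequence of results already assembled in the excerpt. The only point requiring a word of care is the matching of indexing sets and scalars—checking that the $l_\pi$ appearing in Theorem~\ref{TPBW} is the same quantity defined in~(\ref{ELPi}) and used in the corollary's statement, which is true by construction. One should also note, as the surrounding text does, that Theorem~\ref{TDimGen} and Lemma~\ref{LMonPBWGen} are stated without the finite-type hypothesis, but since Theorems~\ref{TPBW} and~\ref{TDualPBW} and Lemma~\ref{LStdPBW} (via McNamara's cuspidal theory) do use convexity of the order on $\Phi_+$, the corollary as stated lives in the finite-type setting, which is the standing assumption anyway.
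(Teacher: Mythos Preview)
Your proof is correct and follows essentially the same approach as the paper: invoke Lemma~\ref{LStdPBW} to identify $\ga^*([\bar\De(\pi)])$ with $E_\pi^*$, use Theorem~\ref{TDualPBW} to confirm these form an $\A$-basis with dual basis $\{E_\pi\}$, apply the orthogonality $(E_\pi,E_\si)=\de_{\pi,\si}l_\pi$ from Theorem~\ref{TPBW}, and then read off the result from Theorem~\ref{TDimGen}. The paper's own proof is exactly this sequence of citations, stated more tersely.
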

\begin{proof}
By Lemma~\ref{LStdPBW}, we have $\ga^*(\bar\De(\pi))=E^*_\pi$ for all $\pi\in\Pi(\al)$. Moreover, by Theorem~\ref{TDualPBW}, $\{E_\pi^*\mid \pi\in \Pi(\al)\}$ and $\{E_\pi\mid \pi\in \Pi(\al)\}$ is a pair of dual bases in $\Af_\al^*$ and $\Af_\al$. Finally,   $(E_\pi,E_\si)=\de_{\pi,\si}l_\pi$ by Theorem~\ref{TPBW}. It remains to apply Theorem~\ref{TDimGen}. 
\end{proof}

\section{Affine cellular structure}

Throughout this section we fix $\al\in Q_+$ and a total order $\leq$ on the set $\Pi(\al)$ of root partitions of $\al$, which refines the bilexicographic partial order (\ref{EBilex}).

\subsection{Some special word idempotents}\label{SSSSWId}
Recall from Section~\ref{SSSMT} that for each $\beta\in \Phi_+$, we have a cuspidal module $L(\be)$. Every irreducible $R_\al$-module $L$ has a word space $L_\bi$ such that the lowest degree component of $L_\bi$ is one-dimensional, see for example \cite[Lemma 2.30]{Kcusp} or \cite[Lemma 4.5]{BKM} for two natural choices. 
From now on, for each $\be\in\Phi_+$ we make an arbitrary choice of such word $\bi_\be$ for the cuspidal module $L(\be)$. 

For $\pi=(\be_1^{p_1},\dots,\be_N^{p_N})\in \Pi(\al)$, define
\begin{align*}
\bi_\pi&:=\bi_{\be_1}^{p_1}\dots \bi_{\be_N}^{p_N},
\\
 I_\pi &:= \sum_{\si \geq \pi} R_\al e(\bi_\si) R_\al,
 \\
 I_{>\pi} &:= \sum_{\si > \pi} R_\al e(\bi_\si) R_\al,
\end{align*}
 the sums being over $\si \in \Pi(\al)$. 
 We also consider the (non-unital) embedding of algebras:
\[
  \iota_\pi := \iota_{p_1\be_1,\dots,p_N\be_N} : R_{p_1 \be_1} \otimes \dots \otimes R_{p_N \be_N} \into R_\al, 
\]
whose image is the parabolic subalgebra
$$
R_\pi := R_{p_1\be_1,\dots,p_N\be_N}. 
$$

\begin{Lemma} \label{LAllIdem}
If a two-sided ideal $J$ of $R_\al$ contains all idempotents $e(\bi_\pi)$ with $\pi\in\Pi(\al)$, then $J=R_\al$.
\end{Lemma}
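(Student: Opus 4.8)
The statement is equivalent to saying that the quotient $R_\al/J$ is zero, and since $R_\al/J$ is a finite-dimensional-over-$F$ (or finitely generated over $\Z$) algebra, it suffices to show it has no irreducible modules, i.e. that every irreducible $R_\al$-module $L$ is killed by $J$. By Theorem~\ref{TStand}(ii), the irreducible modules are exactly the $L(\pi)$ for $\pi\in\Pi(\al)$, so it is enough to prove that for each $\pi\in\Pi(\al)$ there is some $\si\in\Pi(\al)$ with $e(\bi_\si)L(\pi)\neq 0$; then $R_\al e(\bi_\si)R_\al$ acts nontrivially on $L(\pi)$, hence $J$ does, hence $JL(\pi)=L(\pi)$ by irreducibility, and since this holds for all irreducibles, $J=R_\al$.

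The plan is therefore to show $e(\bi_\pi)L(\pi)\neq 0$ for every $\pi\in\Pi(\al)$ (taking $\si=\pi$). First I would recall that $L(\pi)$ is the irreducible head of the proper standard module $\bar\De(\pi) = L(\be_1)^{\circ p_1}\circ\cdots\circ L(\be_N)^{\circ p_N}\langle\shift(\pi)\rangle$. One shows $e(\bi_\pi)$ acts nontrivially on $\bar\De(\pi)$: by the Mackey-type / shuffle description of the word spaces of an induced module, the word space $\bar\De(\pi)_{\bi_\pi}$ contains the "obvious" tensor $v_{\be_1}^{\otimes}\otimes\cdots$ built from the chosen vectors spanning the lowest-degree lines of $L(\be_k)_{\bi_{\be_k}}$; concretely, since $\bi_\pi = \bi_{\be_1}^{p_1}\cdots\bi_{\be_N}^{p_N}$ is the concatenation matching the parabolic $R_\pi$, the vector $1\otimes(v_1\boxtimes\cdots\boxtimes v_N)$ lies in $e(\bi_\pi)\bar\De(\pi)$ and is nonzero by the basis theorem for induced modules (freeness of $R_\al 1_\pi$ over $R_\pi$). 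Then I would pass from $\bar\De(\pi)$ to its head $L(\pi)$: one needs that this distinguished word does not disappear in the head, i.e. that $e(\bi_\pi)L(\pi)\neq 0$. This is where the lowest-degree / cuspidality input matters — the relevant graded component of $\bar\De(\pi)_{\bi_\pi}$ survives in $L(\pi)$ because any lower composition factor $L(\si)$ with $\si<\pi$ cannot contribute to that word-and-degree by the characterization of cuspidal modules and Theorem~\ref{TStand}(iv),(vi). Alternatively, and perhaps more cleanly, one invokes that $L(\pi)$ itself has a distinguished one-dimensional lowest-degree word space, and by the theory of cuspidal systems this word can be taken to be $\bi_\pi$ (this is essentially the content behind the choices made in Section~\ref{SSSSWId}).

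The main obstacle is the last step: controlling the image of the word space $\bar\De(\pi)_{\bi_\pi}$ in the irreducible head $L(\pi)$, rather than just in $\bar\De(\pi)$. The cleanest route is to use Theorem~\ref{TStand}(iv): $[\bar\De(\pi):L(\pi)]_q=1$ and all other factors $L(\si)$ satisfy $\si<\pi$; combined with the defining cuspidality property of the $L(\be)$ and the explicit degree $\shift(\pi)$, one checks that the minimal-degree part of $\bar\De(\pi)_{\bi_\pi}$ is detected only by the $L(\pi)$ constituent, so it maps nonzero into $L(\pi)$. Once $e(\bi_\pi)L(\pi)\neq 0$ is established for all $\pi$, the ideal argument in the first paragraph finishes the proof: $J$ contains $e(\bi_\pi)$, acts nontrivially on each irreducible $L(\pi)$, hence equals $R_\al$.
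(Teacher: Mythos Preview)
Your approach is essentially the paper's: argue that if $J\neq R_\al$ then $R_\al/J$ has a simple module $L(\pi)$, and derive a contradiction from $e(\bi_\pi)L(\pi)\neq 0$; the paper simply asserts this last fact, while you correctly flesh it out via Theorem~\ref{TStand}(iv),(vi) (for $\si<\pi$ one has $e(\bi_\pi)L(\si)\subseteq\Res_\pi L(\si)=0$, so the nonzero $\bi_\pi$-word space of $\bar\De(\pi)$ must come from the unique $L(\pi)$ factor).

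Two small slips to fix: in your first paragraph you write that every irreducible ``is killed by $J$'' when you clearly mean the opposite (is \emph{not} annihilated by $J$, so that $R_\al/J$ has no simples); and $R_\al/J$ need not be finite-dimensional over $F$ --- this claim is false but also unnecessary, since any nonzero unital ring has a maximal left ideal by Zorn's lemma, and the resulting graded simple $R_\al$-module is automatically one of the $L(\pi)$ by Theorem~\ref{TStand}(ii).
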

\begin{proof}
If $J\neq R_\al$, let $I$ be a maximal left ideal containing $J$. Then $R_\al / I \cong L(\pi)$ for some $\pi$. Then $e(\bi_\pi)L(\pi)\neq 0$, which contradicts the assumption that $e(\bi_\pi)\in J$. This argument proves the lemma over any field, and then it also follows for $\Z$. 
\end{proof}

\begin{Lemma}\label{LBadWordsNew}
  Let $\pi\in\Pi(\al)$ and $e \in R_\al$ a homogeneous idempotent. If $e L(\si) = 0$ for all $\si\leq \pi$, then $e \in I_{>\pi}$.
\end{Lemma}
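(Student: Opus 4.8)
The plan is to induct downward on the total order $\leq$ on $\Pi(\al)$, reducing the statement to Lemma~\ref{LAllIdem} applied to the quotient algebra $R_\al/I_{>\pi}$. First I would set $\bar R_\al := R_\al/I_{>\pi}$ and, for a homogeneous idempotent $e$ with $e L(\si) = 0$ for all $\si \leq \pi$, consider the image $\bar e$ of $e$ in $\bar R_\al$. The goal $e \in I_{>\pi}$ is equivalent to $\bar e = 0$ in $\bar R_\al$, and since $\bar e$ is an idempotent this is equivalent to $\bar e \bar R_\al \bar e = 0$, or even just to $\bar e \bar R_\al = 0$ (because $\bar e \in \bar e \bar R_\al$). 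So it suffices to show that the left ideal $\bar R_\al \bar e$ — equivalently the projective module $\bar R_\al \bar e$ over $\bar R_\al$ — is zero.

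Next I would analyze which irreducible $\bar R_\al$-modules can occur in the head of $\bar R_\al \bar e$. The irreducible $R_\al$-modules are exactly the $L(\si)$ for $\si \in \Pi(\al)$ by Theorem~\ref{TStand}(ii), and $L(\si)$ factors through $\bar R_\al = R_\al/I_{>\pi}$ precisely when $I_{>\pi}$ annihilates it; since $I_{>\pi} = \sum_{\tau > \pi} R_\al e(\bi_\tau) R_\al$, this annihilation holds iff $e(\bi_\tau) L(\si) = 0$ for all $\tau > \pi$. By the defining multiplicity-one property of the word $\bi_\tau$ for $L(\tau)$, together with Theorem~\ref{TStand}(iv) (i.e., $[\bar\De(\tau):L(\si)] \neq 0$ implies $\si \leq \tau$, and the analogous statement for words: $e(\bi_\tau)L(\si) \neq 0$ forces $\si \geq \tau$ in the relevant ordering), one sees that $L(\si)$ is an $\bar R_\al$-module exactly when $\si \leq \pi$. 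Hence every composition factor of $\bar R_\al \bar e$ is some $L(\si)$ with $\si \leq \pi$. But by hypothesis $\bar e \cdot L(\si) = e L(\si) = 0$ for all such $\si$, so $\Hom_{\bar R_\al}(\bar R_\al \bar e, L(\si)) = e L(\si)$ vanishes for every irreducible $\bar R_\al$-module $L(\si)$. A nonzero module always has a nonzero map to some irreducible quotient (the relevant finiteness — $R_\al$ Noetherian, or passing to the finite-dimensional quotient picture — is available here), so $\bar R_\al \bar e = 0$, whence $\bar e = 0$ and $e \in I_{>\pi}$.

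The main technical point to get right is the claim that the irreducibles of $\bar R_\al$ are precisely $\{L(\si) : \si \leq \pi\}$, i.e., that $e(\bi_\tau)$ for $\tau > \pi$ generate an ideal that kills exactly the $L(\si)$ with $\si \not\leq \pi$. One direction is the standard fact that $e(\bi_\tau)L(\tau) \neq 0$ (so $L(\tau)$ is not an $\bar R_\al$-module for $\tau > \pi$); more care is needed for the other direction, showing $e(\bi_\tau)L(\si) = 0$ whenever $\si \leq \pi < \tau$. This should follow from the cuspidality of the building blocks and convexity: the word $\bi_\tau$ is a concatenation $\bi_{\be_1}^{q_1}\cdots\bi_{\be_N}^{q_N}$ reflecting the root partition $\tau$, and a nonzero word space $e(\bi_\tau)L(\si)$ would, via the $\Res$ functors and Theorem~\ref{TStand}(vi) (together with Lemma~\ref{LBKM}), force $\si \geq \tau$ in the bilexicographic order, contradicting $\si \leq \pi < \tau$. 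I expect the bookkeeping in this comparison-of-root-partitions step — matching the combinatorics of $\bi_\tau$ against the restriction behaviour of $L(\si)$ — to be the part that needs the most care, though all the ingredients are already in place from Section~\ref{SSSMT}.
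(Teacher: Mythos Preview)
Your core argument is correct and is essentially the paper's proof rephrased: the paper shows that $e$ lies in every maximal left ideal of $R_\al$ containing $I_{>\pi}$ (since the quotient by any such ideal is some $L(\si)$ with $\si\leq\pi$, which $e$ kills), and then invokes a standard fact (an idempotent in the Jacobson radical is zero) to conclude $e\in I_{>\pi}$. Your ``$\bar R_\al\bar e$ has no irreducible quotient, hence is zero'' is the same statement in module-theoretic dress.

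Two points of unnecessary complication, though. First, your opening sentence about downward induction and Lemma~\ref{LAllIdem} does not describe the argument you actually give; neither is used. Second, and more importantly, your entire third paragraph is worrying about something you do not need. For the proof you only need that every irreducible $\bar R_\al$-module is of the form $L(\si)$ with $\si\leq\pi$, i.e.\ that if $\si>\pi$ then $L(\si)$ is \emph{not} an $\bar R_\al$-module. This is immediate: $e(\bi_\si)\in I_{>\pi}$ by definition, while $e(\bi_\si)L(\si)\neq 0$ by the choice of $\bi_\si$. You do \emph{not} need the converse (that $e(\bi_\tau)L(\si)=0$ for all $\si\leq\pi<\tau$), so the ``bookkeeping in this comparison-of-root-partitions step'' that you flag as delicate can simply be deleted. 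With that trimmed away, your proof and the paper's coincide.
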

\begin{proof}
  Let $I$ be any maximal (graded) left ideal containing $I_{>\pi}$. Then $R_{\al} / I \cong L(\si)$ for some $\si \in \Pi(\al)$ such that $\si\leq \pi$. Indeed, if we had $\si > \pi$ then by definition $e(\bi_\si) \in I_{>\pi} \subseteq I$, and so $e(\bi_\si) L(\si) = e(\bi_\si)(R_\al/I)=0$, which is a contradiction. 
  
We have shown that $e$ is contained in every maximal left ideal containing $I_{>\pi}$. By a standard argument, explained in \cite[Lemma 5.8]{KLM}, we conclude that $e \in I_{>\pi}$. 
\end{proof}

\begin{Corollary}\label{LBadWords}
  Suppose that $\al = p\be$ for some $p \geq 1$ and $\be \in \Phi_+$. Let $\bi \in \words_{\al}$. If $e(\bi) L(\be^p) = 0$, then $e(\bi) \in I_{>(\be^p)}$.
\end{Corollary}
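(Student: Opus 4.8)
The plan is to deduce Corollary~\ref{LBadWords} from Lemma~\ref{LBadWordsNew} by showing that its hypothesis $e(\bi) L(\be^p) = 0$ actually forces $e(\bi) L(\si) = 0$ for \emph{all} $\si \le (\be^p)$ in $\Pi(p\be)$. But by Lemma~\ref{LMinRP}, $(\be^p)$ is the minimum of $\Pi(p\be)$ with respect to the bilexicographic order, and hence also with respect to the refining total order $\le$ fixed at the start of the section; so the only $\si \le (\be^p)$ is $\si = (\be^p)$ itself. Thus the hypothesis of Lemma~\ref{LBadWordsNew} (with $\pi = (\be^p)$) reduces precisely to the single condition $e(\bi)L(\be^p) = 0$, which is exactly what we are given.

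Concretely, first I would invoke Lemma~\ref{LMinRP} to note $\si \ge (\be^p)$ for every $\si \in \Pi(p\be)$, so $\si \le (\be^p)$ implies $\si = (\be^p)$. Then I would apply Lemma~\ref{LBadWordsNew} with $\pi := (\be^p)$ and the homogeneous idempotent $e := e(\bi)$: the hypothesis ``$eL(\si) = 0$ for all $\si \le \pi$'' is satisfied because the only such $\si$ is $(\be^p)$ and $e(\bi)L(\be^p) = 0$ by assumption. The conclusion of Lemma~\ref{LBadWordsNew} then gives $e(\bi) \in I_{>(\be^p)}$, which is the claim.

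There is essentially no obstacle here: the corollary is a direct specialization of Lemma~\ref{LBadWordsNew} once one observes, via Lemma~\ref{LMinRP}, that $(\be^p)$ is the bottom of the poset $\Pi(p\be)$. The only minor point to keep straight is that $e(\bi)$ is a genuine homogeneous idempotent of $R_\al$ (immediate from relation~(\ref{R1}) and $\deg e(\bi) = 0$), so that Lemma~\ref{LBadWordsNew} applies verbatim.
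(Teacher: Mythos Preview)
Your proposal is correct and is exactly the paper's own argument: the paper's proof reads ``This follows from Lemma~\ref{LMinRP} together with Proposition~\ref{LBadWordsNew},'' which is precisely your observation that $(\be^p)$ is minimal in $\Pi(p\be)$ so Lemma~\ref{LBadWordsNew} applies with $\pi=(\be^p)$.
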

\begin{proof}
  This follows from Lemma~\ref{LMinRP} together with Proposition~\ref{LBadWordsNew}.
\end{proof}


\begin{Lemma}\label{LIotaImage}
  Let $\pi = (\be_1^{p_1}\dots \be_N^{p_N})\in \Pi(\al)$. Then $R_\pi \subseteq I_\pi$.
\end{Lemma}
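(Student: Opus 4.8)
The plan is to show that the generators of the parabolic subalgebra $R_\pi$ — namely the idempotents $e(\bi)$ for $\bi\in\words_{p_1\be_1}\times\dots\times\words_{p_N\be_N}$, together with the $y$'s and $\psi$'s living inside the parabolic — all lie in $I_\pi$. Since $I_\pi$ is a two-sided ideal of $R_\al$, it suffices to handle the idempotents: if $e(\bi^{(1)}\dots\bi^{(N)})\in I_\pi$ for every admissible tuple, then multiplying on either side by the (parabolic) $y_r$'s and $\psi_r$'s keeps us inside $I_\pi$, and these elements together with those idempotents span $R_\pi$ by Theorem~\ref{TBasis} applied in each tensor factor. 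So the whole statement reduces to the claim: for $\bi^{(k)}\in\words_{p_k\be_k}$ with $k=1,\dots,N$, the idempotent $e(\bi^{(1)}\cdots\bi^{(N)})$ belongs to $I_\pi=\sum_{\si\geq\pi}R_\al e(\bi_\si)R_\al$.

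To prove that claim I would use Lemma~\ref{LBadWordsNew}: it is enough to show that $e(\bi^{(1)}\cdots\bi^{(N)})\,L(\si)=0$ for every $\si<\pi$ (equivalently, $e(\bi^{(1)}\cdots\bi^{(N)})\,L(\si)\neq 0$ forces $\si\geq\pi$). Wait — Lemma~\ref{LBadWordsNew} is stated with the hypothesis ``$eL(\si)=0$ for all $\si\le\pi$'' and concludes $e\in I_{>\pi}$; here I instead want to use the analogous reasoning with ``$eL(\si)=0$ for all $\si<\pi$'' yielding $e\in I_\pi$. The same maximal-left-ideal argument gives this: any maximal left ideal containing $I_\pi$ has quotient $L(\si)$ with $\si\ge\pi$ (if $\si<\pi$ then $\si\not\ge\pi$, but actually we need $\si$ with $e(\bi_\si)\in I_\pi$, i.e. $\si\ge\pi$, forced), so if $e(\bi^{(1)}\cdots\bi^{(N)})$ kills all $L(\si)$ with $\si\not\ge\pi$ then it lies in every maximal left ideal over $I_\pi$, hence in $I_\pi$ by the standard argument from \cite[Lemma 5.8]{KLM}.

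So the crux is: if $e(\bi^{(1)}\cdots\bi^{(N)})L(\si)\neq 0$ then $\si\ge\pi$. Now $e(\bi^{(1)}\cdots\bi^{(N)})L(\si)\ne 0$ means $1_{p_1\be_1,\dots,p_N\be_N}L(\si)\ne 0$, i.e. $\Res_{p_1\be_1,\dots,p_N\be_N}L(\si)\ne 0$, since $\bi^{(k)}$ is a word of content $p_k\be_k$ and $e(\bi^{(1)}\cdots\bi^{(N)})$ is a summand of $1_{p_1\be_1,\dots,p_N\be_N}$. Because $L(\si)$ is the head of $\bar\De(\si)$, nonvanishing of this restriction on $L(\si)$ implies nonvanishing on $\bar\De(\si)$ — more precisely $\Res_{p_1\be_1,\dots,p_N\be_N}\bar\De(\si)\neq 0$ (a composition factor argument: $L(\si)$ is a quotient of $\bar\De(\si)$ and restriction is exact). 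By Theorem~\ref{TStand}(vi), $\Res_\pi\bar\De(\si)\ne 0$ — where $\Res_\pi=\Res_{p_1\be_1,\dots,p_N\be_N}$ precisely because $\pi=(\be_1^{p_1},\dots,\be_N^{p_N})$ — implies $\si\ge\pi$. That closes the argument.

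The main obstacle I anticipate is the bookkeeping in the reduction step: carefully checking that $R_\pi$ is spanned by elements of the form $x\, e(\bi^{(1)}\cdots\bi^{(N)})\, y$ with $x,y\in R_\al$ (so that membership of the idempotents in the ideal $I_\pi$ suffices), and matching up the restriction functor $\Res_{p_1\be_1,\dots,p_N\be_N}$ with the idempotent truncation $e(\bi^{(1)}\cdots\bi^{(N)})$ versus the full parabolic idempotent $1_{p_1\be_1,\dots,p_N\be_N}$ — one needs that a word of $L(\si)$ of content splitting as $(p_1\be_1,\dots,p_N\be_N)$ exists iff the parabolic restriction is nonzero, which is immediate from the word-space decomposition but should be spelled out. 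The other slightly delicate point is justifying the variant of Lemma~\ref{LBadWordsNew} with strict versus non-strict inequality; since the proof is identical (replace ``$\le\pi$'' by ``$\not\ge\pi$'' throughout, using that $I_\pi$ contains $e(\bi_\si)$ exactly for $\si\ge\pi$), I would simply invoke it in that form, or cite Lemma~\ref{LBadWordsNew} applied with the order truncated appropriately.
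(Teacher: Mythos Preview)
Your proof is correct and takes a somewhat different route from the paper. The paper first invokes Lemma~\ref{LAllIdem} to write each factor $R_{p_n\be_n}$ as a sum of two-sided ideals generated by the specific idempotents $e(\bi_{\pi^{(n)}})$ indexed by root partitions $\pi^{(n)}\in\Pi(p_n\be_n)$, reducing to idempotents of the special form $e(\bi_{\pi^{(1)}}\cdots\bi_{\pi^{(N)}})$; it then splits into two cases --- the diagonal case $\pi^{(n)}=(\be_n^{p_n})$ for all $n$, which gives exactly $e(\bi_\pi)\in I_\pi$ by definition, and the non-diagonal case, which is shown to land in $I_{>\pi}$ via Lemma~\ref{LBadWordsNew} after checking that the idempotent kills each $\bar\De(\si)$ for $\si\le\pi$. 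You instead treat \emph{all} parabolic idempotents $e(\bi^{(1)}\cdots\bi^{(N)})$ uniformly, observing that nonvanishing on $L(\si)$ forces $\Res_\pi L(\si)\neq 0$, hence $\Res_\pi\bar\De(\si)\neq 0$ by exactness, hence $\si\ge\pi$ by Theorem~\ref{TStand}(vi); then the variant of Lemma~\ref{LBadWordsNew} (with $\le\pi$ replaced by $<\pi$ and $I_{>\pi}$ by $I_\pi$, which as you note follows by applying the lemma at the predecessor of $\pi$ in the total order) gives the conclusion. Your argument is slightly more streamlined --- it avoids Lemma~\ref{LAllIdem} and the case split --- while the paper's argument extracts finer information (namely which idempotents actually lie in $I_{>\pi}$), information that is in any case recovered in the subsequent Proposition~\ref{LParaIdeal1New} and Corollary~\ref{LParaIdeal2}.
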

\begin{proof}
By Lemma~\ref{LAllIdem}, we have
$$
R_{p_n\be_n}=\sum_{\pi^{(n)}\in\Pi(p_n\be_n)}R_{p_n\be_n}e(\bi_{\pi^{(n)}})R_{p_n\be_n}
$$
for all $n=1,\dots,N$. Therefore the image of $\iota_\pi$ equals 
\begin{equation}\label{EIotaImage}
  \sum R_\pi e(\bi_{\pi^{(1)}}\dots \bi_{\pi^{(N)}}) R_\pi
\end{equation}
where the sum is over all $\pi^{(1)}\in\Pi(p_1\be_1),\dots, \pi^{(N)}\in\Pi(p_N\be_N)$.   Fix $\pi^{(n)}\in\Pi(p_n\be_n)$ for all $n=1,\dots,N$. If $\pi^{(n)} = (\be_n^{p_n})$ for every $n$, then $\bi_{\pi^{(1)}}\dots \bi_{\pi^{(N)}} = \bi_\pi$, and the corresponding term of (\ref{EIotaImage}) is in $I_\pi$ by definition.

Let us now assume that $\pi^{(k)} \neq (\be_k^{p_k})$ for some $k$. In view of Lemma~\ref{LMinRP}, we have $\pi^{(k)} > (\be_k^{p_k})$. For any $\si \in \Pi(\al)$ we have $e(\bi_{\pi^{(1)}}\dots \bi_{\pi^{(N)}})\bar\De(\si) \subseteq \Res_\pi \bar\De(\si)$, and by Theorem~\ref{TStand}(vi), if $\si < \pi$ then $\Res_\pi \bar\De(\si)=0$. Furthermore, for $\si = \pi$ we have, by Theorem~\ref{TStand}(vi) applied again  
\begin{align*}
e(\bi_{\pi^{(1)}}\dots \bi_{\pi^{(N)}})\bar\De(\pi) &= e(\bi_{\pi^{(1)}}\dots \bi_{\pi^{(N)}}) \Res_\pi \bar \De(\pi)\\
 &=  e(\bi_{\pi^{(1)}}\dots \bi_{\pi^{(N)}})(\bar\De(\be_1^{p_1}) \boxtimes \dots \boxtimes \bar\De(\be_N^{p_N}))\\
 &\subseteq (\Res_{\pi^{(1)}} \bar\De(\be_1^{p_1})) \boxtimes \dots \boxtimes (\Res_{\pi^{(N)}} \bar\De(\be_N^{p_N})),
\end{align*}
which is zero since $\Res_{\pi^{(k)}} \bar\De(\be_k^{p_k}) = 0$ by Theorem~\ref{TStand}(vi) again. We have shown that for all $\si \leq \pi$ we have $e(\bi_{\pi^{(1)}}\dots \bi_{\pi^{(N)}})\bar\De(\si)=0$, and consequently $e(\bi_{\pi^{(1)}}\dots \bi_{\pi^{(N)}}) L(\si)=0$. Applying Lemma~\ref{LBadWordsNew}, we have that $e(\bi_{\pi^{(1)}}\dots \bi_{\pi^{(N)}}) \in I_{>\pi} \subseteq I_\pi$.
\end{proof}

The following result will often allow us to reduce to the case of a smaller height. 

\begin{Proposition}\label{LParaIdeal1New}
Let  $\ga_1,\dots,\ga_m\in Q_{+}$, $1\leq k\leq m$, and $\pi_0\in\Pi(\ga_k)$. Assume that $\pi\in\Pi(\ga_1+\dots+\ga_m)$ is such that all idempotents from the set
$$
E=\{e(\bi_{\pi^{(1)}}\dots \bi_{\pi^{(m)}})\mid \pi^{(n)}\in\Pi(\ga_n)\ \text{for all $n=1,\dots,m$ and $\pi^{(k)}>\pi_0$}\}
$$
annihilate the irreducible modules $L(\si)$ for all $\si\leq \pi$. 
Then  
\begin{equation}\label{EIotaOfR}
  \iota_{\ga_1,\dots,\ga_m}(R_{\ga_1} \otimes\dots\otimes  R_{\ga_{k-1}}\otimes I_{>\pi_0}\otimes R_{\ga_{k+1}} \otimes\dots\otimes  R_{\ga_m}) \subseteq I_{>\pi}.
\end{equation}
\end{Proposition}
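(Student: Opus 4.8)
The goal is to show that the parabolic subalgebra $\iota_{\ga_1,\dots,\ga_m}(R_{\ga_1} \otimes\dots\otimes I_{>\pi_0}\otimes\dots\otimes R_{\ga_m})$ lands in $I_{>\pi}$. Recall that $I_{>\pi_0} = \sum_{\si_0 > \pi_0} R_{\ga_k} e(\bi_{\si_0}) R_{\ga_k}$ by definition. Since $I_{>\pi}$ is a two-sided ideal of $R_{\ga_1+\dots+\ga_m}$ and $\iota_{\ga_1,\dots,\ga_m}$ is an algebra homomorphism, it suffices to check that $\iota_{\ga_1,\dots,\ga_m}(1\otimes\dots\otimes e(\bi_{\si_0})\otimes\dots\otimes 1)\in I_{>\pi}$ for each $\si_0\in\Pi(\ga_k)$ with $\si_0 > \pi_0$; multiplying on left and right by arbitrary elements of $R_{\ga_1+\dots+\ga_m}$ then absorbs the rest of the parabolic. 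So the whole statement reduces to a claim about a single word idempotent placed in the $k$-th slot.

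The second step is to expand that idempotent into the word idempotents $e(\bi_{\pi^{(1)}}\dots\bi_{\pi^{(m)}})$ appearing in the set $E$. For each $n\neq k$, Lemma~\ref{LAllIdem} (applied inside $R_{\ga_n}$) gives $R_{\ga_n} = \sum_{\pi^{(n)}\in\Pi(\ga_n)} R_{\ga_n} e(\bi_{\pi^{(n)}}) R_{\ga_n}$, so $1_{\ga_n}$ lies in the two-sided ideal generated by the $e(\bi_{\pi^{(n)}})$'s. Applying $\iota_{\ga_1,\dots,\ga_m}$ to the tensor product $1_{\ga_1}\otimes\dots\otimes e(\bi_{\si_0})\otimes\dots\otimes 1_{\ga_m}$ and using that the image is the parabolic $R_{\ga_1,\dots,\ga_m}$, we find that $\iota_{\ga_1,\dots,\ga_m}(1\otimes\dots\otimes e(\bi_{\si_0})\otimes\dots\otimes 1)$ lies in the two-sided ideal of $R_{\ga_1+\dots+\ga_m}$ generated by the idempotents $e(\bi_{\pi^{(1)}}\dots\bi_{\si_0}\dots\bi_{\pi^{(m)}})$ as $\pi^{(n)}$ ranges over $\Pi(\ga_n)$ for $n\neq k$. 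Since $\si_0 > \pi_0$, every such idempotent is a member of the set $E$. Hence it is enough to prove $e(\bi_{\pi^{(1)}}\dots\bi_{\pi^{(m)}})\in I_{>\pi}$ for each $e(\bi_{\pi^{(1)}}\dots\bi_{\pi^{(m)}})\in E$.

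The third step invokes the hypothesis and Lemma~\ref{LBadWordsNew}. By assumption, each $e\in E$ annihilates $L(\si)$ for all $\si\leq\pi$. Each such $e$ is a homogeneous idempotent (it is a standard word idempotent $e(\bi)$ for some $\bi\in\words_{\ga_1+\dots+\ga_m}$). Applying Lemma~\ref{LBadWordsNew} directly with this $e$ and this $\pi$ gives $e\in I_{>\pi}$. Summing over all the generators shows the two-sided ideal they generate is contained in $I_{>\pi}$, which establishes \eqref{EIotaOfR}.

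\textbf{Main obstacle.} The only genuinely delicate point is the bookkeeping in the second step: verifying that passing $\iota_{\ga_1,\dots,\ga_m}$ through the expansion $1_{\ga_n}=\sum R_{\ga_n}e(\bi_{\pi^{(n)}})R_{\ga_n}$ really does express the chosen parabolic idempotent as a $R_{\ga_1+\dots+\ga_m}$-combination of the $e(\bi_{\pi^{(1)}}\dots\bi_{\pi^{(m)}})$'s, i.e.\ that the concatenated word idempotent $e(\bi_{\pi^{(1)}}\dots\bi_{\si_0}\dots\bi_{\pi^{(m)}})$ equals $\iota_{\ga_1,\dots,\ga_m}(e(\bi_{\pi^{(1)}})\otimes\dots\otimes e(\bi_{\si_0})\otimes\dots\otimes e(\bi_{\pi^{(m)}}))$ and that multiplication in $R_\pi$ is compatible with the ambient multiplication. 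This is really just the definition of $\iota_{\ga_1,\dots,\ga_m}$ and the formula for $1_{\ga_1,\dots,\ga_m}$ recalled in Section~\ref{SSKLR}, but it must be written carefully. Everything else is a formal consequence of Lemmas~\ref{LAllIdem} and~\ref{LBadWordsNew}.
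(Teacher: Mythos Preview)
Your proposal is correct and follows essentially the same approach as the paper: expand $I_{>\pi_0}$ and each $R_{\ga_n}$ via Lemma~\ref{LAllIdem} to reduce the left hand side to a sum $\sum_{e\in E} R_{\ga_1,\dots,\ga_m}\, e\, R_{\ga_1,\dots,\ga_m}$, and then apply Lemma~\ref{LBadWordsNew} to each $e\in E$. The only difference is cosmetic --- the paper expands all tensor factors simultaneously while you first isolate the $k$-th slot and then expand the remaining $1_{\ga_n}$'s --- but the content is identical.
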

\begin{proof}
We may assume that $\ga_k\neq 0$ since otherwise $I_{>\pi_0}=0$, and the result is clear. By Lemma~\ref{LAllIdem}, we have
$
R_{\ga_n}=\sum_{\pi^{(n)}\in\Pi(\ga_n)}R_{\ga_n}e(\bi_{\pi^{(n)}})R_{\ga_n}
$
for all $n=1,\dots,m$, and by definition, we have
$
I_{>\pi_0}=\sum_{\pi^{(k)}>\pi_0}R_{\ga_k}e(\bi_{\pi^{(k)}})R_{\ga_k}.
$
Therefore the left hand side of (\ref{EIotaOfR}) equals 
$
  \sum_{e\in E} R_{\ga_1,\dots,\ga_m} e R_{\ga_1,\dots,\ga_m}.
$
The result now follows by applying Lemma~\ref{LBadWordsNew}. 
\end{proof}

Recall from Lemma~\ref{LIotaImage} that $\im (\iota_\pi)\subseteq I_\pi$. 

\begin{Corollary}\label{LParaIdeal2}
Let $\pi = (\beta_1^{p_1}, \dots, \beta_N^{p_N})\in\Pi(\al)$ and $1\leq k\leq N$. 
Then 
\[
  \iota_\pi(R_{p_1 \be_1} \otimes \dots \otimes R_{p_{k-1}\be_{k-1}} \otimes I_{>(\be_k^{p_k})} \otimes R_{p_{k+1} \be_{k+1}} \otimes \dots \otimes R_{p_N \be_N}) \subseteq I_{>\pi}.
\]
In particular, the composite map $R_\pi \stackrel{\iota_\pi}{\longrightarrow}I_\pi\longrightarrow I_\pi / I_{>\pi}$ factors through the quotient $R_{p_1 \be_1} / I_{>(\be_1^{p_1})} \otimes \dots \otimes R_{p_N \be_N} / I_{>(\be_N^{p_N})}$. 
\end{Corollary}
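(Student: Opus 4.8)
\textbf{Proof plan for Corollary~\ref{LParaIdeal2}.}
The statement is the specialization of Proposition~\ref{LParaIdeal1New} to the case $m = N$, $\ga_n = p_n\be_n$, and $\pi_0 = (\be_k^{p_k})\in\Pi(p_k\be_k)$. So the plan is to verify the hypothesis of that proposition, namely that every idempotent of the form $e(\bi_{\pi^{(1)}}\cdots\bi_{\pi^{(N)}})$ with $\pi^{(n)}\in\Pi(p_n\be_n)$ for all $n$ and $\pi^{(k)} > (\be_k^{p_k})$ annihilates $L(\si)$ for all $\si\leq\pi$. Once this is checked, \eqref{EIotaOfR} gives exactly the displayed inclusion $\iota_\pi(R_{p_1\be_1}\otimes\cdots\otimes I_{>(\be_k^{p_k})}\otimes\cdots\otimes R_{p_N\be_N})\subseteq I_{>\pi}$.

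To verify the hypothesis I would essentially repeat the computation already carried out in the proof of Lemma~\ref{LIotaImage}. Fix $\si\leq\pi$ and such an idempotent $e = e(\bi_{\pi^{(1)}}\cdots\bi_{\pi^{(N)}})$ with $\pi^{(k)} > (\be_k^{p_k})$. As in Lemma~\ref{LIotaImage}, note $e\bar\De(\si)\subseteq \Res_\pi\bar\De(\si)$ (viewing $e$ as lying in the parabolic $R_\pi$ and matching the grading datum of $\pi = (\be_1^{p_1},\dots,\be_N^{p_N})$); by Theorem~\ref{TStand}(vi), $\Res_\pi\bar\De(\si) = 0$ unless $\si\geq\pi$, so combined with $\si\leq\pi$ we may assume $\si = \pi$. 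For $\si = \pi$, Theorem~\ref{TStand}(vi) gives $\Res_\pi\bar\De(\pi)\simeq \bar\De(\be_1^{p_1})\boxtimes\cdots\boxtimes\bar\De(\be_N^{p_N})$, so $e\bar\De(\pi)$ embeds into $(\Res_{\pi^{(1)}}\bar\De(\be_1^{p_1}))\boxtimes\cdots\boxtimes(\Res_{\pi^{(N)}}\bar\De(\be_N^{p_N}))$; since $\pi^{(k)} > (\be_k^{p_k})$, Theorem~\ref{TStand}(vi) (or Lemma~\ref{LMinRP} together with loc.\ cit.) forces $\Res_{\pi^{(k)}}\bar\De(\be_k^{p_k}) = 0$, hence $e\bar\De(\pi) = 0$. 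Since every $L(\si)$ is a quotient (indeed the head) of $\bar\De(\si)$, it follows that $eL(\si) = 0$ for all $\si\leq\pi$. This is the hypothesis of Proposition~\ref{LParaIdeal1New}, which yields the inclusion.

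For the ``in particular'' clause: by Lemma~\ref{LIotaImage} we have $\im(\iota_\pi)\subseteq I_\pi$, so the composite $R_\pi \xrightarrow{\iota_\pi} I_\pi \to I_\pi/I_{>\pi}$ makes sense. The inclusion just proved says that for each $k$ the image of $R_{p_1\be_1}\otimes\cdots\otimes I_{>(\be_k^{p_k})}\otimes\cdots\otimes R_{p_N\be_N}$ lands in $I_{>\pi}$, hence dies in the quotient; summing over $k = 1,\dots,N$ shows the composite kills $\sum_k R_{p_1\be_1}\otimes\cdots\otimes I_{>(\be_k^{p_k})}\otimes\cdots\otimes R_{p_N\be_N}$, which is exactly the kernel of the surjection $R_{p_1\be_1}\otimes\cdots\otimes R_{p_N\be_N}\onto (R_{p_1\be_1}/I_{>(\be_1^{p_1})})\otimes\cdots\otimes(R_{p_N\be_N}/I_{>(\be_N^{p_N})})$. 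Therefore the composite factors through this tensor product of quotients, as claimed.

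\textbf{Main obstacle.} The one point that needs a little care is the identification $e\bar\De(\si)\subseteq\Res_\pi\bar\De(\si)$ and the bookkeeping that lets one apply Theorem~\ref{TStand}(vi) at the ``inner'' level $\Res_{\pi^{(k)}}\bar\De(\be_k^{p_k})$ inside the box-tensor factorization of $\Res_\pi\bar\De(\pi)$ — i.e.\ correctly tracking which word idempotent sits in which parabolic factor and that $e(\bi_{\pi^{(1)}}\cdots\bi_{\pi^{(N)}})$ acts on $\bar\De(\be_1^{p_1})\boxtimes\cdots\boxtimes\bar\De(\be_N^{p_N})$ componentwise. But this is precisely the argument already made in Lemma~\ref{LIotaImage}, so no genuinely new difficulty arises; the corollary is essentially a transcription of that argument into the language of Proposition~\ref{LParaIdeal1New}.
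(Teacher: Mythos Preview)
Your proposal is correct and follows essentially the same approach as the paper: apply Proposition~\ref{LParaIdeal1New} with $m=N$, $\ga_n=p_n\be_n$, $\pi_0=(\be_k^{p_k})$, and verify its hypothesis by showing $e\bar\De(\si)=0$ for all $\si\leq\pi$ via Theorem~\ref{TStand}(vi), exactly as in the proof of Lemma~\ref{LIotaImage}. The paper's proof is the same computation, written slightly more tersely (it does not separately spell out the ``in particular'' clause, which is immediate).
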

\begin{proof}
Apply Proposition~\ref{LParaIdeal1New} with $m=N$, $\ga_n=p_n\be_n$, for $1\leq n \leq N$, $\pi_0=(\be_k^{p_k})$, and $\pi=\pi$. We have to prove that any $e=
e(\bi_{\pi^{(1)}}\dots \bi_{\pi^{(N)}})
\in E$ annihilates all $L(\si)$ for $\si\leq \pi$. We prove more, namely that $e$ annihilates $\bar\De(\si)$ for all $\si\leq\pi$. By Theorem~\ref{TStand}(vi):
\begin{align*}
e\bar\De(\si)&=e\Res_\pi \bar\De(\si)=e\de_{\pi,\si}(L({\be_1})^{\circ p_1}\boxtimes\dots\boxtimes L({\be_1})^{\circ p_1})\\
&=\de_{\pi,\si}\,e(\bi_{\pi^{(1)}})L({\be_1})^{\circ p_1}\boxtimes \dots\boxtimes  e(\bi_{\pi^{(N)}})L({\be_N})^{\circ p_N},
\end{align*}
which is zero since 
$$
e(\bi_{\pi^{(k)}})L({\be_k})^{\circ p_k}=e(\bi_{\pi^{(k)}})\Res_{\pi^{(k)}}L({\be_k})^{\circ p_k}=0
$$
by Theorem~\ref{TStand}(vi) again. 
\end{proof}

\begin{Corollary}\label{LParaIdeal1}
For $\be\in \Phi_+$ and $a, b, c \in \Z_{\geq 0}$ 
we have 
\[
  \iota_{a\be,b\be,c\be}(R_{a \be} \otimes I_{>(\be^b)} \otimes R_{c\be}) \subseteq I_{>(\be^{a+b+c})}.
\]
\end{Corollary}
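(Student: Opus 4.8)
The plan is to derive Corollary~\ref{LParaIdeal1} as a special case of Corollary~\ref{LParaIdeal2} combined with the associativity of the parabolic embeddings. Here we take $\al = (a+b+c)\be$ and consider the root partition $\pi := (\be^{a+b+c})\in\Pi(\al)$; since $\be$ occupies a single slot among $\be_1,\dots,\be_N$, the only nontrivial tensor factor is the one at that slot, and Corollary~\ref{LParaIdeal2} with $p_k = a+b+c$ gives
\[
  \iota_{(a+b+c)\be}\bigl(I_{>(\be^{a+b+c})}\bigr)\subseteq I_{>(\be^{a+b+c})}
\]
trivially; that is not yet what we want. Instead the point is to refine the single slot $(a+b+c)\be$ into three slots $a\be, b\be, c\be$ and feed the ideal $I_{>(\be^b)}$ into the middle one.

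So the key step is: first apply Proposition~\ref{LParaIdeal1New} directly with $m = 3$, $\ga_1 = a\be$, $\ga_2 = b\be$, $\ga_3 = c\be$, $k = 2$, $\pi_0 = (\be^b)\in\Pi(b\be)$, and $\pi = (\be^{a+b+c})\in\Pi((a+b+c)\be)$. To invoke it I must check the hypothesis that every idempotent $e = e(\bi_{\pi^{(1)}}\bi_{\pi^{(2)}}\bi_{\pi^{(3)}})$ with $\pi^{(1)}\in\Pi(a\be)$, $\pi^{(2)}\in\Pi(b\be)$ with $\pi^{(2)} > (\be^b)$, and $\pi^{(3)}\in\Pi(c\be)$ annihilates $L(\si)$ for all $\si\leq\pi$. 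By Lemma~\ref{LMinRP} we have $\pi = (\be^{a+b+c})$ is the minimum of $\Pi((a+b+c)\be)$, so the only $\si\leq\pi$ is $\si = \pi$ itself, and I must show $eL(\be^{a+b+c}) = 0$. As in the proof of Corollary~\ref{LParaIdeal2}, I would show more, namely $e\,\bar\De(\be^{a+b+c}) = 0$: by the transitivity of restriction and Lemma~\ref{LBKM} (or directly Theorem~\ref{TStand}(vi) applied after restricting $L(\be)^{\circ(a+b+c)}$ down through $R_{a\be,b\be,c\be}$), the word space $e\,\Res_{a\be,b\be,c\be}\bar\De(\be^{a+b+c})$ lands inside $\Res_{\pi^{(1)}}L(\be)^{\circ a}\boxtimes \Res_{\pi^{(2)}}L(\be)^{\circ b}\boxtimes \Res_{\pi^{(3)}}L(\be)^{\circ c}$, and the middle factor $\Res_{\pi^{(2)}}L(\be)^{\circ b}$ vanishes because $\pi^{(2)} > (\be^b)$, using Theorem~\ref{TStand}(vi) and the fact that $L(\be)^{\circ b} = \bar\De(\be^b)$ up to grading shift (Theorem~\ref{TStand}(v) and \eqref{EStand}). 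Hence $e\in I_{>(\be^{a+b+c})}$ by Lemma~\ref{LBadWordsNew}, the hypothesis of Proposition~\ref{LParaIdeal1New} holds, and the conclusion \eqref{EIotaOfR} is exactly the desired containment $\iota_{a\be,b\be,c\be}(R_{a\be}\otimes I_{>(\be^b)}\otimes R_{c\be})\subseteq I_{>(\be^{a+b+c})}$.

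The only mild subtlety — and the step I expect to need the most care — is identifying $\Res_{a\be,b\be,c\be}\bar\De(\be^{a+b+c})$: strictly speaking $\bar\De(\be^{a+b+c}) = L(\be)^{\circ(a+b+c)}\langle\sh\rangle$ and I need that restricting it to $R_{a\be,b\be,c\be}$ produces (up to composition series, which suffices since $e$ acting by zero on all composition factors forces $e$ to annihilate the module) only terms of the form $L(\be)^{\circ a}\boxtimes L(\be)^{\circ b}\boxtimes L(\be)^{\circ c}$; this is precisely Lemma~\ref{LBKM} with $n = 3$. Once that is in hand, applying Theorem~\ref{TStand}(vi) slot-by-slot is routine, and the rest is just unwinding the definitions of $I_{>(\be^b)}$ and $\iota_{a\be,b\be,c\be}$ together with Lemma~\ref{LAllIdem} exactly as in Proposition~\ref{LParaIdeal1New}. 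Alternatively, one can avoid even invoking Proposition~\ref{LParaIdeal1New} afresh and instead note that $\iota_{a\be,b\be,c\be}$ factors through $\iota_{(a+b+c)\be}$ composed with the internal parabolic embedding $R_{a\be}\otimes R_{b\be}\otimes R_{c\be}\into R_{(a+b+c)\be}$, reducing to Corollary~\ref{LParaIdeal2} for $R_{(a+b+c)\be}$ in place of $R_\al$; but the direct application of Proposition~\ref{LParaIdeal1New} is cleaner.
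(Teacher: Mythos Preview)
Your proposal is correct and follows essentially the same approach as the paper: apply Proposition~\ref{LParaIdeal1New} with $m=3$, $k=2$, $\ga_n=$ the three multiples of $\be$, $\pi_0=(\be^b)$, $\pi=(\be^{a+b+c})$; reduce to $\si=\pi$ via Lemma~\ref{LMinRP}; and kill the middle factor using Lemma~\ref{LBKM} together with Theorem~\ref{TStand}(vi). The initial detour through Corollary~\ref{LParaIdeal2} is unnecessary, and the sentence ``Hence $e\in I_{>(\be^{a+b+c})}$ by Lemma~\ref{LBadWordsNew}'' is redundant (that step is internal to Proposition~\ref{LParaIdeal1New}; you only need to verify its hypothesis), but the argument is sound.
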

\begin{proof}
We apply Proposition~\ref{LParaIdeal1New} with $m=3$, $k=2$, $\ga_1=a\be$, $\ga_2=b\be$, $\ga_3=c\be$, $\pi_0=(\be^b)$ and $\pi=(\be^{a+b+c})$. Pick an idempotent $e=e(\bi_{\pi^{(1)}} \bi_{\pi^{(2)}} \bi_{\pi^{(3)}})\in E$. Since $\pi$ is the minimal element of $\Pi((a+b+c)\be)$, it suffices to prove that $eL(\pi)=0$. Note that 
$eL(\pi)=e\Res_{a\be,b\be,c\be}L(\pi)$, so using Lemma~\ref{LBKM}, we just need to show that $e(L(\be)^{\circ a}\boxtimes L(\be)^{\circ b}\boxtimes L(\be)^{\circ c})=0$. But
$$
e(L(\be)^{\circ a}\boxtimes L(\be)^{\circ b}\boxtimes L(\be)^{\circ c})=e(\bi_{\pi^{(1)}})L(\be)^{\circ a}\boxtimes e(\bi_{\pi^{(2)}})L(\be)^{\circ b}\boxtimes e(\bi_{\pi^{(3)}})L(\be)^{\circ c}
$$
is zero, since 
$e(\bi_{\pi^{(2)}})L(\be)^{\circ b}=e(\bi_{\pi^{(2)}})\Res_{\pi^{(2)}}L(\be)^{\circ b}=0
$
 by Theorem~\ref{TStand}(vi).  
\end{proof}

Repeated application of Corollary~\ref{LParaIdeal1} gives the following result.
\begin{Corollary}\label{COneRootIdeal}
For $\be\in \Phi_+$ and $p \in \Z_{> 0}$ we have 
\[
  \iota_{\be,\dots,\be}(R_{\be} \otimes \dots \otimes I_{>(\be)} \otimes \dots \otimes R_{\be}) \subseteq I_{>(\be^p)}.
\]
\end{Corollary}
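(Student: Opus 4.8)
The plan is to induct on $p$, peeling off one tensor factor at a time and invoking Corollary~\ref{LParaIdeal1} at each stage, exactly as the phrase ``repeated application'' suggests. The only auxiliary fact I will use, beyond Corollary~\ref{LParaIdeal1}, is the standard associativity of the parabolic embeddings from (\ref{EIotaPi}): refining a decomposition $\ga_1+\dots+\ga_l=\al$ yields a commuting triangle of the maps $\iota$. Fix the slot $j$ with $1\le j\le p$ in which the factor $I_{>(\be)}$ sits; the claim to be proved is then
\[
\iota_{\be,\dots,\be}\big(R_\be^{\otimes(j-1)}\otimes I_{>(\be)}\otimes R_\be^{\otimes(p-j)}\big)\subseteq I_{>(\be^p)},
\]
where $\iota_{\be,\dots,\be}\colon R_\be^{\otimes p}\to R_{p\be}$ denotes the parabolic embedding on $p$ factors.

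First I would dispose of $p=1$, where the statement is the tautology $I_{>(\be)}\subseteq I_{>(\be^1)}$. For the inductive step with $p\ge 2$, I would split into the cases $j\le p-1$ and $j=p$. In the first case, factor $\iota_{\be,\dots,\be}$ on $p$ factors as $\iota_{(p-1)\be,\,\be}\circ(\iota_{\be,\dots,\be}\otimes\id_{R_\be})$, where the inner embedding carries $R_\be^{\otimes(p-1)}$ into $R_{(p-1)\be}$; apply the inductive hypothesis to the first $p-1$ factors to land the image inside $\iota_{(p-1)\be,\,\be}\big(I_{>(\be^{p-1})}\otimes R_\be\big)$; then apply Corollary~\ref{LParaIdeal1} with $(a,b,c)=(0,p-1,1)$ (identifying $R_0\otimes R_{(p-1)\be}\otimes R_\be$ with $R_{(p-1)\be}\otimes R_\be$, which is harmless since $R_0\cong\O$) to conclude containment in $I_{>(\be^p)}$. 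The case $j=p$ is symmetric: group the last $p-1$ factors and use Corollary~\ref{LParaIdeal1} with $(a,b,c)=(1,p-1,0)$.

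I do not expect a genuine obstacle here: the content is carried entirely by Corollary~\ref{LParaIdeal1}, and what remains is the bookkeeping of the position $j$ and the routine associativity of the embeddings $\iota$. In fact I would remark that the induction can be bypassed altogether: taking $(a,b,c)=(j-1,1,p-j)$ in Corollary~\ref{LParaIdeal1} and factoring $\iota_{\be,\dots,\be}$ through $\iota_{(j-1)\be,\,\be,\,(p-j)\be}$ proves the inclusion in a single step, using only that $I_{>(\be)}$ is a two-sided ideal of $R_\be$ and that $\iota$ maps $R_\be^{\otimes(j-1)}$ and $R_\be^{\otimes(p-j)}$ into $R_{(j-1)\be}$ and $R_{(p-j)\be}$ respectively. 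The inductive phrasing is perhaps preferred only because it keeps every invocation of Corollary~\ref{LParaIdeal1} in the three-term form in which it was stated.
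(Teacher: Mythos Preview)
Your proposal is correct and matches the paper's approach: the paper simply says the corollary follows from ``repeated application of Corollary~\ref{LParaIdeal1}'', which is precisely your inductive argument. Your single-step observation with $(a,b,c)=(j-1,1,p-j)$ is also valid and is arguably the cleanest reading of the paper's one-line proof.
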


\subsection{Basic notation concerning cellular bases}\label{SNota}
Let $\be$ be a fixed  positive root of height $d$. 
Recall that we have made a choice of $\bi_\be$ so that in the word space $e(\bi_\be) L(\be)$ of the cuspidal module, the lowest degree part is $1$-dimensional. We fix its spanning vector $v_\be^-$ defined over $\Z$, see Lemma~\ref{LCuspInt}. Similarly, the highest degree part is spanned over $\Z$ by some $v_\be^+$.

We consider the element of the symmetric group $w_{\be,r}\in \Si_{pd}$ 
$$
w_{\be,r}:=\prod_{k=1}^d((r-1)d+k,rd+k).
$$
which permutes the $r$th and the $(r+1)$st `$d$-blocks'. Now define
$$
\psi_{\be,r}:=\psi_{w_{\be,r}}\in R_{p\be}.
$$
Moreover, for $u\in\Si_p$ with a fixed reduced decomposition $u=s_{r_1}\dots s_{r_m}$,  define the elements 
\begin{align*}
w_{\be,u}&:=w_{\be,r_1}\dots w_{\be,r_m}\in\Si_{pd},
\\
\psi_{\be,u}&:=\psi_{\be,r_1}\dots\psi_{\be,r_m}\in R_{p\be}.
\end{align*}

In Section~\ref{SVerif}, we will explicitly define homogeneous elements 
$$
\de_\be,\ D_\be,\ y_\be\ \in\ e(\bi_\be)R_\be e(\bi_\be)
$$
and $e_\be := D_\be \de_\be$ so that the following hypothesis is satisfied: 

\begin{Hypothesis}\label{HProp}
 We have: 
  \begin{enumerate}
    \item[\textrm{(i)}] $e_\be^2 - e_\be \in I_{>(\be)}$.
    \item[\textrm{(ii)}] $\de_\be, D_\be$ and $y_\be$ are $\tau$-invariant.
    \item[\textrm{(iii)}] 
    $\de_\be v_\be^- = v_\be^+$ and $D_\be v_\be^+ = v_\be^-$,
    \item[\textrm{(iv)}] $y_\be$ has degree $\be\cdot \be$ and commutes with $\de_\be$ and $D_\be$,
    \item[\textrm{(v)}] 
The algebra  $(e_\be R_\be e_\be + I_{>(\be)})/I_{>(\be)}$ is generated by  $e_\be y_\be e_\be + I_{>(\be)}$.
    \item[\textrm{(vi)}] $\iota_{\be,\be}(D_\be \otimes D_\be) \psi_{\be,1} = \psi_{\be,1} \iota_{\be,\be}(D_\be \otimes D_\be)$.
  \end{enumerate}
\end{Hypothesis}

From now on until we verify it in Section~\ref{SVerif}, we will work under the assumption that Hypothesis~\ref{HProp} holds. It turns out that this hypothesis is sufficient to construct affine cellular bases.

\begin{Lemma}\label{CProp}
  $R_\be e_\be R_\be + I_{>(\be)} = R_\be$
\end{Lemma}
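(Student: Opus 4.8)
The goal is to show that the two-sided ideal $R_\be e_\be R_\be + I_{>(\be)}$ already exhausts $R_\be$. Since $(\be)$ is the unique minimal root partition of $\be$ (it is the only one, as $\be$ is a single root of height $d$ and $\Pi(\be) = \{(\be)\}$ when $\be$ is not itself a sum of smaller positive roots; more precisely the only root partition of $\be$ refining to a lower-or-equal one forces the word $\bi_\be$), the quotient $R_\be/I_{>(\be)}$ is controlled by the single cuspidal module $L(\be)$. So the natural strategy is: work in the quotient $\bar R_\be := R_\be/I_{>(\be)}$, show that the image $\bar e_\be$ is not annihilated by the relevant irreducibles, and invoke Lemma~\ref{LAllIdem} (or rather its proof technique via Lemma~\ref{LBadWordsNew}) to conclude.

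\textbf{Key steps.} First I would observe that by Lemma~\ref{LBadWordsNew}, it suffices to exhibit, for every $\si \le (\be)$ — i.e.\ for $\si = (\be)$, since $(\be)$ is minimal in $\Pi(\be)$ — that the ideal $J := R_\be e_\be R_\be + I_{>(\be)}$ does \emph{not} annihilate $L(\si) = L(\be)$ when we pass to $R_\be/J$; equivalently, that $e_\be L(\be) \ne 0$. Actually the cleaner route is the contrapositive used in Lemma~\ref{LAllIdem}: if $J \ne R_\be$, pick a maximal left ideal $I \supseteq J$; then $R_\be/I \cong L(\pi)$ for some $\pi \in \Pi(\be)$, and since $I_{>(\be)} \subseteq I$, Lemma~\ref{LBadWordsNew}'s argument (or directly the definition of $I_{>(\be)}$) forces $\pi = (\be)$, so $R_\be/I \cong L(\be)$. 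Then $e_\be$ acts as zero on $L(\be)$. So the whole lemma reduces to the single assertion $e_\be L(\be) \ne 0$.

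\textbf{The crux.} To see $e_\be L(\be) \ne 0$, recall $e_\be = D_\be \de_\be$ and use Hypothesis~\ref{HProp}(iii): $\de_\be v_\be^- = v_\be^+$ and $D_\be v_\be^+ = v_\be^-$, hence $e_\be v_\be^- = D_\be \de_\be v_\be^- = D_\be v_\be^+ = v_\be^- \ne 0$. Since $v_\be^-$ is a nonzero vector of $L(\be)$ (it spans the lowest-degree part of the word space $e(\bi_\be)L(\be)$, which is one-dimensional and nonzero by the choice of $\bi_\be$ in Section~\ref{SNota}), this immediately gives $e_\be L(\be) \ne 0$, contradicting the conclusion of the previous paragraph. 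Therefore $J = R_\be$, as claimed. One should also note, as is done repeatedly in this section, that the argument runs over any field $F$ and then descends to $\Z$ via Lemma~\ref{PFieldToZ} together with Lemma~\ref{LCuspInt} (which guarantees $v_\be^-$ and the cuspidal module are defined over $\Z$). The main obstacle is essentially bookkeeping: making sure that $(\be)$ really is minimal in $\Pi(\be)$ so that only $L(\be)$ can arise as a quotient modulo $I_{>(\be)}$ — but this is exactly Lemma~\ref{LMinRP} (with $p=1$), so there is no genuine difficulty; the substance of the lemma is the one-line computation $e_\be v_\be^- = v_\be^-$ powered by Hypothesis~\ref{HProp}(iii).
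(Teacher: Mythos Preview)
Your proof is correct and follows essentially the same route as the paper: suppose the ideal is proper, pick a maximal left ideal above it, identify the simple quotient as $L(\be)$ (since $I_{>(\be)}$ already kills all $L(\pi)$ with $\pi>(\be)$ and $(\be)$ is minimal by Lemma~\ref{LMinRP}), and derive a contradiction from $e_\be L(\be)\ne 0$, which you verify via Hypothesis~\ref{HProp}(iii) as $e_\be v_\be^- = D_\be\de_\be v_\be^- = v_\be^-$. The paper's proof is literally ``as in the proof of Lemma~\ref{LAllIdem} using $e_\be L(\be)\ne 0$,'' so you have reproduced it with the details filled in; the only blemish is the parenthetical in your Plan suggesting $\Pi(\be)=\{(\be)\}$, which is generally false (e.g.\ $\be=\al_1+\al_2$ in type $A_2$), but you correct course immediately and your Key Steps and Crux do not rely on it.
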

\begin{proof}
This follows as in the proof of Lemma~\ref{LAllIdem} using $e_\be L(\be)\neq 0$.
\end{proof}

Using Lemma~\ref{LCuspInt}, we can choose a set 
$$
\BB_\be\subseteq R_\be 
$$
of elements defined over $\Z$  such that 
$$
\{b v_\be^-\mid b\in\BB_\be\}
$$
is an $\O$-basis of $L(\be)_\O$.

Fix $p\in\Z_{>0}$ and define the set
\begin{align*}
\BB_{\be^{\boxtimes p}}:=\{\iota_{\be,\dots,\be}(b_1\otimes\dots\otimes b_p)\mid b_1,\dots,b_p\in\BB_\be\},
\end{align*}
and the element 
$$
y_{\be,r}:=\iota_{(r-1)\be,\be,(p-r)\be}(1\otimes y_\be\otimes 1)\in R_{p\be}
\qquad(1\leq r\leq p). 
$$

Further, define the elements of $R_{p\be}$
\begin{align}
e_{\be^{\boxtimes p}}&:=\iota_{\be,\dots,\be}(e_\be,\dots,e_\be),\\
\de_{(\be^p)}&:=y_{\be,2}y_{\be,3}^2\dots y_{\be,p}^{p-1} \iota_{\be,\dots,\be}(\de_\be \otimes \dots \otimes \de_\be),\\
D_{(\be^{p})}&:=\psi_{\be,w_0} \iota_{\be,\dots,\be}(D_\be \otimes \dots \otimes D_\be),\\
e_{(\be^p)}&:=D_{(\be^p)} \de_{(\be^p)} = \psi_{\be,w_0}y_{\be,2}y_{\be,3}^2\dots y_{\be,p}^{p-1} e_{\be^{\boxtimes p}},\label{EDeSi}
\end{align}
where $w_0 \in \Si_p$ is the longest element. It will be proved in Corollary~\ref{C030413_3} that $e_{(\be^p)}^2-e_{(\be^p)}\in I_{>(\be^p)}$ generalizing part (i) of Hypothesis~\ref{HProp}. 
It is easy to see, as in \cite[Lemma~2.4]{KLM}, that there is always a choice of a reduced decompositon of $w_0$ such that 
\begin{equation}\label{ETauInv1}
\psi_{\be,w_0}^\tau=\psi_{\be,w_0}.
\end{equation}
We have the algebras of polynomials and the symmetric polynomials: 
\begin{equation}\label{EPolSi}
\Pol_{(\be^p)}=\O[y_{\be,1},\dots,y_{\be,p}]\quad\text{and}\quad
\Sym_{(\be^p)}=\Pol_{(\be^p)}^{\Si_p}
\end{equation}
While it is clear that the $y_{\be,r}$ commute, we do not yet know that they are algebraically independent, but this will turn out to be the case. For now, one can interpret $\Sym_{(\be^p)}$ as the algebra generated by the elementary symmetric functions in $y_{\be,1},\dots,y_{\be,p}$. Note using Hypothesis~\ref{HProp}(iv) that
\begin{equation}\label{EUpperBoundp}
\qdim \Sym_{(\be^p)}\leq \prod_{s=1}^p\frac{1}{1-q_\be^{2s}}.
\end{equation}

Given $\al \in Q_+$ of height $d$ and a root partition $\pi=(\be_1^{p_1}, \dots, \be_N^{p_N}) \in \Pi(\al)$ we define the parabolic subgroup 
\begin{align*}
\Si_\pi := \Si_{\height(\be_1)}^{\times p_1} \times \dots \times \Si_{\height(\be_N)}^{\times p_N} \subseteq \Si_d,
\\
\Si_{(\pi)} := \Si_{p_1\height(\be_1)} \times \dots \times \Si_{p_N\height(\be_N)} \subseteq \Si_d,
\end{align*}
and we denote by $\Si^\pi$ (resp. $\Si^{(\pi)}$) the set of minimal left coset representatives of $\Si_\pi$ (resp. $\Si_{(\pi)}$) in $\Si_d$. Set 
\begin{align*}
\BB_\pi &:= \{\psi_w \iota_\pi(b_1\otimes \dots\otimes b_N)\mid w\in \Si^\pi,\ b_n\in\BB_{\be_n^{\boxtimes p_n}}\ \text{for $n=1,\dots,N$}\}. 
\end{align*}
Using the natural embedding of $L({\be_1})^{\boxtimes p_1}\boxtimes\dots\boxtimes L({\be_N})^{\boxtimes p_N} \subseteq \bar\De(\pi)$, we define the elements
$$
v_\pi^-= (v_{\be_1}^-)^{\otimes p_1}\otimes\dots\otimes (v_{\be_N}^-)^{\otimes p_N} \in \bar\De(\pi)
$$
which belong to the word space corresponding to the words
$$
\bi_\pi:=\bi_{\be_1}^{p_1}\dots \bi_{\be_N}^{p_N}.
$$
From definitions we have

\begin{Lemma} \label{LDeBasis}
Let $\pi\in\Pi(\al)$. Then 
$\{bv_\pi^-\mid b\in \BB_\pi\}$
is a basis for $\bar \De(\pi)$. 
\end{Lemma}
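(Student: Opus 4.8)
The plan is to unwind all the definitions and reduce the claim to the basis theorems already available. Recall $\BB_\pi = \{\psi_w \iota_\pi(b_1\otimes\dots\otimes b_N)\mid w\in\Si^\pi,\ b_n\in\BB_{\be_n^{\boxtimes p_n}}\}$ and $\bar\De(\pi)=L(\be_1)^{\circ p_1}\circ\dots\circ L(\be_N)^{\circ p_N}\langle\shift(\pi)\rangle$, with $v_\pi^-$ lying in the word space $e(\bi_\pi)\bar\De(\pi)$. Since $\bar\De(\pi)=\Ind_{(\pi)}\big(L(\be_1)^{\circ p_1}\boxtimes\dots\boxtimes L(\be_N)^{\circ p_N}\big)\langle\shift(\pi)\rangle = R_\al 1_{(\pi)}\otimes_{R_{(\pi)}}(\cdots)$, the elements $\psi_w$ for $w\in\Si^{(\pi)}$ (or the refined coset representatives in $\Si^\pi$) furnish the branching/Shuffle decomposition: as an $\O$-module $\bar\De(\pi)=\bigoplus_{w\in\Si^\pi}\psi_w\,\iota_\pi\big(L(\be_1)^{\boxtimes p_1}\boxtimes\dots\boxtimes L(\be_N)^{\boxtimes p_N}\big)$, using that $\Si^\pi$ factors through $\Si^{(\pi)}$ times the minimal coset reps inside each $\Si_{p_n\height(\be_n)}/\Si_{\height(\be_n)}^{\times p_n}$, and that $L(\be_n)^{\circ p_n}=\Ind\big(L(\be_n)^{\boxtimes p_n}\big)$ decomposes freely over those inner cosets by the standard basis theorem for induced modules over KLR algebras.

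Next I would handle the single-root factors. For each $n$, $\{b\, v_{\be_n}^-\mid b\in\BB_{\be_n}\}$ is by construction an $\O$-basis of $L(\be_n)_\O$, so $\{\iota_{\be_n,\dots,\be_n}(b_1\otimes\dots\otimes b_{p_n})\,(v_{\be_n}^-)^{\otimes p_n}\mid b_j\in\BB_{\be_n}\}=\{b\,(v_{\be_n}^-)^{\otimes p_n}\mid b\in\BB_{\be_n^{\boxtimes p_n}}\}$ is an $\O$-basis of $L(\be_n)^{\boxtimes p_n}$, simply because the outer tensor product of bases is a basis of the outer tensor product of modules and $\iota$ is the algebra map realizing that $R_{\be_n}^{\otimes p_n}$-action. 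Tensoring over $n=1,\dots,N$ gives that $\{\iota_\pi(b_1\otimes\dots\otimes b_N)\,v_\pi^-\mid b_n\in\BB_{\be_n^{\boxtimes p_n}}\}$ is an $\O$-basis of $L(\be_1)^{\boxtimes p_1}\boxtimes\dots\boxtimes L(\be_N)^{\boxtimes p_N}$, the latter sitting inside $\bar\De(\pi)$ as its $\bi_\pi$-ish generating subspace.

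Finally I would assemble: applying the direct-sum decomposition $\bar\De(\pi)=\bigoplus_{w\in\Si^\pi}\psi_w\big(L(\be_1)^{\boxtimes p_1}\boxtimes\dots\boxtimes L(\be_N)^{\boxtimes p_N}\big)$ and substituting the basis of the outer tensor product just obtained, we get exactly $\{\psi_w\iota_\pi(b_1\otimes\dots\otimes b_N)v_\pi^-\mid w\in\Si^\pi,\ b_n\in\BB_{\be_n^{\boxtimes p_n}}\}=\{b\,v_\pi^-\mid b\in\BB_\pi\}$ as an $\O$-basis of $\bar\De(\pi)$. Over a field this is immediate; over $\Z$ it follows by Lemma~\ref{PFieldToZ} together with Lemma~\ref{LCuspInt}, which guarantees the integral structures are compatible with base change. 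I expect the only real subtlety to be bookkeeping the coset representatives correctly — i.e. verifying that $\Si^\pi$ is exactly the set indexing a free basis of the iterated induction $\Ind_{(\pi)}\circ(\Ind_{\be_n,\dots,\be_n})_n$ applied to the outer tensor of cuspidals, so that no multiplicities or relations are lost; this is the standard Mackey/basis argument for KLR induction (as in \cite{KL1}) and should cause no genuine trouble, but it is where care is needed. The phrase ``From definitions we have'' in the excerpt signals that the authors regard all of this as routine unwinding, so the write-up can be brief.
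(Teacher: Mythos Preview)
Your proposal is correct and is exactly the routine unwinding the paper intends by ``From definitions we have'': transitivity of induction identifies $\bar\De(\pi)$ with $\Ind$ from the finest parabolic $R_{\be_1,\dots,\be_1,\dots,\be_N,\dots,\be_N}$ applied to $L(\be_1)^{\boxtimes p_1}\boxtimes\dots\boxtimes L(\be_N)^{\boxtimes p_N}$, and then the freeness of $R_\al 1_\pi$ over that parabolic (as in \cite[Proposition~2.16]{KL1}) together with the tensor-product-of-bases observation gives the result. The only remark is that the appeal to Lemma~\ref{PFieldToZ} is unnecessary, since the freeness statement for induction already holds over $\Z$.
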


Define 
\begin{align*}
\de_\pi &:= \iota_\pi(\de_{(\be_1^{p_1})}\otimes \dots \otimes \de_{(\be_N^{p_N})}),
\\
D_\pi &:= \iota_\pi(D_{(\be_1^{p_1})}\otimes  \dots \otimes D_{(\be_N^{p_N})}),
\\
e_\pi &:=   \iota_\pi(e_{(\be_1^{p_1})} \otimes \dots \otimes e_{(\be_N^{p_N})})
=D_\pi \de_\pi,\\
  \De(\pi) &:= ((R_\al e_\pi + I_{>\pi})/I_{>\pi})\<\deg(v_\pi^-)\>,\\
  \De'(\pi) &:= ((e_\pi R_\al + I_{>\pi})/I_{>\pi})\<\deg(v_\pi^+)\>,\\
  \Sym_\pi &:= \iota_\pi(\Sym_{(\be_1^{p_1})}\otimes\dots\otimes \Sym_{(\be_N^{p_N})}).
\end{align*}

Note by (\ref{EUpperBoundp}) and (\ref{ELPi}) that 
\begin{equation}\label{EDimUpper}
\qdim \Sym_\pi\leq l_\pi. 
\end{equation}
Choose also a homogeneous basis $X_\pi$ for $\Sym_\pi$.
The following lemma is a consequence of Hypothesis~\ref{HProp}(ii),(vi) and (\ref{ETauInv1}).
\begin{Lemma}\label{LDDelTau}
We have $D_\pi^\tau = D_\pi$ and $\de_\pi^\tau = \de_\pi$.
\end{Lemma}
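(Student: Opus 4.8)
\textbf{Proof plan for Lemma~\ref{LDDelTau}.}
The plan is to reduce the statement about $\de_\pi$ and $D_\pi$ to the single-root claims already built into the set-up, using the fact that $\iota_\pi$ is an algebra homomorphism and that $\tau$ restricts well to parabolic subalgebras. First I would recall that $\iota_\pi$ intertwines the anti-involution $\tau$ on $R_\al$ with the ``tensor-product'' anti-involution $\tau^{\otimes N}$ on $R_{p_1\be_1}\otimes\dots\otimes R_{p_N\be_N}$ that reverses nothing in the tensor factors (because $\tau$ is the identity on generators, the parabolic embedding is $\tau$-equivariant in this naive sense). Hence it suffices to prove $\de_{(\be^p)}^\tau=\de_{(\be^p)}$ and $D_{(\be^p)}^\tau=D_{(\be^p)}$ in $R_{p\be}$ for each positive root $\be$ and each $p$.

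For $\de_{(\be^p)}$: by definition $\de_{(\be^p)}=y_{\be,2}y_{\be,3}^2\dots y_{\be,p}^{p-1}\,\iota_{\be,\dots,\be}(\de_\be\otimes\dots\otimes\de_\be)$. Each $y_{\be,r}$ is $\iota$-image of $1\otimes y_\be\otimes 1$, and $y_\be$ is $\tau$-invariant by Hypothesis~\ref{HProp}(ii), so each $y_{\be,r}$ is $\tau$-invariant; likewise each $\de_\be$ is $\tau$-invariant by Hypothesis~\ref{HProp}(ii), so $\iota_{\be,\dots,\be}(\de_\be\otimes\dots\otimes\de_\be)$ is $\tau$-invariant. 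Since $\tau$ is an anti-automorphism, $\de_{(\be^p)}^\tau$ is the product of these same $\tau$-invariant factors in the \emph{reverse} order; so I must check that all the relevant factors commute. The $y_{\be,r}$ commute with each other (they are $\iota$-images of elements in disjoint tensor slots), and by Hypothesis~\ref{HProp}(iv) $y_\be$ commutes with $\de_\be$ inside $e(\bi_\be)R_\be e(\bi_\be)$, which via $\iota$ gives that each $y_{\be,r}$ commutes with $\iota_{\be,\dots,\be}(\de_\be\otimes\dots\otimes\de_\be)$ (the $y_\be$ in slot $r$ commutes with the $\de_\be$ in slot $r$, and lies in a disjoint slot from the other $\de_\be$'s). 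Therefore reversing the order changes nothing and $\de_{(\be^p)}^\tau=\de_{(\be^p)}$.

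For $D_{(\be^p)}=\psi_{\be,w_0}\,\iota_{\be,\dots,\be}(D_\be\otimes\dots\otimes D_\be)$: by~(\ref{ETauInv1}) we have chosen the reduced decomposition of $w_0$ so that $\psi_{\be,w_0}^\tau=\psi_{\be,w_0}$, and $\iota_{\be,\dots,\be}(D_\be\otimes\dots\otimes D_\be)$ is $\tau$-invariant since each $D_\be$ is by Hypothesis~\ref{HProp}(ii). Applying $\tau$ reverses the product, so I need $\psi_{\be,w_0}$ to commute with $\iota_{\be,\dots,\be}(D_\be\otimes\dots\otimes D_\be)$; this is exactly the content of Hypothesis~\ref{HProp}(vi), which gives that conjugating $\iota_{\be,\be}(D_\be\otimes D_\be)$ by the elementary block-swap $\psi_{\be,1}$ fixes it, and hence (expressing $\psi_{\be,w_0}$ through the $\psi_{\be,r}$ and applying (vi) in each adjacent pair of blocks, together with the fact that $\iota_{\be,\dots,\be}(D_\be\otimes\dots\otimes D_\be)$ is a tensor of the \emph{same} element in every slot) that $\psi_{\be,w_0}$ commutes with $\iota_{\be,\dots,\be}(D_\be\otimes\dots\otimes D_\be)$. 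Thus $D_{(\be^p)}^\tau=D_{(\be^p)}$, and assembling the tensor factors via $\iota_\pi$ yields $D_\pi^\tau=D_\pi$ and $\de_\pi^\tau=\de_\pi$. The only mildly delicate point, and the one I would write out carefully, is the bookkeeping that lets Hypothesis~\ref{HProp}(vi)---stated for two adjacent $\be$-blocks---propagate to the full permutation $\psi_{\be,w_0}$ of $p$ blocks; this is where one uses that $D_\be$ is repeated identically in every slot so that the pairwise commutation suffices.
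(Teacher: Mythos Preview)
Your proposal is correct and follows essentially the same approach as the paper, which simply records the lemma as ``a consequence of Hypothesis~\ref{HProp}(ii),(vi) and (\ref{ETauInv1})'' without writing out the details; you have filled these in correctly. One small remark: you also invoke Hypothesis~\ref{HProp}(iv) (the commutation of $y_\be$ with $\de_\be$) in handling $\de_{(\be^p)}^\tau$, and this is indeed needed to reorder the factors after applying $\tau$, so your list of ingredients is actually more accurate than the paper's one-line citation.
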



\subsection{Powers of a single root}\label{SSPower}

Throughout this subsection $\be \in \Phi_+$ and $p \in \Z_{>0}$ are fixed. Define $\al := p\be$, and $\si:=(\be^p) \in \Pi(\al)$. 

Define $\bar R_{\al} := R_{\al} / I_{>\si}$, and given $r \in R_{\al}$ write $\bar r$ for its image in $\bar R_{\al}$. The following proposition is the main result of this subsection.
\begin{Proposition}\label{LPowerBasis} We have that
  \begin{enumerate}
    \item[\textrm{(i)}] $\{\bar b \bar f \bar e_{\si} \mid b \in \BB_\si, f \in X_\si\}$ is an $\O$-basis for $\De(\si)$.
    \item[\textrm{(ii)}] $\{\bar e_{\si} \bar f \bar D_{\si}\, \bar b^\tau \mid b \in \BB_\si, f \in X_\si\}$ is an $\O$-basis for $\De'(\si)$.
    \item[\textrm{(iii)}] $\{\bar b \bar e_{\si} \bar f \bar D_{\si} (\bar b')^\tau \mid b,b' \in \BB_\si, f \in X_\si\}$ is an $\O$-basis for $\bar R_{\al}$.
    \item[\textrm{(iv)}] The elements $\bar y_{\be,1},\dots,\bar y_{\be,p}$ are algebraically independent.
  \end{enumerate}
\end{Proposition}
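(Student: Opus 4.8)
The plan is to prove all four statements of Proposition~\ref{LPowerBasis} simultaneously by an induction on $p$, using a dimension/rank count to upgrade a spanning statement into a basis statement. The base case $p=1$ is essentially Hypothesis~\ref{HProp}: here $\bar R_\be = R_\be/I_{>(\be)}$, the module $\bar\De(\be)$ is the cuspidal module $L(\be)$, $\BB_\be$ is a basis of $L(\be)$ by construction, and Hypothesis~\ref{HProp}(iii),(v) together with Hypothesis~\ref{HProp}(i) give that $\bar e_\be \bar R_\be \bar e_\be$ is spanned by powers of $\bar e_\be \bar y_\be \bar e_\be$; the algebraic independence of $\bar y_\be$ in (iv) for $p=1$ amounts to knowing $e(\bi_\be)R_\be e(\bi_\be)/I_{>(\be)}$ is infinite-dimensional in each sufficiently high degree, which follows from Corollary~\ref{TDim} since $l_{(\be)}=1/(1-q_\be^2)$ forces the PBW term for $\si=(\be)$ to contribute in all degrees.

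For the inductive step the key structural input is the factorization $e_\si = \psi_{\be,w_0} y_{\be,2}\cdots y_{\be,p}^{p-1} e_{\be^{\boxtimes p}}$ from~\eqref{EDeSi} together with the parabolic embedding $\iota_{\be,\dots,\be}$ and the nilHecke relations~\eqref{EnH}. First I would show that the stated sets span the relevant spaces: for $\De(\si) = (R_\al e_\si + I_{>\si})/I_{>\si}$, use the basis theorem (Theorem~\ref{TBasis}) to write an arbitrary element of $R_\al$ in terms of $\psi_w$, polynomials in the $y$'s, and parabolic generators, then push everything past $e_{\be^{\boxtimes p}}$ using the case $p=1$ inside each $\be$-block, use Corollary~\ref{LParaIdeal1} and Corollary~\ref{COneRootIdeal} to kill terms that land in $I_{>\si}$, and use the nilHecke identity $e_d\psi_{w_0}=\psi_{w_0}$ pattern (here in the $\be$-thickened nilHecke algebra) to collapse the $\Si_p$-part down to coset representatives $\Si^\si$ and the symmetric-polynomial part into $\Sym_\si$. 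This is parallel to the argument in \cite[Theorem 4.16]{KLM} (our Theorem~\ref{TnilHeckeCellBasis}), which handles exactly the $\be=\al_i$ case; the new feature is that the ``blocks'' are copies of $L(\be)$ rather than one-dimensional, so I also need Lemma~\ref{LDeBasis} and Hypothesis~\ref{HProp}(iii),(vi) to move $D_\be$-factors through $\psi_{\be,1}$.

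Next, to see these spanning sets are bases, I would compare graded dimensions. Corollary~\ref{TDim} gives $\qdim \bar R_\al = \qdim R_\al - \qdim I_{>\si}$; using Lemma~\ref{LMinRP} (every root partition of $p\be$ is $\geq\si$) the only PBW term surviving in $\bar R_\al$ is the one for $\si$, so $\qdim \bar R_\al = (\qdim \bar\De(\si))^2 \, l_\si$ where $l_\si = \prod_{s=1}^p 1/(1-q_\be^{2s})$. On the other hand the proposed basis in (iii) has cardinality-generating-function $(\qdim \bar\De(\si))^2 \cdot \qdim \Sym_\si$ once one accounts for the degree shifts, and $\qdim\Sym_\si \leq l_\si$ by~\eqref{EDimUpper}/\eqref{EUpperBoundp}. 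Hence the spanning set in (iii) has graded size $\leq \qdim\bar R_\al$ with equality iff $\qdim\Sym_\si = l_\si$; since it spans, equality must hold, forcing both that it is a basis and that $\Sym_\si$ achieves its maximal possible graded dimension. The latter is exactly the algebraic independence statement (iv), because $\O[y_{\be,1},\dots,y_{\be,p}]^{\Si_p}$ has graded dimension $\prod 1/(1-q_\be^{2s})$ precisely when the $y_{\be,r}$ are algebraically independent. Statements (i) and (ii) then follow by the analogous (shorter) count, or by observing that multiplication $\De(\si)\otimes$ (symmetric part) relates them; one also needs $\bar e_\si^2 = \bar e_\si$ which is deferred to Corollary~\ref{C030413_3} but whose relevant consequence $R_\be e_\be R_\be + I_{>(\be)} = R_\be$ is Lemma~\ref{CProp}.

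The main obstacle I anticipate is the spanning argument in the inductive step: carefully commuting $y$'s and $\psi$'s past the idempotent $e_{\be^{\boxtimes p}}$ and showing that everything not of the desired form is absorbed into $I_{>\si}$. This requires the ``thickened nilHecke'' combinatorics — that $\psi_{\be,w_0}\,\iota(D_\be^{\otimes p})$ plays the role of $\psi_{w_0}$ and satisfies $e_{(\be^p)}\psi_{\be,w_0} = \psi_{\be,w_0}$ modulo $I_{>\si}$ — together with the module-theoretic vanishing from Corollaries~\ref{LParaIdeal1} and~\ref{COneRootIdeal} to control the ideal $I_{>\si}$. Once the spanning is in hand, the dimension count is essentially forced and closes all four parts at once.
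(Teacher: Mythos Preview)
There is a structural gap in your induction. You induct on $p$ and claim the case $p=1$ is ``essentially Hypothesis~\ref{HProp}'', but this is not so: for $p=1$ the module $\De(\be)=(\bar R_\be\bar e_\be)\langle\deg v_\be^-\rangle$ is the \emph{projective cover} of $L(\be)$ in $\mod{\bar R_\be}$, an infinite-dimensional object, not the cuspidal module $\bar\De(\be)=L(\be)$ you describe. Hypothesis~\ref{HProp} only tells you that $\bar e_\be\bar R_\be\bar e_\be$ is generated by $\bar y_\be\bar e_\be$; it says nothing about linear independence, nor does it give the basis of $\De(\be)$ claimed in (i). The paper handles this by inducting on $p\cdot\height(\be)$, with the genuine base case being $\be$ simple (the nilHecke algebra, Theorem~\ref{TnilHeckeCellBasis}). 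The case $p=1$ with $\be$ non-simple then \emph{uses} the induction hypothesis for all $\pi=(\be_1^{p_1},\dots,\be_N^{p_N})>(\be)$ in $\Pi(\be)$, where necessarily each $p_n\height(\be_n)<\height(\be)$.

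This feeds into a second gap: your dimension count on $\bar R_\al$. You assert that Corollary~\ref{TDim} together with Lemma~\ref{LMinRP} gives $\qdim\bar R_\al=(\qdim\bar\De(\si))^2 l_\si$, but Corollary~\ref{TDim} only computes $\qdim R_\al$ as a sum over \emph{all} $\pi\in\Pi(\al)$; to isolate the $\pi=\si$ term you would need $\qdim I_{>\si}=\sum_{\pi>\si}(\qdim\bar\De(\pi))^2 l_\pi$, which is precisely the statement you are trying to prove for those $\pi$. The paper avoids this circularity: using the induction hypothesis on $p\cdot\height(\be)$ it produces spanning sets for \emph{every} layer $I_\pi/I_{>\pi}$ (including all $\pi>\si$), assembles them into a spanning set for all of $R_\al$, and then compares with $\qdim R_\al$ from Corollary~\ref{TDim}. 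Equality in that global count simultaneously forces linear independence in each layer and the equality $\qdim\Sym_\pi=l_\pi$, yielding (iv). Your outline of the ``thickened nilHecke'' spanning argument for the inductive step is in the right spirit, but until the induction scheme and the dimension count are repaired as above, the argument does not close.
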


The proof of the Proposition will occupy this subsection. It goes by induction on $p\,\height(\be)$. If $\be$ a simple root, then $R_\al=\bar R_\al$ is exactly the nil-Hecke algebra, and we are done by Theorem~\ref{TnilHeckeCellBasis}. For the rest of the section, we assume the Proposition holds with $\si = (\ga^s) \in \Pi(s\ga)$ whenever $\ga\in\Phi_+$ and $s\,\height(\ga) < p\,\height(\be)$ and prove that it also holds for $\si = (\be^p)$. We shall also assume that $\O=F$ is a field, and then use Lemma~\ref{PFieldToZ} to lift to $\Z$-forms.

\begin{Lemma}\label{L050413_1}
  Assume that $p=1$. Then Proposition~\ref{LPowerBasis} holds.
\end{Lemma}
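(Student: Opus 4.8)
\noindent\emph{Plan.}
Since $p=1$ we have $\si=(\be)$, $e_\si=e_\be$, $\BB_\si=\BB_\be$, $\Sym_\si=\Sym_{(\be)}$, and $\De(\si)=(\bar R_\be\bar e_\be)\<\deg v_\be^-\>$, $\De'(\si)=(\bar e_\be\bar R_\be)\<\deg v_\be^+\>$, where $\bar R_\be:=R_\be/I_{>(\be)}$; I will choose $\BB_\be$ so that $1\in\BB_\be$, work over a field $F$, and lift to $\Z$ at the end by Lemma~\ref{PFieldToZ}. The argument splits into three parts: a Morita picture for $\bar R_\be$, a spanning statement, and a dimension count tying the two together. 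For the first part, note that $\bar\De((\be))=L(\be)$, so Theorem~\ref{TStand}(vi) gives $e(\bi_\si)L(\be)\subseteq\Res_\si L(\be)=0$ for every $\si>(\be)$, whence $I_{>(\be)}$ annihilates $L(\be)$; since moreover $e(\bi_\rho)L(\rho)\neq 0$ and $e(\bi_\rho)\in I_{>(\be)}$ for every $\rho>(\be)$, and $(\be)$ is minimal in $\Pi(\be)$ by Lemma~\ref{LMinRP}, the module $L(\be)$ is the \emph{only} irreducible $\bar R_\be$-module. By Lemma~\ref{CProp} the idempotent $\bar e_\be$ (idempotent by Hypothesis~\ref{HProp}(i)) is full in $\bar R_\be$, so $\bar R_\be$ is Morita equivalent to $E:=\bar e_\be\bar R_\be\bar e_\be$ and multiplication gives an isomorphism $\bar R_\be\bar e_\be\otimes_E\bar e_\be\bar R_\be\iso\bar R_\be$. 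By Hypothesis~\ref{HProp}(iv) the element $y_\be$ commutes with $\de_\be$ and $D_\be$, hence with $e_\be$, so $\bar e_\be\bar y_\be^k=\bar y_\be^k\bar e_\be=\bar e_\be\bar y_\be^k\bar e_\be$ for all $k$; thus by Hypothesis~\ref{HProp}(v), $E$ is generated by $t:=\bar y_\be\bar e_\be$ (homogeneous of degree $\be\cdot\be$) and is a graded-local quotient of $F[t]$. From Hypothesis~\ref{HProp}(iii) I get $e_\be v_\be^-=v_\be^-$, and then $y_\be v_\be^-=0$ in $L(\be)$ (apply $\de_\be$, use (iv), then $D_\be$: the point is that $y_\be v_\be^+$ would land strictly above the top degree of $e(\bi_\be)L(\be)$, which is spanned by $v_\be^+$). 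Hence $E$ acts on $v_\be^-=\pi_L(\bar e_\be)$ through $E/(t)\cong F$, and the surjection $\pi_L\colon\bar R_\be\onto L(\be)$, $\bar r\mapsto\bar r v_\be^-$, induces an isomorphism $\bar R_\be\bar e_\be/\bar R_\be\bar y_\be\bar e_\be\iso L(\be)\<\deg v_\be^-\>$ of (simple) $\bar R_\be$-modules; in particular $\ker\pi_L\cap\bar R_\be\bar e_\be=\bar R_\be\bar y_\be\bar e_\be$.

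\emph{Spanning.}
A downward induction on degree, using the basis $\{bv_\be^-\mid b\in\BB_\be\}$ of $L(\be)$ and the identity just obtained, will show $\bar R_\be\bar e_\be=\sum_{b\in\BB_\be,\,f\in X_{(\be)}}F\,\bar b\bar f\bar e_\be$: from a homogeneous $z\in\bar R_\be\bar e_\be$ subtract the $F$-combination of the $\bar b\bar e_\be$ prescribed by $\pi_L(z)$; the difference lies in $\bar R_\be\bar y_\be\bar e_\be=(\bar R_\be\bar e_\be)\bar y_\be\bar e_\be$, so it can be written $(\bar w\bar e_\be)\bar y_\be\bar e_\be$ with $\deg w<\deg z$, and induction applies (collapsing $fy_\be\in\Sym_{(\be)}$ back into $X_{(\be)}$). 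Applying the anti-involution $\tau$ (Lemma~\ref{LDDelTau}, together with $y_\be^\tau=y_\be$) turns this into $\overline{\de_\be D_\be}\,\bar R_\be=\sum F\,\overline{\de_\be D_\be}\,\bar f\bar b^\tau$; left-multiplying by $D_\be$ and using $\bar e_\be\bar R_\be=\bar e_\be\bar D_\be\bar R_\be$ (which holds since $\bar e_\be=\bar e_\be\bar D_\be\bar\de_\be$) then gives $\bar e_\be\bar R_\be=\sum_{b,f}F\,\bar e_\be\bar f\bar D_\be\bar b^\tau$. Feeding both spannings into the Morita isomorphism and absorbing products $\bar f\bar g\in\Sym_{(\be)}$ into $X_{(\be)}$ shows that $\bar R_\be$ is spanned by $\{\bar b\bar e_\be\bar f\bar D_\be(\bar b')^\tau\mid b,b'\in\BB_\be,\ f\in X_{(\be)}\}$. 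Since $\deg e_\be=0$, $\deg D_\be=\deg v_\be^--\deg v_\be^+$, and $\deg v_\be^+=-\deg v_\be^-$ by self-duality of $L(\be)$ (Theorem~\ref{TStand}(iii)), a degree count gives $\qdim\bar R_\be\leq(\qdim L(\be))^2\,\qdim\Sym_{(\be)}\leq(\qdim L(\be))^2/(1-q_\be^2)$, the last step by \eqref{EUpperBoundp}.

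\emph{Dimension count --- the crux.}
By Corollary~\ref{TDim}, $\qdim R_\be=\sum_{\pi\in\Pi(\be)}(\qdim\bar\De(\pi))^2 l_\pi$, and $l_{(\be)}=1/(1-q_\be^2)$; so the inequality above is an equality as soon as $\qdim I_{>(\be)}\leq\sum_{\pi>(\be)}(\qdim\bar\De(\pi))^2 l_\pi$. I would prove this by bounding $\qdim I_{>(\be)}=\qdim\big(\sum_{\pi>(\be)}R_\be e(\bi_\pi)R_\be\big)$ from above: every $\pi>(\be)$ in $\Pi(\be)$ has the form $(\be_1^{q_1},\dots,\be_N^{q_N})$ with each occurring $\be_k$ of height $<\height(\be)$ and $q_k\height(\be_k)<\height(\be)$, so Proposition~\ref{LPowerBasis} already holds for each $(\be_k^{q_k})$ by the inductive hypothesis; combining this with the parabolic reductions of Corollary~\ref{LParaIdeal2} and the factorization \eqref{EStand} of $\bar\De(\pi)$ controls the contribution of each generator $e(\bi_\pi)$ to $I_{>(\be)}$. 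Granting this estimate, all the inequalities above collapse: $\qdim\bar R_\be=(\qdim L(\be))^2/(1-q_\be^2)$ and $\qdim\Sym_{(\be)}=1/(1-q_\be^2)$. The latter forces $\Sym_{(\be)}=F[y_\be]$, and the former forces the spanning set of the previous paragraph to be an $F$-basis of $\bar R_\be$, which is part~(iii).

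\emph{Conclusion.}
Part~(iv) (for $\bar y_{\be,1}=\bar y_\be$) then follows from the basis of (iii): the vectors indexed by $b=b'=1$ are $\{\bar e_\be\bar f\bar D_\be\mid f\in X_{(\be)}\}$, their linear independence makes $f\mapsto\bar f\bar e_\be\bar D_\be$ injective on $\Sym_{(\be)}=F[y_\be]$, hence $\Sym_{(\be)}\hookrightarrow\bar R_\be$. For (i) and (ii): $\{\bar b\bar f\bar e_\be\}$ spans $\De(\si)$ and $\{\bar e_\be\bar f\bar D_\be\bar b^\tau\}$ spans $\De'(\si)$ by the spanning step, and they are linearly independent because --- after right-multiplication by $\bar D_\be$, respectively directly --- their elements appear among the basis vectors of $\bar R_\be$ indexed by $b'=1$, respectively $b=1$. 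Finally one lifts from $F$ to $\Z$ via Lemma~\ref{PFieldToZ}. I expect the dimension count for $I_{>(\be)}$ --- equivalently, the statement that $\bar e_\be\bar y_\be\bar e_\be$ generates a genuine polynomial ring inside $\bar R_\be$ --- to be the main obstacle; everything else is the Morita formalism coming out of Hypothesis~\ref{HProp} together with a bookkeeping of degrees, but pinning down $\qdim\bar R_\be$ seems to require the dimension formula of Corollary~\ref{TDim} and the induction on lower-height roots.
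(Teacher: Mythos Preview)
Your approach is the paper's: the Morita picture for $\bar R_\be$ via the full idempotent $\bar e_\be$, the spanning of $\De(\be)$ and $\De'(\be)$ by lifting a basis of $L(\be)$ through the endomorphism ring $E=F[\bar y_\be\bar e_\be]$, the multiplication map to span $\bar R_\be$, and the closing dimension comparison through Corollary~\ref{TDim} all appear in both. Your degree induction for the spanning of $\bar R_\be\bar e_\be$ is a valid rephrasing of the paper's composition-series argument (the base case is fine because degrees in $R_\be$ are bounded below by Theorem~\ref{TBasis}).

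The one place you flag as ``the main obstacle'' is exactly where the paper does the real work, and your sketch is correct in spirit but not carried out. What you need is not a bound on $\qdim I_{>(\be)}$ in isolation, but a spanning set for \emph{all of $R_\be$}, so that Corollary~\ref{TDim} can be applied to $R_\be$ rather than $\bar R_\be$. Concretely: for each $\pi=(\be_1^{p_1},\dots,\be_N^{p_N})>(\be)$, use $e(\bi_\pi)\in R_\pi$ together with \cite[Proposition~2.16]{KL1} and Lemma~\ref{LIotaImage} to get
\[
I_\pi=\sum_{u,v\in\Si^{(\pi)}}\psi_u\,R_\pi\,\psi_v^\tau+I_{>\pi}.
\]
Then, by Corollary~\ref{LParaIdeal2} and the inductive hypothesis (each $p_n\height(\be_n)<\height(\be)$, so Proposition~\ref{LPowerBasis}(iii) already holds for each $(\be_n^{p_n})$), the parabolic $R_\pi$ modulo $I_{>\pi}$ is spanned by $\iota_\pi$ of tensor products of the known cellular basis elements of the $\bar R_{p_n\be_n}$. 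Unpacking the definitions of $\BB_\pi$, $e_\pi$, $D_\pi$, $\Sym_\pi$ identifies this with $\{b\,e_\pi\,f\,D_\pi\,(b')^\tau\mid b,b'\in\BB_\pi,\ f\in X_\pi\}$, and summing over all $\pi\in\Pi(\be)$ gives a spanning set for $R_\be$ whose graded cardinality is $\sum_\pi(\qdim\bar\De(\pi))^2 l_\pi=\qdim R_\be$. So the spanning set is a basis; restricting to $\pi=(\be)$ gives (iii), and your deductions of (i), (ii), (iv) from (iii) then go through unchanged.
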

\begin{proof}
  Since $L(\be)$ is the unique simple module in $\mod{\bar R_\be}$ and 
$$\Hom_{\bar R_\be}(\De(\be), L(\be)) = e_\be L(\be) = Fv_\be^-$$
is one-dimensional by Hypothesis~\ref{HProp}, it follows that $\De(\be)$ is the projective cover of $L(\be)$ in $\mod{\bar R_\be}$ under the map $\bar e_\be \mapsto v_\be^-$. All composition factors of $\De(\be)$ are isomorphic to $L(\be)$. Therefore, lifting the basis $\{b v_\be^- \mid b \in \BB_\be\}$ of $L(\be)$ to $\De(\be)$ we see that $\De(\be)$ is spanned by 
$$\{\bar b \phi(\bar e_\be) \mid b \in \BB_\be, \phi \in \End_{\bar R_\be}(\De(\be))\}.$$ 
By Hypothesis~\ref{HProp}(v), $\End_{\bar R_\be}(\De(\be)) \simeq \bar e_\be \bar R_\be \bar e_\be$ is generated by $\bar e_\be\bar y_\be \bar e_\be=\bar y_\be \bar e_\be$. Thus 
$$\De(\be) = F\text{-span}\{\bar b \bar f \bar e_\be \mid b \in \BB_\be, f \in X_\be \}.$$

  Analogously, $\De'(\be)$ is the projective cover of $L(\be)^\tau$ as right $\bar R_\be$-modules under the map $\bar e_\be \mapsto v_\be^+$. As above, lifting the basis $\{v_\be^+ D_\be b^\tau \mid b \in \BB_\be\}$ of $L(\be)^\tau$ to $\De'(\be)$ we see that 
  $$\De'(\be) = F\operatorname{-span}\{\bar e_\be \bar f \bar D_\be\, \bar b^\tau \mid b \in \BB_\be, f \in X_\be\}.$$ Therefore by Lemma~\ref{CProp} and Hypothesis~\ref{HProp}(iv), 
  $$\bar R_\be = \bar R_\be \bar e_\be \bar R_\be = F\text{-span}\{\bar b \bar e_\be \bar f \bar D_\be (\bar b')^\tau \mid b,b' \in \BB_\be, f \in X_\be \}.$$
  
  Let $\pi = (\be_1^{p_1},\dots,\be_N^{p_N}) > (\be)$. By definition and \cite[Proposition 2.16]{KL1}  we have
  \begin{align*}
      I_\pi 
      &= R_\be e(\bi_\pi) R_\be + I_{>\pi}\\
      &= \sum_{u,v \in \Si^{(\pi)}} \psi_u R_\pi e(\bi_\pi) R_\pi \psi_v^\tau + I_{>\pi}
        \subseteq \sum_{u,v \in \Si^{(\pi)}} \psi_u R_\pi \psi_v^\tau + I_{>\pi},
  \end{align*}
because $e(\bi_\pi) \in R_\pi$. The opposite inclusion follows from Lemma~\ref{LIotaImage}. 

For $n=1,\dots,N$, define $$B_n := \{b e_{(\be_n^{p_n})} f D_{\be_n^{p_n}} (b')^\tau \mid b,b' \in \BB_{(\be_n^{p_n})}, f \in X_{(\be_n^{p_n})}\}.$$ By part (iii) of the induction hypothesis, for $n=1,\dots,N$, the image of $B_n$ in $\bar R_{p_n \be_n}$ is a basis. Let
$$
B_\pi:=\{\iota_\pi(b_1\otimes\dots\otimes b_N)\mid b_1\in\BB_{(\be_1^{p_1})},\dots, b_N\in\BB_{(\be_N^{p_N})}\}.
$$
By Corollary~\ref{LParaIdeal2} and definitions from Section~\ref{SNota},
\begin{align*}
R_\pi + I_{>\pi} &= F\operatorname{-span}\{\iota_\pi(r_1\otimes \dots\otimes r_N) \mid r_n \in B_n \text{ for } n=1,\dots,N\} + I_{>\pi}\\
     &= F\operatorname{-span}\{b e_\pi f D_\pi (b')^\tau \mid b, b' \in B_\pi, f \in X_\pi\} + I_{>\pi}
\end{align*}
and therefore
$$
   I_\pi = F\operatorname{-span}\{\psi_u b e_\pi f D_\pi (b')^\tau \psi_v^\tau \mid u,v \in \Si^{(\pi)}, b, b' \in B_\pi, f \in X_\pi\} + I_{>\pi}.
$$
  By definition of $\BB_\pi$ we have
    \begin{align}
      I_\pi &= F\text{-span}\{b e_\pi f D_\pi (b')^\tau \mid b, b' \in \BB_\pi, f \in X_\pi\} + I_{>\pi},\label{EIpi}\\
      R_\be &= \sum_{\pi \in \Pi(\be)} F\text{-span}\{b e_\pi f D_\pi (b')^\tau \mid b, b' \in \BB_\pi, f \in X_\pi\}.
    \end{align}
Using (\ref{EDimUpper}) and the equality $\deg(D_\pi) = 2\deg(v_\pi^-)$ for all $\pi \in \Pi(\be)$, we get
\begin{align*}
  \dim_q(R_\be) &= \sum_{\pi\in\Pi(\be)}\dim_q(F\text{-span}\{b e_\pi f D_\pi (b')^\tau \mid b, b' \in \BB_\pi, f \in X_\pi\})\\
  &\leq \sum_{\pi\in\Pi(\be)}\Big(\sum_{b \in \BB_\pi}q^{\deg(b)}\Big) \dim_q(\Sym_\pi) q^{\deg(D_\pi)} \Big(\sum_{b \in \BB_\pi}q^{\deg(b)}\Big)\\
  &\leq \sum_{\pi\in\Pi(\be)}\Big(\sum_{b \in \BB_\pi}q^{\deg(b v_\pi^-)}\Big)^2 l_\pi\\
  &= \sum_{\pi\in\Pi(\be)}\dim_q(\bar \De(\pi))^2 l_\pi = \dim_q(R_\be),
\end{align*}
by Corollary~\ref{TDim}. The inequalities are therefore equalities, and this implies that the spanning set $\{b e_\pi f D_\pi (b')^\tau \mid \pi \in \Pi(\be), b, b' \in \BB_\pi, f \in X_\pi\}$ of $R_\be$ is a basis and $\qdim \Sym_{\pi}=l_\pi$ for all $\pi$. These yield (iii) and (iv) of Proposition~\ref{LPowerBasis} in our special case $p=1$.

To show (i) and (ii), we have already noted that the claimed bases span $\De(\be)$ and $\De'(\be)$, respectively. We now apply part (iii) to see that they are linearly independent.
\end{proof}

\begin{Corollary}\label{CFree}
We have
  \begin{enumerate}
    \item[\textrm{(i)}]  $\bar e_\be \bar R_\be \bar e_\be$ is a polynomial algebra in the variable $\bar y_\be \bar e_\be$.
    \item[\textrm{(ii)}]  $\De(\be)$ is a free right $\bar e_\be \bar R_\be \bar e_\be$-module with basis $\{\bar b \bar e_\be \mid b \in \BB_\be\}$.
    \item[\textrm{(iii)}]  $\De'(\be)$ is a free left $\bar e_\be \bar R_\be \bar e_\be$-module with basis $\{\bar e_\be \bar D_\be \bar b^\tau \mid b \in \BB_\be\}$.
  \end{enumerate}
\end{Corollary}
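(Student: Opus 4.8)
The plan is to derive all three parts of Corollary~\ref{CFree} directly from the $p=1$ case of Proposition~\ref{LPowerBasis}, which has just been established in Lemma~\ref{L050413_1}. So throughout we set $\si=(\be)$, work over a field $F$ first, and lift to $\Z$ at the end via Lemma~\ref{PFieldToZ}.

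For part (i), recall that $\De(\be)=((R_\be e_\be+I_{>(\be)})/I_{>(\be)})\<\deg(v_\be^-)\>$ and that, as in the proof of Lemma~\ref{L050413_1}, $\De(\be)$ is the projective cover of $L(\be)$ in $\mod{\bar R_\be}$, so $\End_{\bar R_\be}(\De(\be))\simeq \bar e_\be\bar R_\be\bar e_\be$ with the isomorphism sending $\phi$ to the image of $\bar e_\be$ under $\phi$. By Hypothesis~\ref{HProp}(v) this algebra is generated by $\bar e_\be\bar y_\be\bar e_\be=\bar y_\be\bar e_\be$, so it is a quotient of a polynomial ring in one variable; it remains to see there are no relations. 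But Proposition~\ref{LPowerBasis}(i) says $\{\bar b\bar f\bar e_\be\mid b\in\BB_\be,f\in X_\be\}$ is an $F$-basis of $\De(\be)$, and $\qdim\Sym_\be=l_\be=\prod_{s\ge1}1/(1-q_\be^{2s})$ by Lemma~\ref{L050413_1}, so $\Sym_\be$ (hence $\bar e_\be\bar R_\be\bar e_\be$) has the graded dimension of a genuine polynomial ring in one degree-$(\be\cdot\be)$ variable; thus $\bar y_\be\bar e_\be$ is not algebraically dependent and $\bar e_\be\bar R_\be\bar e_\be=F[\bar y_\be\bar e_\be]$. (Alternatively, this is literally Proposition~\ref{LPowerBasis}(iv) in the case $p=1$.)

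For parts (ii) and (iii) I would argue that $\{\bar b\bar e_\be\mid b\in\BB_\be\}$ already spans $\De(\be)$ over $\bar e_\be\bar R_\be\bar e_\be$: indeed right multiplication by $\bar e_\be\bar R_\be\bar e_\be=F[\bar y_\be\bar e_\be]$ carries $\bar b\bar e_\be$ to $F$-combinations of $\bar b\bar f\bar e_\be$ ($f\in X_\be$), and these exhaust a spanning set of $\De(\be)$ by the previous paragraph; similarly the $\{\bar e_\be\bar D_\be\bar b^\tau\}$ span $\De'(\be)$ over $\bar e_\be\bar R_\be\bar e_\be$ acting on the left, using Hypothesis~\ref{HProp}(iv) to move the polynomial generator past $\bar D_\be$. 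Freeness then follows from a graded-dimension count: the $\bar R_\be$-module $\De(\be)$ has $\qdim\De(\be)=\big(\sum_{b\in\BB_\be}q^{\deg(bv_\be^-)}\big)\cdot l_\be$ by Lemma~\ref{L050413_1}(i) together with $\qdim\Sym_\be=l_\be$, while a free module on the basis $\{\bar b\bar e_\be\}$ over the polynomial ring of graded dimension $l_\be$ has exactly this graded dimension; hence the surjection from the free module is an isomorphism. The same count handles (iii), using that $\deg(\bar e_\be\bar D_\be\bar b^\tau)$ matches $\deg(bv_\be^-)$ after the grading shift by $\deg(v_\be^+)$ and that $\deg D_\be=2\deg v_\be^-$.

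The main obstacle, such as it is, is purely bookkeeping: one must make sure the grading shifts $\<\deg(v_\be^-)\>$ and $\<\deg(v_\be^+)\>$ built into the definitions of $\De(\be)$ and $\De'(\be)$ are tracked correctly so that the graded-dimension identities used to upgrade "spanning" to "basis" actually close up. There is no new representation-theoretic input needed beyond Lemma~\ref{L050413_1} and Hypothesis~\ref{HProp}; everything is a formal consequence of the $p=1$ basis theorem and the one-variable polynomiality of $\bar e_\be\bar R_\be\bar e_\be$. Finally one invokes Lemma~\ref{PFieldToZ} to pass from $F$ to $\O=\Z$, since all the elements $b,e_\be,\de_\be,D_\be,y_\be$ are defined over $\Z$.
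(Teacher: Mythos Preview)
Your proposal is correct and follows essentially the same route as the paper: both derive all three parts directly from the $p=1$ case of Proposition~\ref{LPowerBasis} (established in Lemma~\ref{L050413_1}), with part (i) coming from Proposition~\ref{LPowerBasis}(iv) and parts (ii)--(iii) coming from the basis statements (i)--(ii) there. Your graded-dimension argument for freeness is a slightly roundabout way of extracting what Proposition~\ref{LPowerBasis}(i)--(ii) give directly (once $\bar e_\be\bar R_\be\bar e_\be=F[\bar y_\be\bar e_\be]$ is known, the $F$-basis $\{\bar b\bar f\bar e_\be\}$ with $f$ ranging over $X_\be$ \emph{is} the statement that $\{\bar b\bar e_\be\}$ is a free module basis); also note the small slip $l_{(\be)}=1/(1-q_\be^2)$, not a product over all $s\ge 1$.
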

\begin{proof}
By the lemma, we have Proposition~\ref{LPowerBasis} for $p=1$. Now, (i) follows  from parts (i) and (iv) of the proposition. 
The remaining statements follow from parts (i) and (ii) of the proposition.
\end{proof}

\begin{Corollary}\label{CGrot}
  In the Grothendieck group, we have $[\De(\be)] = [L(\be)]/(1-q_\be^2).$
\end{Corollary}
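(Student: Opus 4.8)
The plan is to read $[\De(\be)]$ off from the explicit module structure recorded in Corollary~\ref{CFree} together with a graded-dimension count, the point being that $1/(1-q_\be^2)$ is precisely the graded dimension of the polynomial algebra $\bar e_\be\bar R_\be\bar e_\be$.

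First I would observe that, $L(\be)$ being the unique irreducible $\bar R_\be$-module up to grading shift (as already used in the proof of Lemma~\ref{L050413_1}), every composition factor of the graded module $\De(\be)$ is a shift of $L(\be)$. By Corollary~\ref{CFree}(ii) the module $\De(\be)$ is free over the polynomial algebra $\bar e_\be\bar R_\be\bar e_\be = F[\bar y_\be\bar e_\be]$ on the finite basis $\{\bar b\bar e_\be\mid b\in\BB_\be\}$, and since $\deg(y_\be)=\be\cdot\be>0$ this makes $\De(\be)$ bounded below with finite-dimensional graded components. Hence $\De(\be)$ has a composition series whose factors are shifts of $L(\be)$, and in the completed Grothendieck group $[\De(\be)]=c\,[L(\be)]$ for a unique $c\in\Z[[q,q^{-1}]]$; it remains only to identify $c$.

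Next I would compute $\qdim\De(\be)$ from the same description. Since $\deg(y_\be)=\be\cdot\be$ by Hypothesis~\ref{HProp}(iv), the algebra $\bar e_\be\bar R_\be\bar e_\be\cong F[\bar y_\be\bar e_\be]$ has graded dimension $1/(1-q_\be^2)$. A one-line degree bookkeeping using Hypothesis~\ref{HProp}(iii) gives $\deg(e_\be)=\deg(D_\be)+\deg(\de_\be)=(\deg v_\be^- -\deg v_\be^+)+(\deg v_\be^+-\deg v_\be^-)=0$, so, accounting for the shift $\langle\deg(v_\be^-)\rangle$ in the definition of $\De(\be)$, the basis vector $\bar b\bar e_\be$ sits in degree $\deg(b)+\deg(v_\be^-)=\deg(bv_\be^-)$. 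Therefore
\[
\qdim\De(\be)=\frac{1}{1-q_\be^2}\sum_{b\in\BB_\be}q^{\deg(bv_\be^-)}=\frac{1}{1-q_\be^2}\,\qdim L(\be),
\]
the last equality because $\{bv_\be^-\mid b\in\BB_\be\}$ is a basis of $L(\be)$.

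Finally, comparing this with $[\De(\be)]=c\,[L(\be)]$ forces $c\cdot\qdim L(\be)=\frac{1}{1-q_\be^2}\qdim L(\be)$, and dividing by the nonzero Laurent polynomial $\qdim L(\be)$ yields $c=1/(1-q_\be^2)$, as claimed. I do not expect a real obstacle here; the only mild point of care is that the identity is to be read in a completion of the Grothendieck group where the geometric series $1/(1-q_\be^2)=\sum_{n\ge 0}q_\be^{2n}$ is meaningful, which is harmless given that $\De(\be)$ is bounded below with finite-dimensional graded pieces. Alternatively one can dispense with completions by exhibiting directly the decreasing filtration $\De(\be)\supseteq\De(\be)\bar y_\be\bar e_\be\supseteq\De(\be)(\bar y_\be\bar e_\be)^2\supseteq\cdots$ of $\bar R_\be$-submodules with zero intersection, whose successive quotients are isomorphic to $\bigl(\De(\be)/\De(\be)\bar y_\be\bar e_\be\bigr)\langle n(\be\cdot\be)\rangle$ and then identifying $\De(\be)/\De(\be)\bar y_\be\bar e_\be$ with $L(\be)$ by the same graded-dimension comparison.
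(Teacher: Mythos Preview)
Your argument is correct and is exactly the justification the paper has in mind: the corollary is stated without proof immediately after Corollary~\ref{CFree}, and the intended reading is precisely the graded-dimension comparison you carry out (all composition factors of $\De(\be)$ are shifts of $L(\be)$, and the freeness in Corollary~\ref{CFree}(i),(ii) gives $\qdim\De(\be)=\qdim L(\be)/(1-q_\be^2)$). Your remark on interpreting the identity in a completion (or via the filtration by powers of $\bar y_\be\bar e_\be$) is also appropriate.
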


\begin{Lemma}\label{LInduct} Up to a degree shift, 
  $\De(\be)^{\circ p} \cong \bar R_{p\be} \bar e_{\be^{\boxtimes p}}$.
\end{Lemma}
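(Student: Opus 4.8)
The plan is to prove the isomorphism by exhibiting an explicit surjective homomorphism $\De(\be)^{\circ p} \to \bar R_{p\be} \bar e_{\be^{\boxtimes p}}$ and then showing it is an isomorphism by a dimension count. First I would unwind the definitions: $\De(\be)^{\circ p} = \Ind_{\be,\dots,\be}(\De(\be)\boxtimes\dots\boxtimes\De(\be))$, which is $R_{p\be}1_{\be,\dots,\be}\otimes_{R_{\be,\dots,\be}} (\De(\be)^{\boxtimes p})$, where each $\De(\be) = (R_\be e_\be + I_{>(\be)})/I_{>(\be)}$ up to shift. The distinguished generator $\bar e_\be^{\otimes p}$ of $\De(\be)^{\boxtimes p}$ gives an element $1_{\be,\dots,\be}\otimes \bar e_\be^{\otimes p}$, and sending this to $\bar e_{\be^{\boxtimes p}} = \iota_{\be,\dots,\be}(e_\be\otimes\dots\otimes e_\be)$ should define an $R_{p\be}$-module map. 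I need to check this is well-defined, i.e. that $\iota_\pi(R_\be\otimes\dots\otimes I_{>(\be)}\otimes\dots\otimes R_\be)$ (one factor at a time) kills $\bar e_{\be^{\boxtimes p}}$ inside $\bar R_{p\be}$ — but this is exactly the content of Corollary~\ref{COneRootIdeal}, which says $\iota_{\be,\dots,\be}(R_\be\otimes\dots\otimes I_{>(\be)}\otimes\dots\otimes R_\be)\subseteq I_{>(\be^p)}$, so the factor $I_{>(\be)}$ in any slot lands in $I_{>(\be^p)}$, which is zero in $\bar R_{p\be}$.

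Next I would argue surjectivity: the image of the map is the $\bar R_{p\be}$-submodule generated by $\bar e_{\be^{\boxtimes p}}$, which is precisely $\bar R_{p\be}\bar e_{\be^{\boxtimes p}}$, so surjectivity is automatic once well-definedness is established. The substance is injectivity, which I would get by comparing graded dimensions. On the source side, $\De(\be)$ has a known basis by Corollary~\ref{CFree} (it is free over the polynomial algebra $\bar e_\be\bar R_\be\bar e_\be$ with basis $\{\bar b\bar e_\be\mid b\in\BB_\be\}$), so $\qdim\De(\be) = \qdim\bar\De(\be)/(1-q_\be^2)$ by Corollary~\ref{CGrot}; hence $\qdim(\De(\be)^{\circ p})$ is computed from the induction product formula on $[\mod{R}]$, giving $\qdim$ of $\bar\De(\be)^{\circ p}$ times $\prod_{s=1}^p 1/(1-q_\be^{2s})$ up to the bookkeeping of grading shifts and the twisted multiplication. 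On the target side, $\bar R_{p\be}\bar e_{\be^{\boxtimes p}}$ is a summand of $\bar R_{p\be}$ whose dimension I would read off, or bound, using the spanning results already in play. The cleanest route is probably to show both sides have $\qdim$ equal to $\qdim(L(\be)^{\circ p})\cdot l_{(\be^p)}$ (using $L(\be)^{\circ p}$ irreducible by Theorem~\ref{TStand}(v), $\bar\De(\be^p)=L(\be)^{\circ p}$ up to shift since $(\be^p)$ is minimal, and $l_{(\be^p)}=\prod_{s=1}^p 1/(1-q_\be^{2s})$), then conclude a surjection between modules of equal finite graded dimension in each degree is an isomorphism.

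The main obstacle I anticipate is the accounting of grading shifts and the twisted structure constants coming from the twisted multiplication on induction products, together with confirming that the naive assignment really respects the left $R_{p\be}$-action (not just on the generator but globally) — in particular that no relations in $\De(\be)^{\circ p}$ get collapsed, which is why the dimension comparison is essential rather than a direct kernel computation. A secondary subtlety is making sure the map is defined over $\Z$ as well as over a field; as elsewhere in this subsection, I would first work over a field $F$, establish the isomorphism there, and then invoke Lemma~\ref{PFieldToZ} together with the fact that all the relevant elements ($e_\be$, $\BB_\be$, $y_\be$) are defined over $\Z$ to lift. Once injectivity is in hand over $F$, the ``up to a degree shift'' clause is just recording the shift $\langle\deg(v_\be^-)\rangle$ built into $\De(\be)$ propagated through the $p$-fold induction, plus whatever shift is implicit in comparing $\bar e_{\be^{\boxtimes p}}$ with $v_\pi^-$ for $\pi=(\be^p)$.
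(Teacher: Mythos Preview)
Your construction of the forward map $\mu:\De(\be)^{\circ p}\to\bar R_{p\be}\bar e_{\be^{\boxtimes p}}$ is correct and matches the paper: well-definedness is exactly Corollary~\ref{COneRootIdeal}, and surjectivity is automatic. The gap is in your injectivity argument. You propose to compare graded dimensions, but at this point in the argument you have no independent handle on $\qdim(\bar R_{p\be}\bar e_{\be^{\boxtimes p}})$. The spanning and basis results for $\bar R_{p\be}$ (Proposition~\ref{LPowerBasis} for the exponent $p$, and in particular Lemma~\ref{LEndBasis}) are proved \emph{using} Lemma~\ref{LInduct}, so invoking them here would be circular. Corollary~\ref{TDim} gives $\qdim$ of idempotent truncations of $R_{p\be}$, not of the quotient $\bar R_{p\be}$, and there is no available description of $I_{>(\be^p)}\cap R_{p\be}e_{\be^{\boxtimes p}}$ to bridge the two.

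The paper avoids the dimension count entirely by constructing an explicit inverse. The key observation you are missing is that $I_{>(\be^p)}$ annihilates $\De(\be)^{\circ p}$: every composition factor of $\De(\be)$ is $L(\be)$, so by exactness of induction every composition factor of $\De(\be)^{\circ p}$ is $L(\be)^{\circ p}\cong L(\be^p)$, and Theorem~\ref{TStand}(vi) gives $\Res_\pi L(\be^p)=0$ for $\pi>(\be^p)$, hence $e(\bi_\pi)$ kills $\De(\be)^{\circ p}$ for all such $\pi$. This lets you define $\nu:\bar R_{p\be}\bar e_{\be^{\boxtimes p}}\to\De(\be)^{\circ p}$ by $\bar e_{\be^{\boxtimes p}}\mapsto 1_{\be,\dots,\be}\otimes\bar e_\be^{\otimes p}$ (well-defined since $e_{\be^{\boxtimes p}}$ fixes the generator), and $\mu,\nu$ are visibly mutually inverse on generators. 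This sidesteps all the dimension bookkeeping and the $\Z$-versus-field issue you flagged.
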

\begin{proof}
By Corollary~\ref{COneRootIdeal} we have a map
$$
\De(\be)^{\boxtimes p} \to \Res_{\be, \dots, \be}(\bar R_{p\be} \bar e_{\be^{\boxtimes p}}),\; \bar e_\be^{\otimes p} \mapsto \bar e_{\be^{\boxtimes p}}.
$$
By Frobenius reciprocity, we obtain a map
$$
\mu: \De(\be)^{\circ p} \to \bar R_{p\be} \bar e_{\be^{\boxtimes p}},\; 1_{\be, \dots, \be} \otimes \bar e_\be^{\otimes p} \mapsto \bar e_{\be^{\boxtimes p}}.
$$
We now show that $I_{>(\be^p)} \De(\be)^{\circ p} = 0$. It is enough to prove that $\Res_\pi \De(\be)^{\circ p} = 0$ for all $\pi > (\be^p)$. Since all composition factors of $\De(\be)$ are isomorphic to $L(\be)$, it follows that all composition factors of $\De(\be)^{\circ p}$ are isomorphic to $L(\be)^{\circ p} \cong L(\be^p)$. By Theorem~\ref{TStand}\textrm{(vi)}, $\Res_\pi(L(\be)^{\circ p}) = 0$, which proves the claim. Since $e_{\be^{\boxtimes p}} 1_{\be, \dots, \be} \otimes \bar e_\be^{\otimes p} = 1_{\be, \dots, \be} \otimes \bar e_\be^{\otimes p}$, we obtain a map 
\begin{align*}
\nu: \bar R_{p\be} \bar e_{\be^{\boxtimes p}} &\to \De(\be)^{\circ p},\; \bar e_{\be^{\boxtimes p}} \mapsto 1_{\be, \dots, \be} \otimes \bar e_\be^{\otimes p}.
\end{align*}
 The homomorphisms $\mu, \nu$ map the evident cyclic generators to each other, and so are inverse isomorphisms.
\end{proof}

\begin{Lemma}\label{LMackeyPsi}
  There exists an endomorphism of $\De(\be) \circ \De(\be)$ which sends $1_{\be,\be} \otimes (\bar e_\be \otimes \bar e_\be)$ to $ \psi_{\be,1} 1_{\be,\be} \otimes (\bar e_\be \otimes \bar e_\be)$.
\end{Lemma}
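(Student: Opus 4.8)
The plan is to build the desired endomorphism by checking that the map prescribed on the cyclic generator $1_{\be,\be}\otimes(\bar e_\be\otimes\bar e_\be)$ is well-defined as an $R_{2\be}$-module homomorphism, and the natural tool is the same adjunction/Mackey argument used in Lemma~\ref{LInduct}. By definition $\De(\be)\circ\De(\be)=\Ind_{\be,\be}(\De(\be)\boxtimes\De(\be))$, so by Frobenius reciprocity giving an endomorphism of this module amounts to giving an $R_{\be,\be}$-homomorphism $\De(\be)\boxtimes\De(\be)\to\Res_{\be,\be}\bigl(\De(\be)\circ\De(\be)\bigr)$. The target element $\psi_{\be,1}1_{\be,\be}\otimes(\bar e_\be\otimes\bar e_\be)$ is annihilated by the same left ideal that annihilates $1_{\be,\be}\otimes(\bar e_\be\otimes\bar e_\be)$ provided one checks the following two facts: first, that $\psi_{\be,1}$ commutes appropriately with the image of $\iota_{\be,\be}(R_\be\otimes R_\be)$ modulo lower terms, and second, that $I_{>(\be^2)}$ kills $\De(\be)\circ\De(\be)$ (this last is already established inside the proof of Lemma~\ref{LInduct}, via $\Res_\pi(L(\be)^{\circ 2})=0$ for $\pi>(\be^2)$ from Theorem~\ref{TStand}(vi), since all composition factors of $\De(\be)^{\circ 2}$ are $L(\be^2)$).

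Concretely, I would proceed as follows. First, recall that $\De(\be)$ is cyclic with generator $\bar e_\be$ and that, as a right $\bar e_\be\bar R_\be\bar e_\be$-module, it has basis $\{\bar b\bar e_\be\mid b\in\BB_\be\}$ (Corollary~\ref{CFree}). Thus $\De(\be)\boxtimes\De(\be)$ is cyclic over $R_{\be,\be}$ with generator $\bar e_\be\otimes\bar e_\be$, and its annihilator as a left $R_{\be,\be}$-module is generated (modulo $I_{>(\be)}\otimes R_\be+R_\be\otimes I_{>(\be)}$, which maps into $I_{>(\be^2)}$ by Corollary~\ref{LParaIdeal1}) by the annihilator of $\bar e_\be$ on each factor. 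Second, the relevant computation is that for any $h\in R_{\be,\be}$ annihilating $\bar e_\be\otimes\bar e_\be$ in $\De(\be)\boxtimes\De(\be)$, we have $\psi_{\be,1}h\bigl(1_{\be,\be}\otimes(\bar e_\be\otimes\bar e_\be)\bigr)\in I_{>(\be^2)}\cdot(\text{module})=0$. Using the KLR relations (\ref{R2PsiE}), (\ref{R4}), (\ref{R6}), (\ref{R7}) one moves $\psi_{\be,1}$ past $h$; the commutator terms that arise lie in the parabolic subalgebra and, after applying $\Res$, reduce to restrictions of $L(\be)^{\circ 2}$ to finer parabolics, which vanish by the cuspidality of $L(\be)$ exactly as in the Mackey-filtration argument referenced for Lemma~\ref{LBKM}. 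Third, having verified well-definedness, Frobenius reciprocity produces the endomorphism $\mu$ of $\De(\be)\circ\De(\be)$ with $\mu\bigl(1_{\be,\be}\otimes(\bar e_\be\otimes\bar e_\be)\bigr)=\psi_{\be,1}1_{\be,\be}\otimes(\bar e_\be\otimes\bar e_\be)$, which is the claim.

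The main obstacle I anticipate is the bookkeeping in the second step: showing that $\psi_{\be,1}$ genuinely normalizes the annihilator of the cyclic generator modulo $I_{>(\be^2)}$. This is where Hypothesis~\ref{HProp}(vi), $\iota_{\be,\be}(D_\be\otimes D_\be)\psi_{\be,1}=\psi_{\be,1}\iota_{\be,\be}(D_\be\otimes D_\be)$, should be doing real work, together with the fact that $D_\be v_\be^+=v_\be^-$ and $\de_\be v_\be^-=v_\be^+$ from Hypothesis~\ref{HProp}(iii); these let one pass between the $e_\be=D_\be\de_\be$ presentation and the action on the cuspidal module, so that the needed identity can be checked after evaluating on $\De(\be)\boxtimes\De(\be)$, where $\De(\be)$ is understood explicitly via Corollary~\ref{CFree}. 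A secondary subtlety is making sure the relevant commutator terms from relation (\ref{R7}) — which involve $(Q_{i_r,i_{r+1}}(y_{r+2},y_{r+1})-Q_{i_r,i_{r+1}}(y_r,y_{r+1}))/(y_{r+2}-y_r)$ summed over the $d$-block crossings comprising $w_{\be,1}$ — all land in $I_{>(\be^2)}$; this follows from Corollary~\ref{LParaIdeal1} once one observes each such term factors through a proper parabolic $R_{a\be}\otimes I_{>(\be)}\otimes R_{c\be}$-type piece acting on $L(\be)^{\circ 2}$.
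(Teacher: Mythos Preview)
Your framework is right (Frobenius reciprocity reduces the problem to producing an $R_{\be,\be}$-map $\De(\be)\boxtimes\De(\be)\to\Res_{\be,\be}(\De(\be)\circ\De(\be))$ hitting $\psi_{\be,1}1_{\be,\be}\otimes(\bar e_\be\otimes\bar e_\be)$), but the heart of your argument --- the ``bookkeeping'' you flag as the main obstacle --- does not go through as written, and the paper's proof bypasses it entirely with a different idea.

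Your plan is to verify well-definedness by hand: take $h\in R_{\be,\be}$ annihilating $\bar e_\be\otimes\bar e_\be$, commute $h$ past $\psi_{\be,1}$, and argue the error terms die. The claim that the $(\ref{R7})$-type commutators ``factor through a proper parabolic $R_{a\be}\otimes I_{>(\be)}\otimes R_{c\be}$ piece'' is not justified: these error terms are polynomial expressions in the $y_r$'s times $\psi_w$'s with $w<w_{\be,1}$, and there is no reason such elements should lie in (or act through) $I_{>(\be)}$. Hypothesis~\ref{HProp}(vi) is a red herring here --- it concerns the single element $D_\be\otimes D_\be$, not arbitrary annihilator elements, and is used later for a different purpose.

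The paper's argument avoids this computation altogether. The Mackey filtration of $\Res_{\be,\be}(\De(\be)\circ\De(\be))$ has, by cuspidality of $L(\be)$, only two nonzero layers, giving a short exact sequence
\[
0\to \De(\be)\boxtimes\De(\be)\to \Res_{\be,\be}(\De(\be)\circ\De(\be))\to (\De(\be)\boxtimes\De(\be))\langle -\be\cdot\be\rangle\to 0,
\]
in which $\psi_{\be,1}1_{\be,\be}\otimes(\bar e_\be\otimes\bar e_\be)$ is a preimage of the generator of the quotient. One then checks this is a sequence of $\bar R_\be\boxtimes\bar R_\be$-modules (same cuspidality argument you already cite). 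The key point you are missing is that $\De(\be)$ is \emph{projective} as a $\bar R_\be$-module --- this was established in Lemma~\ref{L050413_1} --- so the sequence splits. A splitting sends the generator of the quotient to $\psi_{\be,1}1_{\be,\be}\otimes(\bar e_\be\otimes\bar e_\be)$ plus something in the bottom piece; composing with Frobenius reciprocity and correcting by the obvious ``bottom'' endomorphism gives exactly the map you want. In short: rather than proving the error terms vanish, projectivity lets you absorb them.
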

\begin{proof}
Apply the Mackey theorem to $\Res_{\be,\be}(\De(\be) \circ \De(\be))$. We get a short exact sequence of $R_\be \boxtimes R_\be$-modules
$$
0 \to \De(\be) \boxtimes \De(\be) \to \Res_{\be,\be}(\De(\be) \circ \De(\be)) \to (\De(\be) \boxtimes \De(\be))\<-\be\cdot\be\> \to 0,
$$
where $\psi_{\be,1} 1_{\be,\be} \otimes (\bar e_\be \otimes\bar e_\be)\in \Res_{\be,\be}(\De(\be) \circ \De(\be))$ is a preimage of the standard generator of $(\De(\be) \boxtimes \De(\be))\<-\be\cdot\be\>$.

We now show that this is actually a sequence of $\bar R_\be \boxtimes \bar R_\be$-modules. It is sufficient to show that for any $\pi > (\be)$, we have that
$$
\Res_{\pi,\be} \circ \Res_{\be,\be}(\De(\be) \circ \De(\be)) = 0 = \Res_{\be,\pi} \circ \Res_{\be,\be}(\De(\be) \circ \De(\be)).
$$
We show the first equality, the second being similar. All composition factors of $\De(\be)$ are isomorphic to $L(\be)$, so all composition factors of $\De(\be) \circ \De(\be)$ are isomorphic to $L(\be) \circ L(\be)$, and thus all composition factors of $\Res_{\be,\be}(\De(\be) \circ \De(\be))$ are isomorphic to $L(\be) \boxtimes L(\be)$. Theorem~\ref{TStand} now tells us that $\Res_{\pi}(L(\be)) = 0$ for all $\pi > (\be)$.

By the projectivity of $\De(\be)$ as $\bar R_\be$-module, the short exact sequence splits, giving the required endomorphism by Frobenius reciprocity.
\end{proof}

\begin{Corollary}\label{C030413_1}
  $\bar \psi_{\be,1} \bar e_{\be^{\boxtimes 2}} = \bar e_{\be^{\boxtimes 2}} \bar \psi_{\be,1} \bar e_{\be^{\boxtimes 2}}$.
\end{Corollary}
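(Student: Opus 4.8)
The plan is to leverage the isomorphism $\De(\be)^{\circ 2} \cong \bar R_{2\be}\bar e_{\be^{\boxtimes 2}}$ furnished by Lemma~\ref{LInduct} together with the endomorphism constructed in Lemma~\ref{LMackeyPsi}. First I would note that $\De(\be)\circ\De(\be)$ is genuinely a module over $\bar R_{2\be} = R_{2\be}/I_{>(\be^2)}$: this is already contained in the proof of Lemma~\ref{LMackeyPsi}, where one observes that every composition factor of $\De(\be)\circ\De(\be)$ is isomorphic to $L(\be)^{\circ 2}\cong L(\be^2)$, and $e(\bi_\pi)L(\be^2)=0$ for all $\pi>(\be^2)$ by Theorem~\ref{TStand}(vi), so $I_{>(\be^2)}(\De(\be)\circ\De(\be))=0$. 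Consequently the endomorphism $\phi$ of $\De(\be)\circ\De(\be)$ from Lemma~\ref{LMackeyPsi}, a priori $R_{2\be}$-linear, is automatically $\bar R_{2\be}$-linear, and in particular commutes with the action of $e_{\be^{\boxtimes 2}}$.

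Next I would invoke Lemma~\ref{LInduct} with $p=2$ (noting that the longest element of $\Si_2$ is $s_1$, so $\psi_{\be,w_0}=\psi_{\be,1}$): it supplies a left $\bar R_{2\be}$-module isomorphism $\mu\colon \De(\be)\circ\De(\be)\iso \bar R_{2\be}\bar e_{\be^{\boxtimes 2}}$ sending the cyclic generator $w:=1_{\be,\be}\otimes(\bar e_\be\otimes\bar e_\be)$ to $\bar e_{\be^{\boxtimes 2}}$. I would also recall the identity $e_{\be^{\boxtimes 2}}\cdot w = w$, a direct consequence of Hypothesis~\ref{HProp}(i), which is the same fact used inside the proof of Lemma~\ref{LInduct}. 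Now the computation runs as follows: Lemma~\ref{LMackeyPsi} gives $\phi(w)=\psi_{\be,1}w$, while using $w=e_{\be^{\boxtimes 2}}w$ and $\bar R_{2\be}$-linearity of $\phi$ we also get $\phi(w)=\phi(e_{\be^{\boxtimes 2}}w)=e_{\be^{\boxtimes 2}}\phi(w)=e_{\be^{\boxtimes 2}}\psi_{\be,1}w$. Hence $\psi_{\be,1}w=e_{\be^{\boxtimes 2}}\psi_{\be,1}w$ in $\De(\be)\circ\De(\be)$. Applying the left $\bar R_{2\be}$-module isomorphism $\mu$ and using $\mu(w)=\bar e_{\be^{\boxtimes 2}}$ turns this into $\bar\psi_{\be,1}\bar e_{\be^{\boxtimes 2}}=\bar e_{\be^{\boxtimes 2}}\bar\psi_{\be,1}\bar e_{\be^{\boxtimes 2}}$, which is exactly the assertion.

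There is no serious obstacle here; the substance lies entirely in Lemmas~\ref{LInduct} and~\ref{LMackeyPsi}, and what remains is bookkeeping. The only two points that need a moment's care are (a) verifying that $\phi$ descends to a $\bar R_{2\be}$-endomorphism, so that it commutes with $e_{\be^{\boxtimes 2}}$, and (b) keeping track of the fact that $\mu$ is a homomorphism of \emph{left} $\bar R_{2\be}$-modules, so that transporting the identity $\psi_{\be,1}w=e_{\be^{\boxtimes 2}}\psi_{\be,1}w$ along $\mu$ is legitimate.
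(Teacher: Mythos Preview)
Your argument is correct and is essentially the paper's proof: transport the endomorphism $\phi$ of Lemma~\ref{LMackeyPsi} via Lemma~\ref{LInduct}, then use that $\phi$ is $R_{2\be}$-linear and that $\bar e_{\be^{\boxtimes 2}}$ is idempotent (equivalently $e_{\be^{\boxtimes 2}}w=w$) to conclude. One small remark: your point~(a) is unnecessary, since $\phi$ is already an $R_{2\be}$-module endomorphism and hence commutes with the action of $e_{\be^{\boxtimes 2}}\in R_{2\be}$; the fact that $I_{>(\be^2)}$ kills $\De(\be)\circ\De(\be)$ (which is actually in the proof of Lemma~\ref{LInduct}, not Lemma~\ref{LMackeyPsi}) is only needed to make sense of the identification with $\bar R_{2\be}\bar e_{\be^{\boxtimes 2}}$.
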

\begin{proof}
Let $\phi$ be the endomorphism of $\De(\be) \circ \De(\be)$ constructed in Lemma~\ref{LMackeyPsi}, regarded as an endomorphism of $\bar R_{2\be} \bar e_{\be^{\boxtimes 2}}$ by Lemma~\ref{LInduct}. Then
$$
  \bar \psi_{\be,1} \bar e_{\be^{\boxtimes 2}} = \phi(\bar e_{\be^{\boxtimes 2}}) = \phi(\bar e_{\be^{\boxtimes 2}}^2) = \bar e_{\be^{\boxtimes 2}} \phi(\bar e_{\be^{\boxtimes 2}}) = \bar e_{\be^{\boxtimes 2}} \bar \psi_{\be,1} \bar e_{\be^{\boxtimes 2}},$$
as required. 
\end{proof}

\begin{Corollary} \label{C030413_2} 
We have $\bar e_{\be^{\boxtimes p}}\bar e_{(\be^p)} \bar e_{\be^{\boxtimes p}}=\bar e_{(\be^p)}$. 
\end{Corollary}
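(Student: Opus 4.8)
The goal is to show $\bar e_{\be^{\boxtimes p}}\bar e_{(\be^p)} \bar e_{\be^{\boxtimes p}}=\bar e_{(\be^p)}$ in $\bar R_{p\be}=R_{p\be}/I_{>(\be^p)}$, where by \eqref{EDeSi} we have $e_{(\be^p)}=\psi_{\be,w_0}y_{\be,2}y_{\be,3}^2\dots y_{\be,p}^{p-1} e_{\be^{\boxtimes p}}$. The plan is to show that $\bar e_{(\be^p)}$ is already left-absorbed by $\bar e_{\be^{\boxtimes p}}$, i.e. that $\bar e_{\be^{\boxtimes p}}\bar e_{(\be^p)}=\bar e_{(\be^p)}$, and symmetrically that $\bar e_{(\be^p)}\bar e_{\be^{\boxtimes p}}=\bar e_{(\be^p)}$; the statement then follows immediately. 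Since $e_{(\be^p)}=\psi_{\be,w_0}\,(\text{product of }y_{\be,r}\text{'s})\,e_{\be^{\boxtimes p}}$ already has $e_{\be^{\boxtimes p}}$ on the right, the right-absorption is the easy half provided we can commute $e_{\be^{\boxtimes p}}$ appropriately; the left-absorption requires moving $\bar e_{\be^{\boxtimes p}}$ past $\psi_{\be,w_0}$ and the $y_{\be,r}$'s.

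First I would record that each $y_{\be,r}=\iota_{(r-1)\be,\be,(p-r)\be}(1\otimes y_\be\otimes 1)$ lies in the parabolic subalgebra and, since $y_\be\in e(\bi_\be)R_\be e(\bi_\be)$ is $\tau$-invariant (Hypothesis~\ref{HProp}(ii),(iv)) and commutes with $\de_\be,D_\be$, one gets $e_{\be^{\boxtimes p}}y_{\be,r}=y_{\be,r}e_{\be^{\boxtimes p}}$ directly from the factorwise relation $e_\be y_\be=y_\be e_\be$ (which holds because $y_\be$ commutes with both $D_\be$ and $\de_\be$, hence with $e_\be=D_\be\de_\be$). Thus the monomial $m:=y_{\be,2}y_{\be,3}^2\dots y_{\be,p}^{p-1}$ commutes with $e_{\be^{\boxtimes p}}$, and since $e_{\be^{\boxtimes p}}$ is (in $\bar R_{p\be}$) an idempotent by Hypothesis~\ref{HProp}(i) applied factorwise (i.e. $e_{\be^{\boxtimes p}}^2-e_{\be^{\boxtimes p}}\in I_{>(\be^p)}$ via Corollary~\ref{COneRootIdeal}), we get $\bar e_{\be^{\boxtimes p}}\,\bar m\,\bar e_{\be^{\boxtimes p}}=\bar m\,\bar e_{\be^{\boxtimes p}}$ and $\bar e_{(\be^p)}\bar e_{\be^{\boxtimes p}}=\bar\psi_{\be,w_0}\bar m\,\bar e_{\be^{\boxtimes p}}^2=\bar e_{(\be^p)}$, which is the right-absorption.

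For the left-absorption I would use Corollary~\ref{C030413_1}, $\bar\psi_{\be,1}\bar e_{\be^{\boxtimes 2}}=\bar e_{\be^{\boxtimes 2}}\bar\psi_{\be,1}\bar e_{\be^{\boxtimes 2}}$, together with its ``parabolically induced'' analogues: applying $\iota$ to embed an $R_{2\be}$ relation into $R_{p\be}$, and using Corollary~\ref{LParaIdeal1} (or Corollary~\ref{COneRootIdeal}) to see that the error terms coming from $I_{>(\be^2)}$ land in $I_{>(\be^p)}$, one obtains $\bar e_{\be^{\boxtimes p}}\bar\psi_{\be,r}\bar e_{\be^{\boxtimes p}}=\bar\psi_{\be,r}\bar e_{\be^{\boxtimes p}}$ for each $r=1,\dots,p-1$. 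Writing $\psi_{\be,w_0}$ in terms of a reduced word in the $\psi_{\be,r}$ and inducting on length, one pushes $\bar e_{\be^{\boxtimes p}}$ leftward through each $\bar\psi_{\be,r}$ (absorbing it at each step, using that $\bar e_{\be^{\boxtimes p}}$ commutes with $\bar m$ and is idempotent), to conclude $\bar e_{\be^{\boxtimes p}}\bar\psi_{\be,w_0}\bar m\,\bar e_{\be^{\boxtimes p}}=\bar\psi_{\be,w_0}\bar m\,\bar e_{\be^{\boxtimes p}}$, i.e. $\bar e_{\be^{\boxtimes p}}\bar e_{(\be^p)}=\bar e_{(\be^p)}$.

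The main obstacle I expect is the careful bookkeeping in the induction on a reduced word for $w_0\in\Si_p$: one must make sure that at each step the occurrence of $\psi_{\be,r}$ being absorbed is ``adjacent'' to a copy of $e_{\be^{\boxtimes p}}$ in the right place (since $\psi_{\be,r}$ only interacts nicely with the two $\be$-blocks it permutes), which may require repeatedly inserting $\bar e_{\be^{\boxtimes p}}=\bar e_{\be^{\boxtimes p}}^2$ and using that $e_{\be^{\boxtimes p}}$ commutes with $\psi_{\be,s}$ for blocks it does not touch — a fact that again follows from the parabolic structure and Corollary~\ref{COneRootIdeal}. Once the local absorption $\bar e_{\be^{\boxtimes p}}\bar\psi_{\be,r}\bar e_{\be^{\boxtimes p}}=\bar\psi_{\be,r}\bar e_{\be^{\boxtimes p}}$ is in hand, the rest is a formal manipulation; combining left- and right-absorption gives $\bar e_{\be^{\boxtimes p}}\bar e_{(\be^p)}\bar e_{\be^{\boxtimes p}}=\bar e_{(\be^p)}\bar e_{\be^{\boxtimes p}}=\bar e_{(\be^p)}$, as required.
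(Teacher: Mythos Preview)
Your proposal is correct and follows the same approach the paper intends (the paper's proof is the one-line ``Follows from (\ref{EDeSi}), Corollary~\ref{C030413_1} and Hypothesis~\ref{HProp}''): use the formula for $e_{(\be^p)}$, the commutation of $y_{\be,r}$ with $e_{\be^{\boxtimes p}}$ from Hypothesis~\ref{HProp}(iv), the idempotency of $\bar e_{\be^{\boxtimes p}}$ via Hypothesis~\ref{HProp}(i) and Corollary~\ref{COneRootIdeal}, and then the parabolically induced version of Corollary~\ref{C030413_1} (via Corollary~\ref{LParaIdeal1}) to absorb $\bar e_{\be^{\boxtimes p}}$ through each $\bar\psi_{\be,r}$. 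Your worry about the induction is unfounded: once you have $\bar e_{\be^{\boxtimes p}}\bar\psi_{\be,r}\bar e_{\be^{\boxtimes p}}=\bar\psi_{\be,r}\bar e_{\be^{\boxtimes p}}$ for each $r$, a clean induction on length gives $\bar e_{\be^{\boxtimes p}}\bar\psi_{\be,w}\bar e_{\be^{\boxtimes p}}=\bar\psi_{\be,w}\bar e_{\be^{\boxtimes p}}$ for all $w\in\Si_p$, with no delicate bookkeeping needed.
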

\begin{proof}
Follows from (\ref{EDeSi}), Corollary~\ref{C030413_1} and Hypothesis~\ref{HProp}. 
\end{proof}

\begin{Lemma}\label{LEndBasis}
  The set $\{\bar e_{\be^{\boxtimes p}} \bar y_{\be,1}^{a_1} \dots \bar y_{\be,p}^{a_p} \bar \psi_{\be,w} \bar e_{\be^{\boxtimes p}} \mid w \in \Si_p, a_1, \dots, a_p \geq 0 \}$ gives a linear basis of $\bar e_{\be^{\boxtimes p}} \bar R_{\al} \bar e_{\be^{\boxtimes p}}$. 
\end{Lemma}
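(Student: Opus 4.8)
The strategy is to identify $\bar R_{p\be}\bar e_{\be^{\boxtimes p}}$ with $\De(\be)^{\circ p}$ via Lemma~\ref{LInduct} and then compute its endomorphism algebra using the Mackey theorem. First I would observe that $\bar e_{\be^{\boxtimes p}}\bar R_\al\bar e_{\be^{\boxtimes p}}\cong\End_{\bar R_\al}(\bar R_\al\bar e_{\be^{\boxtimes p}})^{\op}\cong\End_{\bar R_\al}(\De(\be)^{\circ p})^{\op}$. By Lemma~\ref{LInduct}, $\De(\be)$ is projective over $\bar R_\be$ and Lemma~\ref{L050413_1}/Corollary~\ref{CFree} give $\De(\be)$ as a free module over the polynomial algebra $\bar e_\be\bar R_\be\bar e_\be=F[\bar y_\be\bar e_\be]$ with basis $\{\bar b\bar e_\be\mid b\in\BB_\be\}$. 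Then I would compute $\qdim\End_{\bar R_\al}(\De(\be)^{\circ p})$ by iterated application of the Mackey theorem (exactly as in Lemma~\ref{LMackeyPsi}), using that all composition factors of $\De(\be)^{\circ p}$ are $L(\be^p)$ and that by projectivity all the relevant short exact sequences split over $\bar R_{p\be}$; this yields that $\Res_{\be,\dots,\be}\De(\be)^{\circ p}$ has a filtration with $p!$ subquotients each isomorphic (up to shift) to $\De(\be)^{\boxtimes p}$, whence
\[
\qdim\End_{\bar R_\al}\bigl(\De(\be)^{\circ p}\bigr)\;\le\;\sum_{w\in\Si_p}q^{-\deg\psi_{\be,w}}\bigl(\qdim\End_{\bar R_\be}\De(\be)\bigr)^{\!\otimes p}\;=\;\Big(\prod_{s=1}^p\frac{1}{1-q_\be^{2s}}\Big)\cdot\frac{(\text{something symmetric})}{\ \cdots\ },
\]
more precisely matching $\qdim F[\bar y_{\be,1},\dots,\bar y_{\be,p}]\cdot\sum_{w\in\Si_p}q^{-\deg\psi_{\be,w}}$, which is exactly the graded dimension of the span of the proposed set.

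Next I would prove the proposed set spans. Starting from the KLR basis theorem (Theorem~\ref{TBasis}) restricted to $e_{\be^{\boxtimes p}}R_\al e_{\be^{\boxtimes p}}$, every element is a combination of $\psi_w y^{\bm}e(\bi_\pi)$-type terms conjugated by $e_{\be^{\boxtimes p}}$; using Corollary~\ref{C030413_1} (which says $\bar\psi_{\be,1}$ commutes past $\bar e_{\be^{\boxtimes 2}}$ up to the sandwiched form) together with the nilHecke-type straightening inside each $\bar R_{p_n\be_n}$ block and the commutation relations \eqref{R2PsiY}--\eqref{R7}, I can push all $e_{\be^{\boxtimes p}}$ factors to the outside and reduce any $\bar\psi_w$ for $w\notin\Si_p$ to lower terms (these involve word idempotents $e(\bi)$ with $e(\bi)L(\be^p)=0$, hence lie in $I_{>(\be^p)}$ by Corollary~\ref{LBadWords}). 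This shows $\bar e_{\be^{\boxtimes p}}\bar R_\al\bar e_{\be^{\boxtimes p}}$ is spanned by $\{\bar e_{\be^{\boxtimes p}}\bar y_{\be,1}^{a_1}\cdots\bar y_{\be,p}^{a_p}\bar\psi_{\be,w}\bar e_{\be^{\boxtimes p}}\}$. Comparing the resulting upper bound on $\qdim$ with the lower bound from the endomorphism-algebra computation forces equality, so the spanning set is a basis.

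The main obstacle I anticipate is the bookkeeping in the Mackey-theorem computation: getting the graded multiplicities and shifts in the iterated filtration of $\Res_{\be,\dots,\be}\De(\be)^{\circ p}$ exactly right, and verifying that all the intermediate short exact sequences split as $\bar R_{p\be}$-modules (not just $R_{p\be}$-modules) — this needs the vanishing $\Res_\pi L(\be)=0$ for $\pi>(\be)$ applied repeatedly, as in Lemma~\ref{LMackeyPsi}, plus projectivity of $\De(\be)$. A secondary subtlety is making the straightening argument for $\bar\psi_w$ with $w\notin\Si_p$ genuinely terminate; here one argues by induction on length, peeling off a generator $\psi_r$ that moves a strand out of its $\be$-block, and invoking Corollary~\ref{LBadWords} to absorb the error terms into $I_{>(\be^p)}$. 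Everything else is routine manipulation with the relations and the already-established Proposition~\ref{LPowerBasis} in lower height together with Corollaries~\ref{C030413_1} and~\ref{C030413_2}.
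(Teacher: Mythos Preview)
Your logical structure has a genuine gap. The Mackey/Frobenius computation you write gives an \emph{upper} bound
\[
\qdim\End_{\bar R_\al}\bigl(\De(\be)^{\circ p}\bigr)\;\le\;\sum_{w\in\Si_p}q^{-\deg\psi_{\be,w}}\bigl(\qdim\End_{\bar R_\be}\De(\be)\bigr)^{p},
\]
and a spanning set also gives an \emph{upper} bound on $\qdim$. Two upper bounds do not yield linear independence; at the end you call the endomorphism computation a ``lower bound'', but nothing you did produces one. You do mention that the iterated Mackey sequences split by projectivity of $\De(\be)$; if you actually use that, the computation becomes an equality rather than $\le$, and then spanning would suffice---but you must commit to this and carry it out for the full $p$-fold iteration, not just $p=2$.

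The paper runs the argument in the opposite direction. It first proves \emph{linear independence} of the proposed set directly: via Lemma~\ref{LInduct} one works inside $\De(\be)^{\circ p}$, the Mackey filtration of $\Res_{\be,\dots,\be}\De(\be)^{\circ p}$ places $\bar\psi_{\be,w}\bar e_{\be^{\boxtimes p}}$ in the $w$-th layer (cf.\ Lemma~\ref{LMackeyPsi}), and Corollary~\ref{CFree} gives freeness of each layer under the $y_{\be,r}$-action. This is a lower bound. The upper bound then comes not from splitting but from a Grothendieck-group count using Corollary~\ref{CGrot} and the known multiplicity $[\Res_{\be,\dots,\be}L(\be)^{\circ p}:L(\be)^{\boxtimes p}]=q_\be^{-p(p-1)/2}[p]_\be^!$.

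Your proposed spanning argument is also problematic on its own: for a general $w\in\Si_{pd}$ with $w\bi_\be^p=\bi_\be^p$ there is no evident straightening that reduces $\bar e_{\be^{\boxtimes p}}\bar\psi_w\bar e_{\be^{\boxtimes p}}$ to block permutations modulo $I_{>(\be^p)}$; the intermediate words need not leave the support of $L(\be^p)$, so Corollary~\ref{LBadWords} does not obviously apply. The paper avoids this entirely by getting spanning for free from the dimension count.
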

\begin{proof}
The elements above are linearly independent by Lemmas~\ref{LInduct} and \ref{LMackeyPsi}, and Corollary~\ref{CFree}. We use Frobenius reciprocity, Corollary~\ref{CGrot}, and \cite[Lemma 2.11]{BKM} to see that
\begin{align*}
  \qdim \End_{R_\al}(\De(\be)^{\circ p}) &= \qdim \Hom_{R_{\be,\dots,\be}}(\De(\be)^{\boxtimes p}, \Res_{\be,\dots,\be} \De(\be)^{\circ p})\\
  &\leq [\Res_{\be,\dots,\be} \De(\be)^{\circ p} : L(\be)^{\boxtimes p}]\\
  &= [\Res_{\be,\dots,\be} L(\be)^{\circ p} : L(\be)^{\boxtimes p}]/(1-q_\be^2)^p\\
  &= q_\be^{-\frac{1}{2}p(p-1)} [p]_\be^! / (1-q_\be^2)^p.
\end{align*}
By the formula for the Poincar\'e polynomial of $\Si_p$, we have shown that
\[
  \qdim \bar e_{\be^{\boxtimes p}} \bar R_{\al} \bar e_{\be^{\boxtimes p}} \leq \frac{\sum_{w \in \Si_p}q_\be^{-2l(w)}}{(1-q_\be^2)^p},
\]
showing that the proposed basis also spans.
\end{proof}

The next two lemmas are proved using ideas that already appeared in the proofs of \cite[Lemmas]{BKM}.

\begin{Lemma}\label{LNHRel1}
  We have that
\begin{align*}
  \bar \psi_{\be, r}^2 \bar e_{\be^{\boxtimes p}} &= 0, &\text{for } 1 \leq r \leq p-1,\\
  \bar \psi_{\be, r} \bar \psi_{\be, s} \bar e_{\be^{\boxtimes p}} &= \bar \psi_{\be, s} \bar \psi_{\be, r} \bar e_{\be^{\boxtimes p}}, &\text{for $|r-s| > 1$, and}\\
  \bar \psi_{\be, r} \bar \psi_{\be, r+1} \bar \psi_{\be, r} \bar e_{\be^{\boxtimes p}} &= \bar \psi_{\be, r+1} \bar \psi_{\be, r} \bar \psi_{\be, r+1} \bar e_{\be^{\boxtimes p}}, &\text{for } 1 \leq r \leq p-2.
\end{align*}
\end{Lemma}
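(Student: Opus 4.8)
The commutation relation for $|r-s|>1$ already holds in $R_{p\be}$ itself, so it needs no reduction. The other two relations I would reduce to the cases $p=2$ (for $\bar\psi_{\be,r}^2\bar e_{\be^{\boxtimes p}}=0$) and $p=3$ (for the braid relation) by means of a parabolic embedding together with Corollary~\ref{LParaIdeal1}, and then dispose of the base cases using the explicit basis of $\bar e_{\be^{\boxtimes p}}\bar R_{p\be}\bar e_{\be^{\boxtimes p}}$ recorded in Lemma~\ref{LEndBasis}.

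\textbf{The commutation relation and the reductions.} If $|r-s|>1$ the block transpositions $w_{\be,r}$ and $w_{\be,s}$ are supported on disjoint intervals of $\{1,\dots,pd\}$ separated by a gap of length at least $d$; hence every reduced expression of $w_{\be,r}$ uses only Coxeter generators $s_a$ and every reduced expression of $w_{\be,s}$ only $s_b$ with $|a-b|>1$, so repeated use of (\ref{R3Psi}) gives $\psi_{\be,r}\psi_{\be,s}=\psi_{\be,s}\psi_{\be,r}$ in $R_{p\be}$ for any choices of reduced expressions, whence the claim in $\bar R_{p\be}$ after multiplying by $\bar e_{\be^{\boxtimes p}}$. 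For the other two, grouping the $p$ blocks as $((r-1)\be,2\be,(p-r-1)\be)$ identifies $\psi_{\be,r}$ with $\iota_{(r-1)\be,2\be,(p-r-1)\be}(1\otimes\psi_{\be,1}\otimes 1)$ and $e_{\be^{\boxtimes p}}$ with $\iota_{(r-1)\be,2\be,(p-r-1)\be}(e_{\be^{\boxtimes(r-1)}}\otimes e_{\be^{\boxtimes 2}}\otimes e_{\be^{\boxtimes(p-r-1)}})$, so that $\psi_{\be,r}^2e_{\be^{\boxtimes p}}$ is the $\iota$-image of $e_{\be^{\boxtimes(r-1)}}\otimes(\psi_{\be,1}^2e_{\be^{\boxtimes 2}})\otimes e_{\be^{\boxtimes(p-r-1)}}$; once $\psi_{\be,1}^2e_{\be^{\boxtimes 2}}\in I_{>(\be^2)}$ is known, Corollary~\ref{LParaIdeal1} puts this in $I_{>(\be^p)}$. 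Grouping instead as $((r-1)\be,3\be,(p-r-2)\be)$ reduces the braid relation to the assertion that $(\psi_{\be,1}\psi_{\be,2}\psi_{\be,1}-\psi_{\be,2}\psi_{\be,1}\psi_{\be,2})e_{\be^{\boxtimes 3}}\in I_{>(\be^3)}$.

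\textbf{The case $p=2$.} By Hypothesis~\ref{HProp}(iii) one has $\deg e_\be=0$, so the element $\psi_{\be,1}e_{\be^{\boxtimes 2}}$ is homogeneous of degree $-(\be\cdot\be)$, and by Corollary~\ref{C030413_1} it equals $\bar e_{\be^{\boxtimes 2}}\bar\psi_{\be,1}\bar e_{\be^{\boxtimes 2}}\in\bar e_{\be^{\boxtimes 2}}\bar R_{2\be}\bar e_{\be^{\boxtimes 2}}$. Hence $\bar\psi_{\be,1}^2\bar e_{\be^{\boxtimes 2}}=(\bar e_{\be^{\boxtimes 2}}\bar\psi_{\be,1}\bar e_{\be^{\boxtimes 2}})^2$ is a homogeneous element of $\bar e_{\be^{\boxtimes 2}}\bar R_{2\be}\bar e_{\be^{\boxtimes 2}}$ of degree $-2(\be\cdot\be)$. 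But the basis of Lemma~\ref{LEndBasis} (with $p=2$) has degrees $(\be\cdot\be)(a_1+a_2-l(w))$ with $l(w)\le 1$, so every nonzero homogeneous element of $\bar e_{\be^{\boxtimes 2}}\bar R_{2\be}\bar e_{\be^{\boxtimes 2}}$ has degree at least $-(\be\cdot\be)$; since $\be\cdot\be>0$ this forces $\bar\psi_{\be,1}^2\bar e_{\be^{\boxtimes 2}}=0$.

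\textbf{The case $p=3$: the main obstacle.} The same parabolic reduction of Corollary~\ref{C030413_1} gives $\bar\psi_{\be,r}\bar e_{\be^{\boxtimes 3}}=\bar e_{\be^{\boxtimes 3}}\bar\psi_{\be,r}\bar e_{\be^{\boxtimes 3}}$ for $r=1,2$, so $\bar\psi_{\be,1}\bar\psi_{\be,2}\bar\psi_{\be,1}\bar e_{\be^{\boxtimes 3}}=P_1P_2P_1$ and $\bar\psi_{\be,2}\bar\psi_{\be,1}\bar\psi_{\be,2}\bar e_{\be^{\boxtimes 3}}=P_2P_1P_2$, where $P_r:=\bar e_{\be^{\boxtimes 3}}\bar\psi_{\be,r}\bar e_{\be^{\boxtimes 3}}$. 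Both are homogeneous elements of $\bar e_{\be^{\boxtimes 3}}\bar R_{3\be}\bar e_{\be^{\boxtimes 3}}$ of degree $-3(\be\cdot\be)$, and by Lemma~\ref{LEndBasis} the degree $-3(\be\cdot\be)$ component of this algebra is one-dimensional, spanned by $\bar e_{\be^{\boxtimes 3}}\bar\psi_{\be,w_0}\bar e_{\be^{\boxtimes 3}}$ for $w_0\in\Si_3$ the longest element; choosing the fixed reduced expression of $w_0$ to be $s_1s_2s_1$ makes $P_1P_2P_1=\bar e_{\be^{\boxtimes 3}}\bar\psi_{\be,w_0}\bar e_{\be^{\boxtimes 3}}$, so it remains to see $P_2P_1P_2$ is the \emph{same} scalar multiple of it. For this I would pass through the isomorphism $\bar e_{\be^{\boxtimes 3}}\bar R_{3\be}\bar e_{\be^{\boxtimes 3}}\cong\End_{R_{3\be}}(\De(\be)^{\circ 3})^{\op}$ coming from Lemma~\ref{LInduct}, under which $P_1,P_2$ correspond to the endomorphisms $\phi_{12},\phi_{23}$ obtained by applying the Mackey endomorphism of Lemma~\ref{LMackeyPsi} to an adjacent pair of tensor factors and inducing; then, applying $\Res_{\be,\be,\be}$ and using the Mackey filtration of $\Res_{\be,\be,\be}\De(\be)^{\circ 3}$ — whose surviving layers, by cuspidality exactly as in Lemma~\ref{LMackeyPsi}, are grading shifts of $\De(\be)^{\boxtimes 3}$ indexed by $\Si_3$ — one checks that each $\phi_{r,r+1}$ moves between the layers related by $s_r$ with ``leading coefficient'' the identity, so that a product of three of them along a reduced word for $w_0$ has leading coefficient $1$ irrespective of whether the word is $s_1s_2s_1$ or $s_2s_1s_2$ (the lengths add, so no lower-order term survives in top degree). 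Since an element of the one-dimensional degree $-3(\be\cdot\be)$ component is determined by this leading coefficient, $P_2P_1P_2=P_1P_2P_1$, completing the base case and the lemma. The bookkeeping in this final step — matching up the Mackey layers, their shifts, and the precise meaning of the leading coefficient — is the only genuinely delicate point, and it is exactly the type of computation carried out in \cite{BKM}.
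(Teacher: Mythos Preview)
Your argument is correct, and the overall shape --- reduce to $p=2,3$ via Corollary~\ref{LParaIdeal1}, then kill things by degree --- matches the paper. The $p=2$ case is essentially identical to the paper's, only rephrased through the basis of Lemma~\ref{LEndBasis} rather than the decomposition (\ref{EDeSquare}) of $(\De(\be)\circ\De(\be))_{\bi_\be^2}$ directly.

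For $p=3$ you take a somewhat different and longer route. You observe that the degree $-3(\be\cdot\be)$ part of $\bar e_{\be^{\boxtimes 3}}\bar R_{3\be}\bar e_{\be^{\boxtimes 3}}$ is one-dimensional, so $P_1P_2P_1$ and $P_2P_1P_2$ are proportional, and then set out to match their leading coefficients in the Mackey filtration of $\De(\be)^{\circ 3}$. This can be made rigorous, but the bookkeeping you flag is real. The paper avoids it entirely by analysing the \emph{difference} directly: the defining relations (\ref{R6}) and (\ref{R7}) of $R_{3\be}$ already force
\[
(\psi_{\be,2}\psi_{\be,1}\psi_{\be,2}-\psi_{\be,1}\psi_{\be,2}\psi_{\be,1})\,e(\bi_\be^3)\otimes(\bar e_\be)^{\otimes 3}
\ \in\ S:=\sum_{w<w_{\be,w_0}}\psi_w\,e(\bi_\be^3)\otimes\De(\be)^{\boxtimes 3},
\]
where $<$ is Bruhat order. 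The Mackey analysis (exactly as in Lemma~\ref{LMackeyPsi}) identifies $S$ with the sum of the layers for $w\in\{1,w_{\be,1},w_{\be,2},w_{\be,1}w_{\be,2},w_{\be,2}w_{\be,1}\}$, whose minimum degree is that of the length-two layer --- strictly larger than the degree of the difference. So the difference is zero, by the same degree trick as for $p=2$, with no leading-coefficient comparison needed.
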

\begin{proof}
  We use Lemma~\ref{LInduct} to identify $\bar R_{p\be} \bar e_{\be^{\boxtimes p}}$ with $\De(\be)^{\circ p}$. It is enough to prove the first relation in the case $p=2$. The Mackey theorem analysis in the proof of Lemma~\ref{LMackeyPsi} shows that, as a graded vector space
\begin{equation}\label{EDeSquare}
(\De(\be) \circ \De(\be))_{\bi_\be^2} = e(\bi_\be^2) \otimes (\De(\be) \boxtimes \De(\be)) \oplus \psi_{\be,1} e(\bi_\be^2) \otimes (\De(\be) \boxtimes \De(\be)).
\end{equation}
The vector $\bar e_\be \in \De(\be)_{\bi_\be}$ is of minimal degree, and thus $\psi_{\be,1} e(\bi_\be^2) \otimes (\bar e_\be \otimes \bar e_\be)$ is of minimal degree in $(\De(\be) \circ \De(\be))_{\bi_\be^2}$. The degree of $\psi_{\be,1}^2 e(\bi_\be^2) \otimes (\bar e_\be \otimes \bar e_\be)$ is smaller by $\be\cdot\be$, so the vector is zero.

  The second relation is clear from the definitions. To prove the third relation, it is sufficient to consider $p=3$. Let $w_r := w_{\be,r}$, and set $w_0:=w_1w_2w_1$. Using the defining relations of $R_{3\be}$, we deduce that $(\psi_{\be, 2} \psi_{\be, 1} \psi_{\be, 2} - \psi_{\be, 1} \psi_{\be, 2} \psi_{\be, 1}) e(\bi_\be^3) \otimes (\bar e_\be \otimes \bar e_\be \otimes \bar e_\be)$ is an element of degree $3\deg(v_\be^-)-6\be\cdot\be$ in $S:=\sum_{w < w_0} \psi_w e(\bi_\be^3) \otimes (\De(\be) \boxtimes \De(\be) \boxtimes \De(\be))$, where $<$ denotes the Bruhat order. By a Mackey theorem analysis as in the proof of Lemma~\ref{LMackeyPsi}, we see that
$$S = \sum_{w \in \{1,w_1,w_2,w_1w_2,w_2w_1\}} \psi_w e(\bi_\be^3) \otimes (\De(\be) \boxtimes \De(\be) \boxtimes \De(\be)).$$
The lowest degree of an element in $S$ is therefore $3\deg(v_\be^-)-4\be\cdot\be$, and the third relation is proved.
\end{proof}

\begin{Lemma}\label{LNHRel2}
  There exists a unique choice of $\eps_\be = \pm 1$ such that
\begin{align*}
  \bar \psi_{\be, r} \bar y_{\be, s} \bar e_{\be^{\boxtimes p}} &= \bar y_{\be, s} \bar \psi_{\be, r} \bar e_{\be^{\boxtimes p}}, &\text{for } s \neq r, r+1,\\
  \bar \psi_{\be, r} \eps_\be \bar y_{\be, r+1} \bar e_{\be^{\boxtimes p}} &= (\eps_\be \bar y_{\be, r} \bar \psi_{\be, r} + 1) \bar e_{\be^{\boxtimes p}}, &\text{for $1 \leq r < p$, and}\\
  \eps_\be \bar y_{\be, r+1} \bar \psi_{\be, r} \bar e_{\be^{\boxtimes p}} &= (\bar \psi_{\be, r} \eps_\be \bar y_{\be, r} + 1) \bar e_{\be^{\boxtimes p}}, &\text{for $1 \leq r < p$.}
\end{align*}
\end{Lemma}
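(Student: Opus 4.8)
The plan is to prove the three displayed relations separately, and for the last two to reduce to the case $p=2$. The first relation is essentially free of content: when $s\neq r,r+1$ the element $y_{\be,s}$ involves only the KLR generators acting on the $s$-th block of $\height(\be)$ strands, while $\psi_{\be,r}$ involves only the $\psi$-generators crossing the $r$-th and $(r+1)$-st blocks; since these strand sets are disjoint, $y_{\be,s}$ and $\psi_{\be,r}$ already commute in $R_{p\be}$ by relation~(\ref{R3YPsi}), and the relation survives in $\bar R_\al$. For the remaining two relations, the operators $\psi_{\be,r},y_{\be,r},y_{\be,r+1}$ all lie in the parabolic subalgebra $R_{(r-1)\be,\,2\be,\,(p-r-1)\be}$, and under the identifications $\bar R_{p\be}\bar e_{\be^{\boxtimes p}}\cong\De(\be)^{\circ p}\cong\De(\be)^{\circ(r-1)}\circ\bigl(\De(\be)\circ\De(\be)\bigr)\circ\De(\be)^{\circ(p-r-1)}$ coming from Lemma~\ref{LInduct} and associativity of $\circ$, it suffices to establish the corresponding identities in $\De(\be)\circ\De(\be)\cong\bar R_{2\be}\bar e_{\be^{\boxtimes 2}}$; the general case then follows by inducing. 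So I must produce a unique sign $\eps_\be$ with $(\bar\psi_{\be,1}\bar y_{\be,2}-\bar y_{\be,1}\bar\psi_{\be,1})\bar e_{\be^{\boxtimes 2}}=\eps_\be\,\bar e_{\be^{\boxtimes 2}}$, together with the variant obtained by interchanging the two blocks.

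Write $v:=\bar e_{\be^{\boxtimes 2}}=1_{\be,\be}\otimes(\bar e_\be\otimes\bar e_\be)$ and $w:=\bar\psi_{\be,1}v$. Straight from the definition of the induction product, $\bar\psi_{\be,1}\bar y_{\be,j}v$ is obtained by applying $y_{\be,j}\in R_{\be,\be}$ inside the tensor and then left-multiplying by $\psi_{\be,1}$; in particular $v$, $w$, $\bar\psi_{\be,1}\bar y_{\be,1}v$ and $\bar\psi_{\be,1}\bar y_{\be,2}v$ all lie in the word space $e(\bi_\be^2)(\De(\be)\circ\De(\be))$, and since $|\bi_\be|=\be$ forces $\deg(\psi_{\be,1}e(\bi_\be^2))=-\be\cdot\be$ while $\deg(y_\be)=\be\cdot\be$ by Hypothesis~\ref{HProp}(iv), the left-hand side of the desired identity has the same degree as $v$. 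By the Mackey analysis of Lemma~\ref{LMackeyPsi} (see~(\ref{EDeSquare})) this word space decomposes as $e(\bi_\be^2)\otimes(\De(\be)\boxtimes\De(\be))\ \oplus\ \psi_{\be,1}e(\bi_\be^2)\otimes(\De(\be)\boxtimes\De(\be))$, the first summand being the $\bi_\be^2$-part of the bottom $\De(\be)\boxtimes\De(\be)$ of the Mackey filtration and the second projecting isomorphically onto the top $(\De(\be)\boxtimes\De(\be))\langle-\be\cdot\be\rangle$. Since $\bar e_\be$ spans the lowest graded component of $\De(\be)_{\bi_\be}$ (Corollary~\ref{CFree} and the choice of $\bi_\be$), the component of our element in the first summand is forced by degree to be a scalar multiple of $v$; and applying the projection onto the top of the Mackey filtration, which is $\bar R_\be\boxtimes\bar R_\be$-linear, one checks that $\bar\psi_{\be,1}\bar y_{\be,2}v$ and $\bar y_{\be,1}w$ have the same image there, so the component in the second summand cancels. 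Hence $(\bar\psi_{\be,1}\bar y_{\be,2}-\bar y_{\be,1}\bar\psi_{\be,1})v=c\,v$ for some $c\in F$; the block-interchanged computation is identical, and comparing the two (evaluating $\bar\psi_{\be,1}\bar y_{\be,2}\bar\psi_{\be,1}\bar e_{\be^{\boxtimes 2}}$ two ways, using $\bar\psi_{\be,1}^2\bar e_{\be^{\boxtimes 2}}=0$ from Lemma~\ref{LNHRel1} and $\bar\psi_{\be,1}\bar e_{\be^{\boxtimes 2}}=\bar e_{\be^{\boxtimes 2}}\bar\psi_{\be,1}\bar e_{\be^{\boxtimes 2}}$ from Corollary~\ref{C030413_1}) shows the two computations produce the same scalar $c$.

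It remains to see that $c=\pm1$, and this is where the hypotheses are genuinely needed. Multiplying $\bar\psi_{\be,1}\bar y_{\be,2}\bar e_{\be^{\boxtimes 2}}=\bar y_{\be,1}\bar\psi_{\be,1}\bar e_{\be^{\boxtimes 2}}+c\,\bar e_{\be^{\boxtimes 2}}$ on the right by $\bar\psi_{\be,1}\bar e_{\be^{\boxtimes 2}}$ and using $\bar\psi_{\be,1}^2\bar e_{\be^{\boxtimes 2}}=0$ and Corollary~\ref{C030413_1} gives $\bar\psi_{\be,1}\bar y_{\be,2}\bar\psi_{\be,1}\bar e_{\be^{\boxtimes 2}}=c\,\bar\psi_{\be,1}\bar e_{\be^{\boxtimes 2}}$. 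On the other hand $\bar\psi_{\be,1}\bar y_{\be,2}\bar e_{\be^{\boxtimes 2}}=\bar e_{(\be^2)}$ by~(\ref{EDeSi}), so the left-hand side equals $\bar e_{(\be^2)}\bar\psi_{\be,1}\bar e_{\be^{\boxtimes 2}}$; expanding $e_{(\be^2)}=D_{(\be^2)}\de_{(\be^2)}$, sliding $\iota_{\be,\be}(D_\be\otimes D_\be)$ past $\psi_{\be,1}$ by Hypothesis~\ref{HProp}(vi), and using Hypothesis~\ref{HProp}(ii)--(iv) together with the description of $\De(\be)$ as a free $\bar e_\be\bar R_\be\bar e_\be$-module (Corollary~\ref{CFree}), one computes $\bar e_{(\be^2)}\bar\psi_{\be,1}\bar e_{\be^{\boxtimes 2}}=\pm\bar\psi_{\be,1}\bar e_{\be^{\boxtimes 2}}$ — this is the analogue of the nilHecke identity $e_2\psi_{w_0}=\psi_{w_0}$, the sign being accounted for by the rescaling of $y_\be$. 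Since $\bar\psi_{\be,1}\bar e_{\be^{\boxtimes 2}}=w\neq0$ (it generates the top of the Mackey filtration), comparing the two expressions yields $c=\pm1$, and we put $\eps_\be:=c$. The third relation then follows from the second by applying the anti-involution $\tau$ — which fixes the $y_{\be,r}$ and $\psi_{\be,r}$ by Hypothesis~\ref{HProp}(ii) and~(\ref{ETauInv1}) — and multiplying on the right by $\bar e_{\be^{\boxtimes p}}$, absorbing the resulting left-hand idempotent via Corollary~\ref{C030413_1}; alternatively one simply reruns the paragraph above with the two blocks interchanged. Uniqueness of $\eps_\be$ is immediate: another admissible $\eps_\be'$ would give $(\eps_\be-\eps_\be')c\,\bar e_{\be^{\boxtimes p}}=0$, and $c\neq0$, $\bar e_{\be^{\boxtimes p}}\neq0$ force $\eps_\be=\eps_\be'$.

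The main obstacle is precisely this last identification $c=\pm1$: the degree count together with the Mackey decomposition cheaply pins the commutator down to the one-dimensional space $F\bar e_{\be^{\boxtimes 2}}$, but extracting the actual scalar forces one to unwind the definitions of $\de_\be$, $D_\be$ and $e_{(\be^2)}$ and to invoke Hypothesis~\ref{HProp}(vi), which is the crucial structural input; everything else is essentially bookkeeping with parabolic induction and gradings.
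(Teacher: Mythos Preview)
Your first two paragraphs are correct and match the paper: the first relation follows from strand-disjointness, the reduction to $p=2$ is valid, and the Mackey/degree argument correctly shows that $(\bar\psi_{\be,1}\bar y_{\be,2}-\bar y_{\be,1}\bar\psi_{\be,1})\bar e_{\be^{\boxtimes 2}}=c\,\bar e_{\be^{\boxtimes 2}}$ for some scalar $c$.

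The gap is in your determination of $c$. Your computation giving $\bar e_{(\be^2)}\bar\psi_{\be,1}\bar e_{\be^{\boxtimes 2}}=c\,\bar\psi_{\be,1}\bar e_{\be^{\boxtimes 2}}$ is fine, but your claimed independent computation $\bar e_{(\be^2)}\bar\psi_{\be,1}\bar e_{\be^{\boxtimes 2}}=\pm\bar\psi_{\be,1}\bar e_{\be^{\boxtimes 2}}$ is circular. If you actually expand $e_{(\be^2)}=\psi_{\be,1}\iota(D_\be\otimes D_\be)\,y_{\be,2}\,\iota(\de_\be\otimes\de_\be)$ and use Hypothesis~\ref{HProp}(iv),(vi), you arrive at $\bar\psi_{\be,1}\bar y_{\be,2}\,\bar e_{\be^{\boxtimes 2}}\,\bar\psi_{\be,1}\bar e_{\be^{\boxtimes 2}}$; absorbing the middle idempotent via Corollary~\ref{C030413_1} returns you exactly to $\bar\psi_{\be,1}\bar y_{\be,2}\bar\psi_{\be,1}\bar e_{\be^{\boxtimes 2}}$. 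The nilHecke identity $e_2\psi_{w_0}=\psi_{w_0}$ you invoke is, in this setting, a \emph{consequence} of the relation being proved (it is the content of Corollary~\ref{C030413_3}, which comes after this lemma), not an independent input. Hypothesis~\ref{HProp}(vi) only slides $\iota(D_\be\otimes D_\be)$ past $\psi_{\be,1}$, not $\iota(\de_\be\otimes\de_\be)$, and Corollary~\ref{CFree} says nothing about the scalar.

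The paper's route is genuinely different at this step. Working over $\Z$ one obtains $c_+\in\Z$; computing $(\bar\psi_{\be,1}\bar y_{\be,1}\bar y_{\be,2}-\bar y_{\be,1}\bar y_{\be,2}\bar\psi_{\be,1})\bar e_{\be^{\boxtimes 2}}$ two ways and invoking the linear independence of $\bar y_{\be,1}\bar e_{\be^{\boxtimes 2}}$ and $\bar y_{\be,2}\bar e_{\be^{\boxtimes 2}}$ from Lemma~\ref{LEndBasis} forces $c_-=-c_+$. One then shows $c_+\not\equiv 0$ modulo every prime: if $c_+=0$ in $\FF_p$, the relations $\bar\psi_{\be,1}\bar y_{\be,j}\bar e_{\be^{\boxtimes 2}}=\bar y_{\be,3-j}\bar\psi_{\be,1}\bar e_{\be^{\boxtimes 2}}$ imply that the submodule $S\subseteq\De(\be)\circ\De(\be)$ generated by $\bar y_{\be,1}\bar e_{\be^{\boxtimes 2}}$ and $\bar y_{\be,2}\bar e_{\be^{\boxtimes 2}}$ is stable under the endomorphism given by right multiplication by $\bar\psi_{\be,1}\bar e_{\be^{\boxtimes 2}}$. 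This endomorphism then descends to a nonzero negative-degree endomorphism of $(\De(\be)\circ\De(\be))/S\cong L(\be)\circ L(\be)$, contradicting the irreducibility of the latter (Theorem~\ref{TStand}(v)). Hence $c_+=\pm1$.
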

\begin{proof}
  The first relation is clear from the definitions. It is enough to prove the remaining relations for $p=2$. Using the defining relations of $R_{2\be}$ and a Mackey theorem analysis as in the proof of Lemma~\ref{LMackeyPsi}, we deduce that
\begin{align*}
  (\bar \psi_{\be, 1} \bar y_{\be, 2} - \bar y_{\be, 1} \bar \psi_{\be, 1})\bar e_{\be^{\boxtimes 2}} &\in \sum_{w<w_{\be,1}}\psi_we(\bi_\be^2) \otimes (\De(\be) \boxtimes \De(\be))\\
      &= e(\bi_\be^2) \otimes (\De(\be) \boxtimes \De(\be)),
\end{align*}
and the only vector of the correct degree is $\bar e_{\be^{\boxtimes 2}}$. Therefore (working over $\Z$) we must have that
$$(\bar \psi_{\be, 1} \bar y_{\be, 2} - \bar y_{\be, 1} \bar \psi_{\be, 1}) \bar e_{\be^{\boxtimes 2}} = c_+ \bar e_{\be^{\boxtimes 2}}$$
for some $c_+ \in \Z$. Similarly, we obtain
$$(\bar \psi_{\be, 1} \bar y_{\be, 1} - \bar y_{\be, 2} \bar \psi_{\be, 1}) \bar e_{\be^{\boxtimes 2}} = c_- \bar e_{\be^{\boxtimes 2}}$$
for some $c_- \in \Z$. We compute
\begin{align*}
(\bar \psi_{\be,1} \bar y_{\be,1} \bar y_{\be,2} - \bar y_{\be,1} \bar y_{\be,2} \bar \psi_{\be,1}) \bar e_{\be^{\boxtimes 2}} &= (\bar y_{\be,2} \bar \psi_{\be,1} + c_-)\bar y_{\be,2} - \bar y_{\be,2} (\bar \psi_{\be,1} \bar y_{\be,2} - c_+) \\
    &= (c_-+c_+)\bar y_{\be,2} \bar e_{\be^{\boxtimes 2}}\\
(\bar \psi_{\be,1} \bar y_{\be,1} \bar y_{\be,2} - \bar y_{\be,1} \bar y_{\be,2} \bar \psi_{\be,1}) \bar e_{\be^{\boxtimes 2}} &= (\bar y_{\be,1} \bar \psi_{\be,1} + c_+)\bar y_{\be,1} - \bar y_{\be,1} (\bar \psi_{\be,1} \bar y_{\be,1} - c_-) \\
    &= (c_++c_-)\bar y_{\be,1} \bar e_{\be^{\boxtimes 2}}
\end{align*}
and since $\bar y_{\be,1} \bar e_{\be^{\boxtimes 2}}$ and $\bar y_{\be,1} \bar e_{\be^{\boxtimes 2}}$ are linearly independent by Lemma~\ref{LEndBasis}, we must have $c_-=-c_+$. We now fix a prime $p$ and extend scalars to $\FF_p$. Suppose that $\eps_\be = 0 \in \FF_p$, so that
\begin{align*}
  \bar \psi_{\be, 1} \bar y_{\be, 2} \bar e_{\be^{\boxtimes 2}} &= \bar y_{\be, 1} \bar \psi_{\be, 1} \bar e_{\be^{\boxtimes 2}}\\
  \bar \psi_{\be, 1} \bar y_{\be, 1} \bar e_{\be^{\boxtimes 2}} &= \bar y_{\be, 2} \bar \psi_{\be, 1} \bar e_{\be^{\boxtimes 2}}.
\end{align*}
 Define $S$ to be the submodule of $\De(\be) \circ \De(\be)$ generated by $\bar y_{\be,1} \bar e_{\be^{\boxtimes 2}}$ and $\bar y_{\be,2} \bar e_{\be^{\boxtimes 2}}$. The above equations show that the endomorphism defined by right multiplication by $\bar \psi_{\be,1} \bar e_{\be^{\boxtimes 2}}$ leaves $S$ invariant. On the other hand, $\De(\be) \circ \De(\be) / S \cong L(\be) \circ L(\be)$ is irreducible. Since the endomorphism algebra of an irreducible module is one dimensional, we have a contradiction. Therefore $\eps_\be \neq 0$ when reduced modulo any prime, i.e. $\eps_\be = \pm 1$.
\end{proof}

\begin{Corollary}\label{C030413_3}
  The homomorphism from the nilHecke algebra $H_p$ determined by
$$
\zeta: H_p \to \bar e_{\be^{\boxtimes p}} \bar R_{\al} \bar e_{\be^{\boxtimes p}}, y_r \mapsto \eps_\be \bar y_{\be, r} \bar e_{\be^{\boxtimes p}}, \psi_r \mapsto \bar \psi_{\be, r} \bar e_{\be^{\boxtimes p}}
$$
 is an isomorphism. Under this isomorphism the idempotent $e_p\in H_p$ is mapped onto $\bar e_\si$. 
\end{Corollary}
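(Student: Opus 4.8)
The plan is to verify first that $\zeta$ is a well-defined homomorphism of graded algebras, then that it is surjective, then that it is injective by a graded-rank count, and finally to compute $\zeta(e_p)$ by a direct substitution.

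\emph{Well-definedness.} I would begin by observing that the target $\bar e_{\be^{\boxtimes p}}\bar R_\al\bar e_{\be^{\boxtimes p}}$ is a unital algebra with identity $\bar e_{\be^{\boxtimes p}}$: by Hypothesis~\ref{HProp}(i) we have $e_\be^2-e_\be\in I_{>(\be)}$ in every tensor factor, and Corollary~\ref{COneRootIdeal} carries this into $I_{>(\be^p)}$, so $\bar e_{\be^{\boxtimes p}}$ is idempotent in $\bar R_\al$. Next I would record the two \emph{absorption identities}
$$
\bar e_{\be^{\boxtimes p}}\bar y_{\be,r}\bar e_{\be^{\boxtimes p}}=\bar y_{\be,r}\bar e_{\be^{\boxtimes p}},\qquad \bar e_{\be^{\boxtimes p}}\bar\psi_{\be,r}\bar e_{\be^{\boxtimes p}}=\bar\psi_{\be,r}\bar e_{\be^{\boxtimes p}}.
$$
The first already holds in $R_{p\be}$, since $y_\be$ commutes with $\de_\be$ and $D_\be$ by Hypothesis~\ref{HProp}(iv), hence with $e_\be$, so $y_{\be,r}$ commutes with $e_{\be^{\boxtimes p}}$; together with idempotency of $\bar e_{\be^{\boxtimes p}}$ this gives the identity. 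For the second I would realize $\psi_{\be,r}$ and $e_{\be^{\boxtimes p}}$ inside the parabolic subalgebra $R_{(r-1)\be,\,2\be,\,(p-r-1)\be}$, apply Corollary~\ref{C030413_1} in the middle factor, and absorb the resulting error term from $I_{>(\be^2)}$ into $I_{>(\be^p)}$ using Corollary~\ref{LParaIdeal1} (and Proposition~\ref{LParaIdeal1New} for the two outer factors). Granting these, I substitute $\psi_r\mapsto\bar\psi_{\be,r}\bar e_{\be^{\boxtimes p}}$ and $y_r\mapsto\eps_\be\bar y_{\be,r}\bar e_{\be^{\boxtimes p}}$ into each defining relator (\ref{eq:HeckeRel1})--(\ref{eq:HeckeRel6}) of $H_p$; collapsing the intermediate copies of $\bar e_{\be^{\boxtimes p}}$ via the absorption identities turns the three braid-type relations into those of Lemma~\ref{LNHRel1} and the mixed $\psi y$-relations into those of Lemma~\ref{LNHRel2}, while commutativity of the $\bar y_{\be,r}$ is clear. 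Since $\bar y_{\be,r}$ has degree $\be\cdot\be$ (Hypothesis~\ref{HProp}(iv)) and $\psi_{\be,r}e(\bi_\be^p)$ has degree $-\be\cdot\be$, the map $\zeta$ is degree-preserving. Hence $\zeta$ is a homomorphism of graded algebras.

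\emph{Bijectivity.} The image of $\zeta$ is the subalgebra generated by the $\bar\psi_{\be,r}\bar e_{\be^{\boxtimes p}}$ and $\bar y_{\be,r}\bar e_{\be^{\boxtimes p}}$; using the absorption identities, each element $\bar e_{\be^{\boxtimes p}}\bar y_{\be,1}^{a_1}\cdots\bar y_{\be,p}^{a_p}\bar\psi_{\be,w}\bar e_{\be^{\boxtimes p}}$ of the spanning set from Lemma~\ref{LEndBasis} is a product of these generators, so $\zeta$ is surjective. For injectivity I would compare graded ranks: $H_p=R_{p\al_i}$ is a free $\O$-module with basis $\{\psi_w y_1^{m_1}\cdots y_p^{m_p}\}$ (Theorem~\ref{TBasis}), of graded rank $\big(\sum_{w\in\Si_p}q_\be^{-2l(w)}\big)/(1-q_\be^2)^p$ in the grading $\deg y_r=\be\cdot\be$, $\deg\psi_r=-\be\cdot\be$ that makes $\zeta$ degree-preserving, and the proof of Lemma~\ref{LEndBasis} shows $\bar e_{\be^{\boxtimes p}}\bar R_\al\bar e_{\be^{\boxtimes p}}$ is free of exactly the same graded rank. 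A surjective degree-preserving homomorphism of free $\O$-modules of equal, degreewise-finite graded rank is an isomorphism (over a field this is rank--nullity in each degree; over $\Z$ one uses that a surjection $\Z^n\twoheadrightarrow\Z^n$ has determinant $\pm1$). Hence $\zeta$ is an isomorphism of graded algebras.

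\emph{The idempotent.} Finally, from $e_p=\psi_{w_0}\,y_2y_3^2\cdots y_p^{p-1}$ I get $\zeta(e_p)=\zeta(\psi_{w_0})\,\zeta(y_2)\zeta(y_3)^2\cdots\zeta(y_p)^{p-1}$; taking the reduced decomposition of $w_0\in\Si_p$ used to define $\psi_{\be,w_0}$ and collapsing the copies of $\bar e_{\be^{\boxtimes p}}$ with the absorption identities, this equals $\bar\psi_{\be,w_0}\,\bar y_{\be,2}\bar y_{\be,3}^2\cdots\bar y_{\be,p}^{p-1}\bar e_{\be^{\boxtimes p}}$ up to a power of $\eps_\be$, which by (\ref{EDeSi}) is precisely $\bar e_{(\be^p)}=\bar e_\si$. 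The sign $\eps_\be^{\binom{p}{2}}$ equals $+1$ once $y_\be$ has been normalized so that $\eps_\be=1$ (the sign of $y_\be$ is at our disposal in Hypothesis~\ref{HProp}, and flipping it flips $\eps_\be$); so $\zeta(e_p)=\bar e_\si$, which in particular reproves that $e_{(\be^p)}^2-e_{(\be^p)}\in I_{>(\be^p)}$.

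\emph{Main obstacle.} The only genuinely new ingredient is the $\bar\psi$-absorption identity for arbitrary $p$ and $r$, i.e.\ the extension of Corollary~\ref{C030413_1} beyond the case $(p,r)=(2,1)$, proved by restriction to the parabolic $R_{(r-1)\be,\,2\be,\,(p-r-1)\be}$ and the ideal-transfer Corollaries~\ref{LParaIdeal1} and Proposition~\ref{LParaIdeal1New}; this is the step I expect to require the most care, since it is exactly what licenses treating Lemmas~\ref{LNHRel1}--\ref{LNHRel2} — which are stated in $\bar R_\al$ with a trailing factor $\bar e_{\be^{\boxtimes p}}$ — as genuine relations inside the corner algebra.
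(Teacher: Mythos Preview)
Your argument is the paper's own, expanded: well-definedness from Lemmas~\ref{LNHRel1}--\ref{LNHRel2} (the absorption identities you isolate are exactly Corollaries~\ref{C030413_1}--\ref{C030413_2} and their parabolic extensions via Corollary~\ref{LParaIdeal1}), bijectivity from the basis of Lemma~\ref{LEndBasis}, and the idempotent computation via~(\ref{EDeSi}). You are in fact more careful than the paper about the sign $\eps_\be^{\binom{p}{2}}$ appearing in $\zeta(e_p)$: the paper's one-line appeal to Corollary~\ref{C030413_2} does not address it, and your fix---replace $y_\be$ by $\eps_\be y_\be$, which preserves Hypothesis~\ref{HProp} and forces $\eps_\be=1$---is legitimate, though note that it simultaneously rescales $\de_{(\be^p)}$ and hence $e_\si$ by that same sign, so the identity $\zeta(e_p)=\bar e_\si$ you obtain is for this normalized choice.
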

\begin{proof}
  Using Lemmas~\ref{LNHRel1} and~\ref{LNHRel2}, we see that the map exists. By Lemma~\ref{LEndBasis}, the map is an isomorphism. The second statement now follows using Corollary~\ref{C030413_2}. 
\end{proof}

\begin{Corollary}\label{CSymm}
Given $f \in \Sym_\si$, $\bar f$ commutes with $\bar \de_\si$, $\bar e_\si$, and $\bar e_\si \bar D_\si$.
\end{Corollary}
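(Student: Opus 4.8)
The plan is to establish the three commutation relations in turn, reducing the genuinely nontrivial third one — the commutation with $\bar e_\si\bar D_\si$ — to the first two by exploiting the anti-involution $\tau$.

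First I would record a few elementary facts about $f\in\Sym_\si=\O[y_{\be,1},\dots,y_{\be,p}]^{\Si_p}$. Each $y_{\be,r}=\iota_{(r-1)\be,\be,(p-r)\be}(1\otimes y_\be\otimes 1)$ is $\tau$-invariant (by Hypothesis~\ref{HProp}(ii) $y_\be^\tau=y_\be$, and $\tau$ is compatible with the parabolic embeddings), and since the $y_{\be,r}$ commute, $\tau$ restricts to the identity on $\O[y_{\be,1},\dots,y_{\be,p}]$; in particular $f^\tau=f$. Moreover, by Hypothesis~\ref{HProp}(iv) $y_\be$ commutes with $\de_\be$, with $D_\be$, and hence with $e_\be=D_\be\de_\be$; applying $\iota_{\be,\dots,\be}$ slot by slot, every $y_{\be,r}$ — and therefore $f$ — commutes in $R_\al$ with $\iota_{\be,\dots,\be}(\de_\be\otimes\dots\otimes\de_\be)$, with $\iota_{\be,\dots,\be}(D_\be\otimes\dots\otimes D_\be)$, and with $e_{\be^{\boxtimes p}}$.

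The commutation $[\bar f,\bar\de_\si]=0$ is then immediate from $\de_\si=y_{\be,2}y_{\be,3}^2\cdots y_{\be,p}^{p-1}\,\iota_{\be,\dots,\be}(\de_\be\otimes\dots\otimes\de_\be)$, since $f$ already commutes with $\de_\si$ in $R_\al$. For $[\bar f,\bar e_\si]=0$ I would pass to the corner algebra: by Corollary~\ref{C030413_3}, $\bar e_{\be^{\boxtimes p}}\bar R_\al\bar e_{\be^{\boxtimes p}}\cong H_p$ via $\zeta$ with $\zeta(y_r)=\eps_\be\bar y_{\be,r}\bar e_{\be^{\boxtimes p}}$; writing $f$ as a polynomial in the $y_{\be,r}$, the element $\bar f\bar e_{\be^{\boxtimes p}}$ corresponds under $\zeta$ to $f(\eps_\be y_1,\dots,\eps_\be y_p)\in\O[y_1,\dots,y_p]^{\Si_p}=Z(H_p)$, hence is central in the corner algebra. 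Since $\bar e_\si=\zeta(e_p)$ lies in that corner and $\bar e_{\be^{\boxtimes p}}=\zeta(1)$ is its unit, we have $\bar e_{\be^{\boxtimes p}}\bar e_\si=\bar e_\si=\bar e_\si\bar e_{\be^{\boxtimes p}}$, and combining centrality with $[\bar f,\bar e_{\be^{\boxtimes p}}]=0$ gives $\bar f\bar e_\si=\bar f\bar e_{\be^{\boxtimes p}}\bar e_\si=\bar e_\si\bar f\bar e_{\be^{\boxtimes p}}=\bar e_\si\bar f$.

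The commutation $[\bar f,\bar e_\si\bar D_\si]=0$ is the heart of the statement, and the step requiring a new idea. Recall $e_\si=D_\si\de_\si$ and, by Lemma~\ref{LDDelTau}, $D_\si^\tau=D_\si$ and $\de_\si^\tau=\de_\si$, so $e_\si^\tau=\de_\si D_\si$. Using $f^\tau=f$, the anti-automorphism property of $\tau$, and the already-proved $[\bar f,\bar\de_\si]=0$, I would compute $(\bar e_\si\bar f\bar D_\si)^\tau=\bar D_\si\bar f\bar\de_\si\bar D_\si=\bar D_\si\bar\de_\si\bar f\bar D_\si=\bar e_\si\bar f\bar D_\si$, so $\bar e_\si\bar f\bar D_\si$ is $\tau$-invariant; and, directly, $(\bar f\bar e_\si\bar D_\si)^\tau=\bar D_\si\bar\de_\si\bar D_\si\bar f=\bar e_\si\bar D_\si\bar f$. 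Since $[\bar f,\bar e_\si]=0$ gives $\bar f\bar e_\si\bar D_\si=\bar e_\si\bar f\bar D_\si$, applying $\tau$ to this equality and invoking the two computations yields $\bar e_\si\bar D_\si\bar f=\bar e_\si\bar f\bar D_\si=\bar f\bar e_\si\bar D_\si$, which is exactly the desired relation. Beyond the bookkeeping of the second paragraph I do not expect any real obstacle; once Corollary~\ref{C030413_3} and Lemma~\ref{LDDelTau} are in hand the argument is formal, the single genuine insight being to package the last commutation as a $\tau$-symmetry statement.
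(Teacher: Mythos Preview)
Your proof is correct. For the first two commutations it coincides with the paper's: commutation with $\bar\de_\si$ comes straight from Hypothesis~\ref{HProp}(iv), and commutation with $\bar e_\si$ uses Corollary~\ref{C030413_3} to identify $\bar f\bar e_{\be^{\boxtimes p}}$ with a symmetric polynomial, hence a central element, in $H_p$.

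For $[\bar f,\bar e_\si\bar D_\si]=0$ your route genuinely differs. The paper expands $\bar e_\si\bar D_\si$ explicitly, using Corollaries~\ref{C030413_1} and~\ref{C030413_3} to rewrite it as $\zeta(\psi_{w_0})(\bar y_{\be,2}\cdots\bar y_{\be,p}^{p-1})\zeta(\psi_{w_0})\,\iota_{\be,\dots,\be}(D_\be\otimes\cdots\otimes D_\be)$, and then commutes $\bar f$ past each factor: past $\zeta(\psi_{w_0})$ by centrality of symmetric functions in $H_p$, past the $\bar y_{\be,r}$ trivially, and past $\iota_{\be,\dots,\be}(D_\be^{\otimes p})$ by Hypothesis~\ref{HProp}(iv). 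You instead observe that $f^\tau=f$ and use Lemma~\ref{LDDelTau} to show that $\bar e_\si\bar f\bar D_\si$ is $\tau$-fixed while $(\bar f\bar e_\si\bar D_\si)^\tau=\bar e_\si\bar D_\si\bar f$, so applying $\tau$ to the already-established equality $\bar f\bar e_\si\bar D_\si=\bar e_\si\bar f\bar D_\si$ yields the result. Your argument is slicker and structurally more satisfying, avoiding the somewhat delicate expansion; the paper's argument has the minor virtue of not needing to verify $\tau$-invariance of $I_{>\si}$ (implicit in your passing $\tau$ to $\bar R_\al$), though that is immediate from the definition of $I_{>\si}$ as generated by $\tau$-fixed idempotents.
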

\begin{proof}
  It follows directly from Hypothesis~\ref{HProp}(iv) and the definitions that $\de_\si$ commutes with every element of $\Pol_\si$, and in particular with every element of the subalgebra $\Sym_\si$.
Denote by $w_0$ the longest element of $\Si_p$. Then by Corollaries~\ref{C030413_1} and~\ref{C030413_3}
\begin{align*}
  \bar e_\si \bar D_\si &= \bar \psi_{\be,w_0} \bar y_{\be,2} \dots \bar y_{\be,p}^{p-1}  \bar e_{\be^{\boxtimes p}} \bar \psi_{\be,w_0} \iota(D_\be \otimes \dots \otimes D_\be)\\
    &= (\bar e_{\be^{\boxtimes p}} \psi_{\be,w_0} \bar e_{\be^{\boxtimes p}})(\bar y_{\be,2} \dots \bar y_{\be,p}^{p-1})(\bar e_{\be^{\boxtimes p}} \psi_{\be,w_0} \bar e_{\be^{\boxtimes p}})\iota(D_\be \otimes \dots \otimes D_\be)\\
    &= \zeta(\psi_{w_0})(\bar y_{\be,2} \dots \bar y_{\be,p}^{p-1})\zeta(\psi_{w_0})\iota(D_\be \otimes \dots \otimes D_\be)
\end{align*}
  Any $f \in \Sym_\si$ commutes with $\iota(D_\be \otimes \dots \otimes D_\be)$ by Hypothesis~\ref{HProp}(iv). It is well known that the center of the nilHecke algebra $H_p$ is given by the symmetric functions $\Sym_p$. In particular, every element of $\Sym_p$ commutes with $\psi_{w_0}$. Let $g \in \Sym_p$ be such that $\zeta(g) = \bar f \bar e_{\be^{\boxtimes p}}$. Then $\zeta(\psi_{w_0}) \bar f = \zeta(\psi_{w_0} g) = \zeta(g \psi_{w_0}) = f \zeta(\psi_{w_0})$. This implies the claim.
\end{proof}

We can now finish the proof of Proposition~\ref{LPowerBasis}. Corollary~\ref{C030413_3} provides an isomorphism $H_p \cong \End_{R_\al}(\bar R_\al \bar e_{\be^{\boxtimes p}})$ under which the idempotent $e_p$ corresponds to right multiplication by $\bar e_\si$. But $e_p$ is a primitive idempotent, so the image $\bar R_\al \bar e_\si = \De(\si)$ of this endomorphism is an indecomposable projective $\bar R_\al$-module. We may identify
\begin{equation}\label{EEndo}
\End_{R_\al}(\De(\si)) \cong \bar e_\si \bar R_\al \bar e_\si = \zeta(e_p H_p e_p) = \zeta(\Sym_p e_p) = \bar e_\si \bar \Sym_\si \bar e_\si \cong \Sym_\si,
\end{equation}
where the action of $\Sym_\si$ on
$\De(\si)=\bar R_\al \bar e_\si$
is given by right multiplication which makes sense in view of  Corollary~\ref{CSymm}. Therefore $\De(\si) \onto L(\si), \bar e_\si \mapsto v_\si^-$ is a projective cover in $\mod{\bar R_{\al}}$. Furthermore, since $\mod{\bar R_{\al}}$ has only one irreducible module, every composition factor of $\De(\si)$ is isomorphic to $L(\si)$ with an appropriate degree shift. We can lift the basis $\{b v_\si^- \mid b \in \BB_\si\}$ for $L(\si)$ to the set $\{\bar b \bar e_\si \mid b \in \BB_\si\} \subseteq \De(\si)$. Using the basis $X_\si$ for $\Sym_\si$, we get a basis $\{\bar b \bar f \bar e_\si \mid b \in \BB_\si, f \in X_\si\}$ for $\De(\si)$.

Similarly, $\De'(\si) \onto L(\si)^\tau, \bar e_\si \mapsto v_\si^+$ is a projective cover in $\mod{\bar R_{\al}^{op}}$. It is immediate that $\{v_\si^+ D_\si b^\tau \mid b \in \BB_\si\}$ is a basis of $L(\si)^\tau$. Lifting as above, we have that $\{\bar e_\si \bar f\bar D_\si \bar b^\tau \mid b \in \BB_\si, f \in X_\si\}$ is a basis for $\De'(\si)$.

Finally, applying the multiplication map and Corollary~\ref{CSymm} we have that $\{\bar b \bar e_\si \bar f \bar D_\si \bar (b')^\tau \mid b,b' \in \BB_\si, f \in X_\si\}$ spans $\bar R_{\al}$. Therefore by induction, 
$$R_{\al} = F\operatorname{-span}\{\bar b \bar e_\pi \bar f \bar D_\pi \bar (b')^\tau \mid \pi \in \Pi(\al), b,b' \in \BB_\pi, f \in X_\pi\},$$
and comparing graded dimensions with Corollary~\ref{TDim} as in the proof of Lemma~\ref{L050413_1}, this set is therefore a basis.
\qed

\subsection{General case}

In this section we use the results of the previous subsections to obtain affine cellular bases of the KLR algebras of finite type. Fix $\al \in Q_+$ and $\pi=(\be_1^{p_1}, \dots, \be_N^{p_N}) \in \Pi(\al)$. Define $\bar R_\al := R_\al / I_{>\pi}$, and write $\bar r \in \bar R_\al$ for the image of an element $r \in R_\al$.

We begin with some easy consequences of the previous section.
\begin{Corollary}\label{CSymmGen}
We have
 \begin{enumerate}
  \item[\textrm{(i)}] Given $f \in \Sym_\pi$, $\bar f$ commutes with $\bar \de_\pi$, $\bar e_\pi$, and $\bar e_\pi \bar D_\pi$.
  \item[\textrm{(ii)}] Up to a grading shift, $\De(\pi) \cong \De(\be_1^{p_1}) \circ \dots \circ \De(\be_N^{p_N})$.
  \item[\textrm{(iii)}] The map $\Sym_\pi \to \End_{\bar R_\al}(\De(\pi))$ sending $f$ to right multiplication by $\bar e_\pi \bar f \bar e_\pi$ is an isomorphism of algebras.
  \item[\textrm{(iv)}] The map $\Sym_\pi \to \End_{\bar R_\al}(\De'(\pi))$ sending $f$ to left multiplication by $\bar e_\pi \bar f \bar e_\pi$ is an isomorphism of algebras.
 \end{enumerate}
\end{Corollary}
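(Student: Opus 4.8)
The plan is to bootstrap all four statements from the single-root results of the previous subsection --- Proposition~\ref{LPowerBasis}, Corollaries~\ref{CSymm} and~\ref{C030413_3}, and the identification~\eqref{EEndo} --- together with Corollary~\ref{LParaIdeal2}, in the spirit of Lemmas~\ref{LInduct} and~\ref{LEndBasis}. Write $e_n:=e_{(\be_n^{p_n})}$, $\de_n:=\de_{(\be_n^{p_n})}$, $D_n:=D_{(\be_n^{p_n})}$ and $\bar R_{p_n\be_n}:=R_{p_n\be_n}/I_{>(\be_n^{p_n})}$; since $\iota_\pi$ is an algebra embedding we have $e_\pi=\iota_\pi(e_1\otimes\dots\otimes e_N)$, $\de_\pi=\iota_\pi(\de_1\otimes\dots\otimes\de_N)$, $D_\pi=\iota_\pi(D_1\otimes\dots\otimes D_N)$, $e_\pi D_\pi=\iota_\pi(e_1D_1\otimes\dots\otimes e_ND_N)$ and $\Sym_\pi=\iota_\pi(\Sym_{(\be_1^{p_1})}\otimes\dots\otimes\Sym_{(\be_N^{p_N})})$. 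A preliminary observation, used freely below: since $e_n^2-e_n\in I_{>(\be_n^{p_n})}$ (Corollary~\ref{C030413_3}), expanding $\iota_\pi(\bigotimes_n e_n^2)$ and invoking Corollary~\ref{LParaIdeal2} termwise gives $e_\pi^2-e_\pi\in I_{>\pi}$, so $\bar e_\pi$ is idempotent in $\bar R_\al$.

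For (i), Corollary~\ref{CSymm} read inside each $R_{p_n\be_n}$ says that any $f_n\in\Sym_{(\be_n^{p_n})}$ satisfies $[f_n,\de_n]$, $[f_n,e_n]$, $[f_n,e_nD_n]\in I_{>(\be_n^{p_n})}$. For $f=\iota_\pi(f_1\otimes\dots\otimes f_N)$ we then have $[f,\de_\pi]=\iota_\pi\!\bigl(\bigotimes_n f_n\de_n-\bigotimes_n\de_nf_n\bigr)$, which is $\iota_\pi$ applied to a sum of simple tensors each carrying a factor $[f_k,\de_k]\in I_{>(\be_k^{p_k})}$ in some slot; by Corollary~\ref{LParaIdeal2} every such tensor maps into $I_{>\pi}$, so $\bar f$ commutes with $\bar\de_\pi$. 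The same computation with $e_n$, resp.\ $e_nD_n$, in place of $\de_n$ handles $\bar e_\pi$ and $\bar e_\pi\bar D_\pi$.

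For (ii), recall that $\De(\pi)$ is a grading shift of $\bar R_\al\bar e_\pi$. Set $M:=\De(\be_1^{p_1})\circ\dots\circ\De(\be_N^{p_N})$, a cyclic module generated by $1_\pi\otimes(\bar e_1\otimes\dots\otimes\bar e_N)$. First I would show $I_{>\pi}M=0$: every composition factor of $\De(\be_n^{p_n})$ is a copy of $L(\be_n)^{\circ p_n}$, hence every composition factor of $M$ is a composition factor of $L(\be_1)^{\circ p_1}\circ\dots\circ L(\be_N)^{\circ p_N}\cong\bar\De(\pi)$ (up to shift), hence of the form $L(\tau)$ with $\tau\leq\pi$ by Theorem~\ref{TStand}(iv); for $\si>\pi$ one has $\tau\not\geq\si$, so $\Res_\si\bar\De(\tau)=0$ and thus $\Res_\si L(\tau)=0$ by Theorem~\ref{TStand}(vi), whence $\Res_\si M=0$ by exactness and $e(\bi_\si)M\subseteq\Res_\si M=0$ since $e(\bi_\si)\in R_\si$. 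Next, Corollary~\ref{LParaIdeal2} makes $\iota_\pi$ descend to $\bar\iota_\pi\colon\bar R_{p_1\be_1}\otimes\dots\otimes\bar R_{p_N\be_N}\to\bar R_\al$; its restriction to the right ideal $\De(\be_1^{p_1})\boxtimes\dots\boxtimes\De(\be_N^{p_N})$ lands in $\Res_\pi(\bar R_\al\bar e_\pi)$ and sends $\bigotimes_n\bar e_n\mapsto\bar e_\pi$, so Frobenius reciprocity produces $\mu\colon M\to\bar R_\al\bar e_\pi$ with $1_\pi\otimes\bigotimes_n\bar e_n\mapsto\bar e_\pi$; conversely $I_{>\pi}M=0$ together with $e_\pi\cdot(1_\pi\otimes\bigotimes_n\bar e_n)=1_\pi\otimes\bigotimes_n\bar e_n$ produces $\nu\colon\bar R_\al\bar e_\pi\to M$ with $\bar e_\pi\mapsto 1_\pi\otimes\bigotimes_n\bar e_n$. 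Since $\mu$ and $\nu$ interchange cyclic generators, $\mu\nu$ and $\nu\mu$ are identities.

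For (iii), idempotency of $\bar e_\pi$ gives $\End_{\bar R_\al}(\De(\pi))\cong\bar e_\pi\bar R_\al\bar e_\pi$, and by (i) the map $\zeta_\pi\colon\Sym_\pi\to\bar e_\pi\bar R_\al\bar e_\pi$, $f\mapsto\bar e_\pi\bar f\bar e_\pi$, is an algebra homomorphism (the identity $(\bar e_\pi\bar f\bar e_\pi)(\bar e_\pi\bar g\bar e_\pi)=\bar e_\pi\bar f\bar g\bar e_\pi$ uses that $\bar e_\pi$ commutes with $\bar g$). Transporting along the isomorphism of (ii) and evaluating at cyclic generators, $\zeta_\pi(\iota_\pi(f_1\otimes\dots\otimes f_N))$ corresponds to $\Ind_\pi(\phi_{f_1}\boxtimes\dots\boxtimes\phi_{f_N})$, where $\phi_{f_n}\in\End_{\bar R_{p_n\be_n}}(\De(\be_n^{p_n}))$ is right multiplication by $\bar e_n\bar f_n\bar e_n$. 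Since induction from $R_\pi$ is faithful ($R_\al 1_\pi$ being free as a right $R_\pi$-module), a tensor product of nonzero maps over a field is nonzero, and each $\zeta_{(\be_n^{p_n})}\colon\Sym_{(\be_n^{p_n})}\xrightarrow{\sim}\bar e_n\bar R_{p_n\be_n}\bar e_n$ is an isomorphism by~\eqref{EEndo}, the map $\zeta_\pi$ is injective. Surjectivity I would get from a graded-dimension count: $\qdim\Sym_\pi=\prod_n\qdim\Sym_{(\be_n^{p_n})}=l_\pi$ by Proposition~\ref{LPowerBasis}(iv) and~\eqref{ELPi}, while via (ii), Frobenius reciprocity, the fact that $\De(\be_1^{p_1})\boxtimes\dots\boxtimes\De(\be_N^{p_N})$ is the projective cover of the irreducible $L(\be_1)^{\circ p_1}\boxtimes\dots\boxtimes L(\be_N)^{\circ p_N}$ over $\bar R_{p_1\be_1}\otimes\dots\otimes\bar R_{p_N\be_N}$ (which acts on $\Res_\pi M$ by Corollary~\ref{LParaIdeal2}), the Grothendieck-group identities $[\De(\be_n^{p_n})]=l_{(\be_n^{p_n})}[L(\be_n)^{\circ p_n}]$ (the multi-copy analogue of Corollary~\ref{CGrot}), and Theorem~\ref{TStand}(vi), one computes $\qdim\End_{\bar R_\al}(\De(\pi))=\qdim\bigl[\Res_\pi M:L(\be_1)^{\circ p_1}\boxtimes\dots\boxtimes L(\be_N)^{\circ p_N}\bigr]=l_\pi$; an injective degree-preserving homomorphism of graded algebras with equal, finite-in-each-degree graded dimension is bijective. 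Part (iv) is the mirror statement for right $\bar R_\al$-modules and $\De'(\pi)$, proved identically, using $D_\pi^\tau=D_\pi$, $\de_\pi^\tau=\de_\pi$ (Lemma~\ref{LDDelTau}) and that $\tau$ stabilizes $I_{>\pi}$. The main obstacle is the upper bound $\qdim\End_{\bar R_\al}(\De(\pi))\leq l_\pi$ in (iii): it rests on the Mackey-theorem filtration of $\Res_\pi(\De(\be_1^{p_1})\circ\dots\circ\De(\be_N^{p_N}))$ and the accompanying Grothendieck-group bookkeeping --- the multi-block version of the single-root estimate in Lemma~\ref{LEndBasis} --- whereas (i), (ii) and the injectivity half of (iii) are routine consequences of the single-root case and Corollary~\ref{LParaIdeal2}.
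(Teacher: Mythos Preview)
Your argument is correct, and for parts (i), (ii) and (iv) it is essentially the paper's proof written out in more detail. The one place where you genuinely diverge is (iii).

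The paper proves (iii) by first establishing the structural fact
\[
\Res_\pi\De(\pi)\ \cong\ \De(\be_1^{p_1})\boxtimes\dots\boxtimes\De(\be_N^{p_N}),
\]
obtained by observing that $\De(\pi)$ has an exhaustive filtration by $\bar\De(\pi)$, that Theorem~\ref{TStand}(vi) forces $\Res_\pi$ to pick out exactly the $w=1$ summand in each layer, and that these reassemble to the box product of the $\De(\be_n^{p_n})$. One then reads off
\[
\End_{\bar R_\al}(\De(\pi))\ \cong\ \End_{R_\pi}\!\big(\De(\be_1^{p_1})\boxtimes\dots\boxtimes\De(\be_N^{p_N})\big)\ \cong\ \bigotimes_n\End(\De(\be_n^{p_n}))\ \cong\ \Sym_\pi
\]
in one stroke via Frobenius reciprocity and~\eqref{EEndo}. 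Your route instead separates injectivity (faithfulness of $\Ind_\pi$ plus the single-root isomorphisms) from surjectivity (a graded-dimension count matching $[\Res_\pi M:\boxtimes_n L(\be_n)^{\circ p_n}]_q$ with $l_\pi$). This is a legitimate alternative, and the Grothendieck-group bookkeeping you sketch does give the multiplicity exactly, not just an upper bound, so the ``main obstacle'' you flag is in fact no harder than what you already wrote. Two small points: your injectivity phrasing (``a tensor product of nonzero maps is nonzero'') is stated for simple tensors, whereas what you need is that $\bigotimes_n\End(\De(\be_n^{p_n}))\hookrightarrow\End_{R_\pi}(\boxtimes_n\De(\be_n^{p_n}))$ is injective on the whole tensor product --- true over a field, but worth saying; and you could shorten the $I_{>\pi}M=0$ step in (ii) by applying Theorem~\ref{TStand}(vi) directly to the $\bar\De(\pi)$-filtration rather than passing to the composition factors $L(\tau)$. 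The paper's approach buys you the identification of $\Res_\pi\De(\pi)$ as a byproduct and avoids the dimension count; yours is a bit more hands-on but entirely self-contained.
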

\begin{proof}
Claim (i) follows directly from Corollary~\ref{CSymm} and the definitions.

The proof of claim (ii) is similar to that of Lemma~\ref{LInduct}. To be precise, by Corollary~\ref{LParaIdeal2} we have a map
$$
\De(\be_1^{p_1}) \boxtimes \dots \boxtimes \De(\be_N^{p_N}) \to \Res_\pi\De(\pi),\; \bar e_{(\be_1^{p_1})}\otimes \dots \otimes \bar e_{(\be_N^{p_N})} \mapsto \bar e_\pi,
$$
which by Frobenius reciprocity determines a homomorphism
$$
\mu: \De(\be_1^{p_1}) \circ \dots \circ \De(\be_N^{p_N}) \to \De(\pi),\; 1_\pi \otimes (\bar e_{(\be_1^{p_1})}\otimes \dots \otimes \bar e_{(\be_N^{p_N})}) \mapsto \bar e_\pi.
$$

We now claim that $I_{>\pi} (\De(\be_1^{p_1}) \circ \dots \circ \De(\be_N^{p_N})) = 0$. It is enough to prove that $\Res_\si(\De(\be_1^{p_1}) \circ \dots \circ \De(\be_N^{p_N})) = 0$ for all $\si > \pi$. By exactness of induction, it follows that $\De(\be_1^{p_1}) \circ \dots \circ \De(\be_N^{p_N})$ has an exhaustive filtration by $L(\be_1^{p_1}) \circ \dots \circ L(\be_N^{p_N}) = \bar \De(\pi)$. By Theorem~\ref{TStand}\textrm{(vi)}, $\Res_\si(\bar \De(\pi)) = 0$, which proves the claim.  

Since 
$$e_\pi 1_\pi \otimes (\bar e_{(\be_1^{p_1})}\otimes \dots \otimes \bar e_{(\be_N^{p_N})}) = 1_\pi \otimes (\bar e_{(\be_1^{p_1})}\otimes \dots \otimes \bar e_{(\be_N^{p_N})}),$$
we obtain a map 
\begin{align*}
\nu: \De(\pi) &\to \De(\be_1^{p_1}) \circ \dots \circ \De(\be_N^{p_N}),\; \bar e_\pi \mapsto 1_\pi \otimes (\bar e_{(\be_1^{p_1})}\otimes \dots \otimes \bar e_{(\be_N^{p_N})}).
\end{align*}
 The homomorphisms $\mu, \nu$ map the evident cyclic generators to each other, and so are inverse isomorphisms.

We use claim (ii) to identify $\De(\pi)$ with $\De(\be_1^{p_1}) \circ \dots \De(\be_N^{p_N})$. As noted in the proof of claim (ii), $\De(\pi)$ has an exhaustive filtration by 
$$\bar \De(\pi) = \oplus_{w \in \Si^{(\pi)}} \psi_w 1_\pi \otimes (\bar \De(\be_1^{p_1}) \boxtimes \dots \boxtimes \bar \De(\be_N^{p_N})).$$
By Theorem~\ref{TStand}(vi), $\Res_\pi\bar \De(\pi)$ picks out the summand corresponding to $w=1$. Therefore $\Res_\pi\De(\pi) \cong \De(\be_1^{p_1}) \boxtimes \dots \boxtimes \De(\be_N^{p_N})$. Applying Frobenius reciprocity and (\ref{EEndo}), we obtain
\begin{align*}
  \End_{R_\al}(\De(\pi)) &= \Hom_{R_\al}(\De(\be_1^{p_1}) \circ \dots \circ \De(\be_n^{p_N}), \De(\be_1^{p_1}) \circ \dots \circ \De(\be_n^{p_N}))\\
     &\simeq \Hom_{R_\pi}(\De(\be_1^{p_1}) \boxtimes \dots \boxtimes \De(\be_n^{p_N}), \Res_\pi(\De(\be_1^{p_1}) \circ \dots \circ \De(\be_n^{p_N})))\\
     &\simeq \End_{R_\pi}(\De(\be_1^{p_1}) \boxtimes \dots \boxtimes \De(\be_N^{p_N}))\\
     &\simeq \End_{R_{p_1\be_1}}(\De(\be_1^{p_1})) \otimes \dots \otimes \End_{R_{p_1\be_N}}(\De(\be_1^{p_N}))\\
     &\simeq \Sym_{(\be_1^{p_1})} \otimes \dots \otimes \Sym_{(\be_N^{p_N})} \simeq \Sym_\pi.
\end{align*}
This proves claim (iii), and claim (iv) is shown similarly.
\end{proof}

\begin{Proposition}\label{LGenBasis}
  We have that
  \begin{enumerate}
    \item[\textrm{(i)}] $\{\bar b \bar f \bar e_\pi \mid b \in \BB_\pi, f \in X_\pi\}$ is an $\O$-basis for $\De(\pi)$,
    \item[\textrm{(ii)}] $\{\bar e_{\pi} \bar f \bar D_{\pi} \bar b^\tau \mid b \in \BB_\pi, f \in X_\pi\}$ is an $\O$-basis for $\De'(\pi)$, and
    \item[\textrm{(iii)}] $\{\bar b \bar e_{\pi} \bar f \bar D_{\pi} (\bar b')^\tau \mid b,b' \in \BB_\pi, f \in X_\pi\}$ is an $\O$-basis for $\bar I_\pi$.
  \end{enumerate}
\end{Proposition}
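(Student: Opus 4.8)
The plan is to run the argument of Lemma~\ref{L050413_1}, now with Proposition~\ref{LPowerBasis} (applied to the single roots $\be_1,\dots,\be_N$ with multiplicities $p_1,\dots,p_N$) playing the role of the inductive hypothesis and Corollary~\ref{CSymmGen} playing the role of Hypothesis~\ref{HProp}. As there, I would work over a field $F$ and lift to $\Z$-forms at the end via Lemma~\ref{PFieldToZ}.

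First I would record that $\bar e_\pi$ is an idempotent of $\bar R_\al$: since $e_\pi=\iota_\pi(e_{(\be_1^{p_1})}\otimes\dots\otimes e_{(\be_N^{p_N})})$, each $e_{(\be_n^{p_n})}^2-e_{(\be_n^{p_n})}$ lies in $I_{>(\be_n^{p_n})}$ by Corollary~\ref{C030413_3}, and Corollary~\ref{LParaIdeal2} carries these errors into $I_{>\pi}$. Hence $\De(\pi)=(\bar R_\al\bar e_\pi)\langle\deg v_\pi^-\rangle$ and $\De'(\pi)=(\bar e_\pi\bar R_\al)\langle\deg v_\pi^+\rangle$ are projective cyclic $\bar R_\al$-modules with endomorphism algebra $\Sym_\pi$, by Corollary~\ref{CSymmGen}(iii),(iv). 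For (i) I would use the isomorphism $\De(\pi)\cong\De(\be_1^{p_1})\circ\dots\circ\De(\be_N^{p_N})$ of Corollary~\ref{CSymmGen}(ii) (up to a grading shift), i.e.\ the induction product of the $\De(\be_n^{p_n})$: substituting the $\O$-bases $\{\bar b\bar f\bar e_{(\be_n^{p_n})}\}$ of the factors from Proposition~\ref{LPowerBasis}(i), the freeness of $R_\al$ over $R_\pi$ with basis $\{\psi_w\mid w\in\Si^{(\pi)}\}$ (\cite[Proposition 2.16]{KL1}), and the decomposition $\Si^\pi=\Si^{(\pi)}\cdot\bigl(\prod_n\Si^{(\be_n^{p_n})}\bigr)$ of coset representatives (with lengths adding, so the corresponding $\psi$'s multiply), one checks that the induced $\O$-basis $\bigl\{\psi_w\otimes\bigl(\bigotimes_n\bar b_n\bar f_n\bar e_{(\be_n^{p_n})}\bigr)\bigr\}$ of $\De(\pi)$ matches, under the isomorphism, exactly the proposed set $\{\bar b\bar f\bar e_\pi\mid b\in\BB_\pi,\ f\in X_\pi\}$. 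The transpose computation, using Proposition~\ref{LPowerBasis}(ii), Lemma~\ref{LDDelTau} and $\deg e_\pi=0$, proves (ii).

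For (iii) I would first observe that $\bar I_\pi=I_\pi/I_{>\pi}$ is the image of $R_\al e(\bi_\pi)R_\al$ in $\bar R_\al$ (as $e(\bi_\si)\in I_{>\pi}$ for $\si>\pi$), which by \cite[Proposition 2.16]{KL1} is the sum over $u,v\in\Si^{(\pi)}$ of the images of $\psi_u R_\pi e(\bi_\pi)R_\pi\psi_v^\tau$. Plugging in, through Corollary~\ref{LParaIdeal2}, the spanning sets $\{b e_{(\be_n^{p_n})}f D_{(\be_n^{p_n})}(b')^\tau\}$ of $\bar R_{p_n\be_n}$ supplied by Proposition~\ref{LPowerBasis}(iii), and absorbing $\psi_u,\psi_v^\tau$ into $\BB_\pi$ exactly as in Lemma~\ref{L050413_1}, shows that $\{\bar b\bar e_\pi\bar f\bar D_\pi(\bar b')^\tau\mid b,b'\in\BB_\pi,\ f\in X_\pi\}$ spans $\bar I_\pi$. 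Linear independence I would get from the graded-dimension computation at the end of Lemma~\ref{L050413_1}: summed over all $\pi\in\Pi(\al)$ (telescoping $R_\al=I_{\pi_1}\supseteq I_{\pi_2}\supseteq\dots$ with $I_{>\pi_k}=I_{\pi_{k+1}}$), these spanning sets span $R_\al$; using $\qdim\Sym_\pi\le l_\pi$ from (\ref{EDimUpper}), $\deg D_\pi=2\deg v_\pi^-$, and $\qdim R_\al=\sum_\pi(\qdim\bar\De(\pi))^2 l_\pi$ from Corollary~\ref{TDim}, every intermediate inequality is forced to be an equality, so all the spanning sets are bases and $\qdim\Sym_\pi=l_\pi$.

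The hard part will be the combinatorial bookkeeping in (i)--(ii): matching the basis produced by the induction product with the intrinsically defined set $\{\bar b\bar f\bar e_\pi\}$ requires keeping straight the three parabolic subgroups $\Si_\pi$, $\Si_{(\pi)}$ and the $\Si_{(\be_n^{p_n})}$ together with their minimal coset representatives, the way $\Sym_\pi=\iota_\pi(\bigotimes_n\Sym_{(\be_n^{p_n})})$ acts through the isomorphism of Corollary~\ref{CSymmGen}(ii), and the grading shifts concealed in $v_\pi^\pm$, $\bar\De(\pi)$ and $\De(\pi)$. Once that is pinned down, everything else is a faithful transcription of the single-root proofs, and the global dimension comparison with Corollary~\ref{TDim} is the step that makes all three parts of the proposition close simultaneously.
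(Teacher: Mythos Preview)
Your approach is correct and closely parallels the paper's, with two minor organizational differences. First, for (i) and (ii) you aim to identify the induced basis of $\De(\be_1^{p_1})\circ\dots\circ\De(\be_N^{p_N})$ literally with the proposed set $\{\bar b\bar f\bar e_\pi\}$, whereas the paper only argues that both sets have the same $\O$-span and postpones linear independence until after the dimension count for (iii); the paper's route is slightly more economical because your literal matching depends on choosing reduced expressions for the $\psi_w$ ($w\in\Si^\pi$) compatibly with the factorisation $w=w'\cdot(u_1\times\dots\times u_N)$, while equality of spans does not. Second, for the spanning part of (iii) the paper does not rerun the ideal computation from Lemma~\ref{L050413_1} but instead simply applies the surjective multiplication map $\De(\pi)\otimes_{\Sym_\pi}\De'(\pi)\to\bar I_\pi$ to the spanning sets already obtained in (i) and (ii), invoking Corollary~\ref{CSymmGen}(i) to move the $\Sym_\pi$-factor past $\bar e_\pi$. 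Both routes arrive at the same spanning set for $R_\al$ and the same comparison with Corollary~\ref{TDim}.
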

\begin{proof}
For $n=1,\dots,N$, define
$$B_n := \{\bar b \bar f \bar e_{(\be_n^{p_n})} \mid b \in \BB_{(\be_n^{p_n})}, f \in X_{(\be_n^{p_n})}\}.$$
By Proposition~\ref{LPowerBasis}, $B_n$ is a basis of $\De(\be_n^{p_n})$ for each $n=1,\dots,N$. 
Let $\bar \iota_\pi: \bar R_{p_1 \be_1} \otimes \dots \otimes \bar R_{p_N \be_N} \to \bar R_\al$ be the map induced by $\iota_\pi$, as in Corollary~\ref{LParaIdeal2}. 
Using \cite[Proposition 2.16]{KL1}, and computing as in the proof of Lemma~\ref{L050413_1}, we have
\begin{align*}
  \De(\pi) &= \sum_{w \in \Si^{(\pi)}} \bar \psi_w \bar R_\pi \bar e_\pi = \sum_{w \in \Si^{(\pi)}} \bar \psi_w \bar \iota_\pi(\De(\be_1^{p_1}) \otimes \dots \otimes \De(\be_N^{p_N}))\\
    &= \O\operatorname{-span}\{\bar \psi_w \bar \iota_\pi(b_1 \otimes \dots \otimes b_N) \mid w \in \Si^{(\pi)}, b_n \in B_n\}\\
    &= \O\operatorname{-span}\{\bar b \bar f \bar e_\pi \mid b \in \BB_\pi, f \in X_\pi\}.
\end{align*}
We have shown that the set in (i) spans $\De(\pi)$. A similar argument shows that the set in (ii) spans $\De'(\pi)$. Now, applying the multiplication map $\De(\pi) \otimes \De'(\pi) \onto \bar I_\pi$ and using Corollary~\ref{CSymmGen}(i) yields the spanning set of (iii). Letting $\pi$ vary over $\Pi(\al)$, we have
$$
  R_\al = \sum_{\pi \in \Pi(\al)} \O\operatorname{-span}\{b e_\pi f D_\pi (b')^\tau \mid b, b' \in \BB_\pi, f \in X_\pi\}.
$$
Using (\ref{EDimUpper}) and the equality $\deg(D_\pi) = 2\deg(v_\pi^-)$ for all $\pi \in \Pi(\al)$, we get
\begin{align*}
  \dim_q(R_\al) &= \sum_{\pi\in\Pi(\al)}\dim_q(\O\operatorname{-span}\{b e_\pi f D_\pi (b')^\tau \mid b, b' \in \BB_\pi, f \in X_\pi\})\\
  &\leq \sum_{\pi\in\Pi(\al)}\Big(\sum_{b \in \BB_\pi}q^{\deg(b)}\Big) \dim_q(\Sym_\pi) q^{\deg(D_\pi)} \Big(\sum_{b \in \BB_\pi}q^{\deg(b)}\Big)\\
  &\leq \sum_{\pi\in\Pi(\al)}\Big(\sum_{b \in \BB_\pi}q^{\deg(b v_\pi^-)}\Big)^2 l_\pi\\
  &= \sum_{\pi\in\Pi(\al)}\dim_q(\bar \De(\pi))^2 l_\pi = \dim_q(R_\al),
\end{align*}
by Corollary~\ref{TDim}. The inequalities are therefore equalities, and this implies that the spanning set $\{b e_\pi f D_\pi (b')^\tau \mid \pi \in \Pi(\al), b, b' \in \BB_\pi, f \in X_\pi\}$ of $R_\al$ is a basis and $\qdim \Sym_{\pi}=l_\pi$ for all $\pi$.

To show (i) and (ii), we have already noted that the claimed bases span $\De(\be)$ and $\De'(\be)$, respectively. We now apply part (iii) to see that they are linearly independent.
\end{proof}

\begin{Corollary} \label{CCellBasis}
The set $\{ b e_{\pi} f  D_{\pi} ( b')^\tau \mid \pi\in \Pi(\al),\ b,b' \in \BB_\pi, f \in X_\pi\}$ is an $\O$-basis for $R_\al$.
\end{Corollary}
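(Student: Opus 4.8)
The plan is to observe that there is essentially nothing left to prove: the assertion was already obtained in the course of proving Proposition~\ref{LGenBasis}. Concretely, write
\[
S := \{ b e_{\pi} f D_{\pi} (b')^\tau \mid \pi\in \Pi(\al),\ b,b' \in \BB_\pi,\ f \in X_\pi\}.
\]
The proof of Proposition~\ref{LGenBasis} first shows, via Corollary~\ref{CSymmGen}(i) together with the spanning statements in parts (i) and (ii) of that proposition, that $S$ spans $R_\al$ over $\O$. It then estimates $\qdim R_\al$ from above using (\ref{EDimUpper}) and the identity $\deg(D_\pi)=2\deg(v_\pi^-)$, obtaining $\qdim R_\al\le \sum_{\pi\in\Pi(\al)}\qdim(\bar\De(\pi))^2\, l_\pi$, and this last expression equals $\qdim R_\al$ by Corollary~\ref{TDim}. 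Hence every inequality in that estimate is an equality, which forces the spanning set $S$ to be $\O$-linearly independent (and simultaneously forces $\qdim\Sym_\pi = l_\pi$ for all $\pi$). So $S$ is an $\O$-basis of $R_\al$, which is precisely the claim.

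One small point I would record is that this argument is legitimate over $\O=\Z$ as well as over a field. All elements occurring in $S$ are defined over $\Z$: the vectors $v_\be^\pm$ are chosen over $\Z$ by Lemma~\ref{LCuspInt}, the elements $\de_\be, D_\be, y_\be$ lie in $R_\be(\Z)$, the sets $\BB_\be$ (hence $\BB_\pi$) are chosen over $\Z$, and $X_\pi$ may be taken over $\Z$ since $\Sym_\pi$ is, up to the equality $\qdim\Sym_\pi=l_\pi$ just established, a tensor product of polynomial algebras. Moreover $R_\al(\Z)$ is a free $\Z$-module with finite-rank graded pieces by Theorem~\ref{TBasis}, so a graded spanning set of the correct graded rank is automatically a basis; alternatively one invokes Lemma~\ref{PFieldToZ} once the claim is known over every field.

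I do not expect any genuine obstacle — the corollary is a bookkeeping restatement of what Proposition~\ref{LGenBasis} already delivers. The only thing worth a second's attention is that the index set of $S$ is exactly the one used in the graded-dimension comparison, with no identifications among distinct data $(\pi,b,b',f)$; this is immediate from the way $\BB_\pi$ and $X_\pi$ were set up. It is also worth noting, with an eye to Section~4, that $S$ is compatible with the ideal filtration $\{I_\pi\}_{\pi\in\Pi(\al)}$: grouping the basis elements of $S$ by $\pi$ recovers Proposition~\ref{LGenBasis}(iii) for each $\bar I_\pi$, which is exactly the input needed to verify the affine cell ideal axioms.
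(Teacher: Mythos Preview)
Your proposal is correct and essentially the same as the paper's: the paper's proof simply invokes Proposition~\ref{LGenBasis}(iii) together with the fact (from Lemma~\ref{LAllIdem}) that the ideals $I_\pi$ exhaust $R_\al$, while you point directly to the place inside the proof of Proposition~\ref{LGenBasis} where the basis statement for $R_\al$ is established by the graded-dimension count against Corollary~\ref{TDim}. These are two phrasings of the same argument.
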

\begin{proof}
Apply Proposition~\ref{LGenBasis}(iii) and the fact that the filtration by the ideals $I_\pi$ exhausts $R_\al$, which follows from Lemma~\ref{LAllIdem}. 
\end{proof}

\section{Affine cellularity}
Recall the notion of an affine cellular algebra from the introduction. 
In this section, we fix $\al \in Q_+$ and prove that $R_\al$ is affine cellular over $\Z$ (which then implies that it is affine cellular over any $k$). 

For any $\pi \in \Pi(\al)$, we define 
$$I_\pi' := \Z\text{-span}\{b e_\pi \Sym_\pi D_\pi (b')^\tau \mid b, b' \in \BB_\pi\}.$$
By Corollary~\ref{CCellBasis}, we have $R_\al=\oplus_{\pi\in\Pi(\al)}I_\pi'$.  Moreover, $\tau(I_\pi') = I_\pi'$. Indeed, $\de_\pi$ commutes with elements of $\Sym_\pi$ in view of Hypothesis~\ref{HProp}(iv). So by Lemma~\ref{LDDelTau}, we have
  \begin{align*}
    \tau(I_\pi') &= \Z\text{-span}\{b' D_\pi^\tau \Sym_\pi^\tau  \de_\pi^\tau D_\pi^\tau b^\tau \mid b, b' \in \BB_\pi\} \\
      &= \Z\text{-span}\{b' D_\pi \de_\pi  \Sym_\pi D_\pi b^\tau \mid b, b' \in \BB_\pi\}
      = I_\pi'.
  \end{align*}

By Proposition~\ref{LGenBasis}, we have $I_\pi=\oplus_{\si \geq \pi} I_\si'$, and we have a nested family of ideals $(I_\pi)_{\pi\in\Pi(\al)}$. To check that $R_\al$ is affine cellular, we need to verify that 
$\bar I_\pi:=I_\pi/I_{>\pi}$ is an affine cell ideal in $\bar R_\al:=R_\al/I_{>\pi}$. As usual we denote $\bar x:=x+I_{>\pi}\in \bar R_\al$ for $x\in R_\al$. 

The affine algebra $B$ in the definition of a cell ideal will be the algebra $\Sym_\pi$, with the automorphism $\si$ being the identity map. The $\Z$-module $V$ will be the formal free $\Z$-module $V_\pi$ on the basis $\BB_\pi$. By  Corollary~\ref{CSymmGen}(i) and Proposition~\ref{LGenBasis}, the following maps are isomomorphisms of $\Sym_\pi$-modules.
\begin{align*}
\eta_\pi&: V_\pi \otimes_\Z \Sym_\pi \to \De(\pi), b \otimes f \mapsto \bar b \bar f \bar e_\pi,\\
\eta'_\pi&: \Sym_\pi \otimes_\Z V_\pi \to \De'(\pi), f \otimes b \mapsto \bar e_\pi \bar f \bar D_\pi \bar b^\tau.
\end{align*}
This allows us to endow $V_\pi \otimes_\Z \Sym_\pi$ with a structure of an $(R_\al,\Sym_\pi)$-bimodule and $\Sym_\pi \otimes_\Z V_\pi$ with a structure of an $(\Sym_\pi,R_\al)$-bimodule.

In view of Corollary~\ref{CSymmGen}(iii),(iv) we see that $\De(\pi)$ (resp. $\De'(\pi)$) is a right (resp. left) $\Sym_\pi$-module, and so we may define an $R_\al$-bimodule homomorphism 
$$\nu_\pi: \De(\pi) \otimes_{\Sym_\pi} \De'(\pi) \to I_\pi / I_{>\pi}, \bar r \bar e_\pi \otimes \bar e_\pi \bar r' \mapsto \bar r \bar e_\pi \bar r'.$$
By Proposition~\ref{LGenBasis}, $\nu_\pi$ is an isomorphism. Let $\mu_\pi:=\nu_\pi^{-1}$. This will be the map $\mu$ in the definition of a cell ideal.

\begin{Theorem}
The above data make $R_\al$ into an affine cellular algebra.
\end{Theorem}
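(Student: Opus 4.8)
The plan is to check, for each $\pi\in\Pi(\al)$, that $\bar I_\pi:=I_\pi/I_{>\pi}$ is an affine cell ideal of $\bar R_\al:=R_\al/I_{>\pi}$ in the sense of conditions (1)--(3) of the definition, and to note that this is exactly what is needed: since the fixed total order on $\Pi(\al)$ refines the bilexicographic order, the immediate successor $\pi^+$ of $\pi$ satisfies $I_{>\pi}=I_{\pi^+}$, so the chain $(I_\pi)_{\pi}$ (reversed) is the ideal filtration of $R_\al$ required in the definition of affine cellularity, with successive quotients the $\bar I_\pi$. The cell data will be: $B=\Sym_\pi$, a finitely generated commutative $\Z$-algebra (in fact polynomial, by Proposition~\ref{LGenBasis}), with involution $\si=\id$; the free $\Z$-module $V=V_\pi$ of finite rank ($\BB_\pi$ is finite by Lemma~\ref{LDeBasis}); the isomorphisms $\eta_\pi\colon V_\pi\otimes_\Z\Sym_\pi\to\De(\pi)$ and $\eta'_\pi\colon\Sym_\pi\otimes_\Z V_\pi\to\De'(\pi)$; and $\mu=\mu_\pi=\nu_\pi^{-1}$. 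Repeatedly I will use that $\bar e_\pi$ is idempotent in $\bar R_\al$ (Corollary~\ref{C030413_3} for each factor, transported along the algebra map of Corollary~\ref{LParaIdeal2}), that $\bar f$ commutes with $\bar e_\pi$, $\bar\de_\pi$ and $\bar e_\pi\bar D_\pi$ for $f\in\Sym_\pi$ (Corollary~\ref{CSymmGen}), that $D_\pi^\tau=D_\pi$, $\de_\pi^\tau=\de_\pi$ (Lemma~\ref{LDDelTau}) and $e_\pi=D_\pi\de_\pi$, and that every element of $\Sym_\pi$ is $\tau$-fixed (its generators $y_{\be_n,r}$ are $\tau$-fixed by Hypothesis~\ref{HProp}(ii), and $\tau$ is compatible with the parabolic embeddings).

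Conditions (1) and (2) are then essentially bookkeeping. For (1): $\tau$ fixes each $e(\bi_\si)$, hence stabilizes $I_{>\pi}$ and $I_\pi$, descends to an anti-involution of $\bar R_\al$, and $\tau(I'_\pi)=I'_\pi$ has already been observed, so $\tau(\bar I_\pi)=\bar I_\pi$. For (2): by Proposition~\ref{LGenBasis} and Corollary~\ref{CSymmGen}, $\eta_\pi$ is an isomorphism of right $\Sym_\pi$-modules that carries the regular right action on the second tensor factor to the natural one on $\De(\pi)$, and transporting the left $\bar R_\al$-module structure along $\eta_\pi$ makes $\Delta=V_\pi\otimes_\Z\Sym_\pi$ an $\bar R_\al$-$\Sym_\pi$-bimodule; the same argument for $\eta'_\pi$ makes $\Delta'=\Sym_\pi\otimes_\Z V_\pi$ a $\Sym_\pi$-$\bar R_\al$-bimodule with regular left $\Sym_\pi$-action.

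The substantive point is to verify that the right $\bar R_\al$-action transported onto $\Delta'$ through $\eta'_\pi$ coincides with the formula~(\ref{ERightCell}); equivalently, that $\eta'_\pi$ intertwines the natural right multiplication on $\De'(\pi)$ with the action $(f\otimes b)\cdot a=\tw\bigl(\overline{\tau(a)}\cdot(b\otimes f)\bigr)$, where $b\otimes f\in\Delta$ is acted on the left via $\Delta\cong\De(\pi)$. Given $a\in R_\al$ and a basis vector $f\otimes b$, expand $\overline{\tau(a)}\,\bar b\,\bar e_\pi=\sum_j(\bar b_j\bar e_\pi)h_j$ in the free right $\Sym_\pi$-module $\De(\pi)$ (with $b_j\in\BB_\pi$, $h_j\in\Sym_\pi$, finitely many nonzero). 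Chasing this through $\eta_\pi^{-1}$, $\tw$ and $\eta'_\pi$ and using that $\bar f$ commutes with $\bar e_\pi$, the left side of the desired identity becomes $\sum_j\bar e_\pi\bar h_j\bar f\bar D_\pi\bar b_j^\tau$, while the right side is $\bar e_\pi\bar f\bar D_\pi\bar b^\tau\bar a$. Applying $\tau$ to both (and using $\tau(\bar e_\pi)=\overline{\de_\pi D_\pi}$, $D_\pi^\tau=D_\pi$, that $\bar f$ and the $\bar h_j$ commute with $\bar\de_\pi$, and $\bar D_\pi\bar\de_\pi=\bar e_\pi$) reduces the required equality to $\bigl(\sum_j\bar b_j\bar e_\pi\bar h_j\bigr)\bar f\bar D_\pi=\bigl(\overline{\tau(a)}\,\bar b\,\bar e_\pi\bigr)\bar f\bar D_\pi$, i.e.\ the chosen expansion multiplied on the right by $\bar f\bar D_\pi$. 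Hence $\eta'_\pi$ identifies $\De'(\pi)$ with $\Delta'$ carrying the structure~(\ref{ERightCell}), and $\mu_\pi=\nu_\pi^{-1}\colon\bar I_\pi\to\Delta\otimes_{\Sym_\pi}\Delta'$ is an isomorphism of $\bar R_\al$-bimodules by Proposition~\ref{LGenBasis}.

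Finally I would check that the square~(\ref{ECellCD}) commutes on the $\Z$-basis $\{\bar b\,\bar e_\pi\bar f\bar D_\pi(\bar b')^\tau\}$ of $\bar I_\pi$ from Proposition~\ref{LGenBasis}(iii). The same ingredients give the clean formula $\tau\bigl(\bar b\,\bar e_\pi\bar f\bar D_\pi(\bar b')^\tau\bigr)=\bar b'\,\bar e_\pi\bar f\bar D_\pi\bar b^\tau$, and also $\mu_\pi\bigl(\bar b\,\bar e_\pi\bar f\bar D_\pi(\bar b')^\tau\bigr)=\eta_\pi(b\otimes1)\otimes_{\Sym_\pi}\eta'_\pi(f\otimes b')$, i.e.\ the element $b\otimes1\otimes f\otimes b'$ of $(V_\pi\otimes\Sym_\pi)\otimes_{\Sym_\pi}(\Sym_\pi\otimes V_\pi)$; the right vertical map of~(\ref{ECellCD}) (with $\si=\id$) sends this to $b'\otimes f\otimes1\otimes b=b'\otimes1\otimes f\otimes b$ (sliding $f\in\Sym_\pi$ across the middle tensor sign), which is $\mu_\pi\bigl(\bar b'\,\bar e_\pi\bar f\bar D_\pi\bar b^\tau\bigr)=\mu_\pi\bigl(\tau(\bar b\,\bar e_\pi\bar f\bar D_\pi(\bar b')^\tau)\bigr)$, so the square commutes and $R_\al$ is affine cellular. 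The main obstacle is the third paragraph: each commutation is routine on its own, but $\bar e_\pi$ is \emph{not} $\tau$-fixed ($\tau(\bar e_\pi)=\overline{\de_\pi D_\pi}$), so Corollary~\ref{CSymmGen} and the identity $e_\pi=D_\pi\de_\pi$ have to be applied in the correct order to turn the twisted right action of~(\ref{ERightCell}) back into genuine right multiplication.
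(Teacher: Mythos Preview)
Your proof is correct and follows essentially the same approach as the paper's. The only cosmetic difference is in the verification of the right $\bar R_\al$-action on $\Delta'$: the paper expands an element of $\De'(\pi)$ in the basis of Proposition~\ref{LGenBasis}(ii) and then manipulates using $\bar e_\pi^2=\bar e_\pi$ and $\tau$-invariance, whereas you expand an element of $\De(\pi)$ in the basis of Proposition~\ref{LGenBasis}(i) and apply $\tau$ once; both computations use exactly the same ingredients (Lemma~\ref{LDDelTau}, Corollary~\ref{CSymmGen}(i), idempotency of $\bar e_\pi$, and $\tau$-invariance of $\Sym_\pi$) and are transparently equivalent.
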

\begin{proof}
To verify that $\bar I_\pi$ is a cell ideal in $\bar R_\al$, we first check that our $(\Sym_\pi,R_\al)$-bimodule structure on $\Sym_\pi\otimes_\Z V_\pi$  
comes from our $(R_\al,\Sym_\pi)$-bimodule structure on $V_\pi \otimes_\Z \Sym_\pi$ via the rule  (\ref{ERightCell}). 
Let $\tw_\pi: V_\pi \otimes_\Z \Sym_\pi \iso \Sym_\pi \otimes_\Z V_\pi$ be the swap map. This is equivalent to the fact that the composition map 
\begin{equation}\label{EDeSwap}
\phi: \De'(\pi) \stackrel{(\eta'_\pi)^{-1}}{\longrightarrow} \Sym_\pi \otimes_\Z V_\pi \stackrel{\tw_\pi^{-1}}{\rightarrow} V_\pi \otimes_\Z \Sym_\pi \stackrel{\eta_\pi}{\to} \De(\pi) = \De(\pi)^\tau,
\end{equation}
is an isomorphism of right $R_\al$-modules. We already know that this is an isomorphism of $\Z$-modules, and so it suffices to check that 
$$
\phi(\bar e_\pi\bar f \bar D_\pi \bar c^\tau \bar r)=\bar r^\tau \phi(\bar e_\pi\bar f \bar D_\pi \bar c^\tau)
$$
for all $f \in \Sym_\pi$, $c \in \BB_\pi$, and $r\in R_\al$. Note that 
$\phi(\bar e_\pi\bar f \bar D_\pi \bar c^\tau)=\bar c\bar f\bar e_\pi$. So we have to check 
\begin{equation}\label{E120613}
\phi(\bar e_\pi\bar f \bar D_\pi \bar c^\tau \bar r)=\bar r^\tau\bar c\bar f\bar e_\pi.
\end{equation}
By Proposition~\ref{LGenBasis}(ii) we can find $\{f_b \mid b \in \BB_\pi\} \subseteq \Sym_\pi$ such that
\begin{equation}\label{EAction}
\bar e_\pi\bar f \bar D_\pi \bar c^\tau \bar r = \sum_{b \in \BB_\pi}{\bar e_\pi \bar f_b \bar D_\pi \bar b^\tau}.
\end{equation}
Also, by Corollary~\ref{CSymmGen}(i), we have 
$$
\bar e_\pi\bar f \bar D_\pi \bar c^\tau = \bar f \bar e_\pi\bar D_\pi \bar c^\tau= \bar e_\pi\bar D_\pi \bar f \bar c^\tau
$$
Using this and the $\tau$-invariance of $D_\pi$ and $\de_\pi$, we  get (\ref{E120613}) as follows: 
  \begin{align*}
   \bar r^\tau\bar c\bar f\bar e_\pi &= \bar r^\tau\bar c\bar f \bar e_\pi^2
    = \bar r^\tau\bar c\bar f \bar D_\pi^\tau \bar \de_\pi^\tau \bar D_\pi^\tau \bar \de_\pi^\tau
    = (\bar \de_\pi \bar D_\pi \bar \de_\pi \bar D_\pi  \bar f \bar c^\tau  \bar r)^\tau\\
    &= (\bar \de_\pi \bar e_\pi \bar D_\pi  \bar f \bar c^\tau  \bar r)^\tau
    = (\bar \de_\pi \bar e_\pi \bar f \bar D_\pi   \bar c^\tau  \bar r)^\tau
    = (\bar \de_\pi \sum_{b \in \BB_\pi} \bar e_\pi \bar f_b \bar D_\pi \bar b^\tau)^\tau\\
    &= (\sum_{b \in \BB_\pi} \bar \de_\pi \bar D_\pi \bar \de_\pi \bar D_\pi \bar f_b \bar b^\tau)^\tau
    = \sum_{b \in \BB_\pi} \bar b \bar f_b \bar D_\pi \bar \de_\pi \bar D_\pi \bar \de_\pi
    = \sum_{b \in \BB_\pi} \bar b \bar f_b \bar e_\pi^2\\
    &= \sum_{b \in \BB_\pi} \bar b \bar f_b \bar e_\pi,
  \end{align*}
  which equals the left hand side of (\ref{E120613}) by definition of $\phi$. 

To complete the proof, it remains to verify the commutativity of (\ref{ECellCD}). This is equivalent to 
$$
 \tau \circ \nu_\pi \circ (\eta_\pi \otimes \eta'_\pi) ((b \otimes f) \otimes (f' \otimes b'))=\nu_\pi \circ (\eta_\pi \otimes \eta'_\pi) ((b' \otimes f') \otimes (f \otimes b))
$$
for all $b,b'\in \BB_\pi$ and $f,f'\in\Sym_\pi$. 
The left hand side equals 
\begin{align*}
&\tau \circ \nu_\pi(\bar b \bar f \bar e_\pi \otimes \bar e_\pi \bar f' \bar D_\pi (\bar b')^\tau)= \tau(\bar b \bar f \bar e_\pi \bar f' \bar D_\pi (\bar b')^\tau)
= \tau(\bar b \bar e_\pi \bar f \bar f' \bar D_\pi (\bar b')^\tau)\\
    = &\  \bar b' \bar D_\pi \bar f' \bar f \bar e_\pi^\tau \bar b^\tau
    = \bar b' \bar D_\pi \bar f' \bar f \bar \de_\pi \bar D_\pi \bar b^\tau
    = \bar b' \bar D_\pi \de_\pi \bar f' \bar f \bar \bar D_\pi \bar b^\tau
    = \bar b' \bar e_\pi \bar f' \bar f \bar D_\pi \bar b^\tau
    \\
    = &\ \bar b' \bar f' \bar e_\pi \bar f \bar D_\pi \bar b^\tau
    = \nu_\pi(\bar b' \bar f' \bar e_\pi \otimes \bar e_\pi \bar f \bar D_\pi \bar b^\tau),
\end{align*}
which equals $\nu_\pi \circ (\eta_\pi \otimes \eta'_\pi) ((b' \otimes f') \otimes (f \otimes b))$, as required. 
\end{proof}

\section{Verification of the Hypothesis}\label{SVerif}

In this section we verify Hypothesis~\ref{HProp} for all finite types. In ADE types (with one exception) this can be do using the theory of homogeneous representations developed in \cite{KRhomog}. This theory is reviewed in the next subsection. We use the cuspidal modules of \cite{HMM}.

Throughout the section $\be$ is a positive root, and $\bar R_\be:=R_\be/I_{>(\be)}$, $\bar r:=r+I_{>(\be)}$ for $r\in R_\be$. 

\subsection{Homogeneous representations} \label{SSHomog}
In this section we assume that the Cartan matrix $A$ is symmetric. In this subsection we fix $\al\in Q_+$ with $d=\height(\al)$. A graded $R_\al$-module is called {\em homogeneous} if it is concentrated in one degree. 
Let  $\bi\in \words_\al$. We call $s_r\in S_d$ an {\em admissible transposition} for $\bi$ if $a_{i_r, i_{r+1}}=0$. 
The {\em word graph} $G_\al$ is the graph with the set of vertices $\words_\al$, and with $\bi,\bj\in \words_\al$ connected by an edge if and only if $\bj=s_r \bi$ for some admissible transposition $s_r$ for $\bi$. 
A connected component $C$ of $G_\al$ is called {\em homogeneous}  
if for some $\bi=(i_1,\dots,i_d)\in C$ the following condition holds:
\begin{equation}\label{ENC}
\begin{split}
\text{if $i_r=i_s$ for some $r<s$ then there exist $t,u$}
\\  
\text{such that $r<t<u<s$ and $a_{i_r,i_t}=a_{i_r,i_u}=-1$.}
\end{split}
\end{equation}

\begin{Theorem}\label{Thomog} {\rm \cite[Theorems 3.6, 3.10, (3.3)]{KRhomog}} 
Let $C$ be a homogeneous connected component of $G_\al$. Let $L(C)$ be the vector space  concentrated in degree $0$ with basis $\{v_\bi\mid \bi\in C\}$ labeled by the elements of $C$. 
The formulas
\begin{align*}
1_\bj v_\bi&=\de_{\bi,\bj}v_\bi \qquad (\bj\in \words_\al,\ \bi\in C),\\
y_r v_\bi&=0\qquad (1\leq r\leq d,\ \bi\in C),\\
\psi_rv_{\bi}&=
\left\{
\begin{array}{ll}
v_{s_r\bi} &\hbox{if $s_r\bi\in C$,}\\
0 &\hbox{otherwise;}
\end{array}
\right.
\quad(1\leq r<d,\ \bi\in C)
\end{align*}
define an action of $R_\al$ on $L(C)$, under which $L(C)$ is a homogeneous irreducible $R_\al$-module. 
Furthermore, $L(C)\not\cong L(C')$ if $C\neq C'$, and every homogeneous irreducible $R_\al$-module, up to a degree shift, is isomorphic to one of the modules $L(C)$. 
\end{Theorem}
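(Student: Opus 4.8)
This is \cite[Theorems 3.6, 3.10, (3.3)]{KRhomog}; the route I would take is as follows. I would begin with two combinatorial observations about a homogeneous component $C$. Taking $s=r+1$ in condition \eqref{ENC} shows that no word $\bi\in C$ can have $i_r=i_{r+1}$, so $\de_{i_r,i_{r+1}}=0$ for all $\bi\in C$ and all $r$. Next I would prove that for $\bi\in C$ one has $s_r\bi\in C$ if and only if $s_r$ is an admissible transposition for $\bi$: the implication $\Leftarrow$ is the definition of the edges of $G_\al$, and for $\Rightarrow$ one shows that when $a_{i_r,i_{r+1}}<0$ the transposed word $s_r\bi$ cannot be connected back to $\bi$ inside a homogeneous component, since undoing that swap through admissible moves would eventually force a repetition violating \eqref{ENC} for $s_r\bi$. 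With this, the $\psi_r$-action on $L(C)$ is unambiguous and the $v_\bi$ are manifestly linearly independent.

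Second, I would verify that the formulas defining the action respect every defining relation \eqref{R1}--\eqref{R7}. Relations \eqref{R1}, \eqref{R2PsiE}, \eqref{R3Psi} are immediate; \eqref{R2PsiY}, \eqref{R3YPsi}, and \eqref{R6} hold because each $y_r$ acts as $0$ and $\de_{i_r,i_{r+1}}=0$ on $C$; and \eqref{R4} holds because, evaluated at $y_r=y_{r+1}=0$, one has $Q_{i_r,i_{r+1}}=1$ when $a_{i_r,i_{r+1}}=0$ (matching $\psi_r^2 v_\bi=v_{s_rs_r\bi}=v_\bi$, using symmetry of $A$) and $Q_{i_r,i_{r+1}}=0$ otherwise (matching $\psi_r v_\bi=0$, since then $s_r\bi\notin C$). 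The one genuinely delicate point is the braid relation \eqref{R7}: its right-hand side vanishes at $y=0$ unless $i_r=i_{r+2}$, so I would run through the cases determined by the Cartan entries among $i_r,i_{r+1},i_{r+2}$ and check in each that $(\psi_{r+1}\psi_r\psi_{r+1}-\psi_r\psi_{r+1}\psi_r)v_\bi=0$, using the admissibility criterion from the first step to track which of $v_{s_r\bi}$, $v_{s_{r+1}\bi}$, etc. are nonzero. This establishes that $L(C)$ is a well-defined homogeneous $R_\al$-module.

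Third, for irreducibility: any nonzero graded submodule $M\subseteq L(C)$ sits in degree $0$, hence is spanned by $\{v_\bi\mid \bi\in C'\}$ for some $C'\subseteq C$, and $\psi_r$-stability forces $C'$ to be a union of connected components of $G_\al$; connectedness of $C$ gives $C'=C$, so $M=L(C)$. Since the set of words of $L(C)$ is exactly $C$, an isomorphism $L(C)\cong L(C')$ forces $C=C'$, giving the claimed distinctness.

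Finally, for exhaustiveness I would take an arbitrary homogeneous irreducible module $L$, shift it to sit in degree $0$, and pick $\bi$ with $e(\bi)L\neq 0$. Each $y_r$ acts locally nilpotently and strictly raises the degree (by $i_r\cdot i_r>0$), so $y_r=0$ on $L$; the operators $\psi_r$ attached to admissible transpositions are invertible on the relevant word spaces (by \eqref{R4} with $Q=1$) and, together with the $e(\bj)$, show that the set of words of $L$ is a single connected component $C$ of $G_\al$ with each word space at most one-dimensional and $L\cong L(C)$; consistency of the relations on $L$ then forces $C$ to satisfy \eqref{ENC}. I expect the verification of \eqref{R7} on $L(C)$, together with the combinatorial lemma characterizing when $s_r\bi\in C$, to be the real work; the rest is bookkeeping once $y_r$ is known to act as $0$.
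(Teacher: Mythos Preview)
The paper does not prove this theorem at all: it is simply quoted from \cite{KRhomog}, so there is no in-paper argument to compare against. Your outline is essentially the strategy of the cited source, and the main steps are correctly identified.

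Two small comments on your sketch. First, in the verification of \eqref{R7} you should observe explicitly that the case $i_r=i_{r+2}$ cannot occur for $\bi\in C$: taking $s=r+2$ in \eqref{ENC} would require two indices strictly between $r$ and $r+2$, which is impossible. Hence the right-hand side of \eqref{R7} is always zero on $L(C)$, and only the left-hand side needs a case analysis. Second, your exhaustiveness argument is the thinnest part: knowing $y_r=0$ and that admissible $\psi_r$ are invertible shows the word set of $L$ is a union of connected components, but you still need (a) that each word space is one-dimensional, (b) that non-admissible transpositions do not enlarge the word set, and (c) that the resulting component actually satisfies \eqref{ENC}. In \cite{KRhomog} these are handled by separate lemmas, and the passage from ``\eqref{ENC} holds for some $\bi\in C$'' to ``\eqref{ENC} holds for all $\bi\in C$'' (which you use implicitly in your first paragraph) is part of that package.
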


We need to push the theory of homogeneous modules a little further. In Proposition~\ref{PHomGenRel} below we give a presentation for a homogeneous module as a cyclic modules generated by a word vector. Let $C$ be a homogeneous component of $G_\al$  and $\bi\in C$. An element $w\in \Si_d$ is called {\em $\bi$-admissible} if it can be written as $w=s_{r_1}\dots s_{r_b}$, where $s_{r_a}$ is an admissible transposition for $s_{r_{a+1}}\dots s_{r_b}\bi$ for all $a=1,\dots,b$. We denote the set of all $\bi$-admissible elements by $\DD_\bi$.

\begin{Lemma} \label{LAdm}
Let $C$ be a homogeneous component of $G_\al$  and $\bi\in C$. Then $\{\psi_wv_\bi\mid w\in\DD_\bi\}$ is a basis of $L(C)$.
\end{Lemma}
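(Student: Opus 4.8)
The plan is to prove the lemma in two steps: first that the vectors $\{\psi_w v_\bi \mid w \in \DD_\bi\}$ span $L(C)$, and then that they are linearly independent, which combined with a dimension count against Theorem~\ref{Thomog} will finish the argument.

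For the spanning claim, recall from Theorem~\ref{Thomog} that $L(C)$ has basis $\{v_\bj \mid \bj \in C\}$, and that $\psi_r v_\bj = v_{s_r \bj}$ whenever $s_r \bj \in C$. Now fix $\bj \in C$. Since $C$ is a connected component of the word graph $G_\al$, there is a path from $\bi$ to $\bj$: a sequence of admissible transpositions carrying $\bi$ to $\bj$. Unwinding this path gives an element $w = s_{r_1} \dots s_{r_b}$ which is $\bi$-admissible (each $s_{r_a}$ is admissible for the intermediate word $s_{r_{a+1}} \dots s_{r_b}\bi \in C$), and for which $w\bi = \bj$, hence $\psi_w v_\bi = v_\bj$ by repeated application of the action formula. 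Therefore every basis vector $v_\bj$ of $L(C)$ lies in the span of $\{\psi_w v_\bi \mid w \in \DD_\bi\}$, so this set spans.

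For linear independence, I would argue that for each $w \in \DD_\bi$ one has $\psi_w v_\bi = v_{w\bi}$ with $w\bi \in C$, directly from the definition of $\bi$-admissibility and the action formula (each intermediate step applies a $\psi_r$ to a word vector $v_\bk$ with $s_r \bk \in C$, so never hits the ``otherwise $=0$'' branch). Thus the map $w \mapsto w\bi$ sends $\DD_\bi$ into $C$, and $\psi_w v_\bi$ is always a basis vector of $L(C)$. It remains to see this map is injective on $\DD_\bi$: if $w\bi = w'\bi$ then by the above the corresponding $\psi_w v_\bi = \psi_{w'}v_\bi = v_{w\bi}$, so they agree as vectors; injectivity of $w \mapsto w\bi$ on $\DD_\bi$ as a \emph{set map} is not needed — what is needed is that distinct elements of $\DD_\bi$ giving distinct words yield distinct (linearly independent) basis vectors, while elements giving the same word give the same vector, so the set $\{\psi_w v_\bi \mid w \in \DD_\bi\}$ as a \emph{set} consists of distinct basis vectors of $L(C)$. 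Hence it is linearly independent, and together with the spanning claim it is a basis.

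The main obstacle is bookkeeping rather than depth: one must be careful that the notion of $\bi$-admissibility is exactly tailored so that $\psi_w v_\bi$ never triggers the vanishing case in the $\psi_r$-action, and that reduced-word ambiguity in writing $\psi_w$ is harmless here because on $L(C)$ all $y_r$ act as zero and the braid/quadratic relations degenerate (indeed $\psi_r^2 v_\bk = Q_{i_r,i_{r+1}}(0,0)v_\bk = 0$ when $a_{i_r,i_{r+1}}=0$). So the relation between $\DD_\bi$ and paths in $G_\al$ is really a restatement of connectedness of $C$, and the proof is essentially combinatorial.
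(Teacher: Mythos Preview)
Your spanning argument is fine, and your observation that $\psi_w v_\bi = v_{w\bi}$ for $w \in \DD_\bi$ (via an admissible expression) is the right mechanism. However, there is a genuine gap in the linear independence step. You explicitly set aside the injectivity of the map $w \mapsto w\bi$ on $\DD_\bi$, arguing that the \emph{set} $\{\psi_w v_\bi \mid w \in \DD_\bi\}$ consists of basis vectors $v_\bj$ and hence is linearly independent. This only shows that the underlying set of vectors is a basis; it does not show that the family is indexed without repetition by $\DD_\bi$, i.e., that $|\DD_\bi| = |C|$. The lemma is stated and used in the paper as an indexed basis statement: immediately afterwards (Proposition~\ref{PHomGenRel}) the dimension count needs $|\DD_\bi| = \dim L(C)$, and later the Corollary following Lemma~\ref{LyEq} invokes Lemma~\ref{LAdm} to conclude that $w \in \DD_\be$ with $w\bi_\be = \bi_\be$ forces $w = 1$. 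Both of these fail without injectivity.

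The paper's proof supplies exactly this missing ingredient: it shows that $w\bi = \bi$ with $w \in \DD_\bi$ implies $w = 1$, using the homogeneity condition~(\ref{ENC}). (Condition~(\ref{ENC}) forbids $i_r = i_{r+1}$ and $i_r = i_{r+2}$ in any word of $C$, so no nontrivial permutation built from admissible transpositions can fix $\bi$.) Once you have this, surjectivity from your spanning argument gives a bijection $\DD_\bi \to C$, and the lemma follows. A minor side remark: your parenthetical that $\psi_r^2 v_\bk = Q_{i_r,i_{r+1}}(0,0) v_\bk = 0$ when $a_{i_r,i_{r+1}}=0$ is incorrect---in that case $Q_{ij} \equiv 1$, so $\psi_r^2$ acts as the identity, not zero; fortunately this is what one actually wants when reducing a non-reduced admissible word.
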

\begin{proof}
Note that if $w,w'$ are admissible elements, then $w=w'$ if and only if $w\bi=w'\bi$. Indeed, it suffices to prove that $w\bi=\bi$ implies $w=1$, which follows from the property (\ref{ENC}). The lemma follows. 
\end{proof}

\begin{Proposition}\label{PHomGenRel}
Let $C$ be a homogeneous component of $G_\al$  and $\bi\in C$. Let $J(\bi)$ be the left ideal  of $R_\al$ generated by 
\begin{align}\label{EJ(C)}
\{y_r,1_\bj,\psi_w1_\bi\mid\, 
1\leq r\leq d,\ \bj\in \words_\al\setminus \bi,\ 
w\in\Si_d\setminus \DD_\bi\}. 
\end{align}
Then $R_\al/J_\al\simeq L(C)$ as (graded) left $R_\al$-modules.  
\end{Proposition}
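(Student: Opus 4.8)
The plan is to construct an explicit $R_\al$-module isomorphism $R_\al/J(\bi)\iso L(C)$. Since $L(C)$ is cyclic, generated by the word vector $v_\bi$ (indeed $R_\al v_\bi=L(C)$ because every $v_\bj$ with $\bj\in C$ equals $\psi_w v_\bi$ for a suitable $\bi$-admissible $w$ by Lemma~\ref{LAdm}), there is a surjective $R_\al$-module homomorphism $\theta\colon R_\al\to L(C)$, $x\mapsto xv_\bi$. The first step is to check that all the generators listed in \eqref{EJ(C)} lie in $\ker\theta$: this is immediate from the action formulas in Theorem~\ref{Thomog}, since $y_r v_\bi=0$, $1_\bj v_\bi=\de_{\bi,\bj}v_\bi=0$ for $\bj\neq\bi$, and $\psi_w v_\bi=0$ whenever $w\notin\DD_\bi$ (the latter because applying a non-admissible $\psi_r$ at some stage kills the vector). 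Hence $\theta$ factors through a surjection $\bar\theta\colon R_\al/J(\bi)\onto L(C)$.

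The second step, which is the crux, is to show $\bar\theta$ is injective, equivalently that $R_\al/J(\bi)$ is spanned by the images of $\{\psi_w\mid w\in\DD_\bi\}$ — for then $\dim R_\al/J(\bi)\leq |\DD_\bi|=\dim L(C)$ and surjectivity forces an isomorphism. To see this I would work with the basis of $R_\al$ from Theorem~\ref{TBasis}, namely $\{\psi_w y_1^{m_1}\cdots y_d^{m_d}e(\bj)\}$. Modulo $J(\bi)$ we may assume $\bj=\bi$ (else $e(\bj)\in J(\bi)$) and, using the relations \eqref{R2PsiY}--\eqref{R6} to commute the $y$'s to the right past the $\psi_w$ at the cost of lower terms, we may assume all $m_r=0$, since a trailing $y_r e(\bi)\in J(\bi)$; here one inducts on $\sum m_r$ and on the length of $w$, absorbing the correction terms \eqref{R6} (which only appear when consecutive letters coincide) into the already-handled cases. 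Thus $R_\al/J(\bi)$ is spanned by $\{\overline{\psi_w e(\bi)}\mid w\in\Si_d\}$. Finally, for $w\notin\DD_\bi$ one shows $\psi_w e(\bi)\in J(\bi)$: choosing a reduced word for $w$, let $s_{r_a}$ be the last (rightmost) simple transposition that is non-admissible for the word it acts on, so $\psi_w e(\bi)=\psi_{w'}\psi_{r_a}e(\bj')$ with $w'\in\Si_d$, $\psi_{r_a}e(\bj')$ an explicit generator of the type listed in \eqref{EJ(C)} up to reindexing — more precisely $\psi_{s_{r_a}\cdots s_{r_b}}e(\bi)$ is such a generator — completing the reduction.

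The main obstacle I anticipate is the bookkeeping in the second step: the relation \eqref{R6}, $(y_t\psi_r-\psi_r y_{s_r(t)})e(\bi)=\de_{i_r,i_{r+1}}(\de_{t,r+1}-\de_{t,r})e(\bi)$, produces extra terms with fewer $\psi$'s when letters repeat, and \eqref{R4} together with \eqref{EArun} shows $\psi_r^2 e(\bi)=Q_{i_r,i_{r+1}}(y_r,y_{r+1})e(\bi)$, which is $0$ when $i_r=i_{r+1}$ and a polynomial in the $y$'s (hence in $J(\bi)$) when $a_{i_r,i_{r+1}}<0$; so whenever a reduced-word manipulation of $\psi_w$ forces a repeated simple reflection, the result already lands in $J(\bi)$ or drops in $\psi$-length. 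One therefore needs a clean induction — say on the pair $(\text{total }y\text{-degree},\ \ell(w))$ in lexicographic order — to see every basis element of $R_\al$ is congruent mod $J(\bi)$ to an $\O$-combination of $\{\psi_w e(\bi)\mid w\in\DD_\bi\}$, and I would organize the argument exactly so that each correction term produced is strictly smaller in this ordering. The grading is preserved throughout since every generator in \eqref{EJ(C)} is homogeneous and $L(C)$ sits in a single degree shifted appropriately, so the isomorphism is automatically one of graded modules.
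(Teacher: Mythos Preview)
Your proof is correct and follows the same strategy as the paper: construct the surjection $R_\al/J(\bi)\onto L(C)$ via $h+J(\bi)\mapsto hv_\bi$, then bound $\dim R_\al/J(\bi)$ by $|\DD_\bi|=\dim L(C)$. The ``main obstacle'' you anticipate is illusory, however: in the basis $\psi_w y_1^{m_1}\cdots y_d^{m_d}e(\bi)$ of Theorem~\ref{TBasis} the $y$'s already sit to the right of $\psi_w$ and commute with $e(\bi)$, so any such element with some $m_r>0$ lies in $R_\al\cdot y_r\subseteq J(\bi)$ without any commutation, and $\psi_w e(\bi)$ for $w\notin\DD_\bi$ is literally one of the listed generators of $J(\bi)$, so no further reduction is needed.
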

\begin{proof}
Note that the elements in (\ref{EJ(C)}) annihilate the vector $v_\bi\in L(C)$, which generates $L(C)$, whence we have a (homogeneous) surjection 
$$
R_\al/J_\al\onto L(C),\ h+J_\al\mapsto hv_\bi.
$$
To prove that this surjection is an isomorphism it suffices to prove 
that the dimension of $R_\al/J_\al$ is at most $\dim L(C)=|C|$, which follows easily from Lemma~\ref{LAdm}. 
\end{proof}

\subsection{Special Lyndon orders}\label{SSSLO}
Recall the theory of standard modules reviewed in \S\ref{SSSMT}. 
We now specialize to the case of a {\em Lyndon}\, convex order on $\Phi_+$ as studied in \cite{KR}. 
For this we first need to fix a total order `$\leq$' on $I$. This gives rise to a lexicographic order `$\leq$' on the set $\words$. In particular, each finite dimensional $R_\al$-module  has its (lexicographically) highest word, and the highest word of an irreducible module determines the irreducible module uniquely up to an isomorphism. This leads to the natural notion of {\em dominant words} (called good words in \cite{KR}), namely the elements of $\words_\al$ which occur as highest words of finite dimensional $R_\al$-modules. 

The dominant  words of cuspidal modules are characterized among all dominant words  by the property that they are {\em Lyndon words}, so we refer to them as {\em dominant Lyndon words}. There is an explicit bijection
$$
\Phi_+\to\{\text{dominant Lyndon words}\},\ \be\mapsto \bi_\be,
$$
uniquely determined by the property $|\bi_\be|=\be$. Note that this notation $\bi_\be$ will be consistent with the same notation used in \S\ref{SSSSWId}.  

Setting $\be\leq\ga$ if and only if $\bi_\be\leq\bi_\ga$ for $\be,\ga\in\Phi_+$ defines 
a total order on $\Phi_+$ called a {\em Lyndon order}. It is known that each Lyndon order is convex, and the theory of standard modules for Lyndon orders, developed in \cite{KR}, fits into the general theory described in \S\ref{SSSMT}. However, working with Lyndon orders  allows us to be a little more explicit. In particular, given a a root partition $\pi=(p_1,\dots,p_N)\in\Pi(\al)$, set 
\begin{equation}\label{EIPi}
\bi_\pi:=\bi_{\be_1}^{p_1}\dots\bi_{\be_N}^{p_N}\in\words_\al.
\end{equation}

\begin{Lemma} \label{LHighestWt}%
{\rm \cite[Theorem 7.2]{KR}} 
Let $\pi\in\Pi(\al)$. Then $\bi_\pi$ is the highest word of $L(\pi)$. 
\end{Lemma}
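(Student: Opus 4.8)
The plan is to establish two facts and combine them: (a) $e(\bi_\pi)L(\pi)\neq 0$, i.e.\ $\bi_\pi$ is a word of $L(\pi)$; and (b) no word $\bj>\bi_\pi$ occurs in $\bar\De(\pi)$. Since $L(\pi)$ is the head, hence a composition factor, of $\bar\De(\pi)$ by Theorem~\ref{TStand}(i), fact (b) forces every word of $L(\pi)$ to be $\leq\bi_\pi$, and together with (a) this says precisely that $\bi_\pi$ is the highest word of $L(\pi)$. As the statement only concerns which idempotents $e(\bi)$ act non-trivially, I would first reduce to $\O=F$ a field via Lemma~\ref{PFieldToZ}, and recall from (\ref{EStand}) that $\bar\De(\pi)=L(\be_1)^{\circ p_1}\circ\dots\circ L(\be_N)^{\circ p_N}$ up to a grading shift.

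For (a), I would use the elementary fact that if $\bi$ is a word of a module $M$ and $\bj$ a word of $N$, then the concatenation $\bi\bj$ is a word of $M\circ N$: indeed $e(\bi\bj)=e(\bi\bj)\,1_{|\bi|,|\bj|}$, and since $R_{|\bi|+|\bj|}1_{|\bi|,|\bj|}$ is free as a right $R_{|\bi|,|\bj|}$-module \cite[Proposition 2.16]{KL1}, one has $e(\bi\bj)(M\circ N)\supseteq 1_{|\bi|,|\bj|}\otimes e(\bi\bj)(M\boxtimes N)=1\otimes\bigl(e(\bi)M\boxtimes e(\bj)N\bigr)\neq 0$. Iterating and using that each $\bi_{\be_k}$ is a word of $L(\be_k)$, we get that $\bi_\pi=\bi_{\be_1}^{p_1}\dots\bi_{\be_N}^{p_N}$ is a word of $\bar\De(\pi)$. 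Now $e(\bi_\pi)\in R_\pi$, so $e(\bi_\pi)\bar\De(\pi)=e(\bi_\pi)\Res_\pi\bar\De(\pi)$, and in the Grothendieck group $[\Res_\pi\bar\De(\pi)]=[\Res_\pi L(\pi)]$: every composition factor $L(\si)\ne L(\pi)$ of $\bar\De(\pi)$ has $\si<\pi$ by Theorem~\ref{TStand}(iv), whence $\Res_\pi\bar\De(\si)=0$ and so $\Res_\pi L(\si)=0$ by Theorem~\ref{TStand}(vi), while $[\bar\De(\pi):L(\pi)]=1$. Hence $e(\bi_\pi)\Res_\pi L(\pi)\neq 0$, i.e.\ $e(\bi_\pi)L(\pi)\neq 0$.

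For (b), I would pass to $q$-characters. The $q$-character is multiplicative under induction, so $\CH\bar\De(\pi)$ is, up to a power of $q$, the quantum shuffle product of the $\CH(L(\be_k))$ with $\CH(L(\be_k))$ taken $p_k$ times (see \cite{KL2}); in particular every word occurring in $\CH\bar\De(\pi)$ is a shuffle of words $\bj^{(k,l)}$ ($1\leq k\leq N$, $1\leq l\leq p_k$) with each $\bj^{(k,l)}$ a word of $L(\be_k)$, hence $\bj^{(k,l)}\leq\bi_{\be_k}$ (cancellations only shrink the support). The words $\bi_{\be_1}>\dots>\bi_{\be_N}$ are dominant Lyndon words, so the string consisting of $\bi_{\be_1}$ taken $p_1$ times, then $\bi_{\be_2}$ taken $p_2$ times, and so on, is a weakly decreasing sequence of Lyndon words. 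I would then invoke the basic combinatorics of Lyndon words---the content of \cite{KR}, cf.\ the Chen--Fox--Lyndon theory of shuffles: if $v_1,\dots,v_m$ are words with $v_i$ of the same content as a Lyndon word $u_i$ and $v_i\leq u_i$, and $u_1\geq\dots\geq u_m$, then every word in the shuffle of $v_1,\dots,v_m$ is $\leq u_1u_2\cdots u_m$. Applied with the $u$'s as above, every word of $\bar\De(\pi)$ is $\leq\bi_{\be_1}^{p_1}\cdots\bi_{\be_N}^{p_N}=\bi_\pi$, proving (b).

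I expect the only non-formal ingredient to be the Lyndon-word shuffle estimate used in (b)---the monotonicity of the leading term of a shuffle of a weakly decreasing string of Lyndon words---which is the genuine combinatorial heart of the statement and is exactly what \cite{KR} supplies; the remaining steps are bookkeeping with Theorem~\ref{TStand} and the categorification dictionary of Section~2.
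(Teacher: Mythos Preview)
The paper gives no proof here; the lemma is quoted verbatim from \cite[Theorem~7.2]{KR}. Your two-step outline is exactly the strategy behind that reference: step~(b) is the Lyndon-word shuffle bound, which is the combinatorial core proved in \cite{KR} (building on \cite{Lec}), and step~(a) pins down that $\bi_\pi$ actually survives in the head $L(\pi)$. Your route to (a) via Theorem~\ref{TStand}(iv),(vi) is a clean variant available in the present paper's framework; in \cite{KR} itself this is done instead by computing the graded multiplicity of $\bi_\pi$ in $\bar\De(\pi)$ from the shuffle product and checking it does not vanish in the head. Both arguments are fine.

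One point of care in (b): the shuffle inequality you state---that every shuffle of words $v_1,\dots,v_m$ with $v_i\leq u_i$, $|v_i|=|u_i|$, and $u_1\geq\dots\geq u_m$ Lyndon is $\leq u_1\cdots u_m$---is a bit stronger than the bare textbook fact that the maximal shuffle of $u_1,\dots,u_m$ themselves is $u_1\cdots u_m$. The stronger version is what is actually needed (since the non-extremal words of each $L(\be_k)$ participate), and it does follow from the lemmas in \cite{KR}; but it is worth citing the precise statement there rather than treating it as Chen--Fox--Lyndon folklore, since naive ``monotonicity of shuffle in each argument'' fails letter-by-letter.
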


From now on, we fix the notation for the Dynkin diagrams as follows:
\[
\begin{dynkin}
  \draw (.5,.8)node[anchor=west,font=\normalsize]{$A_\ell\quad(\ell \geq 1)$};
  \draw (0,0)node[above]{$1$} circle (0.10);
  \draw
  (0.15,0)--(0.85,0);
  \draw (1,0)node[above]{$2$} circle (0.10);
  \draw[
  dotted] (1.15,0)--(2.85,0);
  \draw (3,0)node[above]{$\ell-1$} circle (0.10);
  \draw
  (3.15,0)--(3.85,0);
  \draw (4,0)node[above]{$\ell$} circle (0.10);
\end{dynkin}\quad
\begin{dynkin}
  \draw (.5,.8)node[anchor=west,font=\normalsize]{$B_\ell\quad(\ell \geq 2)$};
  \draw (0,0)node[above]{$1$} circle (0.10);
  \draw
  (0.15,0)--(0.85,0);
  \draw (1,0)node[above]{$2$} circle (0.10);
  \draw[
  dotted] (1.15,0)--(2.85,0);
  \draw (3,0)node[above]{$\ell-1$} circle (0.10);
  \draw (3.15,.05)--(3.85,.05);
  \draw (3.15,-.05)--(3.85,-.05);
  \draw (3.5,0)node{$>$};
  \draw (4,0)node[above]{$\ell$} circle (0.10);
\end{dynkin}\quad
\begin{dynkin}
  \draw (.5,.8)node[anchor=west,font=\normalsize]{$C_\ell\quad(\ell \geq 3)$};
  \draw (0,0)node[above]{$1$} circle (0.10);
  \draw
  (0.15,0)--(0.85,0);
  \draw (1,0)node[above]{$2$} circle (0.10);
  \draw[
  dotted] (1.15,0)--(2.85,0);
  \draw (3,0)node[above]{$\ell-1$} circle (0.10);
  \draw (3.15,.05)--(3.85,.05);
  \draw (3.15,-.05)--(3.85,-.05);
  \draw (3.5,0)node{$<$};
  \draw (4,0)node[above]{$\ell$} circle (0.10);
\end{dynkin}
\]
\[
\begin{dynkin}
  \draw (.5,.8)node[anchor=west,font=\normalsize]{$D_\ell\quad(\ell \geq 4)$};
  \draw (0,0)node[above]{$1$} circle (0.10);
  \draw
  (0.15,0)--(0.85,0);
  \draw (1,0)node[above]{$2$} circle (0.10);
  \draw[
  dotted] (1.15,0)--(2.85,0);
  \draw (3,0)node[above]{$\ell-2$} circle (0.10);
  \draw
  (3,-.15)--(3,-.85);
  \draw (3,-1)node[right]{$\ell$} circle (0.10);
  \draw
  (3.15,0)--(3.85,0);
  \draw (4,0)node[above]{$\ell-1$} circle (0.10);
\end{dynkin}
\begin{dynkin}
  \draw (.75,.8)node[anchor=west,font=\normalsize]{$E_\ell\quad(\ell = 6,7,8)$};
  \draw (0,0)node[above]{$1$} circle (0.10);
  \draw
  (0.15,0)--(0.85,0);
  \draw (1,0)node[above]{$2$} circle (0.10);
  \draw[
  dotted] (1.15,0)--(2.85,0);
  \draw (3,0)node[above]{$\ell-3$} circle (0.10);
  \draw
  (3,-.15)--(3,-.85);
  \draw (3,-1)node[right]{$\ell$} circle (0.10);
  \draw
  (3.15,0)--(3.85,0);
  \draw (4,0)node[above]{$\ell-2$} circle (0.10);
  \draw
  (4.15,0)--(4.85,0);
  \draw (5,0)node[above]{$\ell-1$} circle (0.10);
\end{dynkin}
\begin{dynkin}
  \draw (1,.8)node[anchor=west,font=\normalsize]{$F_4$};
  \draw (0,0)node[above]{$1$} circle (0.10);
  \draw (0.15,0)--(0.85,0);
  \draw (1,0)node[above]{$2$} circle (0.10);
  \draw (1.15,.05)--(1.85,.05);
  \draw (1.15,-.05)--(1.85,-.05);
  \draw (1.5,0)node{$>$};
  \draw (2,0)node[above]{$3$} circle (0.10);
  \draw (2.15,0)--(2.85,0);
  \draw (3,0)node[above]{$4$} circle (0.10);
\end{dynkin}
\begin{dynkin}
  \draw (0,.8)node[anchor=west,font=\normalsize]{$G_2$};
  \draw (0,0)node[above]{$1$} circle (0.10);
  \draw (0.15,.05)--(0.85,.05);
  \draw (0.15,0)--(0.85,0);
  \draw (0.15,-.05)--(0.85,-.05);
  \draw (0.5,0)node{$<$};
  \draw (1,0)node[above]{$2$} circle (0.10);
\end{dynkin}
\]
Also, we choose the signs $\eps_{ij}$  as in \S\ref{SSKLR} and the total order $\leq$ on $I$ so that $\eps_{ij}=1$  and  $i< j$ if the corresponding labels $i$ and $j$ satisfy $i< j$ as integers. 

\subsection{Homogeneous roots}
We stick with the choices made in \S\ref{SSSLO}. Throughout the subsection, we assume that the Cartan matrix is of $ADE$ type and $\be \in \Phi_+$ is such that $\bi_\be$ is homogeneous. Let $d:=\height(\be)$. The module $L(\be)$ is concentrated in degree 0, and each of its word spaces is one dimensional. Set $\DD_\be := \DD_{\bi_\be}$. Then we can take $\BB_\be = \{\psi_w e(\bi_\be) \mid w \in \DD_\be\}$. Let $\de_\be = D_\be = e(\bi_\be)$, and define $y_\be := y_d e(\bi_\be)$. All parts of Hypothesis~\ref{HProp} are trivially satisfied, except (v). In the rest of this subsection we verify Hypothesis~\ref{HProp}(v).

\begin{Lemma}\label{LBadInBlock}
Let $w \in \Si_d \setminus \DD_{\be}$. Then $\psi_w \Pol_{d} e(\bi_{\be})  \subseteq I_{>(\be)}.$
\end{Lemma}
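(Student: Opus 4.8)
The plan is to prove the slightly stronger statement that $\psi_{t_1}\cdots\psi_{t_m}\Pol_d e(\bi_\be)\subseteq I_{>(\be)}$ for every $w\in\Si_d\setminus\DD_\be$ and \emph{every} reduced expression $w=s_{t_1}\cdots s_{t_m}$; the lemma is the special case of the reduced expression used to define $\psi_w$. Throughout I write $C$ for the homogeneous component of the word graph containing $\bi_\be$, so that $L(\be)=L(C)$ by Theorem~\ref{Thomog} and $e(\bj)L(\be)\neq 0$ exactly when $\bj\in C$. Two preliminary observations will be used repeatedly. \emph{(a)} For $\bi\in C$ and any $r$, if $s_r\bi\in C$ then $s_r$ is an admissible transposition for $\bi$: otherwise $a_{i_r,i_{r+1}}\neq 0$, so $Q_{i_r,i_{r+1}}$ has no constant term and thus $\psi_r^2e(\bi)=Q_{i_r,i_{r+1}}(y_r,y_{r+1})e(\bi)$ acts as $0$ on $L(C)$ (the $y$'s act as zero), while the formulas of Theorem~\ref{Thomog} give $\psi_r^2v_\bi=v_\bi\neq 0$; in particular no word of $C$ has two equal consecutive entries. \emph{(b)} Every element of $\psi_w\Pol_d e(\bi_\be)$ lies in $e(w\bi_\be)R_\be$, by \eqref{R2PsiE} and the fact that the $y_r$ commute with $e(\bi_\be)$.

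Granting these, the case $w\bi_\be\notin C$ is immediate: then $e(w\bi_\be)L(\be)=0$, so $e(w\bi_\be)\in I_{>(\be)}$ by Corollary~\ref{LBadWords}, whence $\psi_w\Pol_d e(\bi_\be)\subseteq e(w\bi_\be)R_\be\subseteq I_{>(\be)}$. The remaining case $w\bi_\be\in C$ (with $w\notin\DD_\be$) I would treat by induction on $\ell(w)$, the case $\ell(w)=0$ being vacuous since $1\in\DD_\be$. Write $w=s_{t_1}v$ with $v=s_{t_2}\cdots s_{t_m}$ a reduced expression of length $\ell(w)-1$. If $v\notin\DD_\be$, the inductive hypothesis applied to $v$ and this reduced expression gives $\psi_{t_2}\cdots\psi_{t_m}\Pol_d e(\bi_\be)\subseteq I_{>(\be)}$, and left multiplication by $\psi_{t_1}$ stays inside the two-sided ideal $I_{>(\be)}$. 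If instead $v\in\DD_\be$, then $v\bi_\be\in C$, and since $w=s_{t_1}v\notin\DD_\be$ the transposition $s_{t_1}$ cannot be admissible for $v\bi_\be$ — otherwise prepending $s_{t_1}$ to an admissible decomposition of $v$ would exhibit $w$ as admissible. By observation \emph{(a)}, $w\bi_\be=s_{t_1}(v\bi_\be)\notin C$, contradicting the standing assumption $w\bi_\be\in C$; so this subcase does not arise. This completes the induction.

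The step I expect to require the most care is the bookkeeping around $\DD_\be$ and reduced expressions: the inductive claim must be formulated for arbitrary reduced expressions so that the sub-word $s_{t_2}\cdots s_{t_m}$ (itself automatically reduced, of length $\ell(w)-1$) can be fed back into the hypothesis, and one must use that membership $v\in\DD_\be$ is witnessed by \emph{some} admissible decomposition, to which $s_{t_1}$ may be prepended without any reducedness assumption. The only genuinely representation-theoretic input is observation \emph{(a)}, which is where homogeneity of $\bi_\be$ is used (via Theorem~\ref{Thomog}); everything else is formal manipulation with the two-sided ideal $I_{>(\be)}$ and Corollary~\ref{LBadWords}.
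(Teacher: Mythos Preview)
Your proof is correct and follows essentially the same approach as the paper's: both locate a point in the reduced expression where the partial word $s_{r_k}\cdots s_{r_m}\bi_\be$ leaves the homogeneous component $C$, and then invoke Corollary~\ref{LBadWords}. The paper does this directly by taking the largest $k$ for which $s_{r_k}$ fails to be admissible (so that the tail $s_{r_{k+1}}\cdots s_{r_m}$ is admissible and hence stays in $C$), whereas you package the same idea as an induction on $\ell(w)$ and make the key fact---your observation~\emph{(a)}, that $\bi\in C$, $s_r\bi\in C$ forces $s_r$ admissible---explicit; the paper leaves this implicit in its appeal to Theorem~\ref{Thomog}. One minor organizational point: your induction is stated only for the case $w\bi_\be\in C$, but when you pass to $v$ you may have $v\bi_\be\notin C$; this is harmless since your case~$w\bi_\be\notin C$ then applies to $v$ directly, but it would be cleaner to run the induction on the full statement.
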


\begin{proof}
We have $\psi_w=\psi_{r_1} \dots \psi_{r_m}$ for a reduced decomposition $w=s_{r_1} \dots s_{r_m}$. Let $k$ be the largest index such that $s_{r_k}$ is not an admissible transposition of $s_{r_{k+1}} \dots s_{r_m} \bi_\be$. By Theorem~\ref{Thomog}, $s_{r_k} \dots s_{r_m} \bi_\be$ is not a word of $L(\be)$. So by Corollary~\ref{LBadWords}, $$\psi_{r_{k}}\dots \psi_{r_m}\Pol_d e( \bi_\be)
=e(s_{r_k} \dots s_{r_m} \bi_\be)\psi_{r_{k}}\dots \psi_{r_m}\Pol_d e( \bi_\be)
\subseteq I_{>(\be)},$$ whence $\psi_w\Pol_d e( \bi_\be)\subseteq I_{>(\be)}$. 
\end{proof}

\begin{Lemma}\label{LyEq}
  Given $1 \leq r,s \leq d$, we have $(y_s - y_r) e(\bi_{\be}) \in I_{>(\be)}$.
\end{Lemma}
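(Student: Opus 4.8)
The plan is to reduce to consecutive indices and then split according to whether the two labels in question are disconnected in the Dynkin diagram. Since $I_{>(\be)}$ is an $\O$-submodule of $R_\be$ and $(y_s-y_r)e(\bi_\be)$ telescopes along consecutive indices (using $y_te(\bi_\be)=e(\bi_\be)y_t$), it suffices to prove $(y_{a+1}-y_a)e(\bi_\be)\in I_{>(\be)}$ for each $1\le a<d$. Fix such an $a$. Because $\bi_\be$ is homogeneous, condition (\ref{ENC}) rules out $i_a=i_{a+1}$, so $a_{i_a,i_{a+1}}\in\{0,-1\}$ in our simply-laced setting.

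If $a_{i_a,i_{a+1}}=-1$, then $s_a\notin\DD_\be$, so $\psi_ae(\bi_\be)\in I_{>(\be)}$ by Lemma~\ref{LBadInBlock}. (Concretely: $s_a$ is not an admissible transposition, so $s_a\bi_\be$ is not a word of $L(\be)=L(C)$ by Theorem~\ref{Thomog}, whence $e(s_a\bi_\be)\in I_{>(\be)}$ by Corollary~\ref{LBadWords} with $p=1$, and $\psi_ae(\bi_\be)=e(s_a\bi_\be)\psi_a$.) Now relation (\ref{R4}) reads, in simply-laced type,
\[
\psi_a^2e(\bi_\be)=Q_{i_a,i_{a+1}}(y_a,y_{a+1})e(\bi_\be)=\eps_{i_a,i_{a+1}}(y_a-y_{a+1})e(\bi_\be),
\]
so $(y_{a+1}-y_a)e(\bi_\be)\in\O\cdot\psi_a\bigl(\psi_ae(\bi_\be)\bigr)\subseteq I_{>(\be)}$, because $I_{>(\be)}$ is a left ideal.

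If $a_{i_a,i_{a+1}}=0$, then $s_a$ is admissible and this trick is unavailable; instead one transports the problem around the component $C$. For $w\in\DD_\be$ every Kronecker correction in (\ref{R6}) vanishes (the labels swapped at each admissible step are disconnected, so those terms are $0$), and $\psi_r$ acts invertibly in admissible directions by (\ref{R4}) and Theorem~\ref{Thomog}; choosing reduced expressions accordingly one obtains $\psi_{w^{-1}}\psi_we(\bi_\be)=e(\bi_\be)$ and $\psi_wy_re(\bi_\be)=y_{w(r)}e(w\bi_\be)\psi_w$, hence the transport identity
\[
(y_s-y_r)e(\bi_\be)=\psi_{w^{-1}}\bigl[(y_{w(s)}-y_{w(r)})e(w\bi_\be)\bigr]\psi_w\qquad(w\in\DD_\be).
\]
Thus $(y_{w(s)}-y_{w(r)})e(w\bi_\be)\in I_{>(\be)}$ forces $(y_s-y_r)e(\bi_\be)\in I_{>(\be)}$. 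Combining this with the previous case and telescoping, it is enough to show that $\{1,\dots,d\}$ forms a single class of the relation generated by ``$r\approx s$ whenever some $w\in\DD_\be$ carries positions $w(r),w(s)$ into neighbouring slots of $w\bi_\be$ bearing Dynkin-adjacent labels'' --- equivalently, that inside $C$ one can always slide a suitable pair of Dynkin-adjacent labels into consecutive positions using only admissible transpositions, and that the resulting graph on positions is connected. I would deduce this from the homogeneity condition (\ref{ENC}) together with connectedness of the support of $\be$; alternatively one can induct on $\height(\be)$ via the canonical Lyndon factorization $\bi_\be=\bi_\ga\bi_\de$, using Proposition~\ref{LParaIdeal1New} to push $I_{>(\ga)}$ and $I_{>(\de)}$ into $I_{>(\be)}$ and reducing to the single ``crossing'' position between the two factors.

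The main obstacle is precisely this combinatorial connectivity in the case $a_{i_a,i_{a+1}}=0$: maneuvering, using only admissible transpositions, a pair of Dynkin-adjacent labels of $\bi_\be$ into adjacent positions, so that the easy case applies and the relations $\overline{y_re(\bi_\be)}=\overline{y_se(\bi_\be)}$ propagate across the entire word. Everything else is a routine manipulation of the defining relations (\ref{R4}) and (\ref{R6}) together with Theorem~\ref{Thomog}, Corollary~\ref{LBadWords}, and Lemma~\ref{LBadInBlock}.
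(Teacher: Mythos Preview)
Your Case~1 argument (consecutive positions with Dynkin-adjacent labels) is correct and is essentially what the paper uses. The gap is Case~2: you reduce everything to a combinatorial connectivity statement about the component $C$ and then do not prove it. The vague appeals to homogeneity, support connectedness, or a Lyndon factorization induction are not arguments, and you explicitly flag this as ``the main obstacle''. So the proof is incomplete.

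The paper sidesteps the whole connectivity issue, and in doing so supplies exactly the missing ingredient. Instead of telescoping to consecutive indices, it inducts on $s$: for each $s>1$ one shows directly that $(y_s-y_t)e(\bi_\be)\in I_{>(\be)}$ for a well-chosen $t<s$. The key observation you are missing is that such a $t$ always exists with the intermediate labels disconnected from $i_s$. Indeed, if every $i_r$ with $r<s$ were disconnected from $i_s$, then $i_s$ could be slid to the front of $\bi_\be$ by admissible transpositions, giving a word of $L(\be)$ strictly larger than $\bi_\be$ (since $\bi_\be$ is Lyndon) --- contradicting Lemma~\ref{LHighestWt}. Hence some $r<s$ has $i_r\cdot i_s=-1$; take $t$ maximal with this property. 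Homogeneity~(\ref{ENC}) then forces $i_r\cdot i_s=0$ for all $t<r<s$ (a repeated label would require an intervening Dynkin-neighbour of $i_s$, contradicting maximality of $t$). Now the computation
\[
(\psi_{s-1}\cdots\psi_t)(\psi_t\cdots\psi_{s-1})e(\bi_\be)=\pm(y_s-y_t)e(\bi_\be)
\]
follows from (\ref{R4}), since all the $\psi_r^2$ with $r>t$ contribute $1$ and $\psi_t^2$ contributes $\pm(y_t-y_{t+1})$ at the word with $i_s$ in slot $t+1$; one then commutes the $y$'s back out past the remaining $\psi$'s using (\ref{R6}). Since the cycle $s_t\cdots s_{s-1}\notin\DD_\be$, Lemma~\ref{LBadInBlock} gives $\psi_t\cdots\psi_{s-1}e(\bi_\be)\in I_{>(\be)}$, and we are done.

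Note that this argument, translated into your language, is precisely the admissible element $w=s_{t+1}\cdots s_{s-1}\in\DD_\be$ which moves the letter at position $s$ next to position $t$, witnessing $t\approx s$ in your relation. So the paper's proof both closes your gap and shows that the transport machinery is unnecessary: one can write down the relevant $\psi$-word and compute directly.
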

\begin{proof}
 We prove by induction on $s=1,\dots,d$ that $(y_s - y_r) e(\bi_\be) \in I_{>(\be)}$ for all $1 \leq r \leq s$. The base case $s=1$ is trivial. Let $s>1$, and 
write $\bi_\be = (i_1, \dots, i_d)$. If $i_r \cdot i_s = 0$ for all $1 \leq r < s$, then 
\[
  (i_s, i_1, i_2, \dots, i_{s-1}, i_{s+1}, \dots, i_d)
\]
is a word of $L(\be)$. On the other hand, Lemma~\ref{LHighestWt} says that $\bi_\be$ is the largest word of $L(\be)$ and so $i_s < i_1$. But then  $\bi_\be$ is not a Lyndon word, which is a contradiction. Thus there exists some $r < s$ with $i_r \cdot i_s \neq 0$. Since the  Cartan matrix is assumed to be of ADE type, either $i_r \cdot i_s = -1$ or $i_r = i_s$. In the second case, by homogeneity (\ref{ENC}) we can find $r < r' < s$ with $i_{r'} \cdot i_s = -1$. This shows that the definition 
$
  t:= \max\{r \mid r < s \text{ and } i_r \cdot i_s = -1\}
$
makes sense. Once again by homogeneity we must have that $i_r \cdot i_s = 0$ for any $r$ with $t < r < s$. Therefore, using defining relations in $R_\al$, we get 
\[
  (\psi_{s-1} \dots \psi_t)(\psi_t \dots \psi_{s-1}) e(\bi_\be) = \pm (y_s - y_t) e(\bi_\be).
\]
On the other hand, the cycle $(t, t+1, \dots, s)$ is not an element of $\DD_\be$. By Lemma~\ref{LBadInBlock} we must have $\psi_t \dots \psi_{s-1} e(\bi_\be) \in I_{>(\be)}$. This shows that $(y_s - y_t) e(\bi_\be) \in I_{>(\be)}$, and therefore by induction that $(y_s - y_r) e(\bi_\be) \in I_{>(\be)}$ for every $r$ with $1 \leq r \leq s$.
\end{proof}

Recall the notation $\bar R_\be:= R_\be/I_{>(\be)}$ and $\bar r:=r+I_{>(\be)}\in\bar R_\be$ for $r\in R_\be$.

\begin{Corollary}
We have  that $\bar e_\be \bar R_\be \bar e_\be$ is generated by $\bar y_\be$.
\end{Corollary}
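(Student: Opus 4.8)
Since $e_\be=D_\be\de_\be=e(\bi_\be)^2=e(\bi_\be)$, we have $\bar e_\be\bar R_\be\bar e_\be=\overline{e(\bi_\be)R_\be e(\bi_\be)}$, so it suffices to describe a spanning set of $e(\bi_\be)R_\be e(\bi_\be)$ and track its image in $\bar R_\be$. The plan is to start from the PBW-type basis of Theorem~\ref{TBasis}. For a basis element $\psi_w y_1^{m_1}\dots y_d^{m_d}e(\bi)$, the relations \eqref{R1}, \eqref{R2PsiY}, \eqref{R2PsiE} give $e(\bi_\be)\,\psi_w y_1^{m_1}\dots y_d^{m_d}e(\bi)\,e(\bi_\be)=\de_{\bi,\bi_\be}\,\de_{w\bi_\be,\bi_\be}\,\psi_w y_1^{m_1}\dots y_d^{m_d}e(\bi_\be)$. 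Hence $e(\bi_\be)R_\be e(\bi_\be)$ is spanned by $\{\psi_w y_1^{m_1}\dots y_d^{m_d}e(\bi_\be)\mid w\bi_\be=\bi_\be,\ m_1,\dots,m_d\geq 0\}$.

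\textbf{Collapsing the spanning set.} Next I would split this set according to whether $w\in\DD_\be$. If $w\notin\DD_\be$, then Lemma~\ref{LBadInBlock} gives $\psi_w y_1^{m_1}\dots y_d^{m_d}e(\bi_\be)\in\psi_w\Pol_d e(\bi_\be)\subseteq I_{>(\be)}$, so this term dies in $\bar R_\be$. If $w\in\DD_\be$ and $w\bi_\be=\bi_\be$, then $w=1$: this is exactly the observation recorded in the proof of Lemma~\ref{LAdm} that an $\bi_\be$-admissible element is determined by its action on $\bi_\be$ (a consequence of the homogeneity condition \eqref{ENC}). Therefore $\bar e_\be\bar R_\be\bar e_\be$ is spanned by $\{\bar y_1^{m_1}\dots\bar y_d^{m_d}\bar e_\be\mid m_1,\dots,m_d\geq 0\}$. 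Finally, Lemma~\ref{LyEq} gives $\bar y_r\bar e_\be=\bar y_d\bar e_\be=\bar y_\be$ for all $r$, and since $\bar e_\be$ is idempotent and commutes with each $y_r$, we get $\bar e_\be\bar y_r^{m}\bar e_\be=(\bar y_r\bar e_\be)^m=\bar y_\be^{m}$ and hence $\bar y_1^{m_1}\dots\bar y_d^{m_d}\bar e_\be=\bar y_\be^{\,m_1+\dots+m_d}$. Thus $\bar e_\be\bar R_\be\bar e_\be$ is spanned by the powers of $\bar y_\be$, i.e.\ generated as an algebra by $\bar y_\be$.

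\textbf{Main obstacle.} There is essentially no serious obstacle here: everything rests on the two preceding lemmas (Lemma~\ref{LBadInBlock} and Lemma~\ref{LyEq}) together with the basis theorem. The only point requiring a little care is the bookkeeping with the idempotents $e(\bi_\be)$ when passing between $R_\be$, the corner algebra $e(\bi_\be)R_\be e(\bi_\be)$, and the quotient $\bar R_\be$ — in particular the identification of which $\psi_w$ survive (namely: $w\notin\DD_\be$ is killed by Lemma~\ref{LBadInBlock}, and $w\in\DD_\be$ with $w\bi_\be=\bi_\be$ forces $w=1$), and the verification that left/right multiplication by $e(\bi_\be)$ converts $\bar y_r$ into $\bar y_\be$ via Lemma~\ref{LyEq}.
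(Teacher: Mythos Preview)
Your proof is correct and follows essentially the same approach as the paper: use Theorem~\ref{TBasis} to reduce to monomials $\psi_w y_1^{m_1}\cdots y_d^{m_d}e(\bi_\be)$ with $w\bi_\be=\bi_\be$, kill the $w\notin\DD_\be$ terms via Lemma~\ref{LBadInBlock}, force $w=1$ for $w\in\DD_\be$ via Lemma~\ref{LAdm}, and then collapse the polynomial part using Lemma~\ref{LyEq}. Your write-up is slightly more explicit in the final step, but the argument is the same.
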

\begin{proof}
By Theorem~\ref{TBasis}, an element of $e_\be R_\be e_\be$ is a linear combination of terms of the form $\psi_w y_1^{a_1} \dots y_d^{a_d} e(\bi_\be)$ such that $w\bi_\be=\bi_\be$. If $w \notin \DD_\be$, then $\psi_w e_\be \in I_{>(\be)}$ by Lemma~\ref{LBadInBlock}. Otherwise, Lemma~\ref{LAdm} shows that $w=1$. Therefore, $\bar e_\be \bar R_\be \bar e_\be$ is spanned by terms of the form $\bar y_1^{a_1} \dots \bar y_d^{a_d} \bar e_\be$. In view of Lemma~\ref{LyEq}, we see that $\bar e_\be \bar R_\be \bar e_\be$ is generated by $\bar y_\be=\bar y_d$.
\end{proof}

\subsection{Types $ADE$}\label{SSADE}
Throughout the subsection, we assume again that the Cartan matrix is of $ADE$ type. By \cite{HMM}, with a correction made in \cite[Lemma A7]{BKM}, if $\be \in \Phi_+$ is any positive root, except the highest root in type $E_8$, then $\bi_\be$ is homogeneous. We have proved in the previous subsection that Hypothesis~\ref{HProp} holds in this case.

Now, we deal with the highest root $$\theta := 2\al_1 + 3\al_2 + 4\al_3 + 5\al_4 + 6\al_5 + 4\al_6 + 2\al_7 + 3\al_8$$ 
in type $E_8$. 
By \cite[Example A.5]{BKM}, the corresponding Lyndon word is 
$$\bi_\theta = 12345867564534231234586756458.$$
Define the positive roots
\begin{align}
\theta_1&:=\al_1 + \al_2 + \al_3 + 2\al_4 + 3\al_5 + 2\al_6 + \al_7 + 2\al_8,
\label{ETheta1}
\\
{\theta_2}&:=\al_1 + 2\al_2 + 3\al_3 + 3\al_4 + 3\al_5 + 2\al_6 + \al_7 + \al_8.
\label{ETheta2}
\end{align}
Then the root partition $(\theta_1,{\theta_2})$ is a minimal element of $\Pi(\theta)\setminus \{(\theta)\}$. Moreover, $\bi_{\theta_2} = 1234586756453423$ and $\bi_{\theta_1} = 1234586756458$. Indeed, one sees by inspection that these words are highest words in the corresponding homogeneous representations and are Lyndon. 
Finally, we have  $\bi_\theta = \bi_{\theta_2} \bi_{\theta_1}$. 

Denote by $v_{\theta_1}$ and $v_{\theta_2}$ non-zero vectors in the $\bi_{\theta_1}$- and $\bi_{\theta_2}$-word spaces in the homogeneous modules $L({\theta_1})$ and $L({\theta_2})$, respectively. Note that $L({\theta_1})\boxtimes L({\theta_2})$ is naturaly a submodule of $L({\theta_1})\circ L({\theta_2})$, so we can consider $v_{\theta_1}\otimes v_{\theta_2}$ as a cyclic vector of $L({\theta_1})\circ L({\theta_2})$, and similarly 
$v_{\theta_2}\otimes v_{\theta_1}$ as a cyclic vector of $L({\theta_2})\circ L({\theta_1})$. By definition, $L({\theta_1})\circ L({\theta_2})$ is the proper standard module $\bar\De({\theta_1},{\theta_2})$, and let $v_{{\theta_1},{\theta_2}}$ be the image of $v_{\theta_1}\otimes v_{\theta_2}$ under the natural projection $\bar\De({\theta_1},{\theta_2})\onto L({\theta_1},{\theta_2})$. 
Denote by $w(\theta)$ the element of $\Si_{29}$ which sends $(1,\dots,29)$ to $(17,\dots,29,1,\dots,16)$.
The following has been established in \cite{BKM}, see especially \cite[Theorem A.9, Proof]{BKM}, but we sketch its very easy proof for the reader's convenience. 

\begin{Lemma}\label{LSES} 
The multiplicity of the highest word $\bi_\theta$ in $L(\theta)$ is one. Moreover, there is a non-zero vector $v_\theta$ in the $\theta$-word space of $L(\theta)$ and  homogeneous $R_\theta$-module maps 
\begin{align*}
\mu&: L({\theta_1},{\theta_2})\langle 1\rangle \to L({\theta_2})\circ L({\theta_1}),\ v_{{\theta_1},{\theta_2}}\mapsto \psi_{w(\theta)}(v_{\theta_2}\otimes v_{\theta_1}),
\\
\nu&: L({\theta_2}) \circ L({\theta_1}) \to L(\theta),\ v_{\theta_2}\otimes v_{\theta_1}\mapsto v_\theta,
\end{align*}
such that the sequence 
\[
  0 \to L({\theta_1},{\theta_2})\langle 1\rangle \stackrel{\mu}{\to} L({\theta_2}) \circ L({\theta_1}) \stackrel{\nu}\to L(\theta) \to 0
\]
is exact. Finally,  
$$\CH L(\theta)=(\CH L({\theta_2})\circ \CH L({\theta_1}) - q\,\CH L({\theta_1})\circ \CH L({\theta_2}))/(1-q^2).$$ 
\end{Lemma}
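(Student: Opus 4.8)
The plan is to identify $\bi_\theta$ as the highest word of the proper standard module $\bar\De(\theta_1,\theta_2) = L(\theta_1)\circ L(\theta_2)$ and show that the corresponding word space carries the needed cyclic vector. First I would note that $(\theta_1,\theta_2)$ is, by construction, a minimal element of $\Pi(\theta)\setminus\{(\theta)\}$, so by Theorem~\ref{TStand}(iv) the only composition factors of $\bar\De(\theta_1,\theta_2)$ are $L(\theta_1,\theta_2)$ (with multiplicity one) and $L(\theta)$. Since $\bi_\theta = \bi_{\theta_2}\bi_{\theta_1}$ is a Lyndon word, Lemma~\ref{LHighestWt} tells us $\bi_\theta$ is the highest word of $L(\theta)$; the homogeneous modules $L(\theta_1), L(\theta_2)$ have one-dimensional word spaces, so a quick count (using the shuffle description of $\CH$ on an induced product, as in Theorem~\ref{Dklthm}(5)) shows $\bi_\theta$ occurs in $\CH(L(\theta_1)\circ L(\theta_2))$ with multiplicity one, hence also in $L(\theta)$ with multiplicity one, hence $L(\theta_1,\theta_2)$ does not contain $\bi_\theta$. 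This gives the first sentence of the lemma and pins down $v_\theta$ (up to scalar) as a spanning vector of the $\bi_\theta$-word space.

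Next I would produce the two maps. For $\nu$: since $L(\theta)$ is the irreducible head of $\bar\De(\theta_2,\theta_1)$ — wait, more carefully, $\bar\De(\theta_2,\theta_1) = L(\theta_2)\circ L(\theta_1)\langle\shift\rangle$ and by Theorem~\ref{TStand}(i) this has irreducible head; I would check that this head is $L(\theta)$ by comparing highest words. Thus $\nu$ is just the canonical projection onto the head, sending the cyclic generator $v_{\theta_2}\otimes v_{\theta_1}$ to a nonzero multiple of $v_\theta$. For $\mu$: the element $\psi_{w(\theta)}(v_{\theta_2}\otimes v_{\theta_1})$ lies in the $\bi_{\theta_1}\bi_{\theta_2}$-word space of $L(\theta_2)\circ L(\theta_1)$. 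One computes its degree: $\psi_{w(\theta)}$ has degree $-(\theta_1\cdot\theta_2)$, and combining with $\shift$-conventions one finds this vector sits in the degree matching $v_{\theta_1,\theta_2}\langle 1\rangle$. To get that $\mu$ is a well-defined $R_\theta$-map it suffices (using the presentation of $L(\theta_1,\theta_2)$ via its annihilator, or rather the universal property of the proper standard module $\bar\De(\theta_1,\theta_2)$ together with the fact that $L(\theta_1,\theta_2)$ is its head) to check that the submodule of $L(\theta_2)\circ L(\theta_1)$ generated by $\psi_{w(\theta)}(v_{\theta_2}\otimes v_{\theta_1})$ is a quotient of $\bar\De(\theta_1,\theta_2)$; this follows because its highest word is $\leq \bi_{\theta_1}\bi_{\theta_2}$ and a highest-weight argument forces it to be a homomorphic image of $\bar\De(\theta_1,\theta_2)$, which has simple head, so the image is $L(\theta_1,\theta_2)$.

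Then exactness: $\nu$ is surjective by construction. The composite $\nu\circ\mu$ kills $v_{\theta_1,\theta_2}$ because $\psi_{w(\theta)}(v_{\theta_2}\otimes v_{\theta_1})$ lies in a word space of strictly lower position than $\bi_\theta$ relative to the head (alternatively: $\im\mu \cong L(\theta_1,\theta_2) \not\cong L(\theta)$ and $\nu$ factors through $L(\theta)$, so $\nu\circ\mu = 0$ since $\Hom(L(\theta_1,\theta_2),L(\theta))=0$), so $\im\mu\subseteq\ker\nu$. Now $\ker\nu$ and $\im\mu$ are both nonzero proper submodules; since $\bar\De(\theta_2,\theta_1)$ has composition length two (only factors $L(\theta_2,\theta_1)=L(\theta)$ and the other simple — here I need to reconcile that $L(\theta_2,\theta_1)$ and $L(\theta_1,\theta_2)$ both label irreducibles, but by Theorem~\ref{TStand}(ii) the labels are root partitions of $\theta$, and $(\theta_1,\theta_2)=(\theta_2,\theta_1)$ as a root partition), $\ker\nu$ is simple, equal to $\im\mu$, and $\mu$ is injective. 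Finally the character formula: from the exact sequence, $\CH L(\theta) = \CH(L(\theta_2)\circ L(\theta_1)) - q\,\CH L(\theta_1,\theta_2)$, and I would apply the same relation with the roles of the two roots swapped, or rather use $\CH L(\theta_1,\theta_2) = \CH(L(\theta_1)\circ L(\theta_2)) - q\cdots$ — iterating once and solving the resulting $2\times 2$ linear system in $\CH L(\theta)$ and $\CH L(\theta_1,\theta_2)$ yields $\CH L(\theta) = (\CH L(\theta_2)\circ\CH L(\theta_1) - q\,\CH L(\theta_1)\circ\CH L(\theta_2))/(1-q^2)$, using that $\CH$ is multiplicative for $\circ$ by Theorem~\ref{Dklthm}(5).

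\textbf{Main obstacle.} The delicate point is verifying that $\mu$ is genuinely a module homomorphism — i.e. that mapping the cyclic generator $v_{\theta_1,\theta_2}$ to $\psi_{w(\theta)}(v_{\theta_2}\otimes v_{\theta_1})$ respects all relations. This is where the explicit combinatorics of $\bi_\theta = \bi_{\theta_2}\bi_{\theta_1}$ and of the permutation $w(\theta)$ enter, and where one really relies on the detailed computations of \cite{BKM}; the degree bookkeeping ($\langle 1\rangle$ shift matching $-(\theta_1\cdot\theta_2)$ plus the $\shift$ normalizations) must come out exactly right, and the homogeneity of $L(\theta_1), L(\theta_2)$ is what makes this tractable. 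Everything else is a standard highest-weight / composition-length argument for KLR standard modules.
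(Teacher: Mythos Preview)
Your overall plan is sound and the ingredients are the right ones, but the paper takes a cleaner route that you are missing: duality. The paper first analyzes $\bar\De(\theta_1,\theta_2)=L(\theta_1)\circ L(\theta_2)$, checking (as you do) that $\bi_{\theta_1}\bi_{\theta_2}$ has multiplicity $1$ there and that $\bi_\theta$ has graded multiplicity $q$; together with Theorem~\ref{TStand} and minimality of $(\theta_1,\theta_2)$ this forces $L(\theta_1)\circ L(\theta_2)$ to be uniserial with head $L(\theta_1,\theta_2)$ and socle $L(\theta)\langle 1\rangle$. Then the single line $(L(\theta_1)\circ L(\theta_2))^{\circledast}\simeq L(\theta_2)\circ L(\theta_1)\langle -1\rangle$ from \cite[Theorem~2.2]{LV}, combined with self-duality of the simples, immediately gives $L(\theta_2)\circ L(\theta_1)$ uniserial with head $L(\theta)$ and socle $L(\theta_1,\theta_2)\langle 1\rangle$. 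The exact sequence, the explicit description of $\mu$, and the character formula all drop out of this.

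By contrast, you try to build $\nu$ and $\mu$ directly on $L(\theta_2)\circ L(\theta_1)$. This can be made to work, but it costs you: Theorem~\ref{TStand} does not apply to the ``opposite'' product $L(\theta_2)\circ L(\theta_1)$ (that is not a proper standard module for this order), so your appeal to it for the head is illegitimate, and you then have to argue separately that $L(\theta_2)\circ L(\theta_1)$ has length two with simple head $L(\theta)$. Likewise the point you flag as your ``main obstacle''---checking that sending $v_{\theta_1,\theta_2}\mapsto\psi_{w(\theta)}(v_{\theta_2}\otimes v_{\theta_1})$ respects all relations and lands in the right degree---simply evaporates in the paper's approach, since once you know the socle is $L(\theta_1,\theta_2)\langle 1\rangle$ the map $\mu$ is just the inclusion and the formula for the generator is forced by where $v_{\theta_1,\theta_2}$ must sit in the $\bi_{\theta_1}\bi_{\theta_2}$-word space. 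A minor point: your ``multiplicity one'' for $\bi_\theta$ in $L(\theta_1)\circ L(\theta_2)$ should be the \emph{graded} multiplicity $q$; this degree shift is exactly what pins down the $\langle 1\rangle$ in the sequence.
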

\begin{proof}
By \cite[Theorem 7.2(ii)]{KR}, the multiplicity of the word $\bi_{\theta_1}\bi_{\theta_2}$ in $L({\theta_1})\circ L({\theta_2})$ is $1$. Moreover, an explicit check shows that the multiplicity of $\bi_\theta$ in $L({\theta_1})\circ L({\theta_2})$ is $q$. We conclude using Theorem~\ref{TStand}  
and the minimality of $({\theta_1},{\theta_2})$ in $\Pi(\theta)\setminus\{(\theta)\}$ that 
the standard module $L({\theta_1})\circ L({\theta_2})$ is uniserial with head $L({\theta_1},{\theta_2})$ and socle $L(\theta)\langle 1\rangle$. 
The result follows from these observations since $L({\theta_1},{\theta_2})$ is $\circledast$-self-dual and 
$(L({\theta_1})\circ L({\theta_2}))^\circledast \simeq L({\theta_2})\circ L({\theta_1})\langle -1\rangle$ in view of \cite[Theorem 2.2]{LV}.
\end{proof}

Consider the parabolic subgroup $\Si_{\height(\theta_2)}\times\Si_{\height(\theta_1)}\subseteq \Si_d$ and define
$$
\DD_{\theta_2,\theta_1}:=\{(w_2,w_1)\in \Si_{\height(\theta_2)}\times\Si_{\height(\theta_1)}\mid w_2\in\DD_{\theta_2},\ w_1\in\DD_{\theta_1}\}.
$$
With this notation we finally have: 

\begin{Lemma}\label{LIrrE8}
The cuspidal module $L(\theta)$ is generated by a degree $0$ vector $v_\theta$ subject only to the relations:
\begin{align}
\label{Rel1}
  (e(\bj)-\de_{\bj, \bi_\theta}) v_\theta &=0, \quad \textup{for all $\bj\in \words_\theta$},\\ \label{Rel2}
  y_r v_\theta &= 0, \quad \textup{for all $r=1,\dots,\height(\theta)$},\\ \label{RE_81}
  \psi_w v_\theta&=0, \quad \textup{for all $w\in (\Si_{\height(\theta_2)}\times\Si_{\height(\theta_1)})\setminus \DD_{\theta_2,\theta_1}$},\\
  \psi_{w(\theta)}v_\theta&=0.
\end{align}
\end{Lemma}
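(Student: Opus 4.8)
The plan is to show that the left ideal $K$ of $R_\theta$ generated by the elements on the left-hand sides of \eqref{Rel1}--\eqref{RE_81} and $\psi_{w(\theta)}$ coincides with the annihilator of $v_\theta$ in the regular module, so that $R_\theta/K \cong L(\theta)$. One inclusion is immediate: all four families of elements annihilate $v_\theta$ by construction — \eqref{Rel1} because $v_\theta$ lives in the $\bi_\theta$-word space and that space is one-dimensional by Lemma~\ref{LSES}; \eqref{Rel2} because $L(\theta)$ is concentrated in degree $0$ (it is a quotient of the homogeneous module $L(\theta_2)\circ L(\theta_1)$, which lives in degrees $\geq 0$, and $v_\theta$ has degree $0$, so all $y_r$, being positive-degree operators, kill it); \eqref{RE_81} and the last relation follow from the description of $v_\theta$ as $\nu(\psi_{w(\theta)}(v_{\theta_2}\otimes v_{\theta_1}))$ together with the exactness of the sequence in Lemma~\ref{LSES} (the relation $\psi_{w(\theta)}v_\theta = 0$ encodes that $v_\theta$ is the image of a cyclic generator of $L(\theta_2)\circ L(\theta_1)$ modulo the submodule $\mu(L(\theta_1,\theta_2))$). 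Hence there is a surjection $R_\theta/K \onto L(\theta)$.

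For the reverse direction it suffices to bound $\dim_q(R_\theta/K)$ from above by $\qdim L(\theta)$. The strategy is to use the basis theorem (Theorem~\ref{TBasis}) to write a general element of $R_\theta$ as a combination of $\psi_w y_1^{a_1}\cdots y_d^{a_d}e(\bj)$ and reduce modulo $K$. Relations \eqref{Rel1} collapse everything onto $e(\bi_\theta)$, and \eqref{Rel2} kills all the $y$'s, so modulo $K$ every element is a combination of $\psi_w e(\bi_\theta)$. Next, using the homogeneity of $\bi_{\theta_1}$ and $\bi_{\theta_2}$ and the Mackey/parabolic analysis exactly as in the proof of Lemma~\ref{LBadInBlock} and the preceding homogeneous-roots subsection, any $\psi_w e(\bi_\theta)$ with $w$ not reducing to an element of $\DD_{\theta_2,\theta_1}$ modulo the parabolic $\Si_{\height(\theta_2)}\times\Si_{\height(\theta_1)}$ already lies in $K$ via \eqref{RE_81} (pushed past the parabolic part as in Lemma~\ref{LBadInBlock}); so $R_\theta/K$ is spanned by $\psi_{w(\theta)}^{\,\eps}\psi_{(w_2,w_1)}e(\bi_\theta)$ with $(w_2,w_1)\in\DD_{\theta_2,\theta_1}$ and $\eps\in\{0,1\}$, the $\eps=1$ terms further killed by the last relation. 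This exhibits a spanning set of size $|\DD_{\theta_2}|\cdot|\DD_{\theta_1}| = \dim L(\theta_1)\cdot\dim L(\theta_2)$, and by the exact sequence of Lemma~\ref{LSES} we have $\dim L(\theta) = \dim(L(\theta_2)\circ L(\theta_1)) - \dim L(\theta_1,\theta_2) = \dim L(\theta_1)\dim L(\theta_2)$ as ungraded spaces — wait, this needs care with the shift, so instead one compares graded dimensions: $\qdim L(\theta) = (\qdim L(\theta_2)\cdot\qdim L(\theta_1) - q\,\qdim L(\theta_1)\cdot \qdim L(\theta_2))/(1-q^2)$ from Lemma~\ref{LSES}, and one checks the spanning set above has exactly this graded dimension.

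The main obstacle I expect is the graded-dimension bookkeeping in the last step: one must verify that after imposing \eqref{RE_81} and $\psi_{w(\theta)}v_\theta = 0$, the degrees of the surviving spanning vectors $\psi_{(w_2,w_1)}e(\bi_\theta)$ and $\psi_{w(\theta)}\psi_{(w_2,w_1)}e(\bi_\theta)$ organize into precisely the series $(\qdim L(\theta_2)\qdim L(\theta_1) - q\,\qdim L(\theta_1)\qdim L(\theta_2))/(1-q^2)$ predicted by Lemma~\ref{LSES}, rather than merely being bounded above in each degree. Concretely one needs: the degree-$0$ parts match the highest-word multiplicity count, the action of $\psi_{w(\theta)}$ raises degree by the expected amount (degree of $\psi_{w(\theta)}$ on $e(\bi_\theta)$), and there are no further collapses. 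Since $L(\theta_2)\circ L(\theta_1)$ is homogeneous-by-homogeneous its graded dimension is explicitly computable from $\DD_{\theta_2}$ and $\DD_{\theta_1}$, and the shift-$1$ copy of $L(\theta_1,\theta_2)=L(\theta_2)\circ L(\theta_1)$ accounts for exactly the $\psi_{w(\theta)}$-image, so the count closes; but pinning this down rigorously is where the real work lies. Everything else is a routine application of Theorem~\ref{TBasis}, the homogeneous-module theory of \S\ref{SSHomog}, and Lemma~\ref{LSES}.
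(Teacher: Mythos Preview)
Your overall strategy---show the relations annihilate $v_\theta$, then bound the size of $R_\theta/K$---is the same as the paper's, but the execution has several real errors.

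First, two small but genuine mistakes in verifying the relations. The module $L(\theta)$ is \emph{not} concentrated in degree $0$; the character formula in Lemma~\ref{LSES} involves nontrivial powers of $q$. The correct reason $y_r v_\theta=0$ is that $v_\theta=\nu(v_{\theta_2}\otimes v_{\theta_1})$, and $v_{\theta_2}\otimes v_{\theta_1}$ sits in the naturally embedded $R_{\theta_2,\theta_1}$-submodule $L(\theta_2)\boxtimes L(\theta_1)$ of the induced module, on which the $y_r$ act as in the homogeneous modules $L(\theta_1),L(\theta_2)$, i.e.\ by zero. You also write $v_\theta=\nu(\psi_{w(\theta)}(v_{\theta_2}\otimes v_{\theta_1}))$, but in fact $v_\theta=\nu(v_{\theta_2}\otimes v_{\theta_1})$; the element $\psi_{w(\theta)}(v_{\theta_2}\otimes v_{\theta_1})$ equals $\mu(v_{\theta_1,\theta_2})$, and that is precisely why $\psi_{w(\theta)}v_\theta=\nu(\mu(v_{\theta_1,\theta_2}))=0$.

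The serious gap is the dimension count. You assert that $R_\theta/K$ is spanned by $\psi_{w(\theta)}^{\eps}\psi_{(w_2,w_1)}e(\bi_\theta)$ with $\eps\in\{0,1\}$ and $(w_2,w_1)\in\DD_{\theta_2,\theta_1}$. This is false: the set of minimal left coset representatives of $\Si_{16}\times\Si_{13}$ in $\Si_{29}$ is not $\{1,w(\theta)\}$ but has $\binom{29}{13}$ elements, and all of them are needed. Your proposed spanning set is therefore far too small, and indeed $\dim L(\theta)\neq \dim L(\theta_1)\cdot\dim L(\theta_2)$ (as you half-suspect in your ``wait'' aside).

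The clean fix, and what the paper's terse proof is encoding, is this: let $K'\subset R_\theta$ be the left ideal generated by \eqref{Rel1}--\eqref{RE_81} only. By Proposition~\ref{PHomGenRel} applied to $L(\theta_1)$ and $L(\theta_2)$ together with the tensor-induction description of $\Ind_{\theta_2,\theta_1}$, one gets an isomorphism $R_\theta/K'\iso L(\theta_2)\circ L(\theta_1)$ sending $e(\bi_\theta)+K'$ to $v_{\theta_2}\otimes v_{\theta_1}$. (The spanning set $\{\psi_u\psi_{(w_2,w_1)}e(\bi_\theta)+K':u\in\Si^{16,13},\,(w_2,w_1)\in\DD_{\theta_2,\theta_1}\}$ maps bijectively onto the standard basis of the induced module.) Adding the last relation then quotients $L(\theta_2)\circ L(\theta_1)$ by the cyclic submodule generated by $\psi_{w(\theta)}(v_{\theta_2}\otimes v_{\theta_1})=\mu(v_{\theta_1,\theta_2})$, which is exactly $\im\mu$; so by the exact sequence of Lemma~\ref{LSES}, $R_\theta/K\cong L(\theta)$. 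No separate dimension bookkeeping is required.
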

\begin{proof}
The theorem  follows easily from Proposition~\ref{PHomGenRel} applied to homogeneous modules $L({\theta_1})$ and $L({\theta_2})$, and Lemma~\ref{LSES}. 
\end{proof}

We now define $\de_\theta = D_\theta = e(\bi_\theta)$, and $y_\theta = y_{\height(\theta)} e(\bi_\theta)$. All parts of Hypothesis~\ref{HProp} are trivially satisfied, except (v). We now verify Hypothesis~\ref{HProp}(v).

\begin{Lemma}\label{PParaE8}
We have
\[
  \iota_{\theta_2, \theta_1}(I_{>(\theta_2)} \otimes R_{\theta_1} + R_{\theta_2} \otimes I_{>(\theta_1)}) \subseteq I_{>(\theta)}.
\]
\end{Lemma}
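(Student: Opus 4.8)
The plan is to deduce Lemma~\ref{PParaE8} directly from the general statement Proposition~\ref{LParaIdeal1New}, mimicking the proof of Corollary~\ref{LParaIdeal1}. I would handle the two summands on the left-hand side separately, and by the symmetry of the situation it suffices to treat, say, $\iota_{\theta_2,\theta_1}(I_{>(\theta_2)}\otimes R_{\theta_1})\subseteq I_{>(\theta)}$, the other inclusion being entirely analogous (with the roles of the two blocks swapped). So I would apply Proposition~\ref{LParaIdeal1New} with $m=2$, $k=1$, $\ga_1=\theta_2$, $\ga_2=\theta_1$, $\pi_0=(\theta_2)\in\Pi(\theta_2)$, and $\pi=(\theta_2,\theta_1)\in\Pi(\theta)$.

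To invoke that proposition I must verify its hypothesis: every idempotent $e=e(\bi_{\pi^{(1)}}\bi_{\pi^{(2)}})$ with $\pi^{(1)}\in\Pi(\theta_2)$, $\pi^{(2)}\in\Pi(\theta_1)$ and $\pi^{(1)}>(\theta_2)$ annihilates $L(\si)$ for all $\si\le\pi=(\theta_2,\theta_1)$. Now $(\theta_2,\theta_1)$ is a \emph{minimal} element of $\Pi(\theta)\setminus\{(\theta)\}$ (as recorded just before Lemma~\ref{LSES}), and by Lemma~\ref{LMinRP}-type reasoning $(\theta)$ is the minimal element of $\Pi(\theta)$; hence the only root partitions $\si\le(\theta_2,\theta_1)$ are $\si=(\theta_2,\theta_1)$ itself and $\si=(\theta)$. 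So I must show $e L((\theta_2,\theta_1))=0$ and $e L(\theta)=0$ for every such $e$. For this I will prove the stronger statement that $e$ annihilates the proper standard module $\bar\De(\si)$ for $\si\in\{(\theta_2,\theta_1),(\theta)\}$, which suffices since each $L(\si)$ is a quotient of $\bar\De(\si)$ (Theorem~\ref{TStand}(i)); actually for $\si=(\theta)$ one has $\bar\De(\theta)=L(\theta)$, and for $\si=(\theta_2,\theta_1)$ note from Lemma~\ref{LSES} that $L(\theta)$ is a submodule of $L(\theta_2)\circ L(\theta_1)$ and $L((\theta_2,\theta_1))$ a quotient of $L(\theta_1)\circ L(\theta_2)=\bar\De(\theta_1,\theta_2)$, so in both cases it is enough to kill word spaces inside $\bar\De(\theta_1,\theta_2)$ (equivalently, inside $\Res_{\theta_1,\theta_2}$ of it) and inside $L(\theta)$.

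The computation itself is the same kind already used repeatedly in Section~3. For $\si=(\theta_2,\theta_1)$ we have $e(\bi_{\pi^{(1)}}\bi_{\pi^{(2)}})\bar\De(\theta_1,\theta_2)\subseteq\Res_{\theta_2,\theta_1}\bar\De(\theta_1,\theta_2)$, and $\Res_{\theta_2,\theta_1}\bar\De(\theta_1,\theta_2)=0$ by Theorem~\ref{TStand}(vi) unless $(\theta_1,\theta_2)\geq(\theta_2,\theta_1)$; since $\theta_1<\theta_2$ in the Lyndon order this restriction vanishes, so $e\,\bar\De(\theta_1,\theta_2)=0$ and hence $e\,L((\theta_2,\theta_1))=0$. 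For $\si=(\theta)$: using the exact sequence of Lemma~\ref{LSES}, $e\,L(\theta)$ is a quotient of $e\bigl(L(\theta_2)\circ L(\theta_1)\bigr)$, and $e\bigl(L(\theta_2)\circ L(\theta_1)\bigr)\subseteq\Res_{\theta_2,\theta_1}\bigl(L(\theta_2)\circ L(\theta_1)\bigr)$; by a Mackey-theorem/convexity analysis (Lemma~\ref{LBKM}) the composition factors of this restriction are of the form $L(\theta_2)\boxtimes L(\theta_1)$, and $e(\bi_{\pi^{(1)}}\bi_{\pi^{(2)}})$ acts on $L(\theta_2)\boxtimes L(\theta_1)$ as $e(\bi_{\pi^{(1)}})L(\theta_2)\boxtimes e(\bi_{\pi^{(2)}})L(\theta_1)$, which is zero because $e(\bi_{\pi^{(1)}})L(\theta_2)=e(\bi_{\pi^{(1)}})\Res_{\pi^{(1)}}L(\theta_2)=0$ by Theorem~\ref{TStand}(vi), $\pi^{(1)}>(\theta_2)$ being the strict inequality we assumed. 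Having verified the hypothesis, Proposition~\ref{LParaIdeal1New} gives $\iota_{\theta_2,\theta_1}(I_{>(\theta_2)}\otimes R_{\theta_1})\subseteq I_{>(\theta)}$; the symmetric argument (apply Proposition~\ref{LParaIdeal1New} with $k=2$, $\pi_0=(\theta_1)$) gives $\iota_{\theta_2,\theta_1}(R_{\theta_2}\otimes I_{>(\theta_1)})\subseteq I_{>(\theta)}$, and adding the two inclusions proves the lemma.

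The only genuinely delicate point, and the one I would be most careful about, is the claim that $(\theta_2,\theta_1)$ and $(\theta)$ are the \emph{only} root partitions $\si$ with $\si\le(\theta_2,\theta_1)$ — i.e. that $(\theta_2,\theta_1)$ really is minimal in $\Pi(\theta)\setminus\{(\theta)\}$. This is asserted in the text preceding Lemma~\ref{LSES}, so I would simply cite it; if one wanted to be self-contained here one would check it against the explicit list of positive roots of $E_8$ summing in pairs (or with multiplicities) to $\theta$, which is a finite but tedious verification. Everything else is a routine re-run of the restriction-vanishing arguments of Section~3.
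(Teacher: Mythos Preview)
Your overall strategy---invoke Proposition~\ref{LParaIdeal1New} twice, once for each summand---is exactly what the paper does. The difference is in the choice of $\pi$: the paper takes $\pi=(\theta)$, whereas you take $\pi=(\theta_1,\theta_2)$. This choice creates a genuine gap in your argument.

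With your $\pi$, the hypothesis of Proposition~\ref{LParaIdeal1New} requires you to check $eL(\si)=0$ for \emph{every} $\si\le(\theta_1,\theta_2)$ in the fixed \emph{total} order on $\Pi(\theta)$ (the one introduced at the start of Section~3, refining the bilexicographic partial order). You assert that the only such $\si$ are $(\theta)$ and $(\theta_1,\theta_2)$, citing the fact that $(\theta_1,\theta_2)$ is ``a minimal element'' of $\Pi(\theta)\setminus\{(\theta)\}$. But minimal is not the same as minimum: the paper only claims minimality in the bilexicographic \emph{partial} order, and a total order refinement may well place other root partitions (those incomparable to $(\theta_1,\theta_2)$ in the partial order) below it. So your list of $\si$'s to check is unjustified, and you would then owe a separate argument for each such $\si$.

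The paper sidesteps this entirely by taking $\pi=(\theta)$. Since $(\theta)$ is the minimum of $\Pi(\theta)$, the \emph{only} $\si\le\pi$ is $\si=(\theta)$ itself, and one just needs $eL(\theta)=0$. Your argument for this case via Lemma~\ref{LSES} and a Mackey analysis is essentially the right one (though note that Lemma~\ref{LBKM} as stated concerns powers of a single root and does not literally apply to $\Res_{\theta_2,\theta_1}(L(\theta_2)\circ L(\theta_1))$; the relevant convexity/Mackey reasoning still goes through, but it is not that lemma). With $\pi=(\theta)$ your handling of $\si=(\theta_1,\theta_2)$ becomes unnecessary, which is just as well, since your invocation of Theorem~\ref{TStand}(vi) there is also off: $\Res_{\theta_2,\theta_1}$ is not of the form $\Res_\rho$ for any root partition $\rho$ (the roots are listed in the wrong order), so that theorem does not apply as written.
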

\begin{proof}
Apply Proposition~\ref{LParaIdeal1New} twice with $m=2$, $\ga_1=\theta_2$, $\ga_2=\theta_1$, $\pi=(\theta)$, and either $k=1$ and $\pi_0=(\theta_2)$, or $k=2$ and $\pi_0=(\theta_1)$. 
\end{proof}


\begin{Lemma}
We have that  $\bar e_\theta \bar R_\theta \bar e_\theta$ is generated by $\bar y_\theta$.
\end{Lemma}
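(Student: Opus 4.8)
The plan is to mirror the argument used for homogeneous roots (the Corollary at the end of Subsection on homogeneous roots), but now using the explicit presentation of $L(\theta)$ from Lemma~\ref{LIrrE8} in place of Theorem~\ref{Thomog}. First I would recall that, since $\de_\theta = D_\theta = e(\bi_\theta)$, we have $e_\theta = e(\bi_\theta)$, so $\bar e_\theta \bar R_\theta \bar e_\theta = e(\bi_\theta) \bar R_\theta e(\bi_\theta)$. By Theorem~\ref{TBasis}, $e(\bi_\theta) R_\theta e(\bi_\theta)$ is spanned by elements $\psi_w y_1^{a_1}\cdots y_d^{a_d} e(\bi_\theta)$ with $w\bi_\theta = \bi_\theta$ (where $d = \height(\theta) = 29$). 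So it suffices to show two things after passing to $\bar R_\theta$: (1) for $w$ with $w\bi_\theta = \bi_\theta$ and $w \neq 1$, the element $\psi_w e(\bi_\theta)$ lies in $I_{>(\theta)}$; and (2) $\bar y_1 e(\bi_\theta) = \cdots = \bar y_d e(\bi_\theta)$ modulo the subalgebra generated by $\bar y_\theta = \bar y_d e(\bi_\theta)$ — more precisely that $(y_s - y_r)e(\bi_\theta) \in I_{>(\theta)}$ for all $r,s$, exactly as in Lemma~\ref{LyEq}.

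For step (1), I would argue as in Lemma~\ref{LBadInBlock}: an idempotent $e(\bj)$ with $\bj$ not a word of $L(\theta)$ lies in $I_{>(\theta)}$ by Corollary~\ref{LBadWords} (since $(\theta)$ is the minimal root partition of $\theta$, by Lemma~\ref{LMinRP}). The set of words of $L(\theta)$ can be read off from the exact sequence in Lemma~\ref{LSES}: it is the union of the words of $L(\theta_2)\circ L(\theta_1)$ that are $\leq$-consistent with surviving to $L(\theta)$, but more usefully I can use the presentation in Lemma~\ref{LIrrE8} directly. Any $w$ fixing $\bi_\theta$ that is not built from admissible transpositions within the two homogeneous blocks — i.e. $w \notin \DD_{\theta_2,\theta_1}$ together with the extra relator $\psi_{w(\theta)}$ — will, when written in a reduced expression, have some initial segment $s_{r_k}\cdots s_{r_m}$ that is not admissible for the relevant partially-permuted word, hence produces an idempotent $e(\bj)$ with $\bj$ not a word of $L(\theta)$; pulling that idempotent out shows $\psi_w e(\bi_\theta) \in I_{>(\theta)}$. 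For the elements $w \in \DD_{\theta_2,\theta_1}$ that fix $\bi_\theta$, the analogue of Lemma~\ref{LAdm} (admissible elements are determined by their action on the word) forces $w = 1$.

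For step (2), I would reprove Lemma~\ref{LyEq} for $\bi_\theta$. The two blocks $\bi_{\theta_1}$ and $\bi_{\theta_2}$ are homogeneous Lyndon words, so within each block the original argument of Lemma~\ref{LyEq} (using Lyndon-ness to force the existence of a neighbor $r<s$ with $i_r\cdot i_s = -1$, then the homogeneity condition~\eqref{ENC}, then the relation $(\psi_{s-1}\cdots\psi_t)(\psi_t\cdots\psi_{s-1})e(\bi_\theta) = \pm(y_s - y_t)e(\bi_\theta)$ together with Lemma~\ref{LBadInBlock}-type vanishing) shows $(y_s - y_r)e(\bi_\theta)\in I_{>(\theta)}$ for $r,s$ in the same block. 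To bridge the two blocks one needs $(y_{16} - y_{17})e(\bi_\theta) \in I_{>(\theta)}$: here I would use that $\bi_\theta = \bi_{\theta_2}\bi_{\theta_1}$ is itself Lyndon, so the adjacency argument applies across the block boundary as well, or alternatively use the relator $\psi_{w(\theta)}v_\theta = 0$ from Lemma~\ref{LIrrE8} to relate $y$'s on the two sides. Combining (1) and (2), $\bar e_\theta\bar R_\theta\bar e_\theta$ is spanned by powers of $\bar y_d e(\bi_\theta) = \bar y_\theta$, which is the claim.

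The main obstacle I anticipate is step (1): verifying that \emph{every} $w \in \Si_{29}$ fixing $\bi_\theta$ either equals $1$ or yields $\psi_w e(\bi_\theta) \in I_{>(\theta)}$, since unlike the fully homogeneous case we no longer have a clean graph-component description of all words of $L(\theta)$ — one must carefully use Lemma~\ref{LIrrE8}'s relations (in particular the role of $w(\theta)$) to control stabilizer elements that move letters between the two blocks. Everything else is a routine transcription of the homogeneous-root arguments.
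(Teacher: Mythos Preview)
Your overall two-step plan matches the paper's, but there is a genuine gap in how you propose to execute step (1), and the bridging in step (2) is too vague.

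The gap in step (1): your argument that ``an initial segment of a reduced expression produces a non-word of $L(\theta)$'' tacitly reuses the mechanism of Lemma~\ref{LBadInBlock}, but that mechanism depends on $L(\be)$ being homogeneous---there, non-admissibility of $s_{r_k}$ \emph{is equivalent to} $s_{r_k}\cdots s_{r_m}\bi_\be$ leaving the word-set of $L(\be)$. For $L(\theta)$ this fails: $L(\theta)$ is not homogeneous, its words include genuine shuffles of $\bi_{\theta_2}$ with $\bi_{\theta_1}$ (from the exact sequence in Lemma~\ref{LSES}), and a transposition that is not admissible in the block sense may well stay inside the word-set of $L(\theta)$. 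The paper avoids this by invoking the parabolic compatibility statement
\[
\iota_{\theta_2,\theta_1}\bigl(I_{>(\theta_2)}\otimes R_{\theta_1}+R_{\theta_2}\otimes I_{>(\theta_1)}\bigr)\subseteq I_{>(\theta)}
\]
(Lemma~\ref{PParaE8}, an instance of Proposition~\ref{LParaIdeal1New}). With this in hand one writes $w=uv$ with $u\in\Si^{\height(\theta_2),\height(\theta_1)}$ and $v\in\Si_{\height(\theta_2)}\times\Si_{\height(\theta_1)}$; if $v\notin\DD_{\theta_2,\theta_1}$ then Lemma~\ref{LBadInBlock} applied \emph{inside each homogeneous block} together with Lemma~\ref{PParaE8} gives $\psi_w\Pol_d e(\bi_\theta)\subseteq I_{>(\theta)}$, while if $v\in\DD_{\theta_2,\theta_1}$ a finite combinatorial check shows $w\bi_\theta=\bi_\theta$ forces $w=1$. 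You are missing this parabolic lemma; without it, neither your step (1) nor the ``within each block'' part of step (2) goes through.

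For the bridge in step (2), the Lyndon-ness of $\bi_\theta$ alone does not suffice: the proof of Lemma~\ref{LyEq} uses Lemma~\ref{LBadInBlock}, hence homogeneity, at the key step. What the paper actually does is pick a specific cycle $w=(27,26,\dots,16)$, compute directly that $\psi_w^\tau\psi_w e(\bi_\theta)\equiv (y_{16}-y_{27})e(\bi_\theta)\pmod{I_{>(\theta)}}$, and then verify from the character formula for $L(\theta)$ in Lemma~\ref{LSES} that $w\bi_\theta$ is not a word of $L(\theta)$, so $\psi_w e(\bi_\theta)\in I_{>(\theta)}$ by Corollary~\ref{LBadWords}. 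Your alternative suggestion ``use the relator $\psi_{w(\theta)}v_\theta=0$'' does not obviously yield a relation among the $y_r$'s and would need substantial elaboration.
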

\begin{proof}
By Theorem~\ref{TBasis}, an element of $e_\theta R_\theta e_\theta$ is a linear combination of terms of the form $\psi_w y_1^{a_1} \dots y_d^{a_d} e(\bi_\theta)$ such that $w\bi_\theta=\bi_\theta$. If $w\in (\Si_{\height(\theta_2)}\times\Si_{\height(\theta_1)})\setminus \DD_{\theta_2,\theta_1}$, then $\psi_w e_\theta \in I_{>(\theta)}$ by Lemmas~\ref{PParaE8} and~\ref{LBadInBlock}. So we may assume that  $w = uv$ with $u \in \Si^{\height(\theta_2),\height(\theta_1)}$, $v \in \DD_{\theta_2, \theta_1}$. It is easy to check that the only such permutation that fixes $\bi_\theta$ is the identity. We therefore see that $\bar e_\theta \bar R_\theta \bar e_\theta$ is generated by $\bar y_1, \dots, \bar y_{\height(\theta)}$.

Note that $\height(\theta_2)=16$ and $\height(\theta_1)=13$. Using the cases $\be=\theta_2$ and $\be=\theta_1$ proved above and Lemma~\ref{PParaE8}, we have that $(y_r - y_s)e(\bi_\theta) \in I_{>(\theta)}$ if $1 \leq r,s \leq 16$ or $17 \leq r,s \leq 29$. It remains to show that $(y_r - y_s)e(\bi_\theta) \in I_{>(\theta)}$ for some $1 \leq r \leq 16$ and $17 \leq s \leq 29$. 
Let $w \in \Si_{29}$ be the cycle $(27, 26, \dots, 16)$. By considering words and using Corollary~\ref{LBadWords}, one can verify that
\begin{align*}
  \psi_w^\tau \psi_w e(\bi_\theta) &\equiv (y_{16}  - y_{27})e(\bi_\theta) \pmod{I_{>(\theta)}}.
\end{align*}
On the other hand, by the formula for the character of $L(\theta)$ from Lemma~\ref{LSES}, we have that $w \bi_\theta$ is not a word of $L(\theta)$. Therefore, by Corollary~\ref{LBadWords}, we have that $\psi_w e(\bi_\theta) \in I_{>(\theta)}$, so $(y_{16} - y_{27})e(\bi_\theta) \in I_{>(\theta)}$, and we are done.
\end{proof}

\subsection{Non-symmetric types}\label{SSBCFG}
Now we deal with non-symmetric Cartan matrices, i.e. Cartan matrices of $BCFG$ types. 

\begin{Lemma}\label{LHelper}
  Suppose that $\de_\be, D_\be \in e(\bi_\be) R_\be e(\bi_\be)$ have been chosen so that Hypothesis~\ref{HProp}(iii) is satisfied. If the minimal degree component of $e(\bi_\be) R_\be e(\bi_\be)$ is spanned by $D_\be$, then Hypothesis~\ref{HProp}(i) and (vi) are satisfied.
\end{Lemma}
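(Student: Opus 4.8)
Both assertions rest on a single observation: since $\O D_\be$ is the entire minimal‑degree component of $e(\bi_\be)R_\be e(\bi_\be)$, any homogeneous element of this algebra of degree $\deg D_\be$ is an $\O$‑multiple of $D_\be$, and the multiple is detected by letting the element act on the top vector $v_\be^+$ of the cuspidal module $L(\be)$, because $D_\be v_\be^+=v_\be^-\neq 0$ by Hypothesis~\ref{HProp}(iii). Throughout one may assume $\O=F$ is a field and afterwards lift to $\Z$ via Lemma~\ref{PFieldToZ}.

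\emph{Part (i).} The element $e_\be=D_\be\de_\be$ lies in $e(\bi_\be)R_\be e(\bi_\be)$ and has degree $0=\deg D_\be+\deg\de_\be$, and by Hypothesis~\ref{HProp}(iii) it satisfies $e_\be v_\be^-=D_\be\de_\be v_\be^-=D_\be v_\be^+=v_\be^-$. Now $e_\be D_\be-D_\be$ is homogeneous of degree $\deg D_\be$, hence equals $\mu D_\be$ for some $\mu\in F$; applying both sides to $v_\be^+$ gives $e_\be v_\be^--v_\be^-=0$ on the left and $\mu v_\be^-$ on the right, so $\mu=0$ and $e_\be D_\be=D_\be$. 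Therefore $e_\be^2=(e_\be D_\be)\de_\be=D_\be\de_\be=e_\be$ already in $R_\be$, so in particular $e_\be^2-e_\be=0\in I_{>(\be)}$, which is Hypothesis~\ref{HProp}(i).

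\emph{Part (vi).} Write $\bi_\be^2:=\bi_\be\bi_\be$ and $\mathbf D:=\iota_{\be,\be}(D_\be\otimes D_\be)$. Since $D_\be=e(\bi_\be)D_\be e(\bi_\be)$ we have $\mathbf D=e(\bi_\be^2)\mathbf D e(\bi_\be^2)$, and as $w_{\be,1}$ fixes $\bi_\be^2$ the element $\psi_{\be,1}$ commutes with $e(\bi_\be^2)$; hence $\mathbf D\psi_{\be,1}$ and $\psi_{\be,1}\mathbf D$ both lie in $e(\bi_\be^2)R_{2\be}e(\bi_\be^2)$ and are homogeneous of the same degree $2\deg D_\be-\be\cdot\be$ (here $\deg(\psi_{\be,1}e(\bi_\be^2))=-\be\cdot\be$, since each strand of the first $\bi_\be$‑block crosses each strand of the second exactly once and $\sum_{a,b}\al_{(\bi_\be)_a}\!\cdot\al_{(\bi_\be)_b}=\be\cdot\be$). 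Working in $L(\be)\circ L(\be)$ and using the Mackey decomposition from the proof of Lemma~\ref{LMackeyPsi}, $e(\bi_\be^2)\bigl(L(\be)\circ L(\be)\bigr)$ is the direct sum of $1_{\be,\be}\otimes(e(\bi_\be)L(\be))^{\otimes 2}$ and $\psi_{\be,1}1_{\be,\be}\otimes(e(\bi_\be)L(\be))^{\otimes 2}$, the latter being $(L(\be)\boxtimes L(\be))\langle-\be\cdot\be\rangle$ up to the obvious identification. Put $u:=1_{\be,\be}\otimes(v_\be^+\otimes v_\be^+)$. By Hypothesis~\ref{HProp}(iii) $\mathbf D u=1_{\be,\be}\otimes(v_\be^-\otimes v_\be^-)$, so $\psi_{\be,1}\mathbf D u=\psi_{\be,1}1_{\be,\be}\otimes(v_\be^-\otimes v_\be^-)$; on the other hand $\mathbf D\psi_{\be,1}u$ maps to $D_\be v_\be^+\otimes D_\be v_\be^+=v_\be^-\otimes v_\be^-$ in the quotient, hence equals $\psi_{\be,1}1_{\be,\be}\otimes(v_\be^-\otimes v_\be^-)$ plus a vector in the first summand; but that vector is homogeneous of degree $2\deg v_\be^--\be\cdot\be$, strictly below the minimal degree $2\deg v_\be^-$ of $(e(\bi_\be)L(\be))^{\otimes 2}$, so it vanishes. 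Thus $\mathbf D\psi_{\be,1}u=\psi_{\be,1}\mathbf D u\neq 0$, and in particular $\mathbf D\psi_{\be,1}\neq 0\neq\psi_{\be,1}\mathbf D$.

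It remains to show that $2\deg D_\be-\be\cdot\be$ is the minimal degree of $e(\bi_\be^2)R_{2\be}e(\bi_\be^2)$ and that its minimal‑degree component is one‑dimensional; then $\mathbf D\psi_{\be,1}$ and $\psi_{\be,1}\mathbf D$ are proportional, say $\mathbf D\psi_{\be,1}=c\,\psi_{\be,1}\mathbf D$, and evaluating on $u$ forces $c=1$, giving Hypothesis~\ref{HProp}(vi). For this last claim one invokes Corollary~\ref{TDim}: $\qdim\bigl(e(\bi_\be^2)R_{2\be}e(\bi_\be^2)\bigr)=\sum_{\si\in\Pi(2\be)}\bigl(\qdim\bar\De(\si)_{\bi_\be^2}\bigr)^2 l_\si$, and since $l_\si=1+(\text{higher degree})$ the $\si$‑summand has lowest degree equal to twice the lowest degree of $\bar\De(\si)_{\bi_\be^2}$, with coefficient the square of the dimension of its lowest‑degree part. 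For $\si=(\be^2)$ one has $\bar\De((\be^2))=L(\be)^{\circ 2}\langle\shift((\be^2))\rangle$, whose word space at $\bi_\be^2$ has one‑dimensional lowest‑degree part (coming, via the decomposition above, from $\psi_{\be,1}1_{\be,\be}\otimes(v_\be^-\otimes v_\be^-)$), and a direct computation from the definitions — using the $\circledast$‑self‑dual normalization of $L(\be)$ — shows that twice this lowest degree is exactly $2\deg D_\be-\be\cdot\be$. Hence the claim reduces to: for every $\si\neq(\be^2)$ in $\Pi(2\be)$, the word $\bi_\be^2$ either does not occur in $\bar\De(\si)$ or occurs only in degrees strictly above the lowest degree of $\bar\De((\be^2))_{\bi_\be^2}$. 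This comparison of lowest degrees of proper standard modules — using their $\circledast$‑self‑duality, the shifts $\shift(\si)$, and Theorem~\ref{TStand}(vi) on $\Res_\si$, together with the fact that $\bi_\be^2=\bi_{(\be^2)}$ is the word of the \emph{minimal} root partition — is the one genuinely technical point of the proof and the step I expect to be the main obstacle; everything else is formal once Hypothesis~\ref{HProp}(iii) and the rank‑one minimal‑degree hypothesis on $e(\bi_\be)R_\be e(\bi_\be)$ are in hand.
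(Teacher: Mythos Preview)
Your argument for part~(i) is correct and coincides with the paper's: the element $D_\be\de_\be D_\be$ lies in the one-dimensional minimal degree component, hence equals a scalar multiple of $D_\be$, and evaluating on $v_\be^+$ pins the scalar to $1$; then $e_\be^2=e_\be$ follows. (You phrased it as $e_\be D_\be=D_\be$, which is the same computation.)

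For part~(vi), however, there is a genuine gap, and you flag it yourself. Your strategy is to prove that the minimal degree component of $e(\bi_\be^2)R_{2\be}e(\bi_\be^2)$ is one-dimensional by invoking the dimension formula of Corollary~\ref{TDim} and then arguing that, among all $\si\in\Pi(2\be)$, only $\si=(\be^2)$ contributes to the lowest power of $q$. But you do not prove this last comparison, and it is not a formality: it requires controlling the lowest degree in which the word $\bi_\be^2$ appears in \emph{every} proper standard module $\bar\De(\si)$ with $\si>(\be^2)$, which amounts to nontrivial information about cuspidal modules for roots other than $\be$. Nothing in the hypotheses of the lemma (which concern only $D_\be$, $\de_\be$ and the single word space $e(\bi_\be)R_\be e(\bi_\be)$) gives you access to this.

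The paper avoids this entirely. Instead of the dimension formula, it appeals to \cite[Lemma~5.3(ii)]{KR}, which describes the lowest degree component of $e(\bi)R_\al e(\bi)$ directly in terms of the basis $\{\psi_w e(\bi)\}$, together with the commutation relations in $R_{2\be}$: one straightens $\mathbf D\,\psi_{\be,1}-\psi_{\be,1}\mathbf D$ into the Khovanov--Lauda basis and observes that all correction terms lie in strictly higher degree than the minimal one. This is a purely local computation in $R_{2\be}$ and needs nothing about proper standard modules for other root partitions. Your module-theoretic computation showing $\mathbf D\psi_{\be,1}u=\psi_{\be,1}\mathbf D\,u$ in $L(\be)\circ L(\be)$ is correct and pleasant, but it cannot close the argument without knowing the two elements are proportional, and that is precisely the step you left open.
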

\begin{proof}
Since $D_\be \de_\be D_\be$ has the same degree as $D_\be$, the assumption above implies that $D_\be \de_\be D_\be$ is proportional to $D_\be$. Acting on $v_\be^+$ and using Hypothesis~\ref{HProp}(iii) gives $D_\be \de_\be D_\be = D_\be$, which upon multiplication by $\de_\be$ on the right gives the property $e_\be^2=e_\be$, which is even stronger than (i).

To see (vi), we look at the lowest degree component in $e(\bi_\be\bi_\be)R_{2\be}e(\bi_\be\bi_\be)$ using \cite[Lemma 5.3(ii)]{KR} and commutation relations in the algebra $R_{2\be}$.
\end{proof}

It will be clear in almost all cases that the condition of Lemma~\ref{LHelper} will be satisfied, and moreover Hypothesis~\ref{HProp}(ii) and (iv) are easy to verify by inspection. This leaves Hypothesis~\ref{HProp}(v) to be shown in each case.

\subsubsection{Type $B_l$}\label{SSTypeB}
The set of positive roots is broken into two types. For $1 \leq i \leq j \leq l$ we have the root $\al_i + \dots + \al_j$, and for $1 \leq i < j \leq l$ we have the root $\al_i + \dots + \al_{j-1} + 2\al_j + \dots + 2\al_l$.

Let $\be := \al_i + \dots + \al_j$. Then $\bi_{\be} := (i, \dots, j)$, and the irreducible module $L({\be})$ is one-dimensional with character $\bi_{\be}$. Define $\de_{\be} := D_{\be} := e(\bi_{\be})$ and $y_{\be} := y_{d} e(\bi_\be)$. Using Corollary~\ref{LBadWords} one sees that $\psi_r e_{\be} \in I_{>(\be)}$ for all $r$, which by Theorem~\ref{TBasis} shows that $\bar R_\be \bar e_\be = F[\bar y_1, \dots, \bar y_d] \bar e_\be$. This also shows that for $1 \leq r \leq d$ we have the elements of $I_{>(\be)}$:
\[
  \psi_r^2 e_{\be} = \begin{cases} (y_r-y_{r+1}^2)e_{\be}, & \text{if $j=l$ and $r=d-1$}\\
        (y_r - y_{r+1})e_{\be}, & \text{otherwise.}
        \end{cases}
\]
It follows that $\bar R_\be \bar e_\be = F[\bar y_\be] \bar e_\be$, and thus $\bar e_{\be} \bar R_{\be} \bar e_{\be}$ is generated by $\bar e_{\be} \bar y_{\be} \bar e_{\be}$.

Consider $\be := \al_i + \dots + \al_{j-1} + 2\al_j + \dots + 2\al_l$. In this case, $\bi_\be = (i, \dots, l, l, \dots, j)$, and $\CH L({\be}) = (q+q^{-1}) \bi_{\be}$. Define $\de_{\be} := y_{l-i+2} e(\bi_{\be})$, $D_{\be} := \psi_{l-i+1} e(\bi_{\be})$, and $y_{\be} = y_1 e(\bi_\be)$. Using Corollary~\ref{LBadWords}, one sees that $\psi_r e(\bi_{\be}) \in I_{>(\be)}$ for $r \neq l-i+1$. It is also clear that $\psi_{l-i+1} e_{\be} = 0$, and therefore by Theorem~\ref{TBasis}, $\bar R_{\be} \bar e_{\be} = F[\bar y_1, \dots, \bar y_d] e_{\be}$. We also have the following elements of $I_{>(\be)}$:
\[
\psi_r^2 e(\bi_{\be}) = \begin{cases} (y_r - y_{r+1})e(\bi_{\be}), & \text{for $1 \leq r \leq l-i-1$;}\\
  (y_{l-i} - y_{l-i+1}^2)e(\bi_{\be}), & \text{for $r = l-i$};\\
  (y_{l-i+3} - y_{l-i+2}^2)e(\bi_{\be}), & \text{for $r = l-i+2$};\\  
  (y_{r+1} - y_r)e(\bi_{\be}) & \text{for $l-i+3 \leq r \leq d-1$}.
\end{cases}
\]
Taken together, these show that $\bar R_{\be} \bar e(\bi_{\be}) = F[\bar y_{l-i+1}, \bar y_{l-i+2}] \bar e(\bi_{\be})$. Multiplying on both sides by $\bar e_{\be}$ and using the KLR / nil-Hecke relations, we have 
$$\bar e_{\be} \bar R_{\be} \bar e_{\be} = F[\bar y_{l-i+1} + \bar y_{l-i+2}, \bar y_{l-i+1} \bar y_{l-i+2}] \bar e_{\be}.$$
Furthermore, $(y_{l-i+1} + y_{l-i+2})\psi_{l-i+1}e(\bi_\be) = \psi_{l-i+1} \psi_{l-i}^2 \psi_{l-i+1} e(\bi_\be) \in I_{>(\be)}$, and so in fact
$$\bar e_{\be} \bar R_{\be} \bar e_{\be} = F[\bar y_{l-i+1}^2] \bar e_{\be} = F[\bar y_1] \bar e_{\be}.$$

\subsubsection{Type $C_l$}
The set of positive roots is broken into three types. For $1 \leq i \leq j \leq l$ we have the root $\al_i + \dots + \al_j$, 
for $1 \leq i < j < l$ we have the root $\al_i + \dots + \al_{j-1} + 2\al_j + \dots + 2\al_{l-1} + \al_l$, 
and for $1 \leq i < l$ we have the root $2\al_i + \dots + 2\al_{l-1} + \al_l$.

Consider $\be = \al_i + \dots + \al_j$. Then $\bi_\be = (i, \dots, j)$ and $\CH L(\be) = \bi_{\be}$. Define $\de_\be = D_\be := e(\bi_\be)$. Define $y_\be := y_1 e(\bi_\be)$.  Using Corollary~\ref{LBadWords} one sees that $\psi_r e_{\be} \in I_{>(\be)}$ for all $r$, which by Theorem~\ref{TBasis} shows that $\bar R_\be \bar e_\be = F[\bar y_1, \dots, \bar y_d] \bar e_\be$. This also shows that for $1 \leq r \leq d$ we have the elements of $I_{>(\be)}$:
\[
  \psi_r^2 e_{\be} = \begin{cases} (y_r^2-y_{r+1})e_{\be}, & \text{if $j=l$ and $r=d-1$}\\
        (y_r - y_{r+1})e_{\be}, & \text{otherwise.}
        \end{cases}
\]
Consequently, $\bar e_{\be} \bar R_{\be} \bar e_{\be}$ is generated by $\bar e_{\be} \bar y_{\be} \bar e_{\be}$.

Consider $\be = \al_i + \dots + \al_{j-1} + 2\al_j + \dots + 2\al_{l-1} + \al_l$. Then $\bi_\be = (i, \dots, l-1, l, l-1, \dots, j)$ and $\CH L(\be) = (i, \dots, l-1, l, l-1, \dots, j)$. Define $\de_\be = D_\be := e(\bi_\be)$, and $y_\be := y_1 e(\bi_\be)$.  Using Corollary~\ref{LBadWords} one sees that $\psi_r e_{\be} \in I_{>(\be)}$ for all $r$, which by Theorem~\ref{TBasis} shows that $\bar R_\be \bar e_\be = F[\bar y_1, \dots, \bar y_d] \bar e_\be$. This also shows that for $1 \leq r \leq d$ we have the elements of $I_{>(\be)}$:
\[
\psi_r^2 e_{\be} = \begin{cases} (y_r - y_{r+1}) e_{\be}, & \text{for $1 \leq r \leq l-i-1$;}\\
  (y_{l-i}^2 - y_{l-i+1})e_{\be}, & \text{for $r = l-i$};\\
  (y_{l-i+2}^2 - y_{l-i+1})e_{\be}, & \text{for $r = l-i+1$};\\  
  (y_{r+1} - y_r)e_{\be} & \text{for $l-i+2 \leq r \leq d-1$}.
\end{cases}
\]
It follows that $\bar R_\be \bar e_\be = F[\bar y_{l-i}, \bar y_{l-i+2}] \bar e_\be$. Furthermore, by the relation (\ref{R7}),
$$
(y_{l-i} + y_{l-i+2}) e_\be = (\psi_{l-i+1} \psi_{l-i} \psi_{l-i+1} - \psi_{l-i} \psi_{l-i+1} \psi_{l-i}) e_\be \in I_{>(\be)}
$$
and therefore $\bar R_\be \bar e_\be = F[\bar y_{l-i}] e_\be = F[\bar y_\be] \bar e_\be$.

Consider $\be = 2\al_i + \dots + 2\al_{l-1} + \al_l$. Then $\bi_{\be} = (i, \dots, l-1, i, \dots, l)$ and 
$$\CH L(\be) = q((i,\dots,l-1) \circ (i,\dots,l-1))\cdot(l).$$
Let $w \in \Si_{d}$ be the permutation that sends $(1,2,\dots, d)$ to $(l-i+1, \dots, d-1, 1, \dots, l-i, d)$, and define $D_{\be} := \psi_w e(\bi_{\be})$. Define also $\de_{\be} := y_{d-1} e(\bi_{\be})$ and $y_{\be} := y_d e(\bi_\be)$. Set $\ga = \al_i + \dots + \al_{l-1}$. 
Since $I_{>(\ga^2)}$ is generated by idempotents $e(\bi)$ with $\bi > \bi_\ga^2$, and $\bi_\be = \bi_\ga^2 i_l$ is the highest weight of $L_\be$, we see that
$$
\iota_{2\ga,\al_l}(I_{>(\ga^2)} \otimes R_{\al_l}) \subseteq I_{>(\be)}.
$$
Let $\mu: \bar R_{2\ga} \boxtimes R_{\al_l} \to \bar R_\be$ be the induced map. Note that every weight of $L(\be)$ ends with $l$, so that $\psi_u e(\bi_\be) \in I_{>(\be)}$ unless $u \in \Si_{d-1,1}$, by Corollary~\ref{LBadWords}. Therefore, applying (\ref{EEndo}) in the type $A$ case of $(\ga^2)$ (which has already been verified), we obtain
$$
\bar e_\be \bar R_\be \bar e_\be = \mu(\bar e_{(\ga^2)} \bar R_{2\ga} \bar e_{(\ga^2)} \otimes R_{\al_l}) = \bar e_\be \O[\bar y_{l-i} + \bar y_{2l-2i}, \bar y_{l-i} \bar y_{2l-2i}, \bar y_d] \bar e_\be.
$$
Furthermore,
$$
(\bar y_{l-i} + \bar y_{2l-2i}) \bar e(\bi_\be) = \bar \psi_{l-i} \dots \bar \psi_{2l-2i-1} \bar \psi_{2l-2i}^2 \bar \psi_{2l-2i-1} \dots \bar \psi_{l-i} \bar e(\bi_\be) = 0
$$
and $(\bar y_{2l-2i}^2 - \bar y_d) \bar e(\bi_\be) = \bar \psi_{2l-2i}^2 \bar e(\bi_\be) = 0.$ Thus $\bar e_\be \bar R_\be \bar e_\be$ is generated by $\bar e_\be \bar y_d \bar e_\be$.

\subsubsection{Type $F_4$}
We write $\be = c_1 \al_1 + c_2 \al_2 + c_3 \al_3 + c_4 \al_4 \in \Phi_+$. If $c_4 = 0$, then this root lies in a subsystem of type $B_3$ with the {\em same order} as in section~\ref{SSTypeB} and we are done. 

If $\be = \al_i + \dots +\al_j$ for some $1\leq i \leq j \leq 4$, then $\bi_\be = (i,\dots,j)$ and $\CH L(\be) = (i,\dots,j)$. In this case we take $D_\be = \de_\be = e(\bi_\be)$, and set $y_\be = y_{\height(\be)} e(\bi_\be)$.

The following table shows the choice of data for the remaining roots, except for the highest root $\be = 2\al_1 + 3\al_2 + 4\al_3 + 2\al_4$, which we discuss separately. In each of these cases, the hypotheses may be verified by employing the same methods used above. For example, in each case either Hypothesis~\ref{HProp}(i)-(iv),(vi) may be verified directly or with the help of Lemma~\ref{LHelper} when it applies.

\begin{tabular}{|c|c|c|c|}
\hline
$\bi_\be$ & $D_\be$ & $\de_\be$ & $y_\be$ \\ \hline
$2343$ & $e(\bi_\be)$ & $e(\bi_\be)$ & $y_3 e(\bi_\be)$ \\
$12343$ & $e(\bi_\be)$ & $e(\bi_\be)$ & $y_5 e(\bi_\be)$ \\
$23434$ & $\psi_3\psi_2\psi_4\psi_3e(\bi_\be)$ & $y_5e(\bi_\be)$ & $y_1e(\bi_\be)$ \\
$123432$ & $e(\bi_\be)$ & $e(\bi_\be)$ & $y_5e(\bi_\be)$ \\
$123434$ & $\psi_4\psi_3\psi_5\psi_4e(\bi_\be)$ & $y_6e(\bi_\be)$ & $y_2e(\bi_\be)$ \\
$1234323$ & $e(\bi_\be)$ & $e(\bi_\be)$ & $y_5 e(\bi_\be)$ \\
$1234342$ & $\psi_4\psi_3\psi_5\psi_4e(\bi_\be)$ & $y_6e(\bi_\be)$ & $y_2e(\bi_\be)$ \\
$12343423$ & $\psi_4\psi_3\psi_5\psi_4e(\bi_\be)$ & $y_6e(\bi_\be)$ & $y_8 e(\bi_\be)$ \\
$123434233$ & $\psi_4\psi_3\psi_5\psi_4\psi_8e(\bi_\be)$ & $y_6y_9e(\bi_\be)$ & $y_7e(\bi_\be)$ \\
$1234342332$ & $\psi_4\psi_3\psi_5\psi_4\psi_8e(\bi_\be)$ & $y_6y_9e(\bi_\be)$ & $y_{10}e(\bi_\be)$ \\
\hline
\end{tabular}

Consider now $\be = 2\al_1 + 3\al_2 + 4\al_3 + 2\al_4$, where $\bi_\be = (12343123432)$. Let $w \in \Si_{11}$ be the permutation that sends $(1,\dots, 11)$ to $(6,7,8,9,10,1,2,3,4,5,11)$, and set $D_\be = \psi_w e(\bi_\be)$. Let $\de_\be = y_{10} e(\bi_\be)$, and $y_\be = y_{11} e(\bi_\be)$. Define $\ga = \al_1 + \al_2 + 2\al_3 + \al_4$. There is a map $\mu: \bar R_{2\ga} \boxtimes R_{\al_2} \to \bar R_\be$. This map is not surjective, but one can show that
$$
\bar e(\bi_\be) \bar R_\be \bar e(\bi_\be) = \mu(\bar e(\bi_\ga^2) \bar R_{2\ga} \bar e(\bi_\ga^2) \otimes R_{\al_2})
$$
and thus
\begin{align*}
  \bar e_\be \bar R_\be \bar e_\be &= \mu(\bar e_{(\ga^2)} \bar R_{2\ga} \bar e_{(\ga^2)} \otimes R_{\al_2}) = \mu(\O[\bar y_5 + \bar y_{10}, \bar y_5 \bar y_{10}] \bar e_{(\ga^2)} \otimes R_{\al_2}) \\
  &= \O[\bar y_5 + \bar y_{10}, \bar y_5 \bar y_{10}, \bar y_{11}] \bar e_\be.
\end{align*}
We also compute (cf. \cite[\S5]{McN}):
$$
(\bar y_5 + \bar y_{10}) \bar e(\bi_\be) = - \bar \psi_5 \bar \psi_6 \bar \psi_7 \bar \psi_8 \bar \psi_9 \bar \psi_{10}^2 \bar \psi_9 \bar \psi_8 \bar \psi_7 \bar \psi_6 \bar \psi_5 \bar e(\bi_\be),
$$
which is zero because it contains the weight $(12341234323)$, and this is not a weight of $L(\be)$. Since $\bar y_{11} \bar e(\bi_\be) = \bar y_{10}^2 \bar e(\bi_\be)$, we see that $\bar e_\be \bar R_\be \bar e_\be = \O[\bar y_{11}] \bar e_\be$, as required.


\subsubsection{Type $G_2$}
$\be = \al_1$: $\bi_\be = (1)$, $D_\be = \de_\be = e(\bi_\be)$, and $y_\be = y_1 e(\bi_\be)$.

$\be = \al_2$: $\bi_\be = (2)$, $D_\be = \de_\be = e(\bi_\be)$, and $y_\be = y_1 e(\bi_\be)$.

$\be = \al_1 + \al_2$: $\bi_\be = (12)$, $D_\be = \de_\be = e(\bi_\be)$, and $y_\be = y_1 e(\bi_\be)$.

$\be = 2\al_1 + \al_2$: $\bi_\be = (112)$, $D_\be = \psi_1 e(\bi_\be)$, $\de_\be = y_2 e(\bi_\be)$, and $y_\be = (y_1 + y_2) e(\bi_\be)$.


$\be = 3\al_1 + \al_2$: $\bi_\be = (1112)$, $D_\be = \psi_1 \psi_2 \psi_1 e(\bi_\be)$, $\de_\be = y_2 y_3^2 e(\bi_\be)$, and $y_\be = y_1 y_2 y_3 e(\bi_\be)$.

Let $\mu$ be the composition $R_{3\al_1} \boxtimes R_{\al_2} \into R_\be \onto \bar R_\be$. If $w \notin \Si_{3,1}$, then $\psi_w e(\bi_\be) \in I_{>(\be)}$, and so $\mu$ is surjective. Furthermore, $\bar e_\be = \mu(e_{(\al_1^3)} \otimes 1)$. Thus $\bar e_\be \bar R_\be \bar e_\be = \O[\bar y_1, \bar y_2, \bar y_3, \bar y_4]^{\Si_{3,1}} \bar e_\be$. Since $(\bar y_3^3 - \bar y_4) \bar e(\bi_\be) = \bar \psi_3^2 \bar e(\bi_\be) = 0$, we have that $\bar e_\be \bar R_\be \bar e_\be$ is generated by $\O[\bar y_1, \bar y_2, \bar y_3]^{\Si_3} \bar e_\be$. Observe using \cite[Theorem 4.12(i)]{KLM} that
$$
  (\bar y_1 + \bar y_2 + \bar y_3) \bar e_\be = \bar e_\be \bar \psi_1 \bar \psi_2 \bar \psi_3^2 \bar e_\be \in I_{>(\be)}
$$
and
$$
  ((\bar y_1 + \bar y_2 + \bar y_3)^2 - (\bar y_1 \bar y_2 + \bar y_1 \bar y_3 + \bar y_2 \bar y_3)) \bar e_\be = \bar e_\be \bar \psi_2 \bar \psi_3^2 \bar e_\be \in I_{>(\be)}.
$$
Therefore $\bar e_\be \bar R_\be \bar e_\be$ is generated by $\bar y_1 \bar y_2 \bar y_3$.

$\be = 3\al_1 + 2\al_2$: $\bi_\be = (11212)$, $D_\be = \psi_1 \psi_3 \psi_2 \psi_4 \psi_1 \psi_3 e(\bi_\be)$, $\de_\be = y_2 y_4^2 e(\bi_\be)$, and $y_\be = y_1 y_2 y_4 e(\bi_\be)$. We first prove 

\vspace{1 mm}
\noindent
{\em Claim:}
If $w \neq 1$ then $e(\bi_\be) \psi_w e_\be \in I_{>(\be)}$.

\vspace{1mm}
\noindent
This is clearly true unless $w$ is one of the twelve permutations that stabilizes the weight $\bi_\be$. Of these, six produce a negative degree. Since $D_\be$ spans the smallest degree component of $e(\bi_\be) R_\be e(\bi_\be)$ we the Claim holds for these six permutations. Two of the remaining six permutations end with the cycle $(12)$. Since $\psi_1 D_\be = 0$, this implies that the Claim holds for them too. Finally, reduced decompositions for the remaining non-identity permutations may be chosen so that $e(\bi_\be) \psi_w \in I_{>(\be)}$ by Lemmas~\ref{LBadWordsNew} and \ref{LHighestWt}. 

Now we combine the Claim with Theorem~\ref{TBasis} to see that 
$\bar e_\be \bar R_\be \bar e_\be$ is generated by $\bar e_\be \O[\bar y_1, \dots, \bar y_5] \bar e_\be$. Next, by weights and quadratic relations, $(y_3-y_2^3) e(\bi_\be), (y_5-y_4^3) e(\bi_\be) \in I_{>(\be)}$. Thus $\bar e_\be \bar R_\be \bar e_\be = \bar e_\be \O[\bar y_1, \bar y_2, \bar y_4] \bar e_\be$. This can then be seen to be equal to $\O[\bar y_1, \bar y_2, \bar y_4]^{\Si_3} \bar e_\be$, arguing as in the case of the root $3\al_1 + \al_2$ above. Again as in the case of the root $3\al_1 + \al_2$, one then shows using specific elements of $I_{>(\be)}$that $(\bar y_1 + \bar y_2 + \bar y_4) \bar e_\be = 0$ and $(\bar y_1 \bar y_2 + \bar y_1 \bar y_4 + \bar y_2 \bar y_4) \bar e_\be = 0$, so that $\bar e_\be \bar R_\be \bar e_\be = \O[\bar y_1 \bar y_2 \bar y_4] e_\be$.



\end{document}